\newtheorem{theorem}{Theorem}[section]
\newtheorem{lemma}[theorem]{Lemma}
\def\xxint#1#2#3{{\setbox0=\hbox{$#1{#2#3}{\int}$}
  \vcenter{\hbox{$#2#3$}}\kern-.5\wd0}}
\def\loc{\text{\rm loc}}
\def\unif{\text{\rm unif}}
\def\varep{\varepsilon}
\newcommand{\average}{-\!\!\!\!\!\!\int}
\numberwithin{equation}{section}
\newtheorem{thm}{Theorem}[section]
\newtheorem{prop}[thm]{Proposition}
\newtheorem{cor}[thm]{Corollary}
\newtheorem{lem}[thm]{Lemma}
\newtheorem{rmk}[thm]{Remark}
\newcommand{\Abs}[1]{\left\lvert#1\right\rvert}
\newcommand{\norm}[1]{\lVert#1\rVert}
\newcommand{\Norm}[1]{\left\lVert#1\right\rVert}
\newcommand{\ag}[1]{\langle#1\rangle}
\newcommand{\cL}{\mathcal{L}}
\newcommand{\R}{\mathbb{R}}
\newcommand{\cA}{\mathcal{A}}
\newcommand{\e}{\varepsilon}
\begin{document}

\title{Approximate Correctors and Convergence Rates\\ in Almost-Periodic Homogenization}

\author{Zhongwei Shen\thanks{Supported in part by NSF grant DMS-1161154.}
\and Jinping Zhuge\thanks{Supported in part by NSF grant DMS-1161154.}}

\date{}

\maketitle

\begin{abstract}

We carry out a comprehensive study of quantitative homogenization of second-order 
elliptic systems with bounded measurable coefficients that are almost-periodic 
in the sense of H. Weyl. We obtain uniform local $L^2$ estimates for the 
approximate correctors in terms of a function that quantifies the almost-periodicity of the coefficient matrix.
We give a condition that implies the existence of (true) correctors. 
These estimates as well as similar estimates for the dual approximate correctors 
yield optimal or near optimal convergence rates in $H^1$ and $L^2$.
The $L^2$-based H\"older and Lipschitz estimates at large scale are also established.

\bigskip

\noindent{\it Keywords.} Homogenization; Almost Periodic; Approximate Correctors; Convergence Rates.

\noindent{\it AMS 2010 Subject Classifications.} 35B27, 74Q20.

\end{abstract}

\tableofcontents

\section{Introduction}

In this paper we shall be interested in quantitative homogenization of
a family of second-order elliptic operators with rapidly oscillating almost-periodic coefficients,
\begin{equation}\label{cond_Le}
\cL_\e = - \text{div}(A(x/\e) \nabla ) = - \frac{\partial}{\partial x_i} \left\{ a^{\alpha\beta}_{ij} \bigg( \frac{x}{\e}\bigg) \frac{\partial}{\partial x_j}\right\},  \qquad \e > 0
\end{equation}
(the summation convention is used throughout). We assume that the coefficient matrix $A(y) = (a^{\alpha\beta}_{ij}(y))$ with $1\le i,j \le d$ and $1\le  \alpha, \beta \le m$ is real, bounded measurable, and satisfies the ellipticity condition,
\begin{equation}\label{cond_ellipticity}
\mu|\xi|^2 \le a^{\alpha\beta}_{ij}(y) \xi_i^\alpha \xi_j^\beta \le \mu^{-1}|\xi|^2 \quad \text{for a.e. } y\in\R^d \text{ and } \xi = (\xi_i^\alpha) \in \R^{ m \times d},
\end{equation}
where $\mu>0$. We further  assume that $A(y)$ is almost-periodic (a.p.) in sense of H. Weyl, which we denote by $A \in APW^2(\R^d)$. This means that each entry of $A$ may be approximated by a sequence of real trigonometric polynomials with respect to the semi-norm,
\begin{equation}\label{cond_W2}
\norm{F}_{W^2} : = \limsup_{R\to \infty} \sup_{x\in\R^d} \left( \fint_{B(x,R)} |F|^2 \right)^{1/2}.
\end{equation}

The qualitative homogenization theory for elliptic equations and systems with a.p.
coefficients has been known since late 1970's \cite{Kozlov-1979, Papanicolaou-1979}.
Let $u_\varep\in H^1(\Omega; \R^m)$ be the weak solution 
to the Dirichlet problem,
\begin{equation}\label{DP-1}
\mathcal{L}_\e (u_\e)= F \quad \text{ in } \Omega \quad \text{ and } \quad  u_\e =f \quad \text{ on } \partial\Omega, 
\end{equation}
where $F\in H^{-1}(\Omega; \R^m)$, $f\in H^{1/2}(\partial\Omega; \R^m)$, and
$\Omega$ is a bounded Lipschitz domain in $\R^d$.
Suppose that $A(y)$ satisfies the ellipticity condition (\ref{cond_ellipticity}) and
is a.p. in the sense of Besicovich (a larger class than $APW^2(\R^d))$.
Then $u_\e$ converges weakly in $H^1(\Omega; \R^m)$  and thus strongly in $L^2(\Omega; \R^m)$ to 
a function $u_0\in H^1(\Omega; \R^m)$.
Moreover, $u_0$ is the weak solution to the (homogenized) Dirichlet problem,
\begin{equation}\label{DP-H-1}
\mathcal{L}_0 (u_0)= F \quad \text{ in } \Omega \quad \text{ and } \quad  u_0 =f \quad \text{ on } \partial\Omega, 
\end{equation}
where $\mathcal{L}_0$ is a second-order elliptic operator with constant coefficients
that depend only on $A$.
Our primary interest in this paper is in
 the convergence rates for $\| u_\varep -u_0\|_{L^2(\Omega)}$.
 
 In the case that $A$ is uniformly a.p. (almost-periodic in the sense of H. Bohr),
 the problem of convergence rates and uniform H\"older estimates
  for the Dirichlet problem (\ref{DP-1}) were studied recently 
 by the first author in \cite{Shen-2015} (also see earlier work \cite{Kozlov-1979, Dungey-2001,Bondarenko-2005}
 as well as \cite{Ishii-2000, Lions-2005, Caffarelli-2010} for homogenization of
 nonlinear differential equations in the a.p. setting).
 The results in \cite{Shen-2015} were subsequently used
 by  S.N. Armstrong  and the first author in \cite{Armstrong-Shen-2016}
 to establish the uniform Lipschitz estimates, up to the boundary, for 
 solutions of $\mathcal{L}_\e (u_\varep)=F$ with either Dirichlet or Neumann conditions.
 In particular, it follows from \cite{Armstrong-Shen-2016} that 
 the so-called approximate correctors $\chi_T$, defined in (\ref{c}) below, satisfy
 the uniform Lipschitz estimate $\|\nabla \chi_T\|_\infty\le C$,
 if $A$ is H\"older continuous  and satisfies an almost-periodicity condition.
 Under some additional assumptions, the uniform boundedness of $\chi_T$,
 $\|\chi_T\|_\infty\le C$ and the existence of (true) correctors  were obtained recently in \cite{AGK-2015}.
 
 In this paper we carry out a comprehensive study of quantitative homogenization of 
 second-order elliptic systems with coefficients in $APW^2(\R^d)$.
 Our results improve and extend those in \cite{Shen-2015, AGK-2015} to a much broader class of a.p.
 functions, which allows bounded measurable coefficients (for comparison, uniformly a.p.
 functions are uniformly continuous in $\R^d$).
 Notice that the semi-norm $\| F\|_{W^2}$ in (\ref{cond_W2})
 is translation and dilation invariant.
 As such the class of  coefficients $A\in APW^2(\R^d)$ seems to be a natural choice for 
 studying quantitative properties in a.p. homogenization
 without smoothness assumptions. 
 
  As in the case of uniformly a.p. (or random) coefficients, to obtain the convergence rates,
 the key step is to establish  estimates for the approximate correctors
 $\chi_T$, defined by the elliptic system
 \begin{equation}\label{c}
 -\text{\rm div} \big(A\nabla \chi_T \big) 
 + T^{-2}\chi_T =\text{\rm div} \big(A\nabla P \big),
 \end{equation}
 where  $T\ge 1$ and $P$ is an affine  function.
 To quantify the almost-periodicity of the coefficient matrix $A$,
 we introduce a function $\rho_k (L,R)$, defined by (\ref{rho-k}) in Section 2.
 It is known that  a bounded function $A$ is a.p. in the sense of
H. Weyl if and only if $\rho_1(L, R)\to 0$ as $L, R\to \infty$ (see Section 2).
 We remark that the function $\rho_1$, which only involves the first-order difference,
 was used in \cite{Shen-2015}.
 Our definition of the higher-order version $\rho_k(L, R)$, as well as one of main steps 
 in the proof of Theorems \ref{main-theorem-1} and \ref{main-theorem-2}, is inspired 
 by \cite{AGK-2015}, where a similar function was used to give a sufficient condition 
 for the existence of (true) correctors. 
 
 The following is one of main results of the paper.

\begin{thm}\label{main-theorem-1}
	Suppose that $A\in APW^2(\R^d)$ and satisfies the ellipticity condition (\ref{cond_ellipticity}). 
	Fix $k\ge 1$ and $\sigma\in (0,1)$.
	Then there exists a constant $c>0$, depending only on $d$ and $k$, such that
	 for any $T\ge 2$, 
	\begin{equation}\label{main-estimate-0}
	\|\nabla \chi_T\|_{S^2_1} \le C_\sigma T^\sigma,
	\end{equation}
	and
	\begin{equation}\label{main-estimate-1}
	\|\chi_T\|_{S^2_1}
	\le C_\sigma \int_1^T \inf_{1\le L\le t}
	\left\{ \rho_k (L, t) +\exp\left(-\frac{c\, t^2}{L^2} \right) \right\}
	\left(\frac{T}{t}\right)^\sigma dt,
	\end{equation}
	where  $C_\sigma$ depends only on $\sigma$, $k$ and $A$.
	 	\end{thm}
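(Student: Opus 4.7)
The plan is to prove (1.6) and (1.7) via complementary arguments, the first via energy/Caccioppoli iteration and the second via a higher-order difference averaging keyed to the $\rho_k$ modulus. For (1.6), testing equation (1.9) with $\eta^2\chi_T$ for a smooth cutoff $\eta$ supported in a ball of radius $r$ produces the Caccioppoli-type inequality
\[
\int_{B(x,r/2)}|\nabla\chi_T|^2 + T^{-2}\!\!\int_{B(x,r/2)}|\chi_T|^2 \;\le\; \frac{C}{r^2}\!\int_{B(x,r)}|\chi_T|^2 + C|B(x,r)|.
\]
Bootstrapping dyadically from $r=1$ up to $r=T$, using the $T^{-2}\chi_T$ term as an anchor at the largest scale, and then invoking a Meyers/Gehring-type self-improvement, should absorb the accumulated iteration losses into the arbitrary exponent $\sigma$.

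The more substantive part is (1.7), where the almost-periodicity of $A$, quantified through $\rho_k$, must be transferred to $\chi_T$ itself. For translations $z_1,\dots,z_k\in\R^d$ and the discrete difference $D_zf(x)=f(x+z)-f(x)$, the iterate $v=D_{z_1}\cdots D_{z_k}\chi_T$ satisfies the same type of equation as (1.9) but with source $\operatorname{div}\bigl(D_{z_1}\cdots D_{z_k}[A\nabla(\chi_T+P)]\bigr)$. Averaging against a smooth product kernel $\phi_L^{\otimes k}$ supported in $B(0,L)^k$ and applying Cauchy--Schwarz, the very definition of $\rho_k$ bounds the averaged source in $L^2(B(x,t))$ by $C\rho_k(L,t)\,\|\nabla(\chi_T+P)\|_{S^2_1}$, which in view of (1.6) is at most $C\rho_k(L,t)\,t^\sigma$. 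Writing $S_L$ for convolution with $\phi_L$ and $T_L=I-S_L$, the kernel-averaged $k$-fold difference operator equals $(-T_L)^k$, and the binomial identity $I=(S_L+T_L)^k$ yields
\[
\chi_T \;=\; T_L^k\chi_T + \sum_{j=1}^{k}\binom{k}{j}\,S_L^{\,j}\,T_L^{k-j}\chi_T.
\]
The first summand is controlled by $C\rho_k(L,t)\,t^\sigma$; the remaining summands each contain at least one smoothing factor $S_L$ and should be bounded through the Green's function representation for $-\operatorname{div}(A\nabla)+T^{-2}$, whose damping provides Agmon-type decay that, when coupled with the $L$-scale averaging, produces the Gaussian factor $\exp(-ct^2/L^2)$ at spatial scale $t$. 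A final dyadic summation over $t\in[1,T]$ with optimization of $L\in[1,t]$ at each scale, weighted by the factor $(T/t)^\sigma$ inherited from rescaling (1.6) from scale $T$ down to scale $t$, then yields the integral form in (1.7).

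The principal obstacle I anticipate is establishing the Gaussian bound $\exp(-ct^2/L^2)$ for the smoothed pieces $S_L^{\,j}T_L^{k-j}\chi_T$. This demands an Agmon/Caccioppoli-on-annuli decay estimate for the Green's function of $-\operatorname{div}(A\nabla)+T^{-2}$, together with careful bookkeeping so that the three scales $L$, $t$, $T$ combine correctly. A secondary subtlety is arranging the dyadic iteration so that the only $\sigma$ loss in the final integrand is the one inherited from (1.6), rather than accruing additional powers from the summation. The infimum over $L\in[1,t]$ in (1.7) should then emerge by selecting, at each scale $t$, the value of $L$ that balances $\rho_k(L,t)$ against $\exp(-ct^2/L^2)$.
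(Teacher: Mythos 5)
Your outline diverges from the paper's argument at two critical points, and in both cases the gap is not cosmetic.

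\textbf{(1.6) cannot be obtained by Caccioppoli iteration plus Meyers/Gehring.} Caccioppoli at scale $r$ gives $\fint_{B(x,r)}|\nabla\chi_T|^2 \lesssim r^{-2}\fint_{B(x,2r)}|\chi_T|^2 + C$. At scale $r\sim T$ one gets a clean bound because $T^{-1}\|\chi_T\|_{S^2_T}\le C$; but at scale $r\sim 1$ the term $\fint_{B(x,2)}|\chi_T|^2$ is a priori only bounded (in the absence of almost-periodicity) by something of order $T^2$, and the Meyers/Gehring self-improvement gains at most a fixed $\bar q>2$ in the Lebesgue exponent, not an arbitrarily small power of $T$. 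The extra decay you need does not come from energy estimates: it comes from the \emph{almost-periodicity} of $A$ through a large-scale $C^{0,1-\sigma}$ estimate proved by the Avellaneda--Lin compactness scheme (Sections 5--6 of the paper). There, $u_\varepsilon$ is compared on balls $B(0,\theta^j)$ to solutions of the constant-coefficient homogenized system, and each dyadic step loses a factor $\theta^{\sigma}$ that can be made arbitrarily close to $\theta^{1}$; iterating down from scale $T$ to scale $1$ then produces exactly $(T/1)^\sigma$. Your proposal never invokes the homogenized operator or the qualitative a.p.\ structure, which is why it cannot reach an arbitrary exponent $\sigma$.

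\textbf{The Gaussian factor $\exp(-ct^2/L^2)$ does not come from Agmon decay of the Green's function of $-\operatorname{div}(A\nabla)+T^{-2}$.} That Green's function decays like $\exp(-c|x|/T)$, i.e.\ exponentially at scale $T$, which bears no relation to the factor $\exp(-ct^2/L^2)$ involving only $t$ and the almost-period scale $L$. In the paper, the exponential arises from a completely different mechanism: a \emph{quantitative ergodic theorem} (Lemma 7.2, imported from \cite{AGK-2015}) for the heat semigroup $g\mapsto g*\Phi_s$. The Gaussian tail of the heat kernel at time $s\sim t^2$ versus the translation scale $L$ yields $\exp(-cs/L^2)=\exp(-ct^2/L^2)$, and one then integrates $\|\partial_s(g*\Phi_s)\|_\infty$ over $s\in[1,T^2]$ and changes variables $t=\sqrt s$. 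Your binomial decomposition $I=(S_L+T_L)^k$ also leaves a gap: it is not shown how the ``high-frequency'' piece $T_L^k\chi_T$ is controlled by $\rho_k$, since $(I-S_L)$ is a local oscillation operator at scale $L$, whereas $\rho_k$ is built from a sup--inf over \emph{arbitrary} translations $y_i$ matched by $z_i$ with $|z_i|\le L$; closing that link is precisely what the ergodic lemma does. Finally, the paper needs a substantive induction on $k$ (Lemma 8.1) to bound $\omega_k(\nabla\chi_T;L,t)$ by $(T/t)^\sigma\rho_k(L,t)$ before the ergodic lemma can be applied — your product-kernel averaging would have to be combined with something of this sort, and as written the $t^\sigma$ attribution to ``rescaling (1.6) down to scale $t$'' is too loose to see that only one $\sigma$-loss accrues.
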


In the theorem above we have used the notation
\begin{equation}\label{S-norm}
\| F\|_{S^p_R}:=
\sup_{x\in \R^d} 
\left(\fint_{B(x,R)} |F|^p\right)^{1/p}
\end{equation}
for $1\le p<\infty$ and $0<R<\infty$.
Since $\rho_k (L, R)\to \infty$ as $L, R\to \infty$,
it follows from (\ref{main-estimate-1}) that
$T^{-1} \|\chi_T\|_{S^2_1} \to 0$, as $T\to \infty$.
In particular, if there exist some $k\ge 1$ and $\alpha\in (0,1]$ such that
\begin{equation}\label{decay-0}
\rho_k (L, L)\le {C}{L}^{-\alpha} \quad \text{ for any }L\ge 1,
\end{equation}
then $\|\chi_T\|_{S^2_1} \le C_\beta\, T^\beta$ for any $\beta>1-\alpha$.
Our next two theorems provide sufficient conditions for the existence of true correctors 
in $APW^2(\R^d)$ and for the boundedness of $\|\nabla \chi_T\|_{S^2_1}$,
respectively.

\begin{thm}\label{main-theorem-2}
Suppose $A$ satisfies the same conditions as in Theorem \ref{main-theorem-1}.
Also assume that there exist some $k\ge 1$ and $\alpha>1$ such that (\ref{decay-0}) holds.
Then $\|\chi_T\|_{S^2_1}\le C$.
Moreover,  for each affine function $P$,
the system for the (true) corrector 
$$
-\text{\rm div}\big(A\nabla \chi)=\text{\rm div} \big(A\nabla P) \quad \text{ in } \R^d
$$
has a weak solution $\chi$ such that  $\chi, \nabla \chi\in APW^2(\R^d)$.
\end{thm}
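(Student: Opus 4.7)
The plan is to deduce both conclusions from the quantitative bound (\ref{main-estimate-1}), combined with a Cauchy-type argument for the family $\{\chi_T\}$. First, I apply (\ref{main-estimate-1}) with the choice $L(t)\sim t/\sqrt{\log t}$, so that $\exp(-ct^2/L^2)$ decays faster than any polynomial, while by monotonicity of $\rho_k(L,\cdot)$ in its second argument and the hypothesis,
\[
\rho_k(L(t),t)\le \rho_k(L(t),L(t))\le CL(t)^{-\alpha}\lesssim t^{-\alpha}(\log t)^{\alpha/2}.
\]
Since $\alpha>1$, integration yields the sub-polynomial growth $\|\chi_T\|_{S^2_1}\le C_\sigma T^\sigma$ for every $\sigma\in(0,1)$.

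Next, I would show $\{\chi_T\}$ is Cauchy in $S^2_1$. For $T_2>T_1\ge 1$ the difference $w=\chi_{T_2}-\chi_{T_1}$ satisfies
\[
-\mathrm{div}(A\nabla w)+T_2^{-2}w=(T_1^{-2}-T_2^{-2})\chi_{T_1},
\]
whose right-hand side has size $O(T_1^{-2+\sigma})$ by the previous step and, crucially, inherits the almost-periodic oscillatory structure of $\chi_{T_1}$. Rerunning the proof scheme of Theorem~\ref{main-theorem-1} on this inhomogeneous equation, now exploiting the oscillation of $\chi_{T_1}$ in place of $\nabla P$, should yield $\|\chi_{T_2}-\chi_{T_1}\|_{S^2_1}\to 0$ as $T_1\to\infty$, uniformly in $T_2\ge T_1$. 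Hence $\chi_T$ converges in $S^2_1$ to some $\chi$, and passing to the limit in the defining equation of $\chi_T$ (using $T^{-2}\chi_T\to 0$ by the previous bound) gives $-\mathrm{div}(A\nabla\chi)=\mathrm{div}(A\nabla P)$ in $\R^d$.

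Finally, each $\chi_T$ lies in $APW^2(\R^d)$ by a standard lemma on the stability of a.p. solutions under $W^2$-approximation of the coefficient matrix. Since $APW^2(\R^d)$ is closed under the $W^2$ seminorm and $\|\cdot\|_{W^2}\lesssim\|\cdot\|_{S^2_1}$, the limit $\chi\in APW^2(\R^d)$; an analogous argument applied to $\nabla\chi_T$, using (\ref{main-estimate-0}) and the corresponding Cauchy estimate, gives $\nabla\chi\in APW^2(\R^d)$. The uniform bound $\|\chi_T\|_{S^2_1}\le C$ then follows from the $S^2_1$-convergence $\chi_T\to\chi$ together with the finiteness of $\|\chi\|_{S^2_1}$. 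The principal obstacle is the quantitative Cauchy estimate: since the forcing $(T_1^{-2}-T_2^{-2})\chi_{T_1}$ is not the divergence of an affine datum, the proof scheme of Theorem~\ref{main-theorem-1} must be re-examined to leverage the oscillation of $\chi_{T_1}$ itself and so beat the naive energy bound $\|w\|_{S^2_1}\lesssim T_2^2T_1^{-2}\|\chi_{T_1}\|_{S^2_1}$, which alone is far too weak to give Cauchyness.
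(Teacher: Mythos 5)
Your high-level strategy matches the paper's: first upgrade (\ref{main-estimate-1}) under (\ref{decay-0}) to $\|\chi_T\|_{S^2_1}\le C_\sigma T^\sigma$, then prove a quantitative Cauchy estimate for the family $\{\chi_T\}$, pass to the limit in $S^2_1$, and verify membership of the limit in $APW^2(\R^d)$. The closing observations (closedness of $APW^2$ under the $W^2$ seminorm, $\|\cdot\|_{W^2}\lesssim\|\cdot\|_{S^2_1}$, uniform boundedness from Cauchyness) are all correct, and your first step is essentially Remark~\ref{power-decay-remark}.

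However, you have not actually proved the crucial step; you explicitly flag ``the principal obstacle is the quantitative Cauchy estimate'' and only say that rerunning the scheme of Theorem~\ref{main-theorem-1} ``should yield'' the result. This is precisely the step that carries the mathematical content. Moreover, the suggested heuristic---``exploiting the oscillation of $\chi_{T_1}$ in place of $\nabla P$''---does not describe what the paper actually does, and it is not obvious it would work: $\chi_{T_1}$, unlike $\nabla P$, is not a fixed smooth datum but an oscillating function whose size is only $O(T_1^\sigma)$. What the paper does instead is apply the heat-kernel/ergodic machinery of Section~7 directly to $g=\chi_T-\chi_{2T}$. First, Corollary~\ref{thm_chiT_BL} (which uses the forcing $T^{-2}\chi_{2T}$ and the sub-polynomial bound from step one) gives $\|\nabla g\|_{S^2_1}\le C_\sigma T^{\sigma-1}$; this already shows $\nabla\chi_T$ is Cauchy and $\nabla\chi\in APW^2$, without any further argument. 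For $g$ itself, the proof writes $\|u(\cdot,1)\|_\infty\le\int_1^\infty\|\nabla_x(\nabla g*\Phi_t)\|_\infty\,dt$ (with $u=g*\Phi_t$), \emph{splits} the time integral at a cutoff $t_0^2$, uses the crude bound $\|\nabla_x(\nabla g*\Phi_t)\|_\infty\le Ct^{-1/2}\|\nabla g\|_{S^2_1}\lesssim t^{-1/2}T^{\sigma-1}$ on $[1,t_0^2]$ and the ergodic bound of Lemma~\ref{heat-lemma-0} (via $\rho_k$ from Lemma~\ref{rho-k-lemma}) on $[t_0^2,\infty)$, and then optimizes $t_0$ (taking $t_0^{(1+\delta)\alpha}=T$) to extract a genuine negative power $T^{-\beta}$. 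Your proposal contains no analogue of this time-splitting and optimization, and the rescue you gesture at does not fill the gap.

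A minor secondary issue: you work with a general pair $T_1<T_2$ and claim uniformity in $T_2$, but the paper only needs the dyadic-difference bound $\|\chi_T-\chi_{2T}\|_{S^2_1}\le CT^{-\beta}$, which yields Cauchyness of $\{\chi_{2^j}\}$ by geometric-series summation. This is simpler than what you are aiming for and avoids any uniformity-in-$T_2$ discussion.
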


\begin{thm}\label{main-theorem-Lip}
Suppose $A$ satisfies the same conditions as in Theorem \ref{main-theorem-1}.
Also assume that there exist some $k\ge 1$ and $\alpha>3$ such that
\begin{equation}\label{decay-Lip}
\rho_k (L, L)\le C \big\{ \log (L)\big\}^{-\alpha} \quad \text{ for any } L\ge 2.
\end{equation}
Then for any $T\ge 1$,
\begin{equation}\label{Lip}
\|\nabla \chi_T\|_{S^2_1} \le C,
\end{equation}
where $C$ is independent of $T$.
\end{thm}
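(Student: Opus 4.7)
The plan is to combine the size estimate (\ref{main-estimate-1}) of Theorem~\ref{main-theorem-1} with a Caccioppoli inequality at a large scale and the $L^2$-based large-scale Lipschitz estimate announced in the abstract and established earlier in the paper. In outline: first show $\|\chi_T\|_{S^2_1}=O(T(\log T)^{-\alpha})$ under (\ref{decay-Lip}); then use Caccioppoli at scale $T$ to convert this into a uniform bound $\|\nabla\chi_T\|_{S^2_T}\le C$; finally, descend from scale $T$ down to scale $1$ using large-scale Lipschitz regularity.

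For the first step, inside the infimum in (\ref{main-estimate-1}) I would choose $L=L(t):=t/\sqrt{\lambda\log t}$ with $\lambda$ a large constant, so that $\exp(-c t^2/L(t)^2)=t^{-c\lambda}$ is negligible, while (using the monotonicity of $\rho_k$ in its arguments expected from its definition)
\[
\rho_k(L(t),t)\le \rho_k(L(t),L(t))\le C(\log L(t))^{-\alpha}\le C(\log t)^{-\alpha}.
\]
With any fixed $\sigma\in(0,1)$, the resulting integral is dominated by its upper endpoint and gives
\[
\|\chi_T\|_{S^2_1}\le C_\sigma T^\sigma\int_1^T (\log t)^{-\alpha}\,t^{-\sigma}\,dt\le C\,T(\log T)^{-\alpha}.
\]

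Next, testing the equation (\ref{c}) with $\chi_T\phi^2$ for a standard cutoff satisfying $\phi\equiv1$ on $B(x,T)$, $\mathrm{supp}\,\phi\subset B(x,2T)$, and $|\nabla\phi|\le C/T$ yields the Caccioppoli-type bound
\[
\fint_{B(x,T)}|\nabla\chi_T|^2\le CT^{-2}\fint_{B(x,2T)}|\chi_T|^2+C.
\]
A simple covering argument gives $\|\chi_T\|_{S^2_R}\le C\|\chi_T\|_{S^2_1}$ for all $R\ge 1$, so this combined with Step~1 yields $\|\nabla\chi_T\|_{S^2_T}\le C$. To descend to scale $1$, set $v:=\chi_T+P$; then $v$ solves $-\mathrm{div}(A\nabla v)=-T^{-2}\chi_T$, whose right-hand side is of order $T^{-1}(\log T)^{-\alpha}$ on $B(x,2T)$. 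The $L^2$-based large-scale Lipschitz estimate of the paper then gives, for all $1\le r\le T$,
\[
\|\nabla v\|_{S^2_r}\le C\bigl(\|\nabla v\|_{S^2_T}+T^{-1}\|\chi_T\|_{S^2_{2T}}\bigr)\le C,
\]
and taking $r=1$ with $|\nabla\chi_T|\le|\nabla v|+|\nabla P|$ yields (\ref{Lip}).

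The principal obstacle is the last step: establishing the large-scale Lipschitz estimate for $\chi_T$, or equivalently for $v$, with sufficient control of the inhomogeneous source term $T^{-2}\chi_T$, and verifying that the threshold $\alpha>3$ is exactly what the dyadic iteration underlying that estimate requires. Roughly, one expects a Meyers-type $H^1$ improvement to cost a small polynomial-in-log factor at each dyadic scale, and $\alpha>3$ should be the sharp threshold that makes the resulting series summable after accounting for the convergence-rate loss inherited from Theorem~\ref{main-theorem-1}; turning this into a rigorous estimate is the heart of the proof. A secondary technical point is justifying the monotonicity-type bound $\rho_k(L,t)\lesssim \rho_k(L,L)$ for $t\ge L$ used in Step~1, directly from the definition of $\rho_k$.
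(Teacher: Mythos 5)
Your overall plan---bound $\|\chi_T\|_{S^2_1}$ via Theorem~\ref{main-theorem-1}, then descend from scale $T$ to scale $1$ using the large-scale Lipschitz estimate of Theorem~\ref{Lip-theorem}---is exactly the paper's route, and your Step~1 computation $\|\chi_T\|_{S^2_1}\le C\,T(\log T)^{-\alpha}$ is correct (so is the monotonicity $\rho_k(L,t)\le C^k\rho_k(L,L)$ for $t\ge L$, which follows from (\ref{r-R}) applied to each factor in (\ref{rho-k})). Your intermediate Caccioppoli step is superfluous---$\|\nabla\chi_T\|_{S^2_T}\le C$ is already (\ref{cor-L-2})---but harmless.

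The genuine gap is the one you flag yourself at the end and then mis-diagnose. Theorem~\ref{Lip-theorem} is conditional: its hypothesis is that the modulus $\eta$ in condition (\ref{Lip-condition-11}) has the Dini property (\ref{Dini}). One must \emph{verify} this hypothesis under (\ref{decay-Lip}), and that verification is where $\alpha>3$ enters---it has nothing to do with a Meyers-type improvement losing a log at each dyadic scale. The paper's chain is: (i) from $T^{-1}\|\chi_T\|_{S^2_1}\le C(\log T)^{-\alpha}$ and the telescoping bound $\|\nabla\chi_T-\psi\|_{B^2}\le C\sum_{j\ge1}(2^jT)^{-1}\|\chi_{2^jT}\|_{S^2_1}$ one gets $\|\nabla\chi_T-\psi\|_{B^2}\le C(\log T)^{1-\alpha}$ (the sum costs one power of $\log$); (ii) feeding this and the dual-corrector bounds into the convergence-rate Theorem~\ref{main-theorem-3} shows (\ref{Lip-condition-11}) holds with $\eta(t)^2\sim(\log(2/t))^{1-\alpha}$, so $\eta(t)\sim(\log(2/t))^{(1-\alpha)/2}$ (the exponent is halved because the hypothesis bounds $\|u_\e-u_0\|_{L^2}$ by $\eta(\e)^2$, not $\eta(\e)$); (iii) the Dini condition $\int_0^1\eta(t)\,dt/t<\infty$ then forces $(1-\alpha)/2<-1$, i.e.\ $\alpha>3$. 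Without this step your proof invokes Theorem~\ref{Lip-theorem} with unverified hypotheses and never actually uses $\alpha>3$.

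Two further small corrections to your final step: once Theorem~\ref{Lip-theorem} is available for $\mathcal{L}_\e$, one takes $u=\chi_{T,j}^\beta+P_j^\beta$, which solves $\mathcal{L}_1(u)=-T^{-2}\chi_{T,j}^\beta$ in $\R^d$, and applies the Lipschitz estimate with $\e=r=1$, $R=T$; the source term is controlled by $T^{-1}\|\chi_T\|_{S^2_1}\le C$ (not the sharper $(\log T)^{-\alpha}$, which isn't needed here).
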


Using the estimates in Theorems \ref{main-theorem-1} and \ref{main-theorem-2}
 for  the approximate correctors $\chi_T$
as well as similar estimates for the dual approximate correctors, 
we are able to establish a convergence rate in $L^2(\Omega;\R^m)$ under the condition 
that $u_0\in H^2(\Omega; \R^m)$.
In the following theorem, the function $\psi$, defined in (\ref{cond_Vpot}), is the limit of
$\nabla \chi_T$ in $B^2(\R^d)$,
while $\psi^*$ and $ \chi_T^*$ are the corresponding functions for the adjoint operator  $\mathcal{L}^*_\e$.
Also, we use $\Theta_{k, \sigma}(T)$ to denote the integral in the r.h.s of (\ref{main-estimate-1}).

\begin{thm}\label{main-theorem-3}
	Suppose  $A$ satisfes the same conditions as in Theorem \ref{main-theorem-1}.
	Let $\Omega$ be a bounded $C^{1,1}$ domain in $\R^d$. 
	Let $u_\e$, $u_0$ be weak solutions of (\ref{DP-1}) and (\ref{DP-H-1}), respectively, in $\Omega$.
	Assume further that $u_0\in H^2(\Omega; \R^m)$.
	 Then, for any $0<\e<1$,
	\begin{equation}\label{ineq_L2}
	\Norm{u_\e - u_0}_{L^2(\Omega)} \le C_\sigma \Big\{ \norm{\nabla \chi_T - \psi}_{B^2} +
	 \norm{\nabla \chi^*_T - \psi^*}_{B^2} +  T^{-1} \Theta_{k, \sigma} (T)  \Big\} \norm{u_0}_{H^2(\Omega)},
	\end{equation}
	where $T = \e^{-1}$.
	 The constant $C_\sigma $ depends only on $\sigma$, $k$, $A$ and $\Omega$.
	 Furthermore, if (\ref{decay-0}) holds for some $\alpha>1$ and $k\ge 1$, then
	 \begin{equation}\label{optimal}
	 \| u_\e -u_0 \|_{L^2(\Omega)} \le C\, \e \, \| u_0\|_{H^2(\Omega)}.
	 \end{equation}
\end{thm}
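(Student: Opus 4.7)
The plan is to use the classical duality method for $L^2$-convergence rates in homogenization, adapting it to the almost-periodic setting through the approximate correctors $\chi_T$ and $\chi_T^*$ with $T=\e^{-1}$. First, I would introduce a two-scale expansion
$$w_\e(x) = u_0(x) + \e\, \chi_T^\beta(x/\e)\, \eta_\e(x)\, S_\e(\partial_\beta \widetilde{u}_0)(x),$$
where $\widetilde{u}_0$ is an $H^2$-extension of $u_0$ to $\R^d$, $\eta_\e$ is a smooth cutoff vanishing in a $C\e$-neighborhood of $\partial\Omega$, and $S_\e$ is a mollification operator at scale $\e$. The cutoff guarantees $w_\e - u_\e \in H^1_0(\Omega;\R^m)$, so this difference is an admissible test function, while the mollification prevents second derivatives of $u_0$ from being evaluated pointwise.

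Next, I would compute $\mathcal{L}_\e(w_\e - u_\e) = \mathcal{L}_\e w_\e - \mathcal{L}_0 u_0$ and split the expression into four pieces: (a) an oscillatory flux of the form $[A(y)(\psi^\beta(y)+e_\beta) - \widehat{A}\, e_\beta]\, \eta_\e\, S_\e(\partial_\beta \widetilde{u}_0)$; (b) the ergodic error $A(x/\e)(\nabla\chi_T^\beta - \psi^\beta)(x/\e)\, \eta_\e\, S_\e \partial_\beta \widetilde{u}_0$; (c) the zeroth-order term $\e\, \chi_T^\beta(x/\e)\, \eta_\e\, S_\e\partial_\beta \widetilde{u}_0$ produced by the mass term $T^{-2}\chi_T$ in \eqref{c}; and (d) commutator and boundary-layer errors from $S_\e$ and $\eta_\e$. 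The oscillatory flux (a) is treated by the dual approximate corrector, the almost-periodic analogue of the flux corrector, which realises this flux as the divergence of an antisymmetric auxiliary potential plus a remainder of order $T^{-1}\|\chi_T^*\|_{S^2_1}$. Combining these pieces with the energy estimate for $\mathcal{L}_\e(w_\e - u_\e)$ gives
$$\|w_\e - u_\e\|_{H^1(\Omega)} \le C\Big\{\|\nabla\chi_T - \psi\|_{B^2} + \|\nabla\chi_T^* - \psi^*\|_{B^2} + T^{-1}\|\chi_T\|_{S^2_1} + \e\Big\}\,\|u_0\|_{H^2(\Omega)},$$
after invoking Theorem~\ref{main-theorem-1} to majorise $\|\chi_T\|_{S^2_1}$ by $\Theta_{k,\sigma}(T)$.

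To upgrade this to $L^2$, I would perform an Aubin--Nitsche-type duality step. Let $v_0 \in H^2(\Omega;\R^m) \cap H^1_0(\Omega;\R^m)$ solve $\mathcal{L}_0^* v_0 = u_\e - u_0$ in $\Omega$, so that $\|v_0\|_{H^2(\Omega)} \le C\|u_\e - u_0\|_{L^2(\Omega)}$ by elliptic regularity on the $C^{1,1}$-domain $\Omega$. Constructing the analogous two-scale approximation $w_\e^*$ built from $\chi_T^*$, I would write
$$\|u_\e - u_0\|_{L^2(\Omega)}^2 = \int_\Omega (u_\e - u_0)\, \mathcal{L}_0^* v_0\, dx,$$
integrate by parts twice, and insert $w_\e - u_\e$ on one side and $w_\e^* - v_0$ on the other. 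The resulting integral becomes a sum of products of the approximation errors for $u_0$ and $v_0$; bounding each factor by the $H^1$-estimate above, using the symmetric estimate for the adjoint, and absorbing one factor of $\|u_\e - u_0\|_{L^2}$ yields \eqref{ineq_L2}. For \eqref{optimal}, Theorem~\ref{main-theorem-2} gives $\|\chi_T\|_{S^2_1} \le C$ under \eqref{decay-0} with $\alpha>1$, so $T^{-1}\Theta_{k,\sigma}(T) \le C\e$; the same hypothesis, combined with the $B^2$-Cauchy property of $\nabla\chi_T$ established earlier in the paper, yields $\|\nabla\chi_T - \psi\|_{B^2} \le C\e$ (and likewise for the adjoint), and the three terms on the right of \eqref{ineq_L2} are all $O(\e)$.

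The main obstacle is the bookkeeping inside the duality step: in the almost-periodic setting the dual corrector does not produce an exact divergence, so every oscillatory flux leaves a residual whose size must be tracked via $T^{-1}\|\chi_T\|_{S^2_1} + \|\nabla\chi_T - \psi\|_{B^2}$ at both the primal and the adjoint level. A secondary difficulty is the boundary layer introduced by $\eta_\e$: its contribution is controlled through a Hardy-type inequality combined with the $S^2_1$-bound from Theorem~\ref{main-theorem-1}, which is exactly what keeps the boundary error proportional to $\|u_0\|_{H^2(\Omega)}$ rather than to some stronger norm of $u_0$.
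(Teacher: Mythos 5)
Your overall plan --- two-scale expansion with the approximate corrector, a flux-corrector (dual approximate corrector) to handle the oscillatory flux, and then an Aubin--Nitsche duality step to upgrade $H^1$ to $L^2$ --- is exactly the scheme the paper follows in Section 10, and the final step (using Theorem \ref{main-theorem-2} plus the $B^2$-Cauchy property of $\nabla\chi_T$ to get $O(\e)$ under \eqref{decay-0}) also matches the paper. However, there is a concrete gap in the choice of the cutoff scale. You truncate $\chi_T(x/\e)$ with a cutoff $\eta_\e$ that vanishes in a $C\e$-neighborhood of $\partial\Omega$. The boundary-layer contribution to the energy estimate then carries a factor of order $\e^{-1}\|\e\chi_T\|_{S^2_1}\,\|\nabla u_0\|_{L^2(\Omega_{C\e})} = \e^{1/2}\|\chi_T\|_{S^2_1}\|u_0\|_{H^2}$. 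In the general situation covered by Theorem \ref{main-theorem-1}, one only knows $\|\chi_T\|_{S^2_1}\le C_\sigma T^{\sigma}$ for $\sigma\in(0,1)$, so this term can be as large as $\e^{1/2-\sigma}$, which blows up. The paper avoids this by introducing a free parameter $\delta\ge 2\e$ for the cutoff (the operator $K_{\e,\delta}(f)=S_\e(\eta_\delta f)$), and only afterwards choosing $\delta=2T^{-1}+\|\nabla\chi_T-\psi\|_{B^2}+T^{-1}\|\chi_T\|_{S^2_1}+T^{-2}\|\phi_T\|_{S^2_1}+T^{-1}\|\nabla\phi_T\|_{S^2_1}$; with this choice both the $\delta^{1/2}$ boundary layer and the $\delta^{-1/2}\cdot\e\|\chi_T\|_{S^2_1}$ term balance to size $\delta^{1/2}$, giving the $H^1$ bound $\|w_\e\|_{H^1_0}\le C\delta^{1/2}\|u_0\|_{H^2}$. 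Note also that your stated $H^1$ estimate is missing a square root; what one actually has is $O(\delta^{1/2})$, not $O(\delta)$, and it is precisely the duality step that removes the square root.

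A second, smaller point: you write that the oscillatory flux is represented by an antisymmetric potential ``plus a remainder of order $T^{-1}\|\chi_T^*\|_{S^2_1}$''. This conflates two different objects. The dual approximate corrector is the matrix $\phi_T$ solving $-\Delta\phi_T+T^{-2}\phi_T=b_T-\langle b_T\rangle$ (equation \eqref{cond_phiT}), which is distinct from the adjoint corrector $\chi_T^*$. The residual terms after extracting the skew-symmetric divergence are $T^{-2}\phi_T$ and $\partial_i(\partial_k\phi_{T,kj})$, and their sizes are controlled by $T^{-2}\|\phi_T\|_{S^2_1}$, $T^{-1}\|\nabla\phi_T\|_{S^2_1}$ and $\|\nabla h_T\|_{S^2_1}\le CT^{-1}\|\chi_T\|_{S^2_1}$ (from Lemma \ref{lem_b_T} and Theorem \ref{appx-theorem-2}); the adjoint corrector $\chi_T^*$ enters only later, in the two-scale approximation of the dual problem. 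Keeping $\phi_T$ and $\chi_T^*$ separate --- and writing out explicitly which of the quantities $\|\nabla\chi_T-\psi\|_{B^2}$, $T^{-1}\|\chi_T\|_{S^2_1}$, $T^{-2}\|\phi_T\|_{S^2_1}$, $T^{-1}\|\nabla\phi_T\|_{S^2_1}$ (and their adjoint analogues) appear in $\delta$ and $\delta^*$ --- is what lets you land exactly on $\|\nabla\chi_T-\psi\|_{B^2}+\|\nabla\chi_T^*-\psi^*\|_{B^2}+T^{-1}\Theta_{k,\sigma}(T)$ in \eqref{ineq_L2}.
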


We now describe the outline of this paper and some of key ideas used in the proof of Theorems
\ref{main-theorem-1}-\ref{main-theorem-3}.
In Section 2 we provide a brief review of the qualitative homogenization theory of
second-order elliptic systems with coefficients that are a.p. in the sense of Besicovitch.
In Sections 3 and 4 we introduce the approximate correctors $\chi_T$
and establish some preliminary estimates for $\chi_T$.
Using  Tartar's method of test functions as well as the estimates
of $\chi_T$ obtained in Section 4,  we prove a compactness theorem in Section 5 on a sequence of 
elliptic operators $\mathcal{L}_{\e_\ell}^\ell  +\lambda_\ell =-\text{\rm div} \big( A^\ell (x/\e_\ell)\nabla \big)
+\lambda_\ell$,
where each $A^\ell (y)$ is obtained from $A(y)$ through a translation.
With this compactness theorem at our  disposal, an $L^2$-based H\"older estimates at large scale 
for solutions of $\mathcal{L}_\e (u_\e) +\lambda u_\e=F+\text{\rm div} (f)$ 
are obtained in Section 6. This is done by using a compactness argument,
introduced to the study of homogenization problems by Avellaneda and Lin \cite{AL-1987}.
As a corollary of the H\"older estimates at large scale, we obtain  the estimate
(\ref{main-estimate-0}) as well as
a Liouville property for solutions of $\mathcal{L}_1 (u)=0$ in $\R^d$.

In Section 7 we establish some general estimates for functions $g$ in $APW^2(\R^d)$.
These estimates, which formalize and extend a quantitate ergodic argument in \cite{AGK-2015},
allow us to control the norm $\| g\|_{S^2_1}$
by $\|\nabla g\|_{S^2_t}$ for $t\ge 1$ and the higher-order differences of $\nabla g$
(see Theorem \ref{general-theorem}).
The estimate (\ref{main-estimate-1}) in Theorem \ref{main-theorem-1}  as well 
as Theorem \ref{main-theorem-2} is proved in Section 8
by combining  estimates in Section 7 and the large-scale H\"older estimates in Section 6.
In particular, the existence of correctors in $APW^2(\R^d)$ under the condition (\ref{decay-0})
	  for some $\alpha>1$ is obtained by showing that
$$
	  \|\chi_T-\chi_{2T}\|_{S^2_1}\le C\, T^{-\beta},
	  $$
	  for all $T\ge 1$ and some $\beta>0$.
In Section 9 we introduce the dual approximate correctors $\phi_T$, defined by the
elliptic system
\begin{equation}\label{dual}
-\Delta \phi_T + T^{-2} \phi_T =b_T -\langle b_T\rangle,
\end{equation}
where $b_T =A +A\nabla \chi_T-\langle A\rangle$.
Estimates for $\phi_T$ and their derivatives are obtained by using a line of argument similar to
that for $\chi_T$.
In particular, we show that
\begin{equation}\label{c-estimate-1}
 T^{-1}
\|\phi_T\|_{S^2_1}
+ \|\nabla \phi_T\|_{S^2_1}\le C_\sigma \, \Theta_{k, \sigma} (T),
\end{equation}
for any $\sigma \in (0,1)$ and $T\ge 2$.
Using the estimates for $\chi_T$ and $\phi_T$,
we give the proof of Theorem \ref{main-theorem-3} in Section 10.
To do this we adapt  a line of argument for establishing sharp $L^2$ convergence rates in periodic 
homogenization in \cite{Shen-boundary-2015, SZ-2015}, which was motivated by 
the approach used in \cite{Suslina-2012}.
The idea is to first establish the following error estimate in $H^1$,
\begin{equation}
	\begin{aligned}\label{ineq_H1}
	&\Norm{ u_\e - u_0 -\e \chi_{T}(x/\e) K_{\e, \delta} 
	\big( \nabla u_0 \big) }_{H^1(\Omega)} \\
	&\qquad\qquad
	 \le C \Big\{ \norm{\nabla\chi_T - \psi}_{B^2} + T^{-1} \Theta_{k, \sigma} (T)
	  \Big\}^{1/2} \norm{u_0}_{H^2(\Omega)},
	\end{aligned}
	\end{equation}
	where $T=\e^{-1}$ and $K_{\e, \delta}$ is a smoothing operator defined
	by $K_{\e, \delta} (f)=\xi_\e * (\eta_\delta f)$.
	The desired  estimate for $\| u_\varep -u_0\|_{L^2(\Omega)}$
	follows from (\ref{ineq_H1}) by a duality argument.
	In  Section 10 we formalize this approach in the a.p. setting
	so that further improvement on the estimates of approximate and dual approximate correctors
	automatically leads to improvement on the rate of convergence in $L^2$.
	
	Finally, Theorem \ref{main-theorem-Lip} is proved in Section 11.
	To do this, we first establish an $L^2$-based large-scale Lipschitz estimate  for 
	$\mathcal{L}_\e (u_\e)=F$ under the  condition
	(\ref{decay-Lip}) for some $k\ge 1$ and $\alpha>3$.
	 The proof, which uses the convergence rates in Theorem
	 \ref{main-theorem-3},
	  is based on an approach developed in \cite{Armstrong-Smart-2014} and further improved in \cite{Armstrong-Shen-2016,
	  Armstrong-Mourrat, Shen-boundary-2015}.
	  Estimate (\ref{Lip}) follows readily from the $L^2$-based Lipschitz estimate.
	 
	Throughout this paper we will use $\fint_E f =\frac{1}{|E|}\int_E f$ to denote the $L^1$ average of a function $f$
	over a set $E$, and $C$ to denote constants that  depend at most on $A$, $\Omega$ and other relevant 
	parameters, but never on $\e$ or $T$.



\section{Almost-periodic homogenization}

In this section we give a brief review of the qualitative homogenization theory for elliptic systems with
a.p. coefficients. A detailed presentation may be found in \cite{Jikov-1994}.

Let $\text{Trig}(\R^d)$ denote the set of real trigonometric polynomials in $\R^d$.
 A function $f$ in $L^2_{\text{loc}}(\R^d) $ is said to belong to $B^2(\R^d)$ 
 if $f$ is the limit of a sequence of functions in $\text{Trig}(\R^d)$ with respect to the semi-norm
 
\begin{equation}\label{cond_B2}
\norm{f}_{B^2} : = \limsup_{R\to\infty} \left( \fint_{B(0,R)} |f|^2 \right)^{1/2}.
\end{equation}
Functions in $B^2(\R^d)$ are said to be a.p. in the sense of Besicovitch. 
It is not hard to see that if $g\in L^\infty(\R^d) \cap B^2(\R^d)$ and $f\in B^2(\R^d)$, then $fg\in B^2(\R^d)$.
 
Let $f\in L^1_{\text{loc}}(\R^d)$. A number $\ag{f}$ is called the mean value of $f$ if

\begin{equation}
\lim_{\e\to 0^+} \int_{\R^d} f(x/\e) \varphi(x) dx = \ag{f} \int_{\R^d} \varphi
\end{equation}
for any $\varphi \in C_0^\infty(\R^d)$. It is known that if $f,g\in B^2(\R^d)$, then $fg$ has a mean value.
 Under the equivalent relation that $f\sim g$ if $\norm{f-g}_{B^2} = 0$, the set $B^2(\R^d)$
  becomes a Hilbert space with the inner product defined by $(f,g) = \ag{fg}$. Furthermore,
  if $B^2(\mathbb{R}^d; \mathbb{R}^k)
  =B^2(\mathbb{R}^d)\times \cdots \times B^2(\mathbb{R}^d)$, then
  the following Weyl's orthogonal decomposition
\begin{equation}
B^2(\R^d;\R^{m\times d}) = V^2_{\text{pot}} \oplus V^2_{\text{sol}} \oplus \R^{m\times d}
\end{equation}
holds,
where $V^2_{\text{pot}}$ (resp., $V^2_{\text{sol}}$) denotes the closure of potential (resp., solenoidal) trigonometric polynomials with mean value zero in $B^2(\R^d;\R^{m\times d})$. 

Suppose that $A=(a_{ij}^{\alpha\beta})$ satisfies the ellipticity condition (\ref{cond_ellipticity}) and
$A\in B^2(\R^d)$, i.e., each entry $a_{ij}^{\alpha\beta}\in B^2(\R^d)$.
For each $1\le j\le d$ and $1\le \beta \le m$, let $\psi_j^\beta = (\psi_{ij}^{\alpha\beta})$ 
be the unique function in $V^2_{\text{pot}}$ such that

\begin{equation}\label{cond_Vpot}
(a_{ik}^{\alpha\gamma} \psi_{kj}^{\gamma\beta}, \phi_i^\alpha) 
= - (a_{ij}^{\alpha\beta}, \phi_i^\alpha) \qquad \text{for any } \phi = (\phi_i^\alpha) \in V^2_{\text{pot}}.
\end{equation}
Let $\widehat{A} = (\widehat{a}_{ij}^{\alpha\beta})$ be the homogenized matrix of $A$, where
\begin{equation}\label{cond_Ahat}
\widehat{a}_{ij}^{\alpha\beta} = \ag{a_{ij}^{\alpha\beta}} + \ag{a_{ik}^{\alpha\gamma} \psi_{kj}^{\gamma\beta}}.
\end{equation}
Using

\begin{equation}
\widehat{a}_{ij}^{\alpha\beta} = (a_{lk}^{t\gamma} (\psi_{kj}^{\gamma\beta} 
+ \delta_{kj} \delta^{\gamma\beta}), \psi_{li}^{t\alpha} + \delta_{li} \delta^{t\alpha}),
\end{equation}
where we have used $\delta_{ij}$ and $\delta^{\alpha\beta}$ for Kronecker's delta, it can be proved that
\begin{equation}\label{ellipticity-1}
\mu|\xi|^2 \le \widehat{a}_{ij}^{\alpha\beta} \xi_i^\alpha \xi_j^\beta \le \mu_1 |\xi|^2
\end{equation}
for any $\xi = (\xi_i^\alpha) \in \R^{m\times d}$, where $\mu_1$ depends only on $d,m$ and $\mu$. 
Moreover, $\widehat{A^*} = \big(\widehat{A}\big)^*$, where $A^*$ denotes the adjoint of $A$.

The next theorem, whose proof may be found in \cite{Jikov-1994}, 
shows that the homogenized operator for $\cL_\e$ is given by $\cL_0 = -\text{div} (\widehat{A} \nabla )$.

\begin{thm}\label{homo-theorem}
Suppose that $A$ is real, bounded measurable, and satisfies (\ref{cond_ellipticity}). Also assume that  $A\in B^2(\R^d)$. 
Let $\Omega$ be a bounded Lipschitz domain in $\R^d$ and $F\in H^{-1}(\Omega;\R^m)$. 
Let $u_{\e_\ell} \in H^1(\Omega;\R^m)$ be a weak solution of $\cL_{\e_\ell} (u_{\e_\ell}) = F$ in $\Omega$,
where $\e_\ell \to 0$. 
Suppose that $u_{\e_\ell} \rightharpoonup u_0$ weakly in $H^1(\Omega;\R^m)$.
Then $A(x/\e_\ell) \nabla u_{\e_\ell}  \rightharpoonup \widehat{A} \nabla u_0$ weakly in $L^2(\Omega;\R^{m\times d})$. 
Consequently, if $f\in H^{1/2}(\partial \Omega;\R^m)$ and $u_\e$ is the weak solution to the Dirichlet problem:
\begin{equation}
\cL_\e(u_\e) = F \quad \text{ in } \Omega \quad \text{and} \quad u_\e=f \quad \text{on } \partial\Omega,
\end{equation}	
	then, as $\e\to 0$, $u_\e  \rightharpoonup u_0$ weakly in $H^1 (\Omega;\R^m)$, where $u_0$ is the weak solution to
\begin{equation}
\cL_0(u_0) = F \quad \text{ in } \Omega \quad \text{and} \quad u_0=f \quad \text{on } \partial\Omega.
\end{equation}  
\end{thm}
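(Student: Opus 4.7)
The plan is to apply Tartar's method of oscillating test functions, adapted to the Besicovitch setting through density of trigonometric polynomials in Weyl's decomposition (2.4). By ellipticity and the energy estimate, $\{u_{\e_\ell}\}$ is bounded in $H^1(\Omega;\R^m)$ and $\{A(x/\e_\ell)\nabla u_{\e_\ell}\}$ is bounded in $L^2(\Omega;\R^{m\times d})$; passing to a subsequence, we may assume the latter converges weakly to some $\xi\in L^2(\Omega;\R^{m\times d})$. It suffices to show $\xi=\widehat{A}\nabla u_0$, since the Dirichlet problem statement then follows from uniqueness of the homogenized problem together with preservation of the boundary trace under weak $H^1$ convergence.

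Fix $1\le j\le d$, $1\le \beta\le m$, and $\delta>0$. Let $\psi^{*,\beta}_j\in V^2_{\text{pot}}$ denote the adjoint corrector, defined by (2.5) with $A$ replaced by $A^*$. Choose a mean-zero trigonometric polynomial $q$ with $\|\nabla q-\psi^{*,\beta}_j\|_{B^2}<\delta$, set $P(x)=x_je^\beta$ with $e^\beta$ the $\beta$th standard basis vector in $\R^m$, and consider the flux $F_\delta:=A^*(\nabla P+\nabla q)$. The defining property (2.5) applied to $A^*$ together with Weyl's orthogonal decomposition shows that $F_\delta-\widehat{A^*}\nabla P$ lies within $O(\delta)$ in $B^2$ of $V^2_{\text{sol}}$; accordingly, pick a mean-zero solenoidal trigonometric polynomial $G_\delta$ so that $R_\delta:=F_\delta-\widehat{A^*}\nabla P-G_\delta$ satisfies $\|R_\delta\|_{B^2}\le C\delta$. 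Define the oscillating test function
\[
v_\e(x):=P(x)+\e\,q(x/\e).
\]
Since $q$ is a bounded trigonometric polynomial, $v_\e\to P$ uniformly on $\Omega$; $\nabla v_\e=\nabla P+(\nabla q)(x/\e)\rightharpoonup\nabla P$ weakly in $L^2(\Omega)$; and $A^*(x/\e)\nabla v_\e=F_\delta(x/\e)\rightharpoonup\langle F_\delta\rangle=\widehat{A^*}\nabla P$ weakly in $L^2(\Omega)$. Moreover, since $G_\delta(x/\e)$ is exactly solenoidal, $-\text{\rm div}(A^*(x/\e)\nabla v_\e)=-\text{\rm div}(R_\delta(x/\e))$, and the mean-value property of $|R_\delta|^2\in B^1$ gives $\|R_\delta(x/\e_\ell)\|_{L^2(\Omega)}\to\|R_\delta\|_{B^2}|\Omega|^{1/2}=O(\delta)$, so this divergence is $O(\delta)$ in $H^{-1}(\Omega)$.

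Given $\phi\in C_0^\infty(\Omega)$, test $\cL_{\e_\ell}(u_{\e_\ell})=F$ against $\phi v_{\e_\ell}$ and the identity $-\text{\rm div}(A^*(x/\e_\ell)\nabla v_{\e_\ell})=-\text{\rm div}(R_\delta(x/\e_\ell))$ against $\phi u_{\e_\ell}$; subtracting and using the pointwise symmetry $A\eta\cdot\zeta=A^*\zeta\cdot\eta$ cancels the bulk bilinear terms, leaving a ``commutator'' identity with all remaining derivatives falling on $\phi$. Passing $\e_\ell\to 0$, using uniform convergence $v_{\e_\ell}\to P$, Rellich strong convergence $u_{\e_\ell}\to u_0$ in $L^2(\Omega)$, the weak convergences of the two fluxes, and the $O(\delta)$ control in $H^{-1}$ above, and finally undoing the integration by parts in the limit (using $\cL_{\e_\ell}(u_{\e_\ell})=F$ tested against $\phi P$ to eliminate the surviving $F$-term), gives
\[
\int_\Omega\phi\,\xi\cdot\nabla P\,dx=\int_\Omega\phi\,(\widehat{A^*}\nabla P)\cdot\nabla u_0\,dx+O(\delta)=\int_\Omega\phi\,(\widehat{A}\nabla u_0)\cdot\nabla P\,dx+O(\delta).
\]
Varying $(j,\beta)$ to span $\R^{m\times d}$ and sending $\delta\to 0$ identifies $\xi=\widehat{A}\nabla u_0$ a.e. in $\Omega$.

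The principal technical obstacle is arranging $v_\e$ so that $A^*(x/\e)\nabla v_\e$ is nearly divergence-free: in the periodic setting $A^*(I+\nabla\chi^*)$ differs from $\widehat{A^*}$ by an exactly solenoidal periodic matrix, whereas in the Besicovitch setting this step rests on Weyl's orthogonal decomposition plus a two-stage trigonometric-polynomial approximation (of the potential part $\psi^{*,\beta}_j$ by $\nabla q$ and of the resulting near-solenoidal part by $G_\delta$), with the residual $R_\delta$ controlled in $B^2$ and transferred to $L^2(\Omega)$ via the mean-value property under the rescaling $x\mapsto x/\e$.
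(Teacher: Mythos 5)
The paper does not actually prove Theorem \ref{homo-theorem}; it attributes the proof to \cite{Jikov-1994}. So there is no in-paper proof to compare against directly. The closest argument the paper writes out is the proof of Theorem \ref{thm_compactness} in Section~5, which is a generalization of the present statement and which follows Tartar's method with test functions built from the \emph{approximate correctors} $\chi^{\ell*}_{T_\ell,j}$ with $T_\ell = \e_\ell^{-1}$, relying on the $\chi_T$ estimates of Sections 3--4 (notably $T^{-1}\|\chi_T\|_{S^2_T}\to 0$) and on Remarks \ref{ineq_A_ave}--\ref{ineq_AdchiT_ave} to get the needed weak convergences in Lemma \ref{lem_echiT2zero}.

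Your argument is a correct alternative that avoids the $\chi_T$ machinery entirely: you build oscillating test functions $v_\e = P + \e q(x/\e)$ from a fixed trigonometric polynomial $q$ with $\nabla q$ close to the adjoint corrector gradient $\psi^*$ in $B^2$, split $A^*\nabla v_\e$ into a constant part, an exactly solenoidal trigonometric polynomial $G_\delta(x/\e)$, and a residual $R_\delta$ that is $O(\delta)$ in $B^2$ (hence $O(\delta)$ in $L^2(\Omega)$ after rescaling via the mean-value property), and then run the usual div--curl cancellation. This is essentially the classical route in \cite{Jikov-1994}, with Weyl's decomposition (2.4) playing the role of the periodic Hodge decomposition. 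It buys simplicity at the qualitative level, whereas the paper's $\chi_T$-based Theorem \ref{thm_compactness} is needed because the compactness argument of Section 6 must allow translation $A(y+x_\ell)$ and zeroth-order terms $\lambda_\ell u_\ell$ with uniform constants, which is more naturally phrased with the approximate correctors that are anyway the paper's central objects.

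Two small imprecisions, neither affecting correctness: (i) $\langle F_\delta\rangle = \widehat{A^*}\nabla P + O(\delta)$ rather than exactly $\widehat{A^*}\nabla P$, since $q$ only approximates $\psi^*$ up to $\delta$; this is harmless because your end identity already carries an $O(\delta)$ error. (ii) In the passage to the limit you should note that $\phi v_{\e_\ell}$ converges to $\phi P$ only \emph{weakly} in $H^1_0(\Omega)$ (the gradient term $\phi\,\nabla q(x/\e)$ does not vanish strongly), which is still enough to handle the $\langle F,\phi v_{\e_\ell}\rangle$ term since $F\in H^{-1}(\Omega;\R^m)$.
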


Let $P_j^\beta(x) = x_j e^\beta$, where $e^\beta = (0,\cdots,1,\cdots,0)$
 with $1$ in the $\beta^{\text{th}}$ position. In the periodic case the homogenized coefficients 
 $\widehat{a}_{ij}^{\alpha\beta}$ are obtained by solving the elliptic system
\begin{equation}\label{cond_homo_chi}
-\text{div}(A(x) \nabla u) = \text{div}(A(x) \nabla P_j^\beta),
\end{equation}
with periodic boundary conditions in a periodic cell $Y$ and then extending the solutions to $\mathbb{R}^d$ 
by periodicity.
 Let $\chi_j^\beta = (\chi_j^{1\beta}, \cdots, \chi_j^{m\beta})$ denote the periodic solution 
 of (\ref{cond_homo_chi}) in $\R^d$ such that $\int_Y \chi_j^\beta = 0$. 
 Such solutions are called the correctors for $\cL_\e$. Then
\begin{equation}
\widehat{a}_{ij}^{\alpha\beta} = \fint_Y \left\{ a_{ij}^{\alpha\beta} + a_{ik}^{\alpha\gamma} \frac{\partial \chi_j^{\gamma\beta}}{\partial y_k}\right\}.
\end{equation}
In the a.p. case, solutions of the elliptic system (\ref{cond_homo_chi}) in $\R^d$ in general do not exist
in the class of a.p. functions.
  Although the existence of correctors is not needed for qualitative homogenization 
  ($\psi_{kj}^{\gamma\beta}$ plays the role of $\frac{\partial \chi_j^{\gamma\beta}}{\partial y_k}$ in the definition of the homogenized coefficients),
 the study of the so-called approximate correctors is fundamental in understanding the quantitative properties in homogenization of $\cL_\e$.
 As indicated in the Introduction, in this paper
 we will carry out a systematic study of the approximate correctors 
 $\chi_T$ under the assumption that $A$ is a.p.
 in the sense of H. Weyl.
  
We end this section with a few definitions and observations that will be useful to us.
 Let $1\le p<\infty$.
 We say $F\in L^p_{\loc, \unif} (\R^d)$ if $ F\in L^p_{\loc}(\R^d)$ and 
 \begin{equation}\label{unif}
 \sup_{x\in \R^d} \int_{B(x, 1)} |F|^p<\infty.
 \end{equation}
 For $F\in L^p_{\loc, \unif}(\R^d)$ and $R>0$, we define  the  norm,
 \begin{equation}\label{S-p}
 \| F\|_{S_R^p} : =\sup_{x\in \R^d} \left(\fint_{B(x, R)} |F|^p\right)^{1/p}.
 \end{equation}
 Note that if $0<r<R<\infty$, then
 \begin{equation}\label{r-R}
 \| F\|_{S_R^p} \le C\, \| F\|_{S_r^p},
 \end{equation}
 where $C$ depends only on $d$ and $p$.
Let
 \begin{equation}\label{W-p}
 \| F\|_{W^p}:=\limsup_{R\to \infty} \| F\|_{S^p_R}.
 \end{equation}
 It follows from (\ref{r-R}) that $\| F\|_{W^p} \le C_p\, \| F\|_{S_R^p}$ for any $R>0$.
A function $F$ is said to be $W^p$ (resp., $S^p_R$) a.p. if  $F\in L^p_{\loc, \unif}(\R^d)$ and
 there exists a sequence  of trigonometric polynomials $\{ t_n\}$ such that
 $\| F-t_n\|_{W^p} \to 0$ (resp., $\| F-t_n\|_{S^p_R}\to 0$), as $n\to \infty$.

For $y, z\in \R^d$, define the difference operator
\begin{equation}\label{Diff}
\Delta_{yz}  g (x):=  g(x+y)- g(x+z) .
\end{equation}
 
 \begin{prop}\label{C-W-theorem}
 Let $g\in L^p_{\loc, \unif}(\R^d)$ for some $1\le p<\infty$.
 Then $g$ is $W^p$ a.p. if and only if 
 \begin{equation}\label{char}
 \sup_{y\in \R^d} \inf_{|z|\le L} \| \Delta_{yz} (g)\|_{S^p_R} \to 0 \quad \text{ as } L, R\to \infty.
 \end{equation}
 \end{prop}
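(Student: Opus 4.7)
The proposition is a Bochner-type characterization of Weyl almost-periodicity, and I would prove the two directions separately. The forward implication $(\Rightarrow)$ is elementary; the converse $(\Leftarrow)$ is a precompactness argument in the semi-normed space $(L^p_{\loc,\unif},\|\cdot\|_{W^p})$.

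For the forward direction, given $\varepsilon>0$, pick a trigonometric polynomial $t$ with $\|g-t\|_{W^p}<\varepsilon$, so by (\ref{r-R}) and the definition (\ref{W-p}) there is $R_0=R_0(t,\varepsilon)$ with $\|g-t\|_{S^p_R}\le C\varepsilon$ for all $R\ge R_0$. Since $t$ is uniformly almost-periodic in Bohr's classical sense, its $\varepsilon$-almost-periods are $L_0$-relatively dense for some $L_0=L_0(t,\varepsilon)$; for any $y\in\R^d$, pick $\tau$ with $|y-\tau|\le L_0$ and $\|t(\cdot+\tau)-t\|_{L^\infty}<\varepsilon$, and set $z:=y-\tau\in B(0,L_0)$. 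Using translation invariance of $\|\cdot\|_{S^p_R}$ and the triangle inequality,
\[
\|\Delta_{yz}g\|_{S^p_R}\le 2\|g-t\|_{S^p_R}+\|t(\cdot+\tau)-t\|_{L^\infty}\le(2C+1)\varepsilon,
\]
which is exactly (\ref{char}) for $R\ge R_0$ and $L\ge L_0$.

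For the converse, the plan is to mollify $g$ and reduce to Bohr's classical theory. Fix $\phi_\delta\in C_c^\infty(B(0,\delta))$ with $\int\phi_\delta=1$, and set $g_\delta:=g*\phi_\delta$. A Young-type estimate on balls gives $\|\Delta_{yz}g_\delta\|_{S^p_R}\le\|\Delta_{yz}g\|_{S^p_R}$, so $g_\delta$ inherits (\ref{char}). Moreover, $g_\delta$ is bounded and Lipschitz with constants depending on $\delta$, so $\Delta_{yz}g_\delta$ is itself uniformly Lipschitz, and a standard interpolation between the Lipschitz and $L^p$-average norms upgrades the $S^p_R$-smallness of $\Delta_{yz}g_\delta$ to $L^\infty$-smallness. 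This is Bohr's compactness criterion, so $g_\delta$ is uniformly almost-periodic and is therefore $L^\infty$-approximable (hence $W^p$-approximable) by trigonometric polynomials. A diagonal extraction over $\delta\to 0$ then realizes $g$ itself as a $W^p$-limit of trigonometric polynomials, provided $\|g-g_\delta\|_{W^p}\to 0$.

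The main obstacle is this last convergence: for arbitrary $g\in L^p_{\loc,\unif}$ one only has $g_\delta\to g$ in $L^p_{\loc}$, not uniformly in the base point $x$ defining $\|\cdot\|_{S^p_R}$. I expect to handle it by establishing an $L^p$-translation-continuity estimate $\sup_{|w|\le\delta}\|g(\cdot+w)-g\|_{W^p}\to 0$ as $\delta\to 0$, which is itself a consequence of (\ref{char}) combined with the Bochner compactness of $\{g_\eta(\cdot+y):y\in\R^d\}$ already established for each $\eta>0$ and transferred to $g$ via $g_\eta\to g$ in $L^p_{\loc}$. With this translation continuity in hand, Jensen's inequality yields $\|g-g_\delta\|_{W^p}\le\sup_{|w|\le\delta}\|g(\cdot-w)-g\|_{W^p}\to 0$, closing the argument. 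This mollification-plus-compactness strategy parallels the Jikov-Kozlov-Oleinik treatment of Besicovitch a.p.\ functions (cf.\ \cite{Jikov-1994}).
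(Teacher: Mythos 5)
The paper treats the proposition as essentially a reformulation: it cites the classical Bochner-type criterion for Weyl almost-periodicity from the reference \cite{B} (for every $\varepsilon>0$, the set of $\varepsilon$-almost-periods, measured in the $S^p_R$-seminorm for a suitable $R=R_\varepsilon$, is relatively dense), and observes that writing each $\tau\in\R^d$ as $\tau=y-z$ with $|z|\le L$ turns relative density into exactly condition (\ref{char}). Your forward direction is correct and consistent with that picture, but your converse via mollification is an independent argument, and as written it has two genuine gaps.

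The more serious gap is the interpolation step. For a function $f$ with Lipschitz constant $K$ and $\|f\|_{S^p_R}\le\varepsilon$, the best pointwise bound one can extract is of the form $\|f\|_{L^\infty}\lesssim\max\bigl\{\varepsilon,\ \varepsilon^{p/(p+d)}(KR)^{d/(p+d)}\bigr\}$, as one sees by looking at the ball of radius $\min(R, \|f\|_\infty/(2K))$ around a near-maximum. In (\ref{char}) the smallness $\varepsilon$ is obtained only at scales $R\ge R_\varepsilon$, and $R_\varepsilon$ must in general tend to infinity: already for a periodic coefficient of large period $N$ one needs $R\gtrsim N$. With no control on the growth of $R_\varepsilon$ in terms of $\varepsilon$, and $K_\delta$ fixed once $\delta$ is chosen, the prefactor $(K_\delta R_\varepsilon)^{d/(p+d)}$ can dominate $\varepsilon^{p/(p+d)}$, so this interpolation does not yield $L^\infty$-smallness of $\Delta_{yz}g_\delta$, and Bohr's criterion for $g_\delta$ does not follow. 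The second gap is the one you flag yourself, $\|g-g_\delta\|_{W^p}\to 0$, and the proposed fix is circular. Translation continuity in the $W^p$-seminorm does hold for any $W^p$-a.p.\ function, but its proof passes through a trigonometric polynomial approximant, which is precisely what you are trying to construct. It does not follow from (\ref{char}) alone: that condition matches every translation with a bounded near-translation but says nothing about small translations being close to the identity. Nor does it follow from the ``transfer'' via $g_\eta\to g$ in $L^p_{\loc}$, because $L^p_{\loc}$-convergence gives no uniformity over the base point, and uniformity over the base point is exactly what the $S^p_R$- and $W^p$-seminorms require.
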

 
 \begin{proof}
 A subset $E$ of $\R^d$ is said to be relatively dense in $\R^d$ if
 there exists $L>0$ such that $\R^d =E + B(0, L)$; i.e.,
 any $x$ in $\R^d$ can be written as $y+z$ for some $y\in E$ and $z\in B(0, L)$.
 It is known  that 
 $g\in L^p_{\loc, \unif}$ is $W^p$ a.p.  if and only if for any $\varepsilon>0$,
 there exists $R=R_\e>0$ such that
 the set
 $$
 \Big\{ \tau \in \R^d: \ \| g(\cdot +\tau) - g(\cdot) \|_{S_{R}^p} < \e \Big\}
 $$
 is relatively dense in $\R^d$ \cite{B}.
 It is not hard to see that this is equivalent to (\ref{char}).
 \end{proof}
 
Let $P=P_k =\big\{ (y_1, z_1), \dots, (y_k, z_k) \big\}$,
where $(y_i, z_i)\in \R^d\times \R^d$, and
 $$
 Q=\big\{ (y_{i_1}, z_{i_1}), \dots, (y_{i_\ell}, z_{i_\ell}) \big\}
 $$
  be a subset of $P$ with 
$i_1<i_2<\dots <i_\ell$. 
Define
$$
\Delta_Q (g)=\Delta_{y_{i_1} z_{i_1}} \cdots \Delta_{y_{i_\ell} z_{i_\ell}} (g).
$$
To quantify the almost periodicity of the coefficient matrix $A$,
we introduce 
\begin{equation}\label{rho-k}
\rho_{k} ( L, R)
=\sup_{y_1\in \R^d}\inf_{|z_1|\le L} \cdots\sup_{y_k\in \R^d} \inf_{|z_k|\le L}
\sum
\|\Delta_{Q_1} (A)\|_{S^p_R} 
\cdots \|\Delta_{Q_\ell}  (A)\|_{S^p_R},
\end{equation}
where the sum is taken over all partitions of $P=Q_1\cup Q_2 \cup \cdots \cup Q_\ell$ with $1\le \ell\le k$.
The exponent $p$ in  (\ref{rho-k}) depends on $k$ and is given by
\begin{equation}\label{p}
\frac{k}{p} =\frac{1}{2}-\frac{1}{\bar{q}},
\end{equation}
where $\bar{q}>2$ is the exponent in the reverse H\"older estimate (\ref{RH}) below and
depends only on $d$, $m$ and $\mu$.


 
\section{Definition of approximate correctors}

In this section and next  we construct the approximate correctors and establish some preliminary estimates, 
under the assumptions that $A=(a_{ij}^{\alpha\beta})$
 satisfies the ellipticity condition (\ref{cond_ellipticity}) and  $A
\in APW^2(\R^d)$.
As in \cite{Shen-2015}, the existence of the approximate correctors is based on the following lemma.
 
\begin{lemma}\label{lem_appr_u}
	Suppose $A$ satisfies the ellipticity condition (\ref{cond_ellipticity}).
	 Let $F\in L^2_{\loc, \unif} (\R^d;\R^m)$ and $f \in L^2_{\loc,\unif}(\R^d; \R^{m\times d})$. 	Then, for any $T>0$, there exists a unique $u = u_T\in H^1_{\text{loc}}(\R^d;\R^{m})$ such that $u, \nabla u\in L^2_{\loc, \unif} (\R^d)$ and
	\begin{equation}\label{cond_appr_eq}
	-\text{\rm div} (A \nabla u) + T^{-2} u = F + \text{\rm div}(f) \quad \text{in } \R^d.
	\end{equation}
	Moreover, the solution $u$ satisfies the estimate
	\begin{equation}\label{ineq_appr_udu}
	\| \nabla u\|_{S^2_T}
	+ T^{-1} \| u\|_{S^2_T} \le C\Big\{ \| g\|_{S^2_T} + T \| f\|_{S^2_T} \Big\},
		\end{equation}
	where $C$ depends only on $d, m$ and $\mu$.
\end{lemma}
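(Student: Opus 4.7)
The plan is to deduce both existence and uniqueness from the a priori bound~\eqref{ineq_appr_udu}, which itself is established by an Agmon-type exponential-weight test-function argument. Fix $x_0\in\R^d$ and introduce the weight $\eta(x)=\exp(-\gamma|x-x_0|/T)$, where $\gamma>0$ is a small constant (depending only on $d$, $m$ and $\mu$) to be chosen. Testing the weak form of \eqref{cond_appr_eq} against $u\eta^2$ — rigorously, against $u\eta^2\zeta_R^2$ with $\zeta_R$ a compactly supported plateau cutoff, then letting $R\to\infty$; the uniform-local integrability of $u$ and $\nabla u$ together with the exponential decay of $\eta^2$ makes the truncation errors vanish — and invoking ellipticity and the pointwise bound $|\nabla\eta|\le(\gamma/T)\eta$ produces, after Young's inequality on the cross term $\int A\nabla u\cdot u\eta\nabla\eta$ and on the two $f$-contributions,
$$\mu\int|\nabla u|^2\eta^2 + T^{-2}\int u^2\eta^2 \le C\int(T^2|F|^2+|f|^2)\eta^2 + C\gamma^2\Big(\int|\nabla u|^2\eta^2 + T^{-2}\int u^2\eta^2\Big).$$
Choosing $\gamma$ small absorbs the rightmost term on the left, leaving a clean weighted energy inequality depending only on the data.

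To unfold this into the $S^2_T$-norm bound, partition $\R^d$ into annuli $A_k=\{kT\le|x-x_0|<(k+1)T\}$, each of which can be covered by $\lesssim(k+1)^{d-1}$ balls of radius $T$. This gives
$$\int |F|^2\eta^2 \le CT^d\|F\|_{S^2_T}^2\sum_{k\ge 0}(k+1)^{d-1}e^{-2\gamma k}\le CT^d\|F\|_{S^2_T}^2,$$
and likewise for $f$, while restricting the left-hand side to $B(x_0,T)$ yields the lower bound $c\,T^d\bigl(\fint_{B(x_0,T)}|\nabla u|^2+T^{-2}\fint_{B(x_0,T)}u^2\bigr)$. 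Dividing by $T^d$ and taking the supremum over $x_0\in\R^d$ produces~\eqref{ineq_appr_udu}.

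For existence, truncate $F_R=F\chi_{B(0,R)}$ and $f_R=f\chi_{B(0,R)}$; these lie in $L^2(\R^d)$, and Lax--Milgram applied to the coercive bilinear form $\int A\nabla u\cdot\nabla v + T^{-2}\int uv$ on $H^1(\R^d)$ produces a unique solution $u_R\in H^1(\R^d)$ of the truncated equation. The a priori estimate applies to $u_R$ uniformly in $R$ because $\|F_R\|_{S^2_T}\le\|F\|_{S^2_T}$ and $\|f_R\|_{S^2_T}\le\|f\|_{S^2_T}$. Extracting a weak $H^1_{\text{loc}}$ limit, with strong $L^2_{\loc}$ convergence via Rellich--Kondrachov, produces a function $u$ solving \eqref{cond_appr_eq} and inheriting \eqref{ineq_appr_udu}. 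Uniqueness is then immediate: the difference $w$ of two solutions in $L^2_{\loc,\unif}$ with $\nabla w\in L^2_{\loc,\unif}$ solves the homogeneous equation, the a priori estimate with zero data forces $\|\nabla w\|_{S^2_T}+T^{-1}\|w\|_{S^2_T}=0$, and hence $w\equiv 0$. The principal technical point is the calibration of the Agmon parameter $\gamma$: it must be small enough to absorb all $|\nabla u||u|\eta|\nabla\eta|$-type cross terms into the coercive left side while preserving positive constants in front of both $\int|\nabla u|^2\eta^2$ and $T^{-2}\int u^2\eta^2$; once this is done, the approximation and duality arguments are routine.
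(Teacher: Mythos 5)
Your proof is correct. The paper itself does not supply a proof of this lemma; it simply defers to \cite{Shen-2015}, where the argument is the same standard one you give: an Agmon-type exponential weight $\eta(x)=e^{-\gamma|x-x_0|/T}$ to obtain a localized energy estimate, dyadic annuli to convert that into the $S^2_T$ bound, Lax--Milgram on $H^1(\R^d)$ for truncated data together with the uniform a priori bound and a diagonal weak-limit argument for existence, and the a priori estimate applied to the difference for uniqueness. One small remark: the inequality as printed in the paper, $\|\nabla u\|_{S^2_T}+T^{-1}\|u\|_{S^2_T}\le C\{\|g\|_{S^2_T}+T\|f\|_{S^2_T}\}$, contains a typographical slip; the dimensionally consistent form is $C\{T\|F\|_{S^2_T}+\|f\|_{S^2_T}\}$, which is exactly what your argument yields and which agrees with the versions stated in (\ref{ineq_appr_dup}) and Lemma \ref{W-B-lemma}.
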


\begin{proof}
	See e.g.  \cite{Shen-2015}.
\end{proof}

The weak solution of (\ref{cond_appr_eq}), given by Lemma \ref{lem_appr_u}, satisfies
	\begin{equation}\label{ineq_appr_dup}
	\| \nabla u\|_{S^q_T} \le C \Big\{ \| f\|_{S^q_T} + T\| F\|_{S^2_T} \Big\},
		\end{equation}
	for $2\le q\le \bar{q}$, where $\bar{q}>2$ and $C>0$ depend only on $d,m$ and $\mu$. 
	This follows from the reverse H\"{o}lder estimate \cite{Giaquinta}
	for weak solutions of $-\text{div}(A \nabla v) = F + \text{div} (f)$
	in $2B=B(x_0, 2R)$,
	\begin{equation}\label{RH}
	\left(\fint_{B} |\nabla v|^{\bar{q}}\right)^{1/\bar{q}}
	\le C \left\{ \left(\fint_{2B} |\nabla v|^2\right)^{1/2}
	+R \left(\fint_{2B} |F|^2\right)^{1/2}
	+\left(\fint_{2B} |f|^{\bar{q}}\right)^{1/\bar{q} }\right\}.
	\end{equation}
	
For $T>0$, let $u = \chi^\beta_{T,j} = (\chi^{1\beta}_{T,j},\cdots,\chi^{m\beta}_{T,j})$ be the weak solution of
\begin{equation}\label{cond_eq_corrector}
-\text{div} (A \nabla u) + T^{-2} u =  \text{div}(A\nabla P_j^\beta) \quad \text{in } \R^d,
\end{equation}
given in Lemma \ref{lem_appr_u}, where $P_j^\beta=x_j e^\beta$.
 The matrix-valued functions $\chi_T = (\chi_{T,j}^\beta) = (\chi_{T,j}^{\alpha\beta})$ are
  called the approximate correctors for $\cL_\e $.
  It follows from (\ref{ineq_appr_udu}) that
  \begin{equation}\label{cor-L-2}
  \|\nabla \chi_T\|_{S^2_T} + T^{-1} \|\chi_T\|_{S^2_T} \le C,
  \end{equation}
  where $C$ depends only on $d$, $m$ and $\mu$.
  Also, by (\ref{ineq_appr_dup}),
\begin{equation}\label{ineq_chiT_p}
\|\nabla \chi_T\|_{S^{\bar{q}}_T} \le C
\end{equation}
for some $\bar{q}>2$. Note that by Sobolev imbedding,
 if $d\ge 3$,
\begin{equation}\label{ineq_chiT_pbar}
T^{-1} \|\chi_T\|_{S_T^{{p}}}\le C
\end{equation}
for some ${p} > 2d/(d-2)$.
If $d=2$, we have $\norm{\chi_T}_\infty \le CT$. Furthermore, by the De Giorgi - Nash estimates, we also have $\norm{\chi_T}_\infty \le CT$, if $m=1$.

\begin{lemma}\label{W-B-lemma}
Let $u\in H^1_{\loc}(\R^d;\R^m)$ be a weak solution of (\ref{cond_appr_eq}) in $\R^d$, given by
Lemma \ref{lem_appr_u}.
Then 
	\begin{equation}\label{3-3}
	\norm{\nabla u}_{S^2_R} + T^{-1} \norm{u}_{S^2_R} \le C \Big\{ \norm{f}_{S^2_R} + T \norm{F}_{S^2_R} \Big\}
	\end{equation}
for any $R\ge T$,
	where $C$ depends on $d$, $m$ and $\mu$.
\end{lemma}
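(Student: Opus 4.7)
The plan is to use a Caccioppoli-type energy estimate at scale $R$, followed by an absorption that exploits the hypothesis $R\ge T$ in a quantitative way, with a comparability argument to cover the borderline regime where $R$ is close to $T$.

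First I would fix $x_0\in\R^d$ and choose a smooth cutoff $\eta$ with $\eta\equiv 1$ on $B(x_0,R)$, $\mathrm{supp}\,\eta\subset B(x_0,2R)$, and $|\nabla\eta|\le C/R$. Testing equation (\ref{cond_appr_eq}) against $\eta^2 u$ and using the ellipticity of $A$, I would absorb the cross terms involving $\int\eta^2|\nabla u|^2$ and $T^{-2}\int\eta^2 u^2$ via Young's inequality, and use $R\ge T$ to bound $T^2|\nabla\eta|^2\le C$ in the coefficient multiplying $\int|f|^2$. This yields
\begin{equation*}
\int_{B(x_0, R)} \bigl(|\nabla u|^2 + T^{-2}|u|^2\bigr)
\le C\Big\{R^{-2}\!\int_{B(x_0, 2R)} |u|^2 + T^2\!\int_{B(x_0, 2R)} |F|^2 + \int_{B(x_0, 2R)}|f|^2\Big\}.
\end{equation*}
Dividing by $|B(x_0,R)|$, taking the supremum in $x_0$, and invoking the monotonicity $\norm{g}_{S^2_{2R}}\le C\norm{g}_{S^2_R}$ from (\ref{r-R}), I arrive at
\begin{equation*}
\norm{\nabla u}_{S^2_R}^2 + T^{-2}\norm{u}_{S^2_R}^2
\le C_1\bigl\{R^{-2}\norm{u}_{S^2_R}^2 + T^2\norm{F}_{S^2_R}^2 + \norm{f}_{S^2_R}^2\bigr\}.
\end{equation*}

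Now I would split into two regimes. If $R\ge C_0 T$ with $C_0:=(2C_1)^{1/2}$, then $C_1 R^{-2}\le \tfrac{1}{2}T^{-2}$, so the first term on the right absorbs into the $T^{-2}\norm{u}_{S^2_R}^2$ on the left, giving the desired bound. If instead $T\le R<C_0 T$, the two scales are comparable: from $B(x,T)\subset B(x,R)$ one gets $\fint_{B(x,T)}|g|^2\le(R/T)^d\fint_{B(x,R)}|g|^2\le C_0^d\fint_{B(x,R)}|g|^2$, so $\norm{g}_{S^2_T}\le C\norm{g}_{S^2_R}$. Combining this with Lemma \ref{lem_appr_u} and the monotonicity $\norm{\nabla u}_{S^2_R}\le C\norm{\nabla u}_{S^2_T}$, $\norm{u}_{S^2_R}\le C\norm{u}_{S^2_T}$ from (\ref{r-R}) gives the estimate in this range as well.

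The only step with any subtlety is the absorption in the large-$R$ regime, which requires strengthening the natural hypothesis $R\ge T$ to $R\ge C_0 T$ for a specific $C_0=C_0(d,m,\mu)$; the small-$R$ regime then follows routinely from Lemma \ref{lem_appr_u} and the monotonicity of the $S^2$-norms in the averaging radius.
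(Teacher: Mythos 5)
Your argument is correct and coincides with the paper's proof: both establish the Caccioppoli inequality at scale $R$, pass to $S^2_R$ norms, absorb the extra term $CR^{-1}\|u\|_{S^2_R}$ once $R$ exceeds a fixed multiple $C_0T$ of $T$, and treat the remaining range $T\le R<C_0 T$ via Lemma \ref{lem_appr_u} and the comparability of $S^2_T$- and $S^2_R$-norms. (One stray remark: no factor $T^2|\nabla\eta|^2$ actually multiplies $\int|f|^2$ in the Caccioppoli estimate and $R\ge T$ is not needed there, but the displayed inequality you arrive at is nevertheless the correct one, so this does not affect the argument.)
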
	

\begin{proof}

It follows by Caccioppoli's inequality that
	\begin{align}\label{ineq_appr_Caccio}
	\begin{aligned}
	& \fint_{B(x_0,R)} |\nabla u|^2 + \fint_{B(x_0,R)} T^{-2} |u|^2 \\
	&\le  \frac{C}{R^2} \fint_{B(x_0,2R)} | u|^2 + C\fint_{B(x_0,2R)} |f|^2 　+ C\fint_{B(x_0,2R)} T^2 |F|^2
	\end{aligned}
	\end{align}
	for any $x_0\in \R^d$ and $R>0$, where $C$ depends only on $d,m$ and $\mu$. 	
	This, together with the observation
	$\| F\|_{S_{2R}^2} \le C_d\,  \| F\|_{S^2_R}$, gives
	$$
	\|\nabla u\|_{S^2_R}  +T^{-1} \|u\|_{S^2_R}
	\le C\Big\{ \| f\|_{S^2_R} + T \| F\|_{S^2_R} \Big\} + C R^{-1} \| u\|_{S^2_R},
	$$
	from which the estimate (\ref{3-3}) follows if $R\ge 2C T$.
	Finally, we observe that the case $T\le R<2CT$ follows directly  from (\ref{ineq_appr_udu}).
	\end{proof}	

\begin{lemma} \label{lem_chiT_A}
	Suppose $A$ satisfies (\ref{cond_ellipticity}). 
	Then there exists some $2<p<\infty$, depending only on $d,m$ and $\mu$, such that for any $y,z\in \R^d$,
	\begin{equation}\label{ineq_chiT_B2Bq}
	\|\Delta_{yz} (\nabla \chi_T)\|_{S^2_R}
	+ T^{-1}\|\Delta_{yz} (\chi_T)\|_{S^2_R}
	\le C\, \| \Delta_{yz} (A)\|_{S^p_R},
	\end{equation}
	where $R\ge T$ and  $C$ depends only on $d$, $m$ and $\mu$.
	\end{lemma}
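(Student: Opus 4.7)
The plan is to derive an elliptic equation for the difference $w=\Delta_{yz}(\chi_T)$, apply the large-scale $S^2_R$ estimate from Lemma \ref{W-B-lemma} to $w$, and then control the resulting right-hand side by H\"older's inequality using the higher integrability (\ref{ineq_chiT_p}) of $\nabla \chi_T$.

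First, translating (\ref{cond_eq_corrector}) by $y$ and by $z$, and writing $A_y(x)=A(x+y)$ and $A_z(x)=A(x+z)$, the functions $\chi_T(\cdot+y)$ and $\chi_T(\cdot+z)$ are weak solutions of
\[
-\operatorname{div}(A_y \nabla \chi_T(\cdot+y))+T^{-2}\chi_T(\cdot+y)=\operatorname{div}(A_y\nabla P),
\]
and the analogous equation with $y$ replaced by $z$. Subtracting these and using the identity
\[
A_y\nabla \chi_T(\cdot+y)-A_z\nabla\chi_T(\cdot+z)=A_y\nabla w+\Delta_{yz}(A)\,\nabla \chi_T(\cdot+z),
\]
we find that $w=\Delta_{yz}(\chi_T)$ satisfies
\[
-\operatorname{div}(A_y\nabla w)+T^{-2}w=\operatorname{div}(h) \qquad\text{in }\R^d,
\]
where $h=\Delta_{yz}(A)\bigl(\nabla \chi_T(\cdot+z)+\nabla P\bigr)$. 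Since $A_y$ satisfies the same ellipticity condition (\ref{cond_ellipticity}), Lemma \ref{lem_appr_u} ensures $w,\nabla w\in L^2_{\loc,\unif}(\R^d)$, so all the preceding estimates apply.

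Next, since $R\ge T$, Lemma \ref{W-B-lemma} (applied with $F\equiv 0$) yields
\[
\|\nabla w\|_{S^2_R}+T^{-1}\|w\|_{S^2_R}\le C\,\|h\|_{S^2_R}.
\]
Choose $p\in(2,\infty)$ so that $\tfrac{1}{p}+\tfrac{1}{\bar q}=\tfrac{1}{2}$, where $\bar q>2$ is the exponent from the reverse H\"older estimate (\ref{RH}). By H\"older's inequality on each ball $B(x,R)$ and then taking the supremum in $x$,
\[
\|h\|_{S^2_R}\le \|\Delta_{yz}(A)\|_{S^p_R}\Bigl(\|\nabla \chi_T(\cdot+z)\|_{S^{\bar q}_R}+|\nabla P|\Bigr).
\]
The $S^{\bar q}_R$-norm is translation invariant, and (\ref{ineq_chiT_p}) together with the elementary inequality (\ref{r-R}) (comparing radii $T$ and $R\ge T$) gives $\|\nabla \chi_T\|_{S^{\bar q}_R}\le C\|\nabla\chi_T\|_{S^{\bar q}_T}\le C$. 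Since $\nabla P$ is a constant matrix with $|\nabla P|=1$, combining the last two displays yields (\ref{ineq_chiT_B2Bq}).

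The main conceptual point, rather than any real obstacle, is simply to write down the correct equation for $w$: the ``coefficients minus coefficients'' piece becomes a divergence-form forcing term that is genuinely a product of the small quantity $\Delta_{yz}(A)$ with $\nabla\chi_T+\nabla P$. Once this is in hand, the only non-trivial input is the Meyers-type higher integrability (\ref{ineq_chiT_p}), which forces the appearance of the exponent $p$ with $1/p=1/2-1/\bar q$; this is precisely the $k=1$ case of the exponent defined in (\ref{p}).
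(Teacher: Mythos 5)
Your argument is correct and takes essentially the same route as the paper: translate the corrector equation by $y$ and $z$, subtract to obtain a divergence-form equation for $w=\Delta_{yz}(\chi_T)$ with forcing $\operatorname{div}(\Delta_{yz}(A)(\nabla\chi_T(\cdot+z)+\nabla P))$, apply Lemma \ref{W-B-lemma}, and close with H\"older against the Meyers exponent $\bar q$ from (\ref{ineq_chiT_p}), which fixes $1/p=1/2-1/\bar q$. The only cosmetic difference is that you fold the $\nabla P$ contribution into a single forcing term $h$, whereas the paper keeps $\Delta_{yz}(A)\nabla P$ and $\Delta_{yz}(A)\nabla v$ as two terms; the estimates are identical.
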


\begin{proof}
	Fix $1\le j\le d, 1\le \beta \le m$, and $y,z\in \R^d$. Let
	\begin{equation*}
	u(x) = \chi_{T,j}^\beta(x+y) - \chi_{T,j}^\beta(x+z) \quad \text{and} \quad v(x) = \chi_{T,j}^\beta(x+z).
	\end{equation*}
	Then
	\begin{align}\label{cond_uv_eq}
	\begin{aligned}
	-\text{div}(A(x+y) \nabla u) + T^{-2} u = \text{div}&( (A(x+y) - A(x+z)) \nabla P_j^\beta ) \\
	&+ \text{div} ( (A(x+y) - A(x+z)) \nabla v ).
	\end{aligned}	
	\end{align}
	It follows from Lemma \ref{W-B-lemma} that for any $R\ge T$,
	\begin{align}\label{ineq_uv_2}
	\begin{aligned}
	& \| \nabla u\|_{S_R^2} + T^{-1} \| u \|_{S^2_R}\\
	& \le C\, \| \Delta_{yz} (A) \|_{S^2_R} + C \sup_{x_0 \in \R^d}
	 \left( \fint_{B(x_0,R)} |\Delta_{yz} (A) |^2 |\nabla v |^2\, dx \right)^{1/2}.
	\end{aligned}	
	\end{align}
	By  (\ref{ineq_chiT_p}) we have 
	$$
	\|\nabla v\|_{S_R^{\bar{q}}}\le C\, \|\nabla v\|_{S_T^{\bar{q}}} \le C.
	$$
This, together with H\"{o}lder's inequality, allows us to bound the last term in the r.h.s.
	 of (\ref{ineq_uv_2}) by
	\begin{equation*}
	C \sup_{x_0 \in \R^d} \left( \fint_{B(x_0,R)} |\Delta_{yz}(A)|^{p}\, dx \right)^{1/{p}},
	\end{equation*}
	where $\frac{1}{p} +\frac{1}{\bar{q}}=\frac12$ and $\bar{q}$ is given by (\ref{ineq_chiT_p}). 
	In view of (\ref{ineq_uv_2}) we have proved the estimate (\ref{ineq_chiT_B2Bq}).
	\end{proof}
	
\begin{theorem}\label{lem_corrector_B2W2}
	Suppose that $A$ satisfies  (\ref{cond_ellipticity}) and $A\in APW^2(\R^d)$. Then $\chi_T,\nabla \chi_T \in APW^2(\R^d)$.
\end{theorem}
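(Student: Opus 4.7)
The plan is to verify the characterization of $W^2$ almost-periodicity from Proposition \ref{C-W-theorem}. Since Lemma \ref{lem_appr_u} already guarantees $\chi_T, \nabla\chi_T \in L^2_{\loc,\unif}(\R^d)$, it suffices to establish that for each of $g=\chi_T$ and $g=\nabla\chi_T$,
\[
\sup_{y\in\R^d}\inf_{|z|\le L}\|\Delta_{yz}(g)\|_{S^2_R}\longrightarrow 0 \qquad \text{as } L,R\to\infty.
\]

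The workhorse will be Lemma \ref{lem_chiT_A}, which yields, for every $R\ge T$,
\[
\|\Delta_{yz}(\nabla\chi_T)\|_{S^2_R} + T^{-1}\|\Delta_{yz}(\chi_T)\|_{S^2_R} \le C\,\|\Delta_{yz}(A)\|_{S^p_R}
\]
for some exponent $p>2$ depending only on $d,m,\mu$. Since $T$ is fixed, the restriction $R\ge T$ is harmless when we take $L,R\to\infty$. Taking $\inf_{|z|\le L}$ and then $\sup_y$ on both sides, the theorem reduces to showing
\[
\sup_y \inf_{|z|\le L}\|\Delta_{yz}(A)\|_{S^p_R}\longrightarrow 0 \qquad \text{as }L,R\to\infty.
\]

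The one genuine obstacle is the exponent mismatch: the hypothesis $A\in APW^2(\R^d)$, combined with Proposition \ref{C-W-theorem}, a priori only delivers the corresponding decay for the $S^2_R$ norm. Here I would invoke the boundedness of $A$, which follows from the ellipticity condition (\ref{cond_ellipticity}). Since $|\Delta_{yz}(A)|\le 2\|A\|_\infty$ pointwise, interpolation gives
\[
\|\Delta_{yz}(A)\|_{S^p_R} \le (2\|A\|_\infty)^{(p-2)/p}\,\|\Delta_{yz}(A)\|_{S^2_R}^{2/p}.
\]
Because $t\mapsto t^{2/p}$ is monotone increasing on $[0,\infty)$, the $\sup_y\inf_{|z|\le L}$ passes through the power, and the $W^2$-characterization for $A$ yields the $S^p_R$ decay required in the previous paragraph. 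Plugging this back into Lemma \ref{lem_chiT_A} and applying Proposition \ref{C-W-theorem} in the converse direction concludes that $\chi_T,\nabla\chi_T\in APW^2(\R^d)$.

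Overall this is a short argument built on two already-proved inputs (Lemma \ref{lem_chiT_A} and Proposition \ref{C-W-theorem}), with the only delicate point being the $L^\infty$-interpolation that bridges the $W^2$ hypothesis and the $W^p$ conclusion of Lemma \ref{lem_chiT_A}. This is exactly the step where \emph{boundedness} of $A$ is essential; it explains why the theorem is naturally posed for $APW^2$ with bounded coefficients rather than for a purely $B^2$-almost-periodic setting, where no such interpolation would be available.
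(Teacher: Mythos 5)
Your proof is correct and follows essentially the same route as the paper: both reduce via Lemma \ref{lem_chiT_A} to the decay of $\sup_y\inf_{|z|\le L}\|\Delta_{yz}(A)\|_{S^p_R}$, bridge the $S^p$--$S^2$ gap using the pointwise bound $|\Delta_{yz}(A)|\le 2\|A\|_\infty$ to get $\|\Delta_{yz}(A)\|_{S^p_R}\le C\|A\|_\infty^{1-2/p}\|\Delta_{yz}(A)\|_{S^2_R}^{2/p}$, and finish with Proposition \ref{C-W-theorem}. The only difference is cosmetic: the paper writes this interpolation inline as estimate (\ref{ineq_chiT_W2W2}) rather than as a separate displayed step.
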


\begin{proof}
	 By Lemma \ref{lem_chiT_A} we obtain
	\begin{align}\label{ineq_chiT_W2W2}
	\begin{aligned}
	& \sup_{y\in \R^d}\inf_{|z|\le L}\norm{\Delta_{yz} (\nabla \chi_T) }_{S^2_R} 
	 + T^{-1} \sup_{y\in \R^d}\inf_{|z|\le L} \norm{\Delta_{yz} (\chi_T)}_{S^2_R} \\
	&\le C \norm{A}_\infty^{1-2/p} \sup_{y\in \R^d}
	\inf_{|z|\le L}\norm{\Delta_{yz} (A)}^{2/p}_{S_R^2},
	\end{aligned}
	\end{align}
	where $R\ge T$, $0<L<\infty$,
	 and $C$ depends only on $d$, $m$ and $\mu$.
	Since $A \in APW^2(\R^d)$, the r.h.s. of (\ref{ineq_chiT_W2W2}) goes to zero
	as $L, R\to \infty$. It follows that the l.h.s. of (\ref{ineq_chiT_W2W2}) goes to zero
	as $L, R\to \infty$.
	Since $\chi_T, \nabla \chi_T \in L^2_{\loc, \unif}$, by Proposition \ref{C-W-theorem},
	this implies that $\chi_T, \nabla \chi_T\in APW^2(\R^d)$.
\end{proof}

It follows from the equation (\ref{cond_eq_corrector}), and Theorem \ref{lem_corrector_B2W2} that if $A\in APW^2(\R^d)$ and $u = \chi_{T,j}^\beta$ for some $1\le j\le d$ and $1\le \beta\le m$, then
\begin{equation}\label{cond_uv_eq1}
\Big\langle{a_{ik}^{\alpha\gamma} \frac{\partial u^\gamma}{\partial x_k}\frac{\partial v^\alpha}{\partial x_i}} \Big\rangle
+ T^{-2} \ag{u^\alpha v^\alpha} = - \Big\langle{a_{ij}^{\alpha\beta} \frac{\partial v^\alpha}{\partial x_i}}\Big\rangle,
\end{equation}
for any $v = (v^\alpha) \in H^1_{\text{loc}}(\R^d;\R^m)$ such  that $v^\alpha,\nabla v^\alpha \in B^2(\R^d)$.
This implies that
	\begin{equation}\label{cond_dchi2psi}
	\frac{\partial}{\partial x_i} \left( \chi_{T,j}^{\alpha\beta}\right) \to \psi_{ij}^{\alpha\beta} \qquad \text{strongly in } B^2(\R^d) \text{ as } T \to \infty,
	\end{equation}
	where $\psi = (\psi_{ij}^{\alpha\beta})$ is defined by (\ref{cond_Vpot}), and that
	\begin{equation}\label{cond_chi2zero}
	T^{-2} \ag{|\chi_T|^2} \to 0 \quad \text{ as } T \to \infty.
	\end{equation}
In fact, it was observed in  \cite{Shen-2015},
	\begin{equation} \label{ineq_psi_dchi}
	\mu \ag{|\psi - \nabla\chi_T|^2} + T^{-2} \ag{|\chi_T|^2} \le \Big\langle{a_{ik}^{\alpha\gamma} \left[ \psi_{kj}^{\gamma\beta} - \frac{\partial }{\partial x_k} \left( \chi_{T,j}^{\gamma\beta} \right)\right] \psi_{ij}^{\alpha\beta}}\Big\rangle.
	\end{equation}



\section{Estimates of approximate correctors, part I}

The goal of this section is to establish the following. 

\begin{thm}\label{thm_chiT_2}
	Suppose that $A\in APW^2(\R^d)$ and satisfies the ellipticity condition (\ref{cond_ellipticity}).  
	Then
		\begin{equation}\label{ineq_chiT_2}
	T^{-1} \|\chi_T\|_{S^2_T}
	\le C\inf_{0<L< T} \left\{ \rho_1 ( L, T) + \frac{L}{T} \right\},
	\end{equation}
	where $\rho_1 (L,T)$ is defined by (\ref{rho-k}) and $C$ depends only on 
	$d$, $m$ and $\mu$.
\end{thm}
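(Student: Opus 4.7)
The overall plan is to use the difference estimate in Lemma~\ref{lem_chiT_A} together with the a~priori bound $\|\nabla\chi_T\|_{S^2_T}\le C$ from~\eqref{cor-L-2} to derive a uniform translation-oscillation bound for $\chi_T$; then to invoke the mean-zero property $\langle\chi_T\rangle=0$ through a Parseval-type identity to produce a $B^2$-bound; and finally to boost this to an $S^2_T$-bound by tracking the oscillation of the local energy $|\chi_T|^2$. The main obstacle I expect is the middle step, where the almost-periodic structure and the mean-zero hypothesis must interact to convert a translation-oscillation bound into a norm bound.

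For the translation-oscillation bound, given $y\in\R^d$ and $L\in(0,T)$, the definition of $\rho_1(L,T)$ supplies $z$ with $|z|\le L$ and $\|A(\cdot+y)-A(\cdot+z)\|_{S^p_T}\le\rho_1(L,T)+\varepsilon$, so Lemma~\ref{lem_chiT_A} gives $T^{-1}\|\chi_T(\cdot+y)-\chi_T(\cdot+z)\|_{S^2_T}\le C(\rho_1(L,T)+\varepsilon)$. The fundamental theorem of calculus combined with $\|\nabla\chi_T\|_{S^2_T}\le C$ yields $\|\chi_T(\cdot+z)-\chi_T\|_{S^2_T}\le CL$, and the triangle inequality plus a covering estimate $\|f\|_{S^p_R}\le C_d\|f\|_{S^p_T}$ for $R\ge T$ then produce, uniformly in $y\in\R^d$,
\begin{equation}\label{my-tr-bd}
T^{-1}\|\chi_T(\cdot+y)-\chi_T\|_{B^2}\le C\bigl(\rho_1(L,T)+L/T\bigr).
\end{equation}

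Next, testing the equation~\eqref{cond_eq_corrector} against the constant function~$1$ gives $\langle\chi_T\rangle=0$, and I would use this to prove $\|\chi_T\|_{B^2}\le\tfrac{1}{\sqrt{2}}\sup_{y\in\R^d}\|\chi_T-\chi_T(\cdot+y)\|_{B^2}$. The autocorrelation $\phi(y):=\langle\chi_T(\cdot)\chi_T(\cdot+y)\rangle$ is an a.p.\ function of $y$ with $\phi(0)=\|\chi_T\|_{B^2}^2$. Its mean in $y$ equals $\lim_{R\to\infty}\langle\chi_T\cdot h_R\rangle$ where $h_R(x):=\fint_{B(x,R)}\chi_T\to\langle\chi_T\rangle=0$ in $B^2$ (a standard ergodic-type property of almost-periodic functions with mean zero), so $\langle\phi\rangle_y=0$. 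Since $\phi$ is a.p.\ with $\phi(0)\ge 0$ and mean zero, $\phi$ must attain some value $\phi(y_0)\le 0$ (otherwise $\phi\ge 0$ with zero mean would force $\phi\equiv 0$, contradicting $\phi(0)>0$ unless $\chi_T=0$ in $B^2$, in which case the theorem is vacuous); the polarization identity $\|\chi_T-\chi_T(\cdot+y_0)\|_{B^2}^2=2\|\chi_T\|_{B^2}^2-2\phi(y_0)\ge 2\|\chi_T\|_{B^2}^2$ then yields the claim. Combined with~\eqref{my-tr-bd} this gives $\|\chi_T\|_{B^2}\le CT(\rho_1(L,T)+L/T)$.

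Finally, I would analyze $g(x_0):=\fint_{B(x_0,T)}|\chi_T|^2$, an a.p.\ function with mean $\|\chi_T\|_{B^2}^2$ and supremum $\|\chi_T\|_{S^2_T}^2$. Writing the difference $g(x_0)-g(x_1)$ via $|a|^2-|b|^2=(a+b)\cdot(a-b)$ and applying Cauchy--Schwarz together with~\eqref{my-tr-bd} give $|g(x_0)-g(x_1)|\le C\|\chi_T\|_{S^2_T}\,T(\rho_1(L,T)+L/T)$, so the oscillation $\sup g-\inf g$ is controlled by the same quantity. Since $\inf g\le\langle g\rangle\le\sup g$,
\[
\|\chi_T\|_{S^2_T}^2\le\|\chi_T\|_{B^2}^2+C\|\chi_T\|_{S^2_T}\,T\bigl(\rho_1(L,T)+L/T\bigr),
\]
a quadratic inequality in $\|\chi_T\|_{S^2_T}$ that resolves to $\|\chi_T\|_{S^2_T}\le CT(\rho_1(L,T)+L/T)$. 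Taking the infimum over $L\in(0,T)$ delivers~\eqref{ineq_chiT_2}.
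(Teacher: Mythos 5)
Your proposal is correct, but it reaches (\ref{ineq_chiT_2}) by a genuinely different route than the paper. Both proofs start from the same ingredients: the difference estimate of Lemma \ref{lem_chiT_A}, the gradient bound (\ref{cor-L-2}), and the fact that $\langle\chi_T\rangle=0$ (obtained by testing against a constant). Where they diverge is in how the translation--oscillation bound is converted into a norm bound. The paper's proof is a two-line application of Theorem \ref{general-theorem-4} (with $M=0$, $R=T$), which is proved by an elementary real-variable argument: Lemma \ref{lem_u_p} compares $\|u\|_{S^2_T}$ to shift-oscillation plus $\sup_x|\fint_{Q(x,T)}u|$, Lemma \ref{lem_Uniform_Ave} bounds that uniform average by the shift-oscillation plus a boundary-layer factor $(L/T)^{1/2}\|u\|_{S^2_T}$, and the small factor is absorbed. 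You instead work on the Besicovitch side first: a Parseval/autocorrelation identity ($\phi(y)=\langle\chi_T(\cdot)\chi_T(\cdot+y)\rangle$ has Bohr--Fourier mean $|\langle\chi_T\rangle|^2=0$, hence $\inf_y\phi\le 0$, hence $\|\chi_T\|_{B^2}\le\tfrac{1}{\sqrt 2}\sup_y\|\chi_T-\chi_T(\cdot+y)\|_{B^2}$), and then a second oscillation argument for the local energy $g(x_0)=\fint_{B(x_0,T)}|\chi_T|^2$ to upgrade the $B^2$ bound to $S^2_T$, closing with a quadratic inequality. This is correct, but you end up performing two oscillation-control steps where the paper performs one; the paper's version also makes Theorem \ref{general-theorem-4} available as a reusable tool for Section 7, whereas your argument is special to $\chi_T$. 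On the other hand your route makes the role of the vanishing Bohr--Fourier coefficient at frequency zero and the explicit $1/\sqrt 2$ constant transparent. One point you should flesh out: the claim $\langle\phi\rangle_y=0$ rests on exchanging two averaging limits; your justification via $h_R(x)=\fint_{B(x,R)}\chi_T\to 0$ is the right idea, and the needed uniformity is supplied by Lemma \ref{lem_Uniform_Ave}, but this is precisely the same lemma the paper's argument uses -- so you are not actually avoiding the paper's real-variable ergodic step, only repackaging it.
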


The  estimate (\ref{ineq_chiT_2}) follows from a general inequality (\ref{general-inequality}), which 
may be of independent interest.
The inequality  allows us 
to bound the local (uniform) $L^2$ norm of a function at a specific scale by its oscillation and gradient. 

We use  $Q(x, R)$ to denote the (closed) cube centered at $x$  with side length $R$.

\begin{thm}\label{general-theorem-4}
Let $u\in H^1_{\loc}(\R^d)$.
	Suppose that $u, \nabla u \in L^2_{\loc, \unif}(\R^d)$ and
$$
M=\lim_{r\to \infty} \fint_{Q(0,r)} u \quad \text{ exists.}
$$
 Then there exists $C>0$, depending only on $d$, such that for any
 $0<L\le R<\infty$,
\begin{equation}\label{general-inequality}
\| u\|_{S_R^2}
\le |M|
+ C 
\left\{ \sup_{y\in \R^d} \inf_{|z|\le L}
\| \Delta_{yz} (u)  \|_{S^2_R}
+L\,  \| \nabla u\|_{S^2_R} \right\}.
\end{equation}
\end{thm}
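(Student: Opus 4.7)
The plan is to combine three ingredients: (i) the almost-periodicity hypothesis, which lets us replace translation by an arbitrary vector $y$ by translation by a short vector $z$ of size at most $L$; (ii) a Fundamental-Theorem-of-Calculus/Minkowski bound controlling the short-translation difference by $L\,\|\nabla u\|_{S^2_R}$; and (iii) the existence of the mean $M$, which enters only at the last step, to identify the large-scale spatial average of $u$ with $M$.

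Set $\eta := \sup_{y\in\R^d}\inf_{|z|\le L}\|\Delta_{yz}(u)\|_{S^2_R}$. First I would establish the uniform bound
\[
\sup_{y\in\R^d}\|u(\cdot+y)-u(\cdot)\|_{S^2_R} \le \eta + L\,\|\nabla u\|_{S^2_R},
\]
by splitting $u(\cdot+y)-u(\cdot)=[u(\cdot+y)-u(\cdot+z)]+[u(\cdot+z)-u(\cdot)]$ with $z=z(y)$ chosen to nearly attain the infimum in the definition of $\eta$, writing the second difference as $\int_0^1 z\cdot\nabla u(\cdot+sz)\,ds$, and invoking Minkowski together with the translation-invariance of the $S^2_R$ semi-norm.

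The core of the proof is the following algebraic identity. Fix $x_1\in\R^d$, $r>0$, and let $\psi_r(x):=\fint_{Q(x,r)} u$. Expanding the square and using $\fint_{Q(x,r)}(u-\psi_r(x))=0$ yields
\[
\fint_{Q(0,r)}|u(x+y)-u(x)|^2\,dy \;=\; \fint_{Q(x,r)}|u-\psi_r(x)|^2 \,+\, |u(x)-\psi_r(x)|^2.
\]
Dropping the first (nonnegative) term, then averaging over $x\in B(x_1,R)$ and applying Fubini together with the bound from Step~1,
\[
\fint_{B(x_1,R)}|u-\psi_r|^2\,dx \;\le\; \bigl(\eta+L\,\|\nabla u\|_{S^2_R}\bigr)^2.
\]
Note that the right-hand side is independent of $r$.

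For the final step I would show that $\psi_r(x)\to M$ as $r\to\infty$, uniformly for $x\in B(x_1,R)$. Comparing the cubes $Q(x,r)$ and $Q(0,r)$, the integral of $u$ over their symmetric difference can be controlled by $(|x|/r)\,\|u\|_{L^2_{\loc,\unif}}$-type quantities, since $|Q(x,r)\triangle Q(0,r)|\lesssim |x|\,r^{d-1}$; this is the only place where the strength $u\in L^2_{\loc,\unif}$ (and not just $L^2_{\loc}$) is needed. Together with $\fint_{Q(0,r)}u\to M$, this uniform convergence yields $(\fint_{B(x_1,R)}|\psi_r|^2)^{1/2}\to|M|$. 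Combining with the triangle inequality
\[
\Bigl(\fint_{B(x_1,R)}|u|^2\Bigr)^{1/2} \le \Bigl(\fint_{B(x_1,R)}|\psi_r|^2\Bigr)^{1/2} + \Bigl(\fint_{B(x_1,R)}|u-\psi_r|^2\Bigr)^{1/2}
\]
and letting $r\to\infty$, we obtain $\bigl(\fint_{B(x_1,R)}|u|^2\bigr)^{1/2} \le |M|+\eta+L\,\|\nabla u\|_{S^2_R}$; taking the supremum over $x_1$ gives \eqref{general-inequality}. The main obstacle is finding the identity in Step~2: it is exactly what converts the oscillation-of-translates bound into a variance-type bound against the sliding mean $\psi_r$, after which the ergodic step $\psi_r\to M$ is essentially routine.
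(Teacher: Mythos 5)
Your proof is correct, and it takes a genuinely different route from the paper. The paper's own argument proceeds through two lemmas, both pinned at the scale $R$: first a Minkowski-type bound
\[
\left(\fint_{Q(x,R)}|u|^2\right)^{1/2}\le C\,\sup_{y}\inf_{|z|\le L}\|\Delta_{yz}(u)\|_{S^2_R}+C\,L\,\|\nabla u\|_{S^2_R}+\sup_{x}\Big|\fint_{Q(x,R)}u\Big|,
\]
and second a uniform-mean lemma controlling $\sup_x\big|\fint_{Q(x,R)}u-M\big|$, which however carries an unavoidable error term of size $(L/R)^{1/2}\|u\|_{S^2_R}$. Because that error involves the quantity being estimated, the paper must finish with an absorption step (choose $\theta$ with $C\theta^{1/2}<1/2$, split into $L\le\theta R$ versus $\theta R<L\le R$, handle the second case by a Poincar\'e inequality at scale $R$). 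You bypass this entirely. By comparing $u$ against the sliding mean $\psi_r=\fint_{Q(\cdot,r)}u$ at an \emph{independent} scale $r$, the pointwise identity
\[
\fint_{Q(0,r)}|u(x+y)-u(x)|^2\,dy=\fint_{Q(x,r)}|u-\psi_r(x)|^2+|u(x)-\psi_r(x)|^2
\]
converts the oscillation-of-translates bound into an $r$-uniform variance bound $\fint_{B(x_1,R)}|u-\psi_r|^2\le(\eta+L\|\nabla u\|_{S^2_R})^2$, and then $r\to\infty$ identifies $\psi_r$ with $M$ without any error term. This gives a clean two-step argument with constant $1$ (rather than some unspecified $C$), and incidentally it does not even use the hypothesis $L\le R$. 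What the paper's route buys is that its intermediate Lemma on uniform means (with the $(L/R)^{1/2}$ error) is applied elsewhere (Remarks 4.5 and 4.6) to derive quantitative estimates on $\fint_{Q(x,R)}A$ and $\fint_{Q(x,T)}A\nabla\chi_T$; your argument produces the theorem but not that separately useful estimate. Both proofs start from the same first step, reducing an arbitrary translation $y$ to a short translation $z$ plus the gradient contribution.
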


The proof of Theorem \ref{general-theorem-4} relies on the following two lemmas.

\begin{lem}\label{lem_u_p}
	Let $u\in H^1_{\loc}(\R^d)$.
	Suppose that $u, \nabla u \in L^2_{\loc, \unif}(\R^d)$. Then, for any
	$0<L, R <\infty$,
	\begin{equation}
	\begin{aligned}
	&\sup_{x\in\R^d} \left( \fint_{Q(x,R)} |u|^2 \right)^{1/2} \\ 
	& \le  C\, \sup_{y\in\R^d}\inf_{|z|\le L}
	 \| \Delta_{yz} (u) \|_{S^2_R} 
	+ C  L \|\nabla u\|_{S^2_R}  
	  +  \sup_{x\in\R^d} \left|\fint_{Q(x,R)} u \right|,
	\end{aligned}
	\end{equation}
	where $C$ depends only on $d$.
\end{lem}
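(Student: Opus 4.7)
The plan is to set $\bar u(x) := \fint_{Q(x,R)} u$ and use
\[
\left(\fint_{Q(x_0, R)} |u|^2\right)^{1/2} \le |\bar u(x_0)| + \left(\fint_{Q(x_0, R)} |u-\bar u|^2\right)^{1/2}.
\]
The trivial bound $|\bar u(x_0)| \le \sup_x |\bar u(x)|$ already produces the third term on the right of the lemma, so the entire problem reduces to showing, uniformly in $x_0$,
\[
\left(\fint_{Q(x_0, R)} |u - \bar u|^2\right)^{1/2} \le C\,\rho(L, R) + C\,L\,\|\nabla u\|_{S^2_R},
\]
where I abbreviate $\rho(L, R) := \sup_{y \in \R^d} \inf_{|z|\le L} \|\Delta_{yz} u\|_{S^2_R}$.

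My key identity is $u(x) - \bar u(x) = \fint_{Q(x, R)} (u(x) - u(w))\,dw$. Applying Jensen's inequality in $w$, integrating over $x \in Q(x_0, R)$, substituting $w = x + h$ so that $h$ ranges over $Q(0, R)$, and swapping the order by Fubini yields
\[
\fint_{Q(x_0, R)} |u - \bar u|^2\,dx \le \fint_{Q(0, R)} \left(\fint_{Q(x_0, R)} |u(x+h) - u(x)|^2\,dx\right) dh.
\]
Using $Q(x_0, R) \subset B(x_0, \sqrt{d}\, R)$ together with the elementary comparison $\|F\|_{S^2_{\sqrt d\, R}} \le C\|F\|_{S^2_R}$ coming from (2.5), the inner cube-average is dominated by $C_d\,\|T_h u - u\|_{S^2_R}^2$, where $T_h u(x) := u(x + h)$. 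So the whole lemma reduces to proving, for every $h \in \R^d$,
\[
\|T_h u - u\|_{S^2_R} \le C\,\rho(L, R) + C\,L\,\|\nabla u\|_{S^2_R}.
\]

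For this final step, given $h$ I would use the definition of $\rho$ to pick $|z| \le L$ with $\|T_h u - T_z u\|_{S^2_R} \le \rho(L, R) + \varepsilon$. Then $\|T_h u - u\|_{S^2_R} \le \|T_h u - T_z u\|_{S^2_R} + \|T_z u - u\|_{S^2_R}$, and for the small-translation term I would apply the representation $u(x+z) - u(x) = \int_0^1 \nabla u(x + t z) \cdot z\,dt$ (valid for smooth $u$, extended to $H^1_{\loc}$ by density), Cauchy--Schwarz in $t$, and the identity $\fint_{B(x_0, R)} |\nabla u(y + tz)|^2\,dy = \fint_{B(x_0 + tz, R)} |\nabla u|^2 \le \|\nabla u\|_{S^2_R}^2$ obtained by shifting the ball of integration, to conclude $\|T_z u - u\|_{S^2_R} \le |z|\,\|\nabla u\|_{S^2_R} \le L\,\|\nabla u\|_{S^2_R}$. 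Letting $\varepsilon \to 0$ completes the argument.

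I foresee no deep obstacle; the only delicate bookkeeping is the honest conversion between the cube-averages that appear in the statement and the ball-averages built into $\|\cdot\|_{S^2_R}$, which costs only dimension-dependent constants that are absorbed into $C$.
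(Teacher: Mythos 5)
Your proof is correct and takes essentially the same route as the paper: both subtract a cube average (your $\bar u(x)=\fint_{Q(x,R)}u$ plays the role of the paper's $\fint_{Q(x,R)}u(t+y)\,dy$), both convert the remainder to an average of translates $u(\cdot+h)-u$, bound it by $\sup_h\|T_hu-u\|_{S^2_R}$, and both finish with the triangle inequality through a nearby $z$ with $|z|\le L$ together with the elementary bound $\|T_zu-u\|_{S^2_R}\le|z|\,\|\nabla u\|_{S^2_R}$. The only cosmetic difference is that you use Jensen followed by Fubini in the integral variable $h$, whereas the paper uses Minkowski's integral inequality at the same step; these are interchangeable here.
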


\begin{proof}
	Note that for any $x\in\R^d$,
	\begin{align*}
	\left(\fint_{Q(x,R)} |u|^2 \right)^{1/2} 
	&\le \left(\fint_{Q(x,R)} \Abs{u(t) - \fint_{Q(x,R)} u(t+y) dy}^2 dt \right)^{1/2} +\sup_{z\in\R^d} \Abs{\fint_{Q(z,R)} u} \\
	&= \left(\fint_{Q(x,R)} \Abs{\fint_{Q(x,R)} [u(t) - u(t+y)] dy}^2 dt \right)^{1/2} +\sup_{z\in\R^d} \Abs{\fint_{Q(z,R)} u} \\
	& \le \fint_{Q(x,R)} \left( \fint_{Q(x,R)} |u(t) - u(t+y)|^2 dt\right)^{1/2} dy +\sup_{z\in\R^d} \Abs{\fint_{Q(z,R)} u}\\
	&\le C \sup_{y\in \R^d}\| u(\cdot) - u(\cdot +y)\|_{S^2_R} +\sup_{z\in\R^d} \Abs{\fint_{Q(z,R)} u},
	\end{align*}
	where we have used Minkowski's inequality for the second inequality.
	
	Next, using
	$$
	\aligned
	\| u(\cdot) -u(\cdot +y)\|_{S^2_R}
	&\le \| u(\cdot +y) -u(\cdot +z) \|_{S^2_R} +\| u(\cdot +z) -u(\cdot)\|_{S^2_R}\\
	&\le \| u(\cdot +y) -u(\cdot +z)\|_{S^2_R} +  L \|\nabla u\|_{S^2_R},
	\endaligned
	$$
	where $z\in \R^d$ and $|z|\le L$, we obtain 
	$$
	\sup_{y\in \R^d}\| u(\cdot) - u(\cdot +y)\|_{S^2_R}
	\le  \sup_{y\in \R^d} \inf_{|z|\le L}
	\| \Delta_{yz} (u)  \|_{S^2_R} +  L \, \|\nabla u\|_{S^2_R}.
	$$
		This completes the proof.
\end{proof}

\begin{lem}\label{lem_Uniform_Ave}
	Let $u\in L^p_{\text{\loc, \unif}}(\R^d)$ for some $p>1$. Suppose that 
	\begin{equation*}
	 M = \lim_{r\to\infty} \fint_{Q(0,r)} u \text{ exists.}
	\end{equation*}
	Then, for any $0<L \le R<\infty$,
	\begin{align}
	\begin{aligned}
	\sup_{x\in\R^d} \Abs{\fint_{Q(x,R)} u - M} &\le 2\sup_{y\in\R^d} \inf_{\substack{z\in\R^d \\ |z|\le L}} \fint_{Q(0,R)} |u(t+y) - u(t+z)| dt\\
	& \qquad + C \left(\frac{L}{R}\right)^{1/{p'}} \sup_{x\in\R^d} \left( \fint_{Q(x,R)} |u|^p \right)^{1/p},
	\end{aligned}
	\end{align}
	where $C$ depends only on $d$ and $p$.
\end{lem}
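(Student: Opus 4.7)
The plan is to first prove a comparison estimate showing that the cube-average $\fint_{Q(y,R)} u$ depends only weakly on the center $y$ (up to the almost-periodicity quantity
$$\eta := \sup_{y \in \R^d} \inf_{|z|\le L} \fint_{Q(0,R)} |u(t+y) - u(t+z)|\,dt),$$
and then to exploit the hypothesis $M = \lim_{r\to\infty}\fint_{Q(0,r)} u$ by tiling a large cube $Q(0, NR)$ with $N^d$ translates of the $R$-cube.

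The key intermediate bound I would aim for is
\begin{equation*}
\Bigl|\fint_{Q(y_1,R)} u - \fint_{Q(y_2,R)} u\Bigr|
\le 2\eta + C\Bigl(\frac{L}{R}\Bigr)^{1/p'} \sup_{x\in\R^d}\Bigl(\fint_{Q(x,R)} |u|^p\Bigr)^{1/p}
\qquad \text{for all } y_1, y_2 \in \R^d.
\end{equation*}
To prove it, I would choose (nearly-optimal) $z_i \in B(0,L)$ corresponding to $y_i$ and insert $u(\cdot + z_1)$, $u(\cdot + z_2)$ into the triangle inequality. The two outer differences each contribute at most $\eta$ by the definition of $\eta$, and the middle difference reduces, via the substitution $s = t + z_i$, to comparing $\fint_{Q(z_1,R)} u$ with $\fint_{Q(z_2,R)} u$, i.e., an integral over the symmetric difference $Q(z_1,R)\triangle Q(z_2,R)$. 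Since $|z_1-z_2|\le 2L\le 2R$, this set has measure at most $CLR^{d-1}$; H\"older's inequality with exponents $(p,p')$, together with the arithmetic identity $\tfrac{d-1}{p'} + \tfrac{d}{p} - d = -\tfrac{1}{p'}$, produces exactly the desired $(L/R)^{1/p'}$-factor (the cubes $Q(z_i, R)$ all sit inside $Q(0, 3R)$, which is covered by a bounded number of $R$-cubes, giving the $\sup_x \fint_{Q(x,R)} |u|^p$ on the right).

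To finish, fix $x\in\R^d$ and note that $Q(0, NR)$ is, up to a null set, the disjoint union of $N^d$ cubes $Q(y_k, R)$, so $\fint_{Q(0,NR)} u = N^{-d}\sum_k \fint_{Q(y_k,R)} u$. Averaging the comparison estimate with $y_1=x$, $y_2=y_k$ over $k$ gives
\begin{equation*}
\Bigl|\fint_{Q(x,R)} u - \fint_{Q(0,NR)} u\Bigr| \le 2\eta + C\Bigl(\frac{L}{R}\Bigr)^{1/p'} \sup_{x\in\R^d}\Bigl(\fint_{Q(x,R)} |u|^p\Bigr)^{1/p}
\end{equation*}
uniformly in $N$. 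Passing to the limit $N\to\infty$ and invoking the hypothesis $\fint_{Q(0,NR)} u \to M$ yields the lemma.

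The only mildly technical step is the symmetric-difference computation in the middle of the triangle inequality, where the $L/R$ ratio enters linearly at the set-measure level and is then refined to $(L/R)^{1/p'}$ by H\"older. The rest is an essentially arithmetic rearrangement of triangle inequalities, the tiling of $Q(0,NR)$, and passage to the limit in $N$; no serious obstacle is expected.
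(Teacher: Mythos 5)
Your proposal is correct and takes essentially the same route as the paper: compare cube averages at two centers via the triangle inequality through nearly-optimal shifts $z_i$, estimate the residual symmetric-difference integral by H\"older with the $(L/R)^{1/p'}$ factor, tile a large cube $Q(0,NR)$ by $N^d$ translates of the $R$-cube to average the comparison estimate, and let $N\to\infty$ to bring in $M$. The paper organizes the bookkeeping slightly differently (comparing each $\fint_{Q(y,R)}u$ to the fixed center $\fint_{Q(0,R)}u$ and then separately to $\fint_{Q(0,kR)}u$, so the factor $2$ on $\eta$ appears in the last step rather than in the two-center estimate), but this is cosmetic; the key ingredients and the arithmetic identity $\tfrac{d-1}{p'}+\tfrac{d}{p}-d=-\tfrac{1}{p'}$ are identical.
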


\begin{proof}
	Observe that if $0<L\le  R$ and $|z|\le L$,
	\begin{equation}
	\aligned\label{4-100}
	\Abs{ \fint_{Q(z,R)} u - \fint_{Q(0,R)} u} 
	&\le \frac{1}{R^d} \int_{Q(z,R)\setminus Q(0,R)} |u| + \frac{1}{R^d} \int_{Q(0,R)\setminus Q(z,R)}|u| \\
	& \le C \left(\frac{L}{R}\right)^{1/{p'}} \sup_{x\in\R^d} \left( \fint_{Q(x,R) } |u|^p\right)^{1/p},
	\endaligned
	\end{equation}
	where $C$ depends only on $d$ and $p$. It follows that for any $y\in\R^d$,
	\begin{align*}
	\Abs{ \fint_{Q(y,R)} u - \fint_{Q(0,R)} u} 
	&\le \Abs{ \fint_{Q(y,R)} u - \fint_{Q(z,R)} u}  + \Abs{ \fint_{Q(z,R)} u - \fint_{Q(0,R)} u}  \\
	& \le \Abs{ \fint_{Q(y,R)} u - \fint_{Q(z,R)} u} + C \left(\frac{L}{R}\right)^{1/{p'}} \sup_{x\in\R^d} \left( \fint_{Q(x,R) } |u|^p\right)^{1/p}.
	\end{align*}
	Hence,
	\begin{align*}
	&\sup_{y\in\R^d} \Abs{ \fint_{Q(y,R)} u - \fint_{Q(0,R)} u} \\
	& \le \sup_{y\in\R^d} \inf_{\substack{z\in\R^d \\ |z|\le L}} \fint_{Q(0,R)} |u(t+y) - u(t+z)| dt + C \left(\frac{L}{R}\right)^{1/{p'}} \sup_{x\in\R^d} \left( \fint_{Q(x,R) } |u|^p\right)^{1/p}.
	\end{align*}
	Using
	\begin{equation*}
	\Abs{\fint_{Q(0,R)} u - \fint_{Q(0,kR)} u} \le \frac{1}{k^d}\sum_{i=1}^{k^d} \Abs{\fint_{Q(0,R)} u - \fint_{Q(x_i,R)} u}, 
	\end{equation*}
	where $k\ge 1$ and $Q(0,kR) = \cup_{i=1}^{k^d} Q(x_i,R)$, we then obtain
	\begin{align*}
	&\sup_{y\in\R^d} \Abs{ \fint_{Q(y,R)} u - \fint_{Q(0,kR)} u} \\
	&\le \sup_{y\in\R^d} \Abs{ \fint_{Q(y,R)} u - \fint_{Q(0,R)} u} + \Abs{ \fint_{Q(0,R)} u - \fint_{Q(0,kR)} u} \\
	& \le 2 \sup_{y\in\R^d} \Abs{ \fint_{Q(y,R)} u - \fint_{Q(0,R)} u} \\
	& \le 2\sup_{y\in\R^d} \inf_{\substack{z\in\R^d \\ |z|\le L}} \fint_{Q(0,R)} |u(t+y) - u(t+z)| dt 
	+ C \left(\frac{L}{R}\right)^{1/{p'}} \sup_{x\in\R^d} \left( \fint_{Q(x,R) } |u|^p\right)^{1/p},
	\end{align*}
	from which the lemma follows by letting $k\to\infty$.
\end{proof}

\begin{rmk}\label{gradient-remark}
{\rm
In the place of (\ref{4-100}) we may also use
$$
\aligned
\Abs{ \fint_{Q(z,R)} u - \fint_{Q(0,R)} u} 
&\le \Abs{ \fint_{Q(z,R)} u - \fint_{Q(0,2R)} u} 
+\Abs{ \fint_{Q(0,R)} u - \fint_{Q(0,2R)} u} \\
&\le C  R \left(\fint_{Q(0, 2R)} |\nabla u|^2\right)^{1/2},
\endaligned
$$
where the last step follows by Poincar\'e inequality.
 This would lead to the estimate
 \begin{equation}\label{4-200}
 \aligned 
 \sup_{x\in\R^d} \Abs{\fint_{Q(x,R)} u - M} &\le 2\sup_{y\in\R^d} \inf_{\substack{z\in\R^d \\ |z|\le R}} 
 \fint_{Q(0,R)} |u(t+y) - u(t+z)| dt\\
	& \qquad + C  R  \sup_{x\in\R^d} \left( \fint_{Q(x,R)} |\nabla u|^2 \right)^{1/2}
	\endaligned
 \end{equation}
for any $0<R<\infty$.
}
\end{rmk}

\begin{rmk}\label{ineq_A_ave}
{\rm
	It follows from Lemma \ref{lem_Uniform_Ave} that functions in $APW^2(\R^d)$ have uniform mean
	values. In particular,
	\begin{equation}
	\sup_{x\in\R^d} \Abs{\fint_{Q(x,R)} A - \ag{A} } \le C \inf_{0<L<R} \left\{ \rho_1(L, R) + \frac{L}{R} \right\},
	\end{equation}
	where $C$ depends only on $d$, $m$ and $\mu$.
	}
\end{rmk}

\begin{rmk}\label{ineq_AdchiT_ave}
{\rm
	Let $u(x) = A(x)\nabla \chi_T(x)$, $R = T$ and $p=2$ in  Lemma \ref{lem_Uniform_Ave}. Observe that 
	by (\ref{ineq_chiT_B2Bq}),
	\begin{align*}
	&\fint_{Q(0,T)} |u(t+y) - u(t+z)| dt \\
	&\le C\left( \fint_{Q(0,T)} |A(t+y) - A(t+z)|^2 dt\right)^{1/2} + C \fint_{Q(0,T)} |\nabla \chi_T(t+y) - \nabla\chi_T(t+z)| dt \\
	& \le C \sup_{x\in\R^d} \left( \fint_{Q(x,T)} |A(t+y) - A(t+z)|^q dt \right)^{1/q},
	\end{align*}
	for some $q\in (2,\infty)$. It follows by Lemma \ref{lem_Uniform_Ave} that
	\begin{equation}
	\sup_{x\in\R^d} \Abs{ \fint_{Q(x,T)} A\nabla \chi_T - \ag{A\nabla \chi_T}} \le C \inf_{0<L<T} \left\{ \rho_1( L,T) + \left(\frac{L}{T}\right)^{1/2} \right\},
	\end{equation}
	where $C$ depends only on $d$, $m$ and $\mu$. This estimate will be used in the proof of Theorem \ref{thm_compactness}.
	}
\end{rmk}

We are now in a position to give the proof of Theorem \ref{general-theorem-4}.

\begin{proof}[\bf Proof of Theorem \ref{general-theorem-4}]
By considering the function $u-M$ we may assume that $\langle u \rangle=0$.
It follows from Lemmas  \ref{lem_u_p} and  \ref{lem_Uniform_Ave} that for any $0<L\le R<\infty$,
$$
\aligned
\| u\|_{S^2_R}
& \le C\, \sup_{y\in\R^d}\inf_{|z|\le L}
	 \| \Delta_{yz} (u) \|_{S^2_R} 
	+ C  L \|\nabla u\|_{S^2_R}  
		  + \sup_{x\in\R^d} \left|\fint_{Q(x,R)} u \right|\\
	  &\le C \sup_{y\in\R^d}\inf_{|z|\le L}
	 \| \Delta_{yz} (u) \|_{S^2_R} 
	+ C  L \|\nabla u\|_{S^2_R}  
	 +C \left(\frac{L}{R}\right)^{1/2} \| u\|_{S^2_R},
	  \endaligned
	  $$
	  where $C$ depends only on $d$.
	  Since $u, \nabla u \in L^2_{\loc, \unif}$, it follows that
	  $\|u\|_{S^2_R} $ and $\|\nabla u\|_{S^2_R}$ are finite for any $R>0$.
	Next, we fix a constant $\theta\in (0,1)$ so small that $C\theta^{1/2} < 1/2$. Then,
	if $0<L\le \theta R$, we have
	$$
\| u\|_{S^2_R} \le C\, \sup_{y\in\R^d}\inf_{|z|\le L}
	 \| \Delta_{yz} (u) \|_{S^2_R} 
	+ C  L \|\nabla u\|_{S^2_R}.
	$$
	Finally,  we observe that
	if $\theta R<L \le R$, the estimate (\ref{general-inequality}) follows from
	 Lemma \ref{lem_u_p} and (\ref{4-200}).	
\end{proof}

\begin{proof}[\bf Proof of Theorem \ref{thm_chiT_2}]
To see (\ref{ineq_chiT_2}), we
		 let $L = T$ in  (\ref{general-inequality}) and use (\ref{ineq_chiT_B2Bq}) and
		  the fact that $\|\nabla \chi_T\|_{S^2_T}\le C$.
\end{proof}



\section{A compactness  theorem}

In this section we establish a compactness theorem, which extends Theorem \ref{homo-theorem} 
in the case $A\in APW^2(\R^d)$. It will play
  a key role in the compactness argument in the next section.
Throughout this section we will assume that 
$A\in APW^2(\R^d)$ and satisfies the ellipticity condition (\ref{cond_ellipticity}).

\begin{thm}\label{thm_compactness}
	Let $\Omega$ be a bounded Lipschitz domain in $\R^d$.
	Suppose that $\{u_\ell \}_{
	\ell =1}^\infty \subset H^1(\Omega;\R^m)$ and
	\begin{equation*}
	-\text{\rm div} (A^\ell (x/\e_\ell) \nabla u_\ell) + \lambda_\ell  u_\ell = F_\ell \quad \text{in } \Omega,
	\end{equation*}
	where $\e_\ell \to 0$, $\lambda_\ell \to \lambda$,
	and $A^\ell(y) = A(y+x_\ell)$ for some $x_\ell \in \R^d$. 
	Assume that $u_\ell \rightharpoonup u_0$ weakly in $H^1(\Omega; \R^m)$ and $F_\ell \to F_0$
	 strongly in $H^{-1}(\Omega;\R^m)$. Then $A^\ell(x/\e_\ell) \nabla u_\ell \rightharpoonup \widehat{A} \nabla u_0$ weakly in $L^2(\Omega;\R^{m\times d})$, and $u_0\in H^1(\Omega;\R^m)$ is a weak solution of
	\begin{equation*}
	-\text{\rm div}(\widehat{A} \nabla u_0) + \lambda u_0 = F_0 \quad \text{in } \Omega,
	\end{equation*}
	where $\widehat{A}$ is the homogenized matrix of $A$.
\end{thm}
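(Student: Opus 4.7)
The plan is to adapt Tartar's method of oscillating test functions to the translated, adjoint setting, using the approximate correctors constructed in Section 3. The starting observation is that the $W^2$-semi-norm is translation invariant, so each $A^\ell(y) = A(y + x_\ell)$ lies in $APW^2(\R^d)$ with the \emph{same} modulus $\rho_1$ as $A$; the corresponding adjoint approximate correctors are the translates $\chi^{\ell,*}_T(\cdot) = \chi^*_T(\cdot + x_\ell)$; and $\widehat{A^\ell} = \widehat{A}$ for every $\ell$. Hence the bounds \eqref{cor-L-2}, \eqref{ineq_chiT_p}, Theorem \ref{thm_chiT_2}, and the uniform mean-value estimate of Remark \ref{ineq_AdchiT_ave} all hold with constants independent of $\ell$.

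Fix an affine function $P(x) = x_{j_0}\, e^{\beta_0}$ and write $\eta(T) := \inf_{0 < L < T}\{\rho_1(L,T) + L/T\}$, which tends to zero by the $APW^2$ hypothesis. I will construct the oscillating test function
\[
\phi_\ell(x) \;:=\; P(x) + \e_\ell\, \chi^{\ell,*}_{T_\ell}(x/\e_\ell),
\]
where $T_\ell \to \infty$ is chosen so that $\e_\ell T_\ell\, \eta(T_\ell) \to 0$ and $\eta(T_\ell)/(\e_\ell T_\ell) \to 0$ simultaneously (possible because $\eta(T) \to 0$; e.g.\ choose $T_\ell$ with $\e_\ell T_\ell = \sqrt{\eta(T_\ell)}$, after replacing $\eta$ by its decreasing majorant). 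Theorem \ref{thm_chiT_2} then gives $\|\phi_\ell - P\|_{L^2(\Omega)} \le C\, \e_\ell T_\ell\, \eta(T_\ell) \to 0$, the bound \eqref{cor-L-2} shows $\{\nabla \phi_\ell\}$ is bounded in $L^2(\Omega)$, and the adjoint corrector equation yields
\[
-\text{\rm div}\bigl((A^\ell)^*(x/\e_\ell)\nabla \phi_\ell\bigr) \;=\; -\e_\ell^{-1}T_\ell^{-2}\, \chi^{\ell,*}_{T_\ell}(x/\e_\ell) \;=:\; -R_\ell
\]
with $\|R_\ell\|_{L^2(\Omega)} \le C\, \eta(T_\ell)/(\e_\ell T_\ell) \to 0$.

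Next I perform Tartar's cross-multiplication: test the equation for $u_\ell$ with $\phi_\ell\,\psi$ and the equation for $\phi_\ell$ with $u_\ell\,\psi$, for $\psi \in C_0^\infty(\Omega)$; on subtracting, the two $\int A^\ell(x/\e_\ell) \nabla u_\ell \cdot \nabla \phi_\ell\,\psi$ terms cancel, leaving
\begin{equation*}
\int_\Omega A^\ell(x/\e_\ell) \nabla u_\ell \cdot (\phi_\ell \otimes \nabla\psi)\,dx - \int_\Omega B^\ell_{T_\ell}(x/\e_\ell) \cdot (u_\ell \otimes \nabla\psi)\,dx + \lambda_\ell \int_\Omega u_\ell \cdot \phi_\ell\,\psi\,dx = \langle F_\ell, \phi_\ell\,\psi\rangle + o(1),
\end{equation*}
where $B^\ell_T(y) := (A^\ell)^*(y)\bigl(\nabla P + \nabla \chi^{\ell,*}_T(y)\bigr)$ and the $o(1)$ absorbs $\int R_\ell\cdot u_\ell\psi$. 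As $\ell \to \infty$: $A^\ell(\cdot/\e_\ell) \nabla u_\ell \rightharpoonup b_0$ weakly in $L^2$; $\phi_\ell \to P$ and (by Rellich) $u_\ell \to u_0$ strongly in $L^2$; and $B^\ell_{T_\ell}(\cdot/\e_\ell) \rightharpoonup (\widehat{A})^*\nabla P$ weakly in $L^2(\Omega)$. The last convergence uses both that $\langle B^\ell_{T_\ell}\rangle = \langle B_{T_\ell}\rangle \to (\widehat{A})^*\nabla P$ (definition of $\widehat A$ applied to $A^*$) and that, by the adjoint analogue of Remark \ref{ineq_AdchiT_ave}, the averages of $B^\ell_T$ over balls of radius $R$ converge uniformly in $x$ and $\ell$ to $\langle B_T\rangle$; taking $R_\ell = \log(1/\e_\ell)$ so that $\e_\ell R_\ell \to 0$, a standard Riemann-sum argument closes the weak convergence. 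Passing to the limit yields
\begin{equation*}
\int_\Omega b_0 \cdot (P \otimes \nabla\psi)\,dx - \int_\Omega (\widehat{A})^*\nabla P \cdot (u_0 \otimes \nabla\psi)\,dx + \lambda \int_\Omega u_0 \cdot P\,\psi\,dx = \langle F_0, P\,\psi\rangle.
\end{equation*}
Testing the direct weak limit $-\text{\rm div}(b_0) + \lambda u_0 = F_0$ against $P\,\psi$ and subtracting cancels the $\int b_0 \cdot (P\otimes\nabla\psi)$ and the $\lambda$-terms, yielding $(\widehat{A}\nabla u_0)^{\beta_0}_{j_0} = (b_0)^{\beta_0}_{j_0}$ distributionally. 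Since $(j_0, \beta_0)$ is arbitrary, $b_0 = \widehat{A}\nabla u_0$, giving the claimed homogenized equation.

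The main obstacle is the construction of $T_\ell$: the competing requirements $\e_\ell T_\ell\, \eta(T_\ell) \to 0$ (for $\phi_\ell \to P$ in $L^2$) and $\eta(T_\ell)/(\e_\ell T_\ell) \to 0$ (for $R_\ell \to 0$ in $L^2$) can be simultaneously met only thanks to the qualitative $o(T)$ bound $T^{-1}\|\chi^{\ell,*}_T\|_{S^2_T} \le C\,\eta(T) \to 0$ of Theorem \ref{thm_chiT_2}; this is the crucial input that elevates the argument from uniformly a.p.\ coefficients to the full $APW^2$ class. All other ingredients (weak--strong convergence of products and the uniform mean-value estimate of Remark \ref{ineq_AdchiT_ave}) are extensions of the qualitative theory (Theorem \ref{homo-theorem}), made uniform over translates by the translation-invariance of the $W^2$ semi-norm.
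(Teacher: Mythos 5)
Your argument is correct and follows the same route as the paper: Tartar's method with the oscillating adjoint-corrector test function $\phi_\ell = P + \e_\ell\,\chi^{\ell,*}_{T_\ell}(\cdot/\e_\ell)$, passing to the weak limit via the uniform mean-value estimates (Remarks~\ref{ineq_A_ave}, \ref{ineq_AdchiT_ave}) and the decay $T^{-1}\|\chi_T\|_{S^2_T}\to 0$ from Theorem~\ref{thm_chiT_2}. The one thing to flag is a conceptual misstep in how you present the choice of $T_\ell$: you frame it as resolving a tension between the competing constraints $\e_\ell T_\ell\,\eta(T_\ell)\to 0$ and $\eta(T_\ell)/(\e_\ell T_\ell)\to 0$, and you introduce the tuning $\e_\ell T_\ell = \sqrt{\eta(T_\ell)}$ to satisfy both. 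In fact there is no tension at all: the natural choice $T_\ell = \e_\ell^{-1}$, which is what the paper makes in Lemma~\ref{lem_echiT2zero}, gives $\e_\ell T_\ell = 1$ identically, so both of your conditions collapse to the single requirement $\eta(\e_\ell^{-1})\to 0$. You have correctly isolated the crucial input from Theorem~\ref{thm_chiT_2}, but you have misread how it enters — it is not needed to balance two competing rates, only to make $\e_\ell\,\chi^{\ell,*}_{T_\ell}(\cdot/\e_\ell)\to 0$ at the scale where $T$ and $\e^{-1}$ coincide. This also makes the flux-convergence step cleaner: with $T_\ell = \e_\ell^{-1}$ one checks $\fint_{B(z,r)} B^\ell_{T_\ell}(x/\e_\ell)\,dx \to \widehat{A^*}\nabla P$ for each fixed ball by splitting the error into a uniform-deviation term (Lemma~\ref{lem_Uniform_Ave} at scale $rT_\ell$) plus $\big|\langle A^*(\nabla P + \nabla\chi^*_{T_\ell})\rangle - \widehat{A^*}\nabla P\big|$, which vanishes by~(\ref{cond_dchi2psi}); your ``take $R_\ell = \log(1/\e_\ell)$ and do a Riemann-sum argument'' handles the same step less transparently.
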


We will prove Theorem \ref{thm_compactness} by using Tartar's method of test functions and the following
lemma.

\begin{lem}\label{lem_echiT2zero}
	 Let $\{x_\ell\} \subset \R^d, \e_\ell \to 0$, and $T_\ell = \e_\ell^{-1}$. Then for any bounded domain $\Omega$,
	\begin{equation}\label{cond_echiT2zero}
	\e_\ell  \chi_{T_\ell} ((x/\e_\ell) + x_\ell) \rightharpoonup 0 \text{ weakly in } H^1(\Omega),
	\end{equation}
	and
	\begin{equation}\label{cond_adchi2ahat}
	a_{ik}^{\alpha\gamma} ((x/\e_\ell) + x_\ell) \frac{\partial}{\partial x_k} \left\{ x_j\delta^{\gamma\beta} 
	+ \e_\ell \chi_{{T_\ell},j}^{\gamma\beta} ((x/\e_\ell) + x_\ell) \right\} \rightharpoonup \widehat{a}_{ij}^{\alpha\beta} \text{ weakly in } L^2(\Omega),
	\end{equation}
	as $\ell\to\infty$.
\end{lem}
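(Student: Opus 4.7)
My plan is to derive both (\ref{cond_echiT2zero}) and (\ref{cond_adchi2ahat}) from two ingredients already in place: the $S^2$-based bounds on approximate correctors supplied by Lemma~\ref{W-B-lemma} and Theorem~\ref{thm_chiT_2}, and the translation-uniform decay of cube averages stated in Remarks~\ref{ineq_A_ave} and~\ref{ineq_AdchiT_ave}. Both conclusions will emerge after one unwinds the rescaling $y = (x/\e_\ell) + x_\ell$, $T_\ell = \e_\ell^{-1}$.

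For part (\ref{cond_echiT2zero}), set $v_\ell(x) := \e_\ell \chi_{T_\ell}((x/\e_\ell)+x_\ell)$. A change of variables bounds $\|v_\ell\|_{L^2(\Omega)}$ and $\|\nabla v_\ell\|_{L^2(\Omega)}$ by integrals of $|\chi_{T_\ell}|^2$ and $|\nabla \chi_{T_\ell}|^2$ over a ball $B(x_\ell, C(\mathrm{diam}\,\Omega)/\e_\ell)$ of radius comfortably larger than $T_\ell$. Lemma~\ref{W-B-lemma} then produces a uniform $H^1$-bound on $v_\ell$, while Theorem~\ref{thm_chiT_2} provides the decisive gain $T_\ell^{-1}\|\chi_{T_\ell}\|_{S^2_{T_\ell}}\to 0$ that promotes the $L^2$-bound on $v_\ell$ into $\|v_\ell\|_{L^2(\Omega)}\to 0$. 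Since $v_\ell$ is bounded in $H^1(\Omega)$ and tends to $0$ strongly in $L^2(\Omega)$, its unique weak $H^1$-limit must vanish.

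For part (\ref{cond_adchi2ahat}), expanding the derivative rewrites the left-hand side as $a_{ij}^{\alpha\beta}(y_\ell) + (a_{ik}^{\alpha\gamma}\partial_k\chi^{\gamma\beta}_{T_\ell,j})(y_\ell)$ with $y_\ell = (x/\e_\ell)+x_\ell$. In view of (\ref{cond_Ahat}), it suffices to establish weak $L^2(\Omega)$-convergence of these two terms to $\langle a_{ij}^{\alpha\beta}\rangle$ and $\langle a_{ik}^{\alpha\gamma}\psi_{kj}^{\gamma\beta}\rangle$, respectively. Each term is uniformly bounded in $L^2(\Omega)$ by Lemma~\ref{W-B-lemma}, so weak convergence reduces, via density of step functions, to checking convergence against the characteristic function of a cube $Q\subset\Omega$. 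After another change of variables this becomes the statement that $\fint_{Q/\e_\ell+x_\ell}A \to \langle A\rangle$ and $\fint_{Q/\e_\ell+x_\ell}A\nabla\chi_{T_\ell} \to \langle A\psi\rangle$, both uniformly in $x_\ell$. The first is supplied by Remark~\ref{ineq_A_ave}, and the second follows by combining Remark~\ref{ineq_AdchiT_ave} with the $B^2$-convergence $\nabla\chi_{T_\ell}\to \psi$ from (\ref{cond_dchi2psi}).

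The main obstacle is the presence of the translation parameter $x_\ell$: if it were fixed, Theorem~\ref{homo-theorem} would apply directly. The saving observation is that $\rho_1(L,R)$ and the $S^p_R$ seminorms are translation-invariant, so Remarks~\ref{ineq_A_ave}--\ref{ineq_AdchiT_ave} deliver uniform-in-base-point decay that absorbs the moving center $x_\ell$ without any extra work.
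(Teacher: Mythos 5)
Your proof is correct and follows essentially the same route as the paper's: for (\ref{cond_echiT2zero}) it combines a uniform $H^1$ bound (from the $S^2$-bounds on $\chi_T, \nabla\chi_T$) with the strong $L^2$ decay $T^{-1}\|\chi_T\|_{S^2_T}\to 0$ from Theorem~\ref{thm_chiT_2}; for (\ref{cond_adchi2ahat}) it decomposes the weak limit into the three pieces handled by Remark~\ref{ineq_A_ave}, Remark~\ref{ineq_AdchiT_ave}, and (\ref{cond_dchi2psi}), reducing by density to averages over boxes. Your use of cubes rather than balls and of Lemma~\ref{W-B-lemma} in place of (\ref{ineq_chiT_p})--(\ref{ineq_chiT_pbar}) is a cosmetic, not a substantive, difference.
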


\begin{proof}
	We start with the proof of (\ref{cond_adchi2ahat}).
	 Let $\{g_\ell\}$ denote the sequence in  (\ref{cond_adchi2ahat}). 
	 Since $T_\ell = \e_\ell^{-1}$, in view of estimate (\ref{ineq_chiT_p}), $\{g_\ell\}$ is bounded in $L^2(\Omega)$. 
	 Thus, by a density argument, it suffices to show that
	\begin{equation}
	\fint_{B(z,r)} g_\ell \to \widehat{a}_{ij}^{\alpha\beta} \qquad \text{as } \ell\to\infty,
	\end{equation}
	for any ball $B(z,r)$ in $\R^d$. Recall that $\widehat{A} = \ag{A} + \ag{A\psi}$, where $\psi = (\psi_{ij}^{\alpha\beta})$ is defined by (\ref{cond_Vpot}). Hence,
	\begin{align*}
	&\Abs{\fint_{B(z,r)} g_\ell - \widehat{a}_{ij}^{\alpha\beta}} 
	= \Abs{\fint_{B(x_\ell+T_\ell z,rT_\ell)} a_{ik}^{\alpha\gamma}(y) \left\{ \delta_{jk} \delta^{\gamma\beta} + \frac{\partial}{\partial y_k} (\chi_{{T_\ell},j}^{\gamma\beta}) \right\} dy - \widehat{a}_{ij}^{\alpha\beta} } \\
	&\le \sup_{x\in\R^d} \Abs{\fint_{B(x,rT_\ell)} A - \ag{A} } 
	+\sup_{x\in\R^d} \Abs{\fint_{B(x,rT_\ell)} A\nabla \chi_{T_\ell} -\ag{A\nabla \chi_{T_\ell}}} + |\ag{A(\nabla \chi_{T_\ell} - \psi)}|.
	\end{align*}
	This, together with Remark \ref{ineq_A_ave}, Remark \ref{ineq_AdchiT_ave} and (\ref{cond_dchi2psi}), gives (\ref{cond_adchi2ahat}).
	
	Finally, let $\{f_\ell\}$ denote the sequence in (\ref{cond_echiT2zero}). It follows from estimates (\ref{ineq_chiT_pbar}) and (\ref{ineq_chiT_p}) that $\{f_\ell\}$ is bounded in $H^1(\Omega)$. 
	Also, by the estimate (\ref{ineq_chiT_2}), 
	$f_\ell\to 0$ strongly in $L^2(\Omega)$. This implies that $f_\ell \rightharpoonup 0$ weakly in $H^1(\Omega)$.
\end{proof}

\begin{proof}[\bf Proof of Theorem \ref{thm_compactness}]
	 Let
	$
	p_\ell(x) = A^\ell(x/\e_\ell) \nabla u_\ell(x).
	$
	Observe that since $\{ u_\ell\}$ is bounded in $H^1(\Omega; \R^m)$,
	$\{p_\ell\}$ is bounded in $L^2(\Omega;\R^{m\times d})$. We will show that if a subsequence $\{p_{\ell^\prime}\}$ converges weakly in $L^2(\Omega;\R^{m\times d})$ to $p_0$, then $p_0 = \widehat{A} \nabla u_0$. This would imply that the full sequence converges weakly to $\widehat{A} \nabla u_0$. As a result, we also obtain $-\text{div} (\widehat{A} \nabla u_0) + \lambda u_0 = F_0$ in $\Omega$.
	
	Without loss of generality let us assume that $p_\ell$ converges weakly in $L^2(\Omega;\R^{m\times d})$ to $p_0$, 
	as $\ell\to\infty$. Let $\psi\in C_0^1(\Omega)$. 
	Fix $1\le j \le d$ and $1\le \beta\le m$. Let $T_\ell = \e_\ell^{-1}$.
	Note that
	\begin{align}\label{cond_chiT_*}
	\begin{aligned}
	&\ag{F_\ell, (P_j^\beta + \e_\ell \chi_{T_\ell,j}^{\ell * \beta} (x/\e_\ell)) \psi}_{H^{-1} (\Omega) \times H_0^1(\Omega)} \\
	& = \lambda_\ell \int_\Omega u_\ell (P_j^\beta + \e_\ell \chi_{T_\ell,j}^{\ell *\beta} (x/\e_\ell)) \psi dx \\
&\qquad\qquad
	+ \int_\Omega A^\ell(x/\e_\ell) \nabla u_\ell \cdot \nabla \Big\{ (P_j^\beta + \e_\ell \chi_{T_\ell,j}^{\ell *\beta} (x/\e_\ell)) \psi \Big\} 
	dx \\
	& = \lambda_\ell\int_\Omega u_\ell (P_j^\beta + \e_\ell \chi_{T_\ell,j}^{\ell *\beta} (x/\e_\ell)) \psi dx\\
&\qquad\qquad
	 + \int_\Omega A^\ell(x/\e_\ell) \nabla u_\ell \cdot \nabla (P_j^\beta + \e_\ell \chi_{T_l,j}^{\ell *\beta} (x/\e_\ell)) \psi dx \\
	&\qquad\qquad + \int_{\Omega} A^l(x/\e_\ell) \nabla u_\ell \cdot (P_j^\beta 
	+ \e_\ell \chi_{T_\ell,j}^{\ell *\beta} (x/\e_\ell)) (\nabla \psi) dx \\
	&= \lambda_\ell \int_\Omega u_\ell (P_j^\beta 
	+ \e_\ell \chi_{T_\ell,j}^{\ell *\beta} (x/\e_\ell)) \psi dx 
	- \int_\Omega u_\ell \cdot \e_\ell \chi_{T_\ell,j}^{\ell * \beta} (x/\e_\ell) \psi dx\\
	& \qquad\qquad - \int_{\Omega} u_\ell\cdot (A^\ell)^* (x/\e_\ell) \nabla (P_j^\beta 
	+ \e_\ell \chi_{T_\ell,j}^{\ell * \beta} (x/\e_\ell)) (\nabla \psi) dx\\
	& \qquad \qquad + \int_{\Omega} A^\ell(x/\e_\ell) \nabla u_\ell \cdot (P_j^\beta 
	+ \e_\ell \chi_{T_\ell,j}^{\ell *\beta} (x/\e_\ell)) (\nabla \psi) dx,
	\end{aligned}
	\end{align}
	where $\chi_{T,j}^{\ell *\beta}$ denote the approximate correctors
	 for the adjoint matrix $(A^\ell)^*$. We point out that the following equation
	\begin{equation*}
	\text{div} \left\{ (A^\ell)^* (x/\e_\ell) \nabla (P_j^\beta + \e_\ell \chi_{T_\ell,j}^{\ell*\beta} (x/\e_\ell)) \right\}
	 = -\e_\ell \chi_{T_\ell,j}^{\ell*\beta} (x/\e_\ell) \qquad \text{in } \R^d,
	\end{equation*}
	was used for the last equality  in (\ref{cond_chiT_*}). Since $A^\ell(y) = A(y+x_\ell)$, we have
	\begin{equation}
	\chi_{T,j}^{\ell*\beta} (y) = \chi_{T,j}^{*\beta} (y+x_\ell),
	\end{equation}
	where $\chi_{T,j}^{*\beta}$ denote the approximate correctors for $A^*$.	
	
	We now let $\ell \to\infty$ in (\ref{cond_chiT_*}) and use Lemma \ref{lem_echiT2zero} 
	(with $A^*$ in the place of $A$) to find the limit on each side. 
	Since $F_\ell \to F_0$ strongly in $H^{-1}(\Omega;\R^m)$, by (\ref{cond_echiT2zero}), the l.h.s.
	 of (\ref{cond_chiT_*}) converges to $\ag{F_0,P_j^\ell\psi}$. 
	 Since $u_\ell \to u_0$ strongly in $L^2(\Omega;\R^m)$,
	it also follows from (\ref{cond_echiT2zero}) that the sum of first two terms in the r.h.s. of (\ref{cond_chiT_*}) converges to $\lambda \int_{\Omega} u_0 P_j^\beta \psi dx$, and the fourth term converges to
	\begin{equation*}
	\int_{\Omega} p_0\cdot P_j^\beta (\nabla \psi) \, dx.
	\end{equation*}
	Similarly, using (\ref{cond_adchi2ahat}) and the fact that $u_\ell \to u_0$ strongly in $L^2(\Omega;\R^m)$, we see that the third term in the r.h.s. of (\ref{cond_chiT_*}) converges to
	\begin{equation*}
	-\int_{\Omega} u_0^\alpha \cdot \widehat{a^*}_{ij}^{\alpha\beta} \frac{\partial \psi}{\partial x_i}\, dx 
	= \int_{\Omega} \widehat{a^*}_{ij}^{\alpha\beta} \frac{\partial u_0^\alpha}{\partial x_i} \psi\,  dx = \int_{\Omega} \widehat{a}_{ji}^{\beta\alpha} \frac{\partial u_0^\alpha}{\partial x_i} \psi \, dx,
	\end{equation*}
	where $\widehat{A^*}= \big(\widehat{a^*}^{\alpha\beta}_{ij}\big)$ and we have used the fact $(\widehat{A})^* = \widehat{A^*}$ for the last step. 
	As a result, we have proved that
	\begin{equation}\label{cond_inner_FP}
	\ag{F_0,P_j^\beta \psi} = \lambda \int_{\Omega} u_0\cdot P_j^\beta \psi \, dx + \int_{\Omega} \widehat{a}_{ji}^{\beta\alpha} \frac{\partial u_0^\alpha}{\partial x_i} \psi \, dx + \int_{\Omega} p_0\cdot P_j^\beta (\nabla \psi) \, dx.
	\end{equation}
	
	Finally, by taking limits in the equation 
	\begin{equation*}
	\ag{F_\ell,P_j^\beta \psi } = \lambda_\ell \int_{\Omega} u_\ell \cdot P_j^\beta \psi \, dx 
	+ \int_{\Omega} p_\ell\cdot \nabla(P_j^\beta \psi) \, dx,
	\end{equation*}
	we obtain
	\begin{align*}
	\ag{F_0,P_j^\beta \psi} &=\lambda \int_{\Omega} u_0\cdot P_j^\beta \psi \, dx 
	+ \int_{\Omega} p_0\cdot \nabla(P_j^\beta \psi) \, dx \\
	& = \lambda \int_{\Omega} u_0\cdot P_j^\beta \psi \, dx + \int_{\Omega} p_0\cdot \nabla(P_j^\beta) \psi \, dx + \int_{\Omega} p_0\cdot P_j^\beta (\nabla\psi) \, dx.
	\end{align*}
	Since $\psi\in C_0^1(\Omega)$ is arbitrary, this, together with (\ref{cond_inner_FP}), implies that $p_0\cdot (\nabla P_j^\beta) =\widehat{a}_{ji}^{\beta\alpha} \frac{\partial u_0^\alpha}{\partial x_i}$, i.e., $p_0 = \widehat{A} \nabla u_0$. The proof is complete.
\end{proof}



\section{H\"older estimates at large scale}

In this section we establish  an $L^2$-based H\"older estimate at large scale for the elliptic system,
\begin{equation}\label{cond_comp_Ff}
-\text{\rm div}(A(x/\e) \nabla u_\e) + \lambda u_\e = F + \text{\rm div} ( f ).
\end{equation}
Throughout this section we will assume 
that $A\in APW^2(\R^d)$ and satisfies the ellipticity condition (\ref{cond_ellipticity}).

\begin{thm}\label{thm_ue_Ff}
	Fix $\sigma \in (0,1)$ and $B=B(x_0, R)$ for some $x_0\in \R^d$.
	 Let $u_\e \in H^1(B; \R^m)$ be a weak solution of
	\begin{equation}\label{cond_ue_BR}
	-\text{\rm div}(A(x/\e) \nabla u_\e) + \lambda u_\e = F + \text{\rm div} (f) \quad \text{in } B,
	\end{equation}
	for $0<\e < R$ and $\lambda \in [0,R^{-2}]$.
	Then, if $\e \le r \le R/2$,
	\begin{align}\label{ineq_due_Br}
	\begin{aligned}
	&\left(\fint_{B(x_0,r)} |\nabla u_\e|^2 \right)^{1/2} + \sqrt{\lambda} \left(\fint_{B(x_0,r)} |u_\e|^2 \right)^{1/2}\\
	& \le C_\sigma\bigg( \frac{R}{r}\bigg)^{\sigma} \Bigg\{  \frac{1}{R}\left( \fint_{B(x_0,R)} | u_\e|^2 \right)^{1/2} + 
	\sup_{\substack{x\in B(x_0, R/2)\\ r\le t\le R/2}}  t \left( \fint_{B(x,t)} |F|^2 \right)^{1/2}  \\
	&\qquad \qquad \qquad\qquad\qquad\qquad+ \sup_{x\in B(x_0,R/2)} \left( \fint_{B(x,r)} |f|^2 \right)^{1/2}\Bigg\},
	\end{aligned}
	\end{align}
	where $C_\sigma$ depends only on  $\sigma$ and $A$.
\end{thm}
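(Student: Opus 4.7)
This is a classical Avellaneda--Lin style compactness-plus-iteration estimate, with the compactness theorem from Section 5 (Theorem 5.1) providing the homogenization ingredient. The plan is to reduce to the unit scale, prove a one-step improvement lemma by contradiction using Theorem 5.1, iterate, and finally convert the oscillation estimate into a gradient estimate via Caccioppoli.

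\textbf{Step 1: Rescaling.} By translation and the dilation $v(y) := R^{-1} u_\e(x_0+Ry)$ I can reduce to the case $x_0=0$, $R=1$. The rescaled equation is $-\mathrm{div}(A^{z}(y/\e')\nabla v)+\lambda' v = F'+\mathrm{div}(f')$ in $B_1$, where $\e':=\e/R\in(0,1]$, $\lambda':=\lambda R^2\in[0,1]$, and $A^{z}(y):=A(y+z)$ with $z=x_0/\e$. Thus only translates of $A$ appear, which is exactly the class covered by Theorem 5.1, and the hypothesis $\lambda\in[0,R^{-2}]$ becomes the uniform bound $\lambda'\le 1$.

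\textbf{Step 2: One-step improvement by contradiction.} The heart of the proof is to show that for each $\sigma\in(0,1)$ there exist $\theta\in(0,1/4)$ and $\e_0\in(0,1)$, depending only on $d,m,\mu,\sigma$ and $A$, such that for every $z\in\R^d$, every $\e'\in(0,\e_0]$ and every $v\in H^1(B_1;\R^m)$ solving the rescaled equation,
\begin{equation*}
\inf_{c\in\R^m}\Bigl(\fint_{B_\theta}|v-c|^2\Bigr)^{1/2}
\le \theta^{1+\sigma}\Bigl(\fint_{B_1}|v|^2\Bigr)^{1/2}
+ C\,\mathrm{Data}(F',f'),
\end{equation*}
where $\mathrm{Data}$ is the sup-norm expression appearing on the right of \eqref{ineq_due_Br}. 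Suppose this fails: then there exist sequences $z_\ell\in\R^d$, $\e_\ell\to 0$, $\lambda_\ell\to\lambda_\infty\in[0,1]$, $F_\ell,f_\ell$ with vanishing data norm, and $v_\ell$ with $\fint_{B_1}|v_\ell|^2\le 1$, for which the inequality fails. Caccioppoli on $B_{3/4}$ gives a uniform $H^1(B_{1/2})$ bound, so along a subsequence $v_\ell\rightharpoonup v_0$ weakly in $H^1(B_{1/2})$ and strongly in $L^2(B_{1/2})$. Applying Theorem 5.1 to $A^\ell(y)=A(y+z_\ell)$ shows that $v_0$ solves the constant-coefficient system $-\mathrm{div}(\widehat A\nabla v_0)+\lambda_\infty v_0=0$ in $B_{1/2}$. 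Since $\widehat A$ is elliptic with constant coefficients, $v_0\in C^{1,\sigma'}$ for any $\sigma'\in(\sigma,1)$, so $\inf_c\bigl(\fint_{B_\theta}|v_0-c|^2\bigr)^{1/2}\le C\theta^{1+\sigma'}\bigl(\fint_{B_{1/2}}|v_0|^2\bigr)^{1/2}$. Choosing $\theta$ with $C\theta^{\sigma'-\sigma}\le 1/2$ and passing to the limit contradicts the assumed failure of the estimate.

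\textbf{Step 3: Iteration, small scales, and gradient estimate.} Iterating the one-step lemma on dyadic balls $B_{\theta^k}$, with the translates $A^{z+\theta^{-k}y}$ ensuring that each step is an instance of the same lemma, gives
\begin{equation*}
\inf_{c}\Bigl(\fint_{B_r}|v-c|^2\Bigr)^{1/2}\le C\,r^{1+\sigma}\Bigl\{\Bigl(\fint_{B_1}|v|^2\Bigr)^{1/2}+\mathrm{Data}\Bigr\}
\end{equation*}
for all $r$ in the range $\e'/\e_0 \le r\le 1/2$; the short range $\e'\le r\le\e'/\e_0$ is absorbed into the constant since it spans only a bounded number of scales. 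Caccioppoli's inequality applied on $B_r\subset B_{2r}$ (with any constant $c$ subtracted from $v$) then converts this into the desired bound on $\bigl(\fint_{B_r}|\nabla v|^2\bigr)^{1/2}$, and the term $\sqrt{\lambda'}\bigl(\fint_{B_r}|v|^2\bigr)^{1/2}$ is estimated by writing $v=(v-c)+c$, using Caccioppoli to control $c$ in terms of $\fint_{B_1}|v|^2$ and the data, and absorbing. Rescaling back to $B(x_0,R)$ and $B(x_0,r)$ with the original $\e$, $\lambda$, $F$, $f$ gives \eqref{ineq_due_Br}.

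\textbf{Main obstacle.} The delicate point is Step 2: one must ensure that the compactness argument produces a \emph{constant-coefficient} limit equation regardless of the (unbounded) sequence of translation parameters $z_\ell$, which is precisely why Theorem 5.1 was formulated for arbitrary translates $A(\cdot+x_\ell)$ of $A$. A secondary nuisance is the sup-form of the $F$ and $f$ norms, which is chosen exactly so that both the iteration along scales $\theta^k$ and the rescaling $y\mapsto x_0+Ry$ preserve the form of the estimate.
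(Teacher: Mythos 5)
Your Steps 1 and 3 and the overall compactness-plus-iteration architecture match the paper's proof, and the observation that Theorem 5.1 must be formulated for arbitrary translates of $A$ is exactly right. However, Step 2 contains a concrete error that breaks the whole argument. You claim a one-step improvement of the form
\[
\inf_{c\in\R^m}\Bigl(\fint_{B_\theta}|v-c|^2\Bigr)^{1/2}
\le \theta^{1+\sigma}\Bigl(\fint_{B_1}|v|^2\Bigr)^{1/2}+ C\,\mathrm{Data},
\]
obtained from the estimate $\inf_c(\fint_{B_\theta}|v_0-c|^2)^{1/2}\le C\theta^{1+\sigma'}(\fint_{B_{1/2}}|v_0|^2)^{1/2}$ for $\widehat A$-harmonic $v_0$. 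This last inequality is false as stated: for $v_0(x)=x_1$, the left side is comparable to $\theta$, not $\theta^{1+\sigma'}$. Subtracting only a \emph{constant} (as opposed to the best affine function) can never give more than Lipschitz-order decay $\theta$, no matter how smooth $v_0$ is. The paper's Lemma \ref{lem_comp_first} uses precisely this $O(\theta)$ decay (their inequality (\ref{ineq_Ff_Lip})) and then picks $\theta$ so small that $C_0 2^{d/2}\theta < \theta^\sigma/2$, turning a $\theta$-rate into a $\theta^\sigma$-rate for any fixed $\sigma<1$.

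This is not merely a constant issue but a sign error with structural consequences. Iterating your $\theta^{1+\sigma}$ lemma and applying Caccioppoli would produce $\bigl(\fint_{B_r}|\nabla u_\e|^2\bigr)^{1/2}\lesssim r^\sigma$, i.e.\ a bound that \emph{improves} as $r\to 0$ — an interior $C^{1,\sigma}$ estimate down to scale $\e$. That conclusion cannot hold for merely bounded measurable almost-periodic $A$ without extra quantitative assumptions; it is exactly what Theorem \ref{main-theorem-Lip} establishes under the additional decay hypothesis (\ref{decay-Lip}), using affine subtraction (Lemma \ref{lemma-11-4}) and a Dini condition. Theorem \ref{thm_ue_Ff}, by contrast, asserts a bound that \emph{grows} like $(R/r)^\sigma$ as $r\to 0$; as Remark \ref{remark-6-1} explains, this is a large-scale $C^{1-\sigma}$ estimate, the best one can hope for in this generality. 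To repair your proof, replace the $\theta^{1+\sigma}$ claim in Step 2 with the $\theta^\sigma$ claim coming from the Lipschitz estimate for the constant-coefficient limit, and the iteration then delivers $r^\sigma$ decay of the oscillation and hence the $r^{\sigma-1}\sim(R/r)^{1-\sigma}$ growth of the gradient average — matching the theorem (with $1-\sigma$ in the role of the theorem's $\sigma$, as the paper notes at the end of its proof).
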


\begin{rmk}\label{remark-6-1}
{\rm
One may regard the estimate (\ref{ineq_due_Br}) as a H\"older estimate at large scale, as the estimate
$$
\sup_{\substack{0<r<R/2 \\ x\in B(x_0, R/2)}}
r^{\sigma} \left(\fint_{B(x,r)} |\nabla u_\e|^2\right)^{1/2}<\infty
$$
would imply that $u_\e\in C^{1-\sigma} (B(x_0, R/2))$.
Note that since no smoothness condition is imposed on $A$,
estimate (\ref{ineq_due_Br}) may fail to hold for  $0<r<\e$.
}
\end{rmk}

As a corollary of Theorem \ref{thm_ue_Ff}, we obtain a Liouville property for the elliptic operator $\mathcal{L}_1$.

\begin{cor}\label{Corollary-L}
Suppose that $A\in APW^2(\R^d)$ and satisfies the ellipticity condition  (\ref{cond_ellipticity}).
Let $u\in H^1_{\loc} (\R^d; \R^m)$ be a weak solution of
$\text{\rm div} (A\nabla u)=0$ in $\R^d$.
Assume that there exist constants $\sigma \in (0,1)$ and $C_u>0$ such that
\begin{equation}\label{growth-condition}
\left(\fint_{B(0,R)} |u|^2\right)^{1/2} \le C_u \, R^\sigma
\end{equation}
for all $R\ge1$. Then $u$ is constant in $\R^d$.
\end{cor}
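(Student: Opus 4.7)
\textbf{Proof plan for Corollary \ref{Corollary-L}.} The strategy is to apply Theorem~\ref{thm_ue_Ff} with $\e = 1$, $\lambda = 0$, $F=0$, $f=0$, and then exploit the subpolynomial growth \eqref{growth-condition} by letting the outer radius tend to infinity. Choose the H\"older exponent in the theorem to be some $\tau \in (0, 1-\sigma)$; this is possible since $\sigma < 1$. Then for any $x_0 \in \R^d$, any $r \ge 1$, and any $R \ge 2r$, the conclusion \eqref{ineq_due_Br} reduces to
\begin{equation*}
\left(\fint_{B(x_0,r)} |\nabla u|^2\right)^{1/2}
\le C_\tau \Bigl(\frac{R}{r}\Bigr)^\tau \cdot \frac{1}{R}\left(\fint_{B(x_0,R)} |u|^2\right)^{1/2}.
\end{equation*}

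Next I would control the right-hand side using \eqref{growth-condition}. Since $B(x_0,R) \subset B(0, R + |x_0|)$,
\begin{equation*}
\left(\fint_{B(x_0,R)} |u|^2\right)^{1/2}
\le \Bigl(\frac{R+|x_0|}{R}\Bigr)^{d/2} \left(\fint_{B(0,R+|x_0|)} |u|^2\right)^{1/2}
\le C_u \Bigl(\frac{R+|x_0|}{R}\Bigr)^{d/2} (R+|x_0|)^\sigma.
\end{equation*}
For $x_0$ and $r$ fixed, the prefactor $(R+|x_0|)/R$ tends to $1$ as $R \to \infty$. Combining the two displays yields, for all sufficiently large $R$,
\begin{equation*}
\left(\fint_{B(x_0,r)} |\nabla u|^2\right)^{1/2}
\le C_\tau C_u \, r^{-\tau} \, R^{\tau+\sigma - 1} \cdot (1+o(1)).
\end{equation*}
Because $\tau + \sigma - 1 < 0$, sending $R \to \infty$ forces $\nabla u = 0$ almost everywhere in $B(x_0, r)$. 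Since $x_0 \in \R^d$ and $r \ge 1$ were arbitrary, $\nabla u \equiv 0$ in $\R^d$, so $u$ is constant.

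The only genuine ingredient is Theorem~\ref{thm_ue_Ff}, which is already in hand; there is no real obstacle beyond choosing $\tau$ in the admissible range $(0, 1-\sigma)$ and verifying that the outer $L^2$-average of $u$ on $B(x_0, R)$ is controlled by the growth at the origin, which is a trivial comparison of balls. The point to emphasize is that the large-scale H\"older estimate, valid only for $r \ge \e = 1$, is exactly matched to the hypothesis $R \ge 1$ in \eqref{growth-condition}, so no assumption on the small-scale regularity of $A$ enters.
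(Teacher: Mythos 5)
Your proof is correct and follows essentially the same route as the paper: apply the large-scale H\"older estimate of Theorem~\ref{thm_ue_Ff} with $\e=1$, $\lambda=0$, $F=f=0$, choosing the exponent to be some $\tau\in(0,1-\sigma)$ (the paper writes this as $1-\sigma_1$ with $\sigma_1\in(\sigma,1)$), and then send $R\to\infty$ using the growth hypothesis. The only cosmetic difference is that you work on $B(x_0,r)$ for arbitrary $x_0$ and pass through $B(x_0,R)\subset B(0,R+|x_0|)$, whereas the paper simply takes $x_0=0$ and lets $r\to\infty$; both are immediately sufficient.
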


\begin{proof} Choose $\sigma_1\in (\sigma,1)$. It follows from Theorem \ref{thm_ue_Ff} that
$$
\aligned
\left(\fint_{B(0,r)} |\nabla u|^2\right)^{1/2}
&\le C \left(\frac{R}{r}\right)^{1-\sigma_1} \frac{1}{R} \left(\fint_{B(0,R)} |u|^2\right)^{1/2}\\
&\le  \frac{CR^{\sigma-\sigma_1}}{r^{1-\sigma_1}}
\endaligned
$$
for any $1\le r\le R/2$.
By letting $R\to \infty$, we obtain $\nabla u=0$ in $B(0,r)$ for any $r>1$.
Thus $u$ is constant in $\R^d$.
\end{proof}

Theorem \ref{thm_ue_Ff} will be proved by using a compactness argument introduced by Avellaneda and Lin \cite{AL-1987} to the study of uniform regularity estimates in homogenization.

We begin with a Caccioppoli's inequality.

\begin{lem} \label{lem_Cacci_Ff}
	Let $u$ be a weak solution of $-\text{\rm div}(A\nabla u) + \lambda u = F+\text{\rm div} (f)$ in $B(x_0,2R)$ for some $x_0\in\R^d, R>0$, and $\lambda \ge 0$. Then
	\begin{align}\label{ineq_Cacci}
	\begin{aligned}
	\left(\fint_{B} |\nabla u|^2 \right)^{1/2} \le & \frac{C}{R} \left(\fint_{2B} \Big| u - \fint_{2B} u \Big|^2 \right)^{1/2} +  C \left( \fint_{2B} |f|^2 \right)^{1/2}\\
	& \qquad + CR \left\{ \lambda \left( \fint_{2B} |u|^2 \right)^{1/2} + \left( \fint_{2B} |F|^2 \right)^{1/2}\right\},
	\end{aligned}
	\end{align}
	where $B=B(x_0, R)$ and $C$ depends only on $d$, $m$ and $\mu$.
\end{lem}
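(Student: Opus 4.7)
The plan is to carry out a standard Caccioppoli computation: test the weak form of the equation against $(u-c)\eta^2$, where $c := \fint_{2B} u$ is the mean and $\eta \in C_0^\infty(2B)$ is a cutoff with $\eta \equiv 1$ on $B$, $0 \le \eta \le 1$, and $|\nabla \eta| \le C/R$. The reason to subtract $c$ (rather than testing with $u\eta^2$ directly) is that the leading second-order operator annihilates constants, so the ellipticity pairing sees $\nabla(u-c)=\nabla u$; this is what will produce the oscillation $\|u - \fint_{2B} u\|_{L^2(2B)}$ on the right-hand side instead of the cruder $\|u\|_{L^2(2B)}$.

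Since $c$ is constant, $\nabla[(u-c)\eta^2] = \nabla u\,\eta^2 + 2(u-c)\eta\nabla\eta$, and the weak formulation then gives
\begin{align*}
\int A\nabla u \cdot \nabla u\,\eta^2 &+ 2\int A\nabla u\cdot(u-c)\eta\nabla\eta + \lambda\int u\cdot(u-c)\eta^2 \\
&= \int F\cdot(u-c)\eta^2 - \int f\cdot\nabla u\,\eta^2 - 2\int f\cdot(u-c)\eta\nabla\eta.
\end{align*}
I would bound the first term from below by $\mu\int|\nabla u|^2\eta^2$ using the ellipticity condition (\ref{cond_ellipticity}), and then estimate each remaining term by Young's inequality. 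The cross terms $\int A\nabla u\cdot(u-c)\eta\nabla\eta$ and $\int f\cdot\nabla u\,\eta^2$ are split as $\varepsilon|\nabla u|^2\eta^2 + C_\varepsilon\bigl(|u-c|^2|\nabla\eta|^2 + |f|^2\eta^2\bigr)$, with $\varepsilon$ small enough to absorb the gradient contributions back into the left-hand side.

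The $F$ and $\lambda u$ terms, paired against $(u-c)\eta^2$, require the rescaled Young's inequality $|ab| \le R^2 a^2 + R^{-2} b^2$ to produce the correct $R$-weights in the statement: writing $|F \cdot (u-c)|\eta^2 \le R^2|F|^2\eta^2 + R^{-2}|u-c|^2\eta^2$ and $\lambda |u||u-c|\eta^2 \le R^2\lambda^2|u|^2\eta^2 + R^{-2}|u-c|^2\eta^2$, the stray $R^{-2}|u-c|^2\eta^2$ pieces merge with the $|u-c|^2|\nabla\eta|^2$ term from the ellipticity step, since $|\nabla\eta|\le C/R$. After dividing through by $|B|$ (using $|2B|\le C_d|B|$) and taking square roots, one obtains (\ref{ineq_Cacci}). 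There is no real obstacle here: this is a template computation, and the only subtlety is the choice of Young's scaling with $R$ rather than a dimensionless constant, which is what gives the $F$ and $\lambda u$ contributions the correct powers of $R$.
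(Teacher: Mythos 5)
Your computation is correct and is the standard Caccioppoli argument with the mean-value normalization; the paper simply declares this lemma ``well known'' and gives no proof, so there is nothing to compare against. Two cosmetic remarks: the Young inequalities you invoke, e.g.\ $|F\cdot(u-c)|\le R^2|F|^2 + R^{-2}|u-c|^2$, hold only with a factor $\tfrac12$ on the right, but this is harmless since constants are absorbed into $C$; and the hypothesis $\lambda\ge 0$ is not actually needed in your route, since you move the zeroth-order term to the right and estimate it by Young regardless of sign (the sign condition would only matter if one chose to keep $\lambda\int|u-c|^2\eta^2$ on the left as a good term, which you do not).
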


\begin{proof}
This is well known.
\end{proof}

To assure that our estimates are translation invariant in the compactness argument, 
we introduce the set of all matrices obtained from $A$ by translation, 
\begin{equation*}
\cA = \Big\{ M=M(y): M(y) = A(y+z) \text{ for some } z \in\R^d \Big\}.
\end{equation*}

\begin{lem}\label{lem_comp_first}
	Fix $\sigma\in (0,1)$. There exist $\e_0 \in (0,1/2)$ and $\theta \in (0,1/8)$, depending at most on
	$\sigma$ and $A$, such that
	\begin{equation}\label{ineq_ue_lambda}
	\begin{aligned}
	&\left(\fint_{B(0,\theta)} \Big| u_\e - \fint_{B(0,\theta)} u_\e \Big|^2 \right)^{1/2} + \theta\sqrt{\lambda}  \left(\fint_{B(0,\theta)} | u_\e|^2 \right)^{1/2} \\
	& \qquad \le \theta^{\sigma} \Bigg\{ \left(\fint_{B(0,1)} \Big| u_\e - \fint_{B(0,1)} u_\e \Big|^2 \right)^{1/2} + \sqrt{\lambda} \left(\fint_{B(0,1)} \left| u_\e \right|^2 \right)^{1/2} \\
	&\qquad \qquad  \qquad\qquad
	+ \e_0^{-1} \left( \fint_{B(0,1)} |F|^2 \right)^{1/2} + \e_0^{-1} \left( \fint_{B(0,1)} |f|^2 \right)^{1/2} \Bigg\},
	\end{aligned}
	\end{equation}
	whenever $0<\e<\e_0$ and $u_\e \in H^1(B(0,1);\R^m)$ is a  weak solution of
	\begin{equation}\label{cond_ue_Ff}
	-\text{\rm div}(M(x/\e) \nabla u_\e) + \lambda u_\e = F  + \text{\rm div} (f) \quad \text{in } B(0,1)
	\end{equation}
	for some $M\in\cA$ and $\lambda\in [0,\e_0^2]$.
\end{lem}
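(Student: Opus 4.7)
My plan is to use the Avellaneda--Lin compactness method, arguing by contradiction with Theorem \ref{thm_compactness} supplying the homogenization step. Fix $\sigma\in(0,1)$. First I would select $\theta$: since $\widehat{A}$ satisfies the constant ellipticity bound (\ref{ellipticity-1}), any weak solution $v$ of $-\text{\rm div}(\widehat{A}\nabla v)=0$ in $B(0,1/2)$ is smooth, and standard interior regularity for constant-coefficient elliptic systems yields
\[
\Bigl(\fint_{B(0,\theta)}\bigl|v-\fint_{B(0,\theta)}v\bigr|^2\Bigr)^{1/2} \le C_0\,\theta\,\Bigl(\fint_{B(0,1/2)}|v|^2\Bigr)^{1/2}
\]
for $\theta\in(0,1/4)$, with $C_0=C_0(\mu,d,m)$. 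Because $\sigma<1$, I pick $\theta\in(0,1/8)$, depending only on $\sigma$ and $A$, such that $C_0\theta\le\theta^\sigma/4$ and $\theta\le\theta^\sigma/4$.

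Now I claim this $\theta$ admits an $\e_0$ as required. Suppose not: for each $\ell\ge 1$ there exist $\e_\ell\in(0,1/\ell)$, $\lambda_\ell\in[0,\e_\ell^2]$ (so $\lambda_\ell\to 0$ and $\sqrt{\lambda_\ell}\le \e_\ell$), $M_\ell\in\cA$, and $u_\ell\in H^1(B(0,1);\R^m)$ solving (\ref{cond_ue_Ff}) with data $(F_\ell,f_\ell)$, normalized so that the expression in braces on the right-hand side of (\ref{ineq_ue_lambda}) equals $1$, but for which the left-hand side exceeds $\theta^\sigma$. The normalization forces $(\fint_{B(0,1)}|F_\ell|^2)^{1/2}+(\fint_{B(0,1)}|f_\ell|^2)^{1/2}\le\e_\ell\to 0$, while $(\fint_{B(0,1)}|u_\ell-\fint u_\ell|^2)^{1/2}\le 1$ and $\sqrt{\lambda_\ell}(\fint_{B(0,1)}|u_\ell|^2)^{1/2}\le 1$. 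Set $\tilde u_\ell:=u_\ell-\fint_{B(0,1)}u_\ell$, which satisfies the same equation with $F_\ell$ replaced by $F_\ell-\lambda_\ell\fint u_\ell$; the modified source has $L^2$-average norm at most $\e_\ell+\sqrt{\lambda_\ell}\to 0$. Since $\lambda_\ell(\fint|u_\ell|^2)^{1/2}\le\sqrt{\lambda_\ell}\to 0$, Caccioppoli (Lemma \ref{lem_Cacci_Ff}) shows $\{\tilde u_\ell\}$ is bounded in $H^1(B(0,1/2))$; passing to a subsequence, $\tilde u_\ell\rightharpoonup u_0$ weakly in $H^1(B(0,1/2))$ and strongly in $L^2(B(0,1/2))$.

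Writing $M_\ell(y)=A(y+z_\ell)$, Theorem \ref{thm_compactness} applies to $\tilde u_\ell$ with limit coefficient $\widehat{A}$, giving $-\text{\rm div}(\widehat{A}\nabla u_0)=0$ in $B(0,1/2)$ because all right-hand-side data tend to zero in $H^{-1}(B(0,1/2))$ and $\lambda_\ell\to 0$. The choice of $\theta$ combined with strong $L^2$ convergence on $B(0,\theta)\subset B(0,1/2)$ then yields
\[
\lim_{\ell\to\infty}\Bigl(\fint_{B(0,\theta)}\bigl|\tilde u_\ell-\fint_{B(0,\theta)}\tilde u_\ell\bigr|^2\Bigr)^{1/2} \le C_0\theta\,\|u_0\|_{L^2(B(0,1/2))}\le \theta^\sigma/4,
\]
and the subtraction of a constant leaves the first term on the left of (\ref{ineq_ue_lambda}) unchanged. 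For the remaining term I split $u_\ell=\tilde u_\ell+\fint u_\ell$ to estimate $\theta\sqrt{\lambda_\ell}(\fint_{B(0,\theta)}|u_\ell|^2)^{1/2}\le C\theta^{1-d/2}\sqrt{\lambda_\ell}+\theta\sqrt{\lambda_\ell}|\fint u_\ell|\le C\theta^{1-d/2}\e_\ell+\theta$, using $\sqrt{\lambda_\ell}|\fint u_\ell|\le 1$ from the normalization; this is at most $\theta^\sigma/2$ for $\ell$ large. Summing shows the LHS of (\ref{ineq_ue_lambda}) is $\le \theta^\sigma$ for large $\ell$, contradicting the assumption. The main technical point is that the homogenization step must apply uniformly across the family of translated matrices $M_\ell\in\cA$; this is precisely why Theorem \ref{thm_compactness} was formulated with arbitrary translates $A^\ell(y)=A(y+z_\ell)$ rather than a fixed matrix, and it is the reason the conclusion holds uniformly in $M\in\cA$.
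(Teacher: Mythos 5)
Your proof is correct and uses essentially the same Avellaneda--Lin compactness argument as the paper, invoking Theorem~\ref{thm_compactness} on the translated family $\mathcal{A}$ and the interior Lipschitz estimate for the constant-coefficient system to choose $\theta$; the paper merely routes through an intermediate normalized inequality (6.5) before deducing (\ref{ineq_ue_lambda}), whereas you run the contradiction directly, handling the subtracted mean and the $\theta\sqrt{\lambda}$ term in-line. Two small bookkeeping slips: the constraint is $\lambda_\ell\in[0,\e_0^2]=[0,1/\ell^2]$, so $\sqrt{\lambda_\ell}\le 1/\ell$ rather than $\sqrt{\lambda_\ell}\le\e_\ell$ (which would not follow), and the smallness condition on $\theta$ should carry the factor $2^{d/2}$ from passing between $\fint_{B(0,1)}$ and $\fint_{B(0,1/2)}$, i.e.\ $2^{d/2}C_0\theta\le\theta^\sigma/4$ as in the paper.
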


\begin{proof}
	The lemma is proved by contradiction, along a line of argument used in \cite{AL-1987} for periodic coefficients. 
	We will show that there exist $\e_0 \in (0,1/2)$ and $\theta \in (0,1/8)$, 
	depending at most on  $\sigma$ and $A$, such that whenever $0<\e<\e_0$ and
	$u_\e$ is a solution of (\ref{cond_ue_Ff}), then
	\begin{equation}\label{ineq_theta}
	\begin{aligned}
	&\left(\fint_{B(0,\theta)} \Big| u_\e - \fint_{B(0,\theta)} u_\e \Big|^2 \right)^{1/2} + \theta \left( \fint_{B(0,\theta)} | u_\e|^2 \right)^{1/2}\\
	& \le \frac{\theta^\sigma}{2} \left\{  \left(\fint_{B(0,1)} \left| u_\e \right|^2 \right)^{1/2} + \e_0^{-1}\left( \fint_{B(0,1)} |F|^2 \right)^{1/2} + \e_0^{-1} \left( \fint_{B(0,1)} |f|^2 \right)^{1/2} \right\}.
	\end{aligned}
	\end{equation}
	We claim that (\ref{ineq_theta}) implies (\ref{ineq_ue_lambda}). In fact, assume (\ref{ineq_theta}) is true, we set
	$
	E = \fint_{B(0,1)} u_\e
	$
	and apply (\ref{ineq_theta}) to $v_\e = u_\e - E$,
	 which is a solution of (\ref{cond_ue_Ff}) with the r.h.s. $F$ replaced by $F-\lambda E$. As a result, it follows that
	\begin{align}
	\begin{aligned}\label{ineq_theta_1}	
	& \left(\fint_{B(0,\theta)} \Big| u_\e - \fint_{B(0,\theta)} u_\e \Big|^2 \right)^{1/2}
	 + \theta \left(\fint_{B(0,\theta)} \Big|u_\e - \fint_{B(0,1)} u_\e \Big|^2\right)^{1/2} \\
	&\qquad  \le \frac{\theta^\sigma}{2} \Bigg\{  \left(\fint_{B(0,1)} \Big| u_\e - \fint_{B(0,1)} u_\e \Big|^2 \right)^{1/2} \\
	&\qquad \qquad + \e_0^{-1}\left( \fint_{B(0,1)} \Big|F-\lambda \fint_{B(0,1)} u_\e\Big|^2 \right)^{1/2} + \e_0^{-1} \left( \fint_{B(0,1)} |f|^2 \right)^{1/2} \Bigg\} \\
	&\qquad \le \frac{\theta^\sigma}{2} \Bigg\{  \left(\fint_{B(0,1)} \Big| u_\e - \fint_{B(0,1)} u_\e \Big|^2 \right)^{1/2} + \sqrt{\lambda} \left( \fint_{B(0,1)} |u_\e|^2 \right)^{1/2} \\
	&\qquad \qquad \qquad+ \e_0^{-1}\left( \fint_{B(0,1)} |F|^2 \right)^{1/2} + \e_0^{-1} \left( \fint_{B(0,1)} |f|^2 \right)^{1/2} \Bigg\},
	\end{aligned}
	\end{align}
	where we have used the assumption $\lambda\le \e_0^2$ in the last step.
	Using the fact that $\theta,\lambda \in [0,1]$ as well as the triangle inequality, we have
	\begin{equation*}
	\theta \left(\fint_{B(0,\theta)} \Big|u_\e - \fint_{B(0,1)} u_\e \Big|^2 \right)^{1/2} \ge \theta\sqrt{\lambda} \left(\fint_{B(0,\theta)} \Abs{u_\e }^2 \right)^{1/2} - \frac{\theta^\sigma}{2}\sqrt{\lambda} \left(\fint_{B(0,1)} \Abs{u_\e }^2 \right)^{1/2},
	\end{equation*}
	where we also assume that $\theta$ is small enough so that $\theta \le \theta^\sigma/2$. This, together with (\ref{ineq_theta_1}), gives the desired estimate (\ref{ineq_ue_lambda}). 
	
	It remains to prove (\ref{ineq_theta}). 
	By normalizing the r.h.s. of (\ref{ineq_theta}),
	without loss of generality,  
	 it suffices to show that there exist $\e_0 \in (0,1/2)$ and $\theta \in (0,1/8)$ so that if
	\begin{equation}\label{cond_uFf_norm}
	\left(\fint_{B(0,1)} \left| u_\e \right|^2 \right)^{1/2} + \e_0^{-1}\left( \fint_{B(0,1)} |F|^2 \right)^{1/2} 
	+\e_0^{-1} \left( \fint_{B(0,1)} |f|^2 \right)^{1/2} \le 1,
	\end{equation}
	then
	\begin{equation}\label{ineq_theta_norm}
	\left(\fint_{B(0,\theta)} \Big| u_\e - \fint_{B(0,\theta)} u_\e \Big|^2 \right)^{1/2} + \theta \left( \fint_{B(0,\theta)} | u_\e|^2 \right)^{1/2} \le \frac{\theta^\sigma}{2}.
	\end{equation}
	To this end we first 
	note that if $u\in H^1(B(0,1/2);\R^m)$ is a weak solution of
	\begin{equation}\label{constant-system}
	-\text{div}(A^0 \nabla u) + \lambda u = 0 \quad \text{in } B(0,1/2),
	\end{equation}
	where $\lambda\in [0,1]$ and $A^0$ is a constant matrix satisfying the ellipticity condition 
	(\ref{ellipticity-1}), then, for any $\theta\in (0,1/8)$,
	\begin{equation}\label{ineq_Ff_Lip}
	\left(\fint_{B(0,\theta)} \Big| u - \fint_{B(0,\theta)} u \Big|^2 \right)^{1/2} 
	+ \theta \left(\fint_{B(0,\theta)} |u|^2 \right)^{1/2}
	\le C_0 \theta \left(\fint_{B(0,1/2)} \left| u \right|^2 \right)^{1/2} ,
	\end{equation}
	where $C_0$ depends only on $d,m$ and $\mu$. This follows  from the interior Lipschitz estimate
	$$
	\|\nabla u\|_{L^\infty(B(0,1/4))} +\| u\|_{L^\infty(B(0,1/4))}
	\le C\, \| u\|_{L^2(B(0,1/2))} 
	$$
	 for solutions of the elliptic system (\ref{constant-system}) with constant coefficients.
	
		We now choose $\theta\in (0,1/4)$ so small that $2^{d/2}C_0\theta < \theta^{\sigma}/2$. We claim that (\ref{ineq_theta_norm}) holds for this $\theta$ and for some $\e_0\in (0,1/2)$, 
		which depends at most on $\sigma$ and $A$.
	
	Suppose this is not the case. Then there exist sequences 
	$\{ M^\ell \} \subset \cA, \{\e_\ell\} \subset \R_+$, $\{\lambda_\ell \} \subset [0,1]$,
	$\{ F_\ell\}\subset L^p(B(0,1); \R^m)$,
	$\{ f_\ell\}\subset L^2(B(0,1); \R^{m\times d})$,
	and $ \{u_\ell\} \subset H^1(B(0,1);\R^m)$, 
	 such that $\e_\ell\to 0$, $0\le \lambda_\ell \le\e_\ell^2$,
	\begin{equation}\label{cond_ul_Ff}
	-\text{div}(M^\ell(x/\e_\ell) \nabla u_\ell) + \lambda_\ell u_\ell = F_\ell + \text{div} (f_\ell) \quad \text{in } B(0,1),
	\end{equation}
	and
	\begin{equation}\label{cond_f_0}
	\left(\fint_{B(0,1)} \left| u_\ell \right|^2 \right)^{1/2} + \e_\ell^{-1}\left( \fint_{B(0,1)} |F_\ell|^2 \right)^{1/2} 
	+\e_\ell^{-1} \left( \fint_{B(0,1)} |f_\ell|^2 \right)^{1/2} \le 1,
	\end{equation}
	but
	\begin{equation}\label{ineq_theta_false}
	\left(\fint_{B(0,\theta)} \left| u_\ell - \fint_{B(0,\theta)} u_\ell \right|^2 \right)^{1/2} 
	+ \theta \left( \fint_{B(0,\theta)} | u_\ell |^2 \right)^{1/2} > \frac{\theta^\sigma}{2}.
	\end{equation}
		By passing to subsequences, we may assume that 
	\begin{align}
	\begin{aligned}\label{cond_weakC}
	 u_\ell \rightharpoonup u \text{ weakly in } L^2(B(0,1);\R^m).
	\end{aligned}
	\end{align}
	By Caccioppoli's inequality  the sequence $\{u_\ell \}$ is bounded in $H^1(B(0,1/2); \R^m)$. 
	By passing to a subsequence, we may further assume that
	 $u_\ell$ converges to $u$ weakly in $H^1(B(0,1/2);\R^m)$ and hence strongly in $L^2(B(0,1/2);\R^m)$.
	  Also note  that 
	  $\lambda_\ell \to 0$, and
	  $F_\ell + \text{div} f_\ell$ converges to zero strongly in $H^{-1}(B(0,1/2); \R^m)$. This allows us to apply Theorem \ref{thm_compactness} 
	  to the system (\ref{cond_ul_Ff}) in $B(0,1/2)$. It follows that $u$ is a weak solution 
	  of $-\text{div}(\widehat{A} \nabla u)  = 0$ in $B(0,1/2)$.
	  
	  Finally, since $u_\ell \to u$ strongly in $L^2(B(0,1/2); \R^m)$,
	  by  (\ref{ineq_theta_false}), we obtain
	\begin{equation}\label{C-0}
	\left(\fint_{B(0,\theta)} \Big| u - \fint_{B(0,\theta)} u \Big|^2 \right)^{1/2} 
	+ \theta \left( \fint_{B(0,\theta)} | u|^2 \right)^{1/2} \ge \frac{\theta^\sigma}{2}.
	\end{equation}
	Similarly, by (\ref{cond_weakC}) and (\ref{cond_f_0}),
	\begin{equation}\label{C-1}
	\left(\fint_{B(0,1)} |u|^2\right)^{1/2}  \le 1.
	\end{equation}
	On the other hand, it follows from (\ref{ineq_Ff_Lip}) and (\ref{C-1}) that
	\begin{equation}
	\left(\fint_{B(0,\theta)} \Big| u - \fint_{B(0,\theta)} u \Big|^2 \right)^{1/2} + \theta \left( \fint_{B(0,\theta)} | u|^2 \right)^{1/2} \le C_0 2^{d/2}\theta.
	\end{equation}
	This, together with (\ref{C-0}), gives $C_0 2^{d/2}\theta \ge \theta^\sigma/2 $, 
	which is in contradiction with our choice of $\theta$. The proof is now complete.
\end{proof}

\begin{lem}\label{lem_comp_iter}
	Let $\sigma \in (0,1)$ and
	$\e_0, \theta$ be given by Lemma \ref{lem_comp_first}. If $0<\e<\e_0\theta^{k-1}$ for some $k\ge 1$
	and $u_\e$ is a weak solution of (\ref{cond_ue_Ff}) in $B(0,1)$ for some $M\in \cal{A}$ and
	$\lambda\in [0,\e_0^2]$, then
	\begin{align*}
	&\left(\fint_{B(0,\theta^k)} \Big| u_\e - \fint_{B(0,\theta^k)} u_\e \Big|^2 \right)^{1/2} + \theta^{k}\sqrt{\lambda} \left( \fint_{B(0,\theta^k)} |u_\e|^2 \right)^{1/2} \\
	&\qquad \le \theta^{k\sigma} \Bigg\{ \left(\fint_{B(0,1)} \Big| u_\e - \fint_{B(0,1)} u_\e \Big|^2 \right)^{1/2} + \sqrt{\lambda} \left(\fint_{B(0,1)} \left| u_\e \right|^2 \right)^{1/2}  +I_k+J_k \Bigg\},
		\end{align*}
	where $I_k$ and $J_k$ are defined by
	\begin{equation}\label{I-J}
	\aligned
	&I_k =\e_0^{-1}\sum_{\ell=0}^{k-1} \theta^{\ell (2-\sigma)}\left(\fint_{B(0,\theta^\ell)} |F|^2\right)^{1/2},\\
	& J_k =\varep^{-1}_0\sum_{\ell=0}^{k-1}\theta^{\ell (1-\sigma)} \left(\fint_{B(0, \theta^\ell)}
	|f|^2\right)^{1/2}.
	\endaligned
	\end{equation}
\end{lem}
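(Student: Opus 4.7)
The plan is to argue by induction on $k\ge 1$, with the base case $k=1$ reducing directly to Lemma \ref{lem_comp_first}. Indeed, when $k=1$ the sums in \eqref{I-J} are $I_1=\e_0^{-1}\left(\fint_{B(0,1)}|F|^2\right)^{1/2}$ and $J_1=\e_0^{-1}\left(\fint_{B(0,1)}|f|^2\right)^{1/2}$, so the conclusion coincides with \eqref{ineq_ue_lambda} under the hypothesis $\e<\e_0=\e_0\theta^{0}$.

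For the inductive step, suppose the estimate holds at level $k$, and assume $0<\e<\e_0\theta^k$ with $u_\e$ a weak solution of \eqref{cond_ue_Ff} for some $M\in\cA$ and $\lambda\in[0,\e_0^2]$. I would rescale by setting $w(x)=u_\e(\theta^k x)$ for $x\in B(0,1)$. A direct change of variables shows that $w$ satisfies
\[
-\text{\rm div}(M(x/\tilde\e)\nabla w)+\tilde\lambda w=\tilde F+\text{\rm div}(\tilde f) \quad\text{in } B(0,1),
\]
with $\tilde\e=\e/\theta^k$, $\tilde\lambda=\theta^{2k}\lambda$, $\tilde F(x)=\theta^{2k}F(\theta^k x)$, and $\tilde f(x)=\theta^k f(\theta^k x)$. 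The constraint $\e<\e_0\theta^k$ gives $\tilde\e<\e_0$, and $\lambda\le\e_0^2$ together with $\theta<1$ gives $\tilde\lambda\le\e_0^2$, so Lemma \ref{lem_comp_first} applies to $w$. Translating its conclusion back to $u_\e$ via the change of variables yields a one-step inequality
\begin{align*}
&\left(\fint_{B(0,\theta^{k+1})}\Big|u_\e-\fint_{B(0,\theta^{k+1})}u_\e\Big|^2\right)^{1/2}
+\theta^{k+1}\sqrt{\lambda}\left(\fint_{B(0,\theta^{k+1})}|u_\e|^2\right)^{1/2}\\
&\le\theta^\sigma\Bigg\{\left(\fint_{B(0,\theta^{k})}\Big|u_\e-\fint_{B(0,\theta^k)}u_\e\Big|^2\right)^{1/2}
+\theta^{k}\sqrt{\lambda}\left(\fint_{B(0,\theta^{k})}|u_\e|^2\right)^{1/2}\\
&\qquad\qquad+\e_0^{-1}\theta^{2k}\left(\fint_{B(0,\theta^k)}|F|^2\right)^{1/2}
+\e_0^{-1}\theta^{k}\left(\fint_{B(0,\theta^k)}|f|^2\right)^{1/2}\Bigg\}.
\end{align*}

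Combining this with the inductive hypothesis at level $k$, which bounds the first two terms on the right by $\theta^{k\sigma}$ times the scale-$1$ initial data plus $I_k+J_k$, and factoring $\theta^{(k+1)\sigma}$ out of the whole right-hand side, the residual contributions become $\e_0^{-1}\theta^{k(2-\sigma)}\left(\fint_{B(0,\theta^k)}|F|^2\right)^{1/2}$ and $\e_0^{-1}\theta^{k(1-\sigma)}\left(\fint_{B(0,\theta^k)}|f|^2\right)^{1/2}$, which are precisely the new summands $I_{k+1}-I_k$ and $J_{k+1}-J_k$ in \eqref{I-J}. This closes the induction. The only real obstacle is the algebraic bookkeeping of the powers of $\theta$: one must check that the change-of-variables factors ($\theta^{2k}$ for $F$ and $\theta^{k}$ for $f$) combine with the geometric gain $\theta^\sigma$ from Lemma \ref{lem_comp_first} and the factor $\theta^{k\sigma}$ produced by the inductive hypothesis to yield exactly the exponents $k(2-\sigma)$ and $k(1-\sigma)$ required by the definition of $I_k, J_k$.
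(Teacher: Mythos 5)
Your proposal is correct and follows essentially the same route as the paper: induction on $k$ with base case given by Lemma \ref{lem_comp_first}, then rescaling $w(x)=u_\e(\theta^kx)$ to reduce the inductive step to a single application of Lemma \ref{lem_comp_first}, and finally checking that the change-of-variables factors $\theta^{2k}$, $\theta^{k}$ combine with $\theta^\sigma\cdot\theta^{k\sigma}=\theta^{(k+1)\sigma}$ to produce the increments $I_{k+1}-I_k$ and $J_{k+1}-J_k$. The exponent bookkeeping you flag does close as claimed, since $(k+1)\sigma+k(2-\sigma)=\sigma+2k$ and $(k+1)\sigma+k(1-\sigma)=\sigma+k$.
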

\begin{proof}
	The lemma is proved by an induction argument on $k$. The case $k=1$ is given by Lemma \ref{lem_comp_first}. Suppose now that the lemma holds for some $k\ge 1$. Let $u_\e$ be a weak solution of (\ref{cond_ue_Ff}) in $B(0,1)$ for some $0<\e<\e_0 \theta^k$. Consider the function $v(x) = u_\e(\theta^k x)$. Observe that $v$ satisfies the system
	\begin{equation*}
	-\text{div}(M(x/(\theta^{-k}\e)) \nabla v) + \theta^{2k} \lambda v = G+  \text{div}(H) \quad \text{in } B(0,1),
	\end{equation*}
	where $G(x)=\theta^{2k} F(\theta^k x)$ and $H(x)=\theta^k f(\theta^k x)$.
	Since $\theta^{-k} \e < \e_0$ and $\theta^{2k} \lambda \in [0,\e_0^2]$, it follows from Lemma \ref{lem_comp_first} that
	\begin{align*}
	&\left(\fint_{B(0,\theta^{k+1})} \Big| u_\e - \fint_{B(0,\theta^{k+1})} u_\e \Big|^2 \right)^{1/2} + \theta^{k+1} \sqrt{\lambda} \left( \fint_{B(0,\theta^{k+1})} |u_\e|^2 \right)^{1/2} \\
	& = \left(\fint_{B(0,\theta)} \Big| v - \fint_{B(0,\theta)} v \Big|^2 \right)^{1/2} + \theta ( \sqrt{\theta^{2k}\lambda}) \left( \fint_{B(0,\theta)} |v|^2 \right)^{1/2} \\
	& \le \theta^\sigma \Bigg\{  \left(\fint_{B(0,1)} \Big| v - \fint_{B(0,1)} v \Big|^2 \right)^{1/2} +\sqrt{\theta^{2k} \lambda} \left(\fint_{B(0,1)} \left| v \right|^2 \right)^{1/2} \\
	&\qquad \qquad+ \e_0^{-1}\theta^{2k} \left( \fint_{B(0,1)} |F(\theta^k x)|^2 \, dx\right)^{1/2} + \e_0^{-1} \theta^k \left( \fint_{B(0,1)} |f(\theta^k x)|^2 \, dx \right)^{1/2} \Bigg\} \\
	& \le \theta^\sigma \Bigg\{  \left(\fint_{B(0,\theta^k)} \Big| u_\e - \fint_{B(0,\theta^k)} u_\e \Big|^2 \right)^{1/2} +\theta^{k}\sqrt{ \lambda} \left(\fint_{B(0,\theta^k)} \left| u_\e \right|^2 \right)^{1/2} \\
	&\qquad \qquad+ \e_0^{-1}\theta^{2k} \left( \fint_{B(0,\theta^k)} |F|^2  \right)^{1/2} 
	+ \e_0^{-1} \theta^k \left( \fint_{B(0,\theta^{k})} |f|^2  \right)^{1/2} \Bigg\} .
	\end{align*}
	By the induction assumption this is bounded by
	\begin{align*}
	 & \theta^{(k+1)\sigma} \Bigg\{  \left(\fint_{B(0,1)} \Big| u_\e - \fint_{B(0,1)} u_\e \Big|^2 \right)^{1/2} +\sqrt{ \lambda} \left(\fint_{B(0,1)} \left| u_\e \right|^2 \right)^{1/2} \\
	&\qquad \qquad + I_k+J_k + \e_0^{-1}\theta^{k(2-\sigma)}
	 \left( \fint_{B(0,\theta^k)} |F|^2\right)^{1/2} 
	 + \e_0^{-1} \theta^{k(1-\sigma)}  \left( \fint_{B(0,\theta^{k})} |f|^2 \right)^{1/2}\Bigg\} \\
	&=  \theta^{(k+1)\sigma} \Bigg\{  \left(\fint_{B(0,1)} \Big| u_\e - \fint_{B(0,1)} u_\e \Big|^2 \right)^{1/2} 
	+\sqrt{ \lambda} \left(\fint_{B(0,1)} \left| u_\e \right|^2 \right)^{1/2}  +I_{k+1} +J_{k+1} \Bigg\},
		\end{align*}
	where we have used the definitions of $I_k$ and $J_k$.
	The proof is complete.
\end{proof}

We are now ready to give the proof of Theorem \ref{thm_ue_Ff}.

\begin{proof}[\bf Proof of Theorem \ref{thm_ue_Ff}]
Let $\e_0$ and $\theta$ be given by Lemma \ref{lem_comp_first}.
We may   assume that $0\le \lambda\le \e_0^2 R^{-2}$.
The case $\varep_0^2  R^{-2}<\lambda\le R^{-2}$ follows easily 
from the case $\lambda=\e_0^{2}R^{-2}$.
	By translation and dilation we may also assume that $x_0 = 0$ and $R = 1$. 
	Thus $u_\e \in H^1(B(0,1);\R^m)$ is a weak solution of $-\text{div}(M(x/\e)\nabla u_\e) + \lambda u_\e
	 = F+\text{div} ( f)$ in $B(0,1)$ for some $M\in\cA, 0<\e<1$ and $\lambda\in [0,\e_0^2]$. 
	 Let $\e < r < 1$.  We may assume that $r<\e_0 \theta$,
	 as the case $r\ge \e_0 \theta$ follows directly from Caccioppoli's inequality.
	
	Now we choose $k\ge 1$ so that $\e_0 \theta^{k+1} \le r < \e_0 \theta^{k}$. 
	It follows from Lemma \ref{lem_comp_iter} and (\ref{ineq_Cacci}) that
	\begin{align*}
	&\left(\fint_{B(0,r)} |\nabla u_\e|^2 \right)^{1/2} + \sqrt{\lambda} \left(\fint_{B(0,r)} | u_\e|^2 \right)^{1/2} \\
	&\qquad \le C \Bigg\{ \left(\fint_{B(0,\theta^{k}/2)} |\nabla u_\e|^2 \right)^{1/2} + \sqrt{\lambda}\left(\fint_{B(0,\theta^{k}/2)} | u_\e|^2 \right)^{1/2} \Bigg\} \\
	&\qquad \le C \left\{  \theta^{-k}\left(\fint_{B(0,\theta^k)} \left| u_\e - \fint_{B(0,\theta^k)} u_\e \right|^2 \right)^{1/2} + \sqrt{\lambda} \left(\fint_{B(0,\theta^{k})} | u_\e|^2 \right)^{1/2} \right. \\
	&\qquad\qquad \qquad \left.+  \theta^k \left( \fint_{B(0,\theta^k)} |F|^2 \right)^{1/2} 
	+ \left( \fint_{B(0,\theta^k)} |f|^2 \right)^{1/2} \right\} \\
	&\qquad \le C \theta^{k(\sigma-1)}\Bigg\{  \left(\fint_{B(0,1)} \left| u_\e  \right|^2 \right)^{1/2} 
	+\theta^{k(2-\sigma)} \left( \fint_{B(0,\theta^k)} |F|^2\right)^{1/2} \\
& \qquad\qquad\qquad\qquad\qquad
	+I_k +J_k
	+\theta^{k(1-\sigma)} \left( \fint_{B(0,\theta^{k})} |f|^2 \right)^{1/2}  \Bigg\}.
		\end{align*}
		
		Finally,  note that by (\ref{I-J}),
		$$
		\aligned
		 &I_k\le C \sup_{\substack{x\in B(0,1/2) \\ r\le t\le 1/2} } t \left(\fint_{B(x,t)} |F|^2\right)^{1/2},\\
		& J_k \le C \sup_{x\in B(0, 1/2)} \left(\fint_{B(x, r)}  |f|^2\right)^{1/2}.
		\endaligned
		$$
		We obtain
		$$
		\aligned
		&\left(\fint_{B(0,r)} |\nabla u_\e|^2 \right)^{1/2} + \sqrt{\lambda} \left(\fint_{B(0,r)} | u_\e|^2 \right)^{1/2}\\
		 &\le C_\sigma  r^{\sigma-1}
		\left\{ \left(\fint_{B(0,1)} \left| u_\e  \right|^2 \right)^{1/2} 
	+\sup_{\substack{ x\in B(0,1/2)\\ r\le t\le (1/2} } t \left( \fint_{B(x,t)} |F|^2\right)^{1/2}\right.\\
&\qquad\qquad\qquad\qquad\qquad\qquad
\left.	+ \sup_{x\in B(0, 1/2)} \left(\fint_{B(x, r)}  |f|^2\right)^{1/2}\right\}.
	\endaligned
	$$
	This finishes the proof (with $1-\sigma$ in the place of $\sigma$).
\end{proof}

As a corollary of Theorem \ref{thm_ue_Ff}, we obtain the following.

\begin{thm}\label{H-theorem-global}
Suppose that $A\in APW^2(\R^d)$ and satisfies the ellipticity condition (\ref{cond_ellipticity}).
Let $F\in L^2_{\loc, \unif}(\R^d; \R^m)$, $f\in L^2_{\loc, \unif}(\R^{m\times d})$, and $u$ be the solution of
\begin{equation}\label{global-equation}
-\text{\rm div} (A\nabla u) +T^{-2} u = F +\text{\rm  div} (f) \quad \text{ in } \R^d,
\end{equation}
given by Lemma \ref{lem_appr_u}. Then there exists $\bar{q}>2$, depending only on  $d$, $m$ and $\mu$,
such that for any $1\le r\le T$ and $\sigma \in (0,1)$,
\begin{equation}\label{G-estimate-r-6}
\|\nabla u\|_{S^q_r} + T^{-1} \| u\|_{S^2_r}
\le C_\sigma \left(\frac{T}{r} \right)^{\sigma}
\left\{ \sup_{r\le t\le T}  t \| F\|_{S^2_t} + \| f\|_{S^q_r} \right\},
\end{equation}
where $2\le q\le \bar{q}$ and
$C_\sigma$ depends only on $\sigma$ and $A$.
\end{thm}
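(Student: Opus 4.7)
The plan is to first establish the $L^2$ version of the estimate, then upgrade the gradient norm to $L^{\bar q}$ via the reverse H\"older inequality (\ref{RH}), and finally interpolate to cover all exponents $2\le q\le \bar q$. Throughout we view (\ref{global-equation}) as an instance of the equation in Theorem \ref{thm_ue_Ff} with $\e=1$, $R=T$, and $\lambda=T^{-2}$, noting that $\lambda=R^{-2}$ so the hypothesis $\lambda\in[0,R^{-2}]$ is satisfied.

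The first step: for each $x_0\in\R^d$ and $1\le r\le T/2$, apply Theorem \ref{thm_ue_Ff} to $u$ on the ball $B(x_0,T)$. Taking the supremum over $x_0$ and using the simple bound $\sup_{x\in B(x_0,T/2),\,r\le t\le T/2} t(\fint_{B(x,t)}|F|^2)^{1/2}\le \sup_{r\le t\le T} t\|F\|_{S^2_t}$ yields
\[
\|\nabla u\|_{S^2_r}+T^{-1}\|u\|_{S^2_r}
\le C_\sigma (T/r)^\sigma \Big\{T^{-1}\|u\|_{S^2_T}+\sup_{r\le t\le T} t\|F\|_{S^2_t}+\|f\|_{S^2_r}\Big\}.
\]
By Lemma \ref{W-B-lemma} with $R=T$, $T^{-1}\|u\|_{S^2_T}\le C\{\|f\|_{S^2_T}+T\|F\|_{S^2_T}\}$; and (\ref{r-R}) together with $r\le T$ gives $\|f\|_{S^2_T}\le C\|f\|_{S^2_r}$, while $T\|F\|_{S^2_T}\le \sup_{r\le t\le T} t\|F\|_{S^2_t}$ trivially. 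Hence for $1\le r\le T/2$,
\[
\|\nabla u\|_{S^2_r}+T^{-1}\|u\|_{S^2_r}
\le C_\sigma (T/r)^\sigma \Big\{\sup_{r\le t\le T} t\|F\|_{S^2_t}+\|f\|_{S^2_r}\Big\}.
\]
The range $T/2\le r\le T$ is covered directly by Lemma \ref{lem_appr_u} (estimate (\ref{ineq_appr_udu}) combined with (\ref{r-R})).

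For the second step, I rewrite (\ref{global-equation}) in the form $-\text{div}(A\nabla u)=\widetilde F+\text{div}(f)$ with $\widetilde F:=F-T^{-2}u$, and apply the interior reverse H\"older inequality (\ref{RH}) on the pair $B(x_0,r/2)\subset B(x_0,r)$, taking the supremum over $x_0$:
\[
\|\nabla u\|_{S^{\bar q}_{r/2}}
\le C\Big\{\|\nabla u\|_{S^2_r}+r\|F\|_{S^2_r}+rT^{-2}\|u\|_{S^2_r}+\|f\|_{S^{\bar q}_r}\Big\}.
\]
Since $r\le T$, the term $rT^{-2}\|u\|_{S^2_r}\le T^{-1}\|u\|_{S^2_r}$ is controlled by the $L^2$ estimate already obtained, and $r\|F\|_{S^2_r}\le \sup_{r\le t\le T} t\|F\|_{S^2_t}$. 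Replacing $r$ by $2r$ (harmless up to the constant $C_\sigma$, using (\ref{r-R}) on the $f$-term as well), this gives the desired bound with $q=\bar q$. For a general $q\in[2,\bar q]$, Jensen's inequality gives $\|\nabla u\|_{S^q_r}\le \|\nabla u\|_{S^{\bar q}_r}$, so (\ref{G-estimate-r-6}) follows for all such $q$.

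The main obstacle is essentially bookkeeping: each scale change must respect the monotonicity $\|\cdot\|_{S^p_R}\le C\|\cdot\|_{S^p_r}$ for $r\le R$ from (\ref{r-R}) so as not to inflate the factor $(T/r)^\sigma$; and the reverse H\"older step must absorb the zeroth-order term $T^{-2}u$ into the effective right-hand side, which is possible only because the weight $r$ in front is dominated by $T$. No new compactness input is needed beyond Theorem \ref{thm_ue_Ff} and Lemmas \ref{lem_appr_u} and \ref{W-B-lemma}.
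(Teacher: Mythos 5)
Your approach matches the paper's proof: apply Theorem \ref{thm_ue_Ff} with $\e=1$, $R=T$, $\lambda=T^{-2}$, absorb the $T^{-1}\|u\|_{S^2_T}$ term back into the data via Lemma \ref{W-B-lemma} and (\ref{r-R}), and then upgrade $\nabla u$ from $L^2$ to $L^{\bar q}$ by reverse H\"older, absorbing the zeroth-order term $T^{-2}u$ into the effective source thanks to $r\le T$. This is exactly the paper's argument.

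The one step to tighten is the final interpolation. Jensen's inequality $\|\nabla u\|_{S^q_r}\le\|\nabla u\|_{S^{\bar q}_r}$ only shrinks the left-hand side and leaves $\|f\|_{S^{\bar q}_r}$ (not $\|f\|_{S^q_r}$) on the right, which is formally weaker than (\ref{G-estimate-r-6}) for $2\le q<\bar q$. To obtain the $f$-term at exponent $q$ as stated, apply the reverse H\"older/Gehring self-improvement at exponent $q$ directly, for each $q\in[2,\bar q]$ --- this is the same range version already used to derive (\ref{ineq_appr_dup}) --- rather than only at $\bar q$ followed by Jensen; then no interpolation is needed and the estimate comes out with $\|f\|_{S^q_r}$.
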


\begin{proof}
The case $(T/2)\le r\le T$ follows from Lemma \ref{lem_appr_u}
and does not use the almost periodicity of $A$.
To treat the case $1\le r< (T/2)$,
we use Theorem \ref{thm_ue_Ff} with $\e=1$, $\lambda=T^{-2}$ and $R=T$.
This, together with the reverse H\"older estimate (\ref{RH}), gives
$$
	\aligned
	&\left(\fint_{B(x_0,r)} |\nabla u|^q \right)^{1/q} + \sqrt{\lambda} \left(\fint_{B(x_0,r)} |u|^2 \right)^{1/2}\\
	& \le C_\sigma\bigg( \frac{T}{r}\bigg)^{\sigma} \Bigg\{  \frac{1}{T}\left( \fint_{B(x_0,T)} | u|^2 \right)^{1/2} + 
	\sup_{\substack{x\in B(x_0, T/2)\\ r\le t\le T/2}} t \left( \fint_{B(x,t)} |F|^2 \right)^{1/2}  \\
	&\qquad \qquad \qquad\qquad\qquad\qquad+ \sup_{x\in B(x_0,T/2)} \left( \fint_{B(x,r)} |f|^q \right)^{1/q}\Bigg\},
	\endaligned
$$
where $2\le q\le \bar{q}$ and
$\bar{q}>2$ depends only on $d$, $m$ and $\mu$. It follows that
$$
\aligned
\|\nabla u\|_{S^q_r} + T^{-1} \| u\|_{S^2_r}
&\le C_\sigma \left(\frac{T}{r}\right)^{\sigma}
\Big\{ T^{-1} \| u\|_{S^2_T} + \sup_{r\le t\le T} t \| F\|_{S^2_t} + \| f\|_{S^q_r} \Big\}\\
&\le C_\sigma \left(\frac{T}{r}\right)^{\sigma}
\Big\{\sup_{r\le t\le T}  t\| F\|_{S^2_t} +  +\| f\|_{S^2_T} 
+ \| f\|_{S^q_r} \Big\},
\endaligned
$$
which leads to (\ref{G-estimate-r-6}), using $\| f\|_{S^2_T} \le C \| f\|_{S^q_r}$.
\end{proof}

\begin{cor}\label{thm_chiT_BL}
	Suppose that $A\in APW^2(\R^d)$ and satisfies the ellipticity condition (\ref{cond_ellipticity}). 
	Let $T>1$ and $\sigma \in (0,1)$.
	Then
	\begin{equation}\label{ineq_dchi_r}
	\aligned
	 \|\nabla \chi_T\|_{S^{\bar{q}}_r}  & \le C_\sigma \left( \frac{T}{r}\right)^{\sigma},\\
	 \|\nabla \big(\chi_T -\chi_{2T}\big) \|_{S^{\bar{q}}_r}
	  & \le C _\sigma \left( \frac{T}{r}\right)^{\sigma}
	  \sup_{r\le t\le T} t \| T^{-2} \chi_{2T} \|_{S^2_t},
	  \endaligned
	\end{equation}
	for any $1\le r\le T$,
 where  $C_\sigma$ depends only on $\sigma$ and $A$. 
 \end{cor}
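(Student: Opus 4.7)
The plan is to apply Theorem \ref{H-theorem-global} directly to the equations satisfied by $\chi_T$ and by the difference $\chi_T-\chi_{2T}$, which will yield the two inequalities after bounding the right-hand side data appropriately.

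For the first estimate, recall that $\chi_T=\chi_{T,j}^\beta$ solves
$$
-\text{div}(A\nabla \chi_T)+T^{-2}\chi_T=\text{div}(A\nabla P_j^\beta) \quad\text{in }\R^d,
$$
which is of the form (\ref{global-equation}) with $F=0$ and $f=A\nabla P_j^\beta$. Since $\nabla P_j^\beta$ is a constant vector and $A\in L^\infty$, we have $\|f\|_{S^{\bar q}_r}\le \|A\|_\infty\le \mu^{-1}$ for every $r>0$. Plugging into (\ref{G-estimate-r-6}) gives immediately $\|\nabla \chi_T\|_{S^{\bar q}_r}\le C_\sigma (T/r)^\sigma$ for $1\le r\le T$, which is the first assertion of (\ref{ineq_dchi_r}).

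For the second estimate, I would first derive the equation satisfied by $w:=\chi_T-\chi_{2T}$. Subtracting the defining equations for $\chi_T$ and $\chi_{2T}$ (which share the same right-hand side $\text{div}(A\nabla P_j^\beta)$) and rearranging,
\begin{equation*}
-\text{div}(A\nabla w)+T^{-2}w=-\bigl(T^{-2}-(2T)^{-2}\bigr)\chi_{2T}=-\tfrac{3}{4}T^{-2}\chi_{2T} \quad\text{in }\R^d.
\end{equation*}
This is again of the form (\ref{global-equation}), now with $F=-\tfrac{3}{4}T^{-2}\chi_{2T}$ and $f=0$. Applying Theorem \ref{H-theorem-global} yields
$$
\|\nabla w\|_{S^{\bar q}_r}\le C_\sigma \left(\frac{T}{r}\right)^\sigma \sup_{r\le t\le T} t\,\|T^{-2}\chi_{2T}\|_{S^2_t},
$$
which is the second assertion.

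No step is particularly delicate: the only routine point to verify is that the difference $w$ indeed satisfies an equation of the type covered by Theorem \ref{H-theorem-global} (i.e.\ with $w,\nabla w\in L^2_{\loc,\unif}$), which follows from the membership of $\chi_T,\chi_{2T}$ in this class (estimate (\ref{cor-L-2})) and the uniqueness statement of Lemma \ref{lem_appr_u}. The existence and boundedness of $\bar q>2$ is built into Theorem \ref{H-theorem-global} via the reverse H\"older inequality (\ref{RH}), so it need not be re-derived here. The main content of the corollary is simply the observation that the large-scale H\"older estimate, combined with the boundedness of the structural data and the self-referential right-hand side $T^{-2}\chi_{2T}$, controls $\nabla \chi_T$ and $\nabla(\chi_T-\chi_{2T})$ at scales between $1$ and $T$.
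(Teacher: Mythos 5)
Your proposal matches the paper's own proof essentially verbatim: both apply Theorem \ref{H-theorem-global} first with $F=0$, $f=A\nabla P_j^\beta$ (using $\|A\nabla P_j^\beta\|_{S^{\bar q}_r}\le \|A\|_\infty$), and then to $w=\chi_T-\chi_{2T}$ with $F=-\tfrac{3}{4}T^{-2}\chi_{2T}$, $f=0$, after deriving the same equation for $w$. The argument is correct and complete.
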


\begin{proof}
The first inequality in (\ref{ineq_dchi_r}) follows directly from Theorem \ref{H-theorem-global}
with $F=0$ and $f=A\nabla P_j^\beta$.
The same argument also gives a rough estimate,
\begin{equation}\label{small-cor-1}
T^{-1} \|\chi_T\|_{S^2_r} \le C_\sigma \left(\frac{T}{r}\right)^\sigma.
\end{equation}
To see the second inequality we let $u=\chi_T -\chi_{2T}$.
Then 
$$
-\text{\rm div} \big(A\nabla  u \big)
+T^{-2} u= -(3/4)T^{-2} \chi_{2T}.
$$
By Theorem \ref{H-theorem-global} we obtain the second inequality in (\ref{ineq_dchi_r}).
		\end{proof}
		


\section{A quantitative ergodic argument}

In this section we establish some general  estimates, which
formalize and extend the quantitative ergodic argument in \cite{AGK-2015},
for functions in $APW^2(\R^d)$.
These estimates allow us to control the norm $\| g\|_{S^2_1}$ by
$\| \nabla g\|_{S^2_t}$ for $t\ge 1$ and the function $\omega_k (g; L, R)$,
defined by 
\begin{equation}\label{omega-k}
\omega_k (g; L, R)
=\sup_{y_1\in \R^d} \inf_{|z_1|\le L} 
\cdots \sup_{y_k\in \R^d} \inf_{|z_k|\le L}
\| \Delta_{y_1z_1}\Delta_{y_2z_2} \cdots \Delta_{y_k z_k} (g)\|_{S^2_R},
\end{equation}
where $0<L, R<\infty$ and $k\ge 1$.
Throughout this section we will assume that
$g, \nabla g \in L^2_{\loc, \unif} (\R^d)$ and
\begin{equation}\label{mean}
\langle g \rangle=\lim_{R\to\infty} \fint_{B(0,R)} g =0.
\end{equation}
Let
\begin{equation}\label{u}
u(x,t)=g * \Phi_t ( x)= \int_{\R^d} g(y) \Phi_t (x-y)\, dy,
\end{equation}
where 
$$
\Phi_t (y)=t^{d/2}\Phi(y/\sqrt{t}) =c_d t^{-d/2} \exp (-|y|^2/(4 \sqrt{t}))
$$
is the standard heat kernel.

We begin with a lemma that reduces the estimate of $\| g\|_{S^2_1}$ to that of $\| u(\cdot, 1)\|_\infty$.

\begin{lem}\label{heat-lemma-3}
Let $u (x, t)=g * \Phi_t (x)$. Then, for $0<R<\infty$,
\begin{equation}\label{heat-estimate-10}
\| g\|_{S^2_R}
\le C \left\{ \| u(\cdot, R^2)\|_\infty +R\,  \| \nabla g\|_{S^2_R} \right\},
\end{equation}
where $C$ depends only on $d$.
\end{lem}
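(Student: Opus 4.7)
The plan is to write $g$ as its heat-smoothed version $u(\cdot,R^2)$ plus a remainder and control the remainder by $R\,\|\nabla g\|_{S^2_R}$ using the gradient of $g$ along straight lines. Since $\int\Phi_{R^2}=1$, we have the identity
\begin{equation*}
g(x)-u(x,R^2)=\int_{\R^d}\bigl(g(x)-g(x-y)\bigr)\,\Phi_{R^2}(y)\,dy,
\end{equation*}
and the elementary pointwise bound $|g(x)|^2\le 2|u(x,R^2)|^2+2|g(x)-u(x,R^2)|^2$. It therefore suffices, after taking the supremum over $x_0\in\R^d$, to prove that
\begin{equation*}
\Bigl(\fint_{B(x_0,R)}|g(x)-u(x,R^2)|^2\,dx\Bigr)^{1/2}\le C\,R\,\|\nabla g\|_{S^2_R}
\end{equation*}
with a constant $C$ depending only on $d$.

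For the remainder, I would apply Minkowski's integral inequality to exchange the $L^2_x$-average over $B(x_0,R)$ with the $\Phi_{R^2}$-integration, reducing matters to bounding $\bigl(\fint_{B(x_0,R)}|g(x)-g(x-y)|^2\,dx\bigr)^{1/2}$ for fixed $y$. Since $g\in H^1_{\loc}$ (after a routine mollification, which may be made rigorous by convolving with a standard bump and passing to the limit), we have the line-integral representation $g(x)-g(x-y)=\int_0^1 y\cdot\nabla g(x-sy)\,ds$. Cauchy--Schwarz in $s$ and Fubini give
\begin{equation*}
\fint_{B(x_0,R)}|g(x)-g(x-y)|^2\,dx\le |y|^2\int_0^1\fint_{B(x_0-sy,R)}|\nabla g|^2\,ds\le |y|^2\,\|\nabla g\|_{S^2_R}^2,
\end{equation*}
where the last step uses the fact that $\|\nabla g\|_{S^2_R}$ is a supremum over all centers. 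Plugging this back in and using $\int_{\R^d}\Phi_{R^2}(y)|y|\,dy\le C R$ (a change of variables based on $\Phi_{R^2}$ being the heat kernel at time $R^2$) yields the bound on the remainder.

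Combining the pointwise decomposition with the remainder estimate and taking the supremum over $x_0\in\R^d$ gives (\ref{heat-estimate-10}). No step presents a genuine obstacle; the main technical point is applying Minkowski correctly to exchange the ball-average and the convolution integral, and observing that the translation $x\mapsto x-sy$ in the gradient integral is absorbed harmlessly into the supremum defining $\|\nabla g\|_{S^2_R}$. The hypothesis $\langle g\rangle=0$ plays no role in this particular lemma.
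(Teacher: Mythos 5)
Your proof is correct, and it takes a genuinely different — and in fact more streamlined — route than the paper. The paper rescales to $R=1$, proves an intermediate pointwise estimate of the form
\begin{equation*}
\Big| u(x,1)-\fint_{B(x,r)}g\Big|\le C\, r^{\frac{d}{2}+2}\|\nabla g\|_{S^2_r}+C\,e^{-c r^2}\|g\|_{S^2_r}
\end{equation*}
for $r\ge 1$ by separating the near-field integral $\int_{B(0,r)}$ from the Gaussian tail, couples this with Poincar\'e's inequality $\|g\|_{S^2_r}\le C r\|\nabla g\|_{S^2_r}+\sup_x|\fint_{B(x,r)}g|$, and then runs an absorption argument: choose $r$ large enough that $Ce^{-cr^2}\le 1/2$ so the $\|g\|_{S^2_r}$ term on the right can be moved to the left. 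Your approach avoids the auxiliary scale $r$ and the absorption step entirely: you write $g-u(\cdot,R^2)=\int(g(\cdot)-g(\cdot-y))\Phi_{R^2}(y)\,dy$, apply Minkowski's integral inequality, bound the translated difference by $|y|\,\|\nabla g\|_{S^2_R}$ using the translation-invariant definition of $\|\cdot\|_{S^2_R}$, and use $\int|y|\Phi_{R^2}(y)\,dy=CR$. The result is a one-pass argument with a cleaner constant. Both are valid; your version is shorter and requires no a-priori finiteness discussion to close the absorption. You are also right that the standing assumption $\langle g\rangle=0$ from the section is irrelevant to this particular lemma — the paper's proof does not use it either.
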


\begin{proof}
Note that if $g(x)=f(Rx)$, then
$\| g\|_{S^2_1} =\| f\|_{S^2_R}$ and $\Phi_t *g (x) =\Phi_{tR^2} *f(Rx)$.
Thus, by rescaling, we may assume that $R=1$.
We will show that for any $r\ge 1$ and $x\in \R^d$,
\begin{equation}\label{heat-3-1}
\Big| u(x, 1)-\fint_{B(x, r)} g \Big|
\le C \,  r^{\frac{d}{2}+2}
 \|\nabla g\|_{S^2_r}+ C\, e^{-c\, r^2} \| g\|_{S^2_r},
\end{equation}
where $C>0$ and $c>0$ depend only on $d$.
Assume (\ref{heat-3-1}) holds for a moment.
Then, by Poincar\'e inequality, for any $r\ge 1$,
$$
\aligned
\| g\|_{S^2_r}   &\le C\, r \|\nabla g\|_{S^2_r} +\sup_{x\in \R^d}
\Big| \fint_{B(x,r)} g \Big|\\
&\le C\, r^{\frac{d}{2}+2} \|\nabla g\|_{S^2_r}
+\| u(\cdot, 1)\|_\infty +C\,e^{-c\, r^2} \| g\|_{S^2_r}.
\endaligned
$$
We now fix $r>1$ such that $C e^{-c\, r^2}\le (1/2)$.
Since $\| g\|_{S^2_r}<\infty$,
it follows that
$$
\aligned
\|g\|_{S^2_1}
&\le C \, \| g\|_{S^2_r}
\le C \, \| u(\cdot, 1)\|_\infty
+C\, \| \nabla g\|_{S^2_r}\\
&\le C \Big\{ \| u(\cdot, 1)\|_\infty
+\| \nabla g\|_{S^2_1} \Big\}.
\endaligned
$$

It remains to prove (\ref{heat-3-1}).
To this end we first note that
$$
\aligned
\Big| u(x, 1) -\int_{B(0,r)} g(x-y) \Phi (y)\, dy \Big|
&\le \int_{\R^d\setminus B(0,r)}  |g(x-y)|\, \Phi (y)\, dy\\
&\le \sum_j \int_{Q_j} |g(x-y)| \Phi (y)\, dy\\
&\le \sum_j \left(\fint_{Q_j} |g(x-\cdot)|^2\right)^{1/2}
\left(\fint_{Q_j} |\Phi|^2\right)^{1/2}\\
&\le C \| g\|_{S^2_r} \int_{|y|\ge cr} e^{-c|y|^2}\, dy\\
&\le C\, e^{-c\, r^2} \| g\|_{S^2_r},
\endaligned
$$
where $\{ Q_j\}$ is a collection of non-overlapping cubes with side length $c\, r$
such that 
$$
\R^d\setminus B(0,r)\subset \cup_j Q_j \subset \R^d \setminus B(0,r/2).
$$
It follows that
\begin{equation}\label{heat-3-3}
\aligned
& \Big| u(x, 1) -\fint_{B(x,r)} g \Big|\\
&\le C e^{-c\, r^2} \| g\|_{S^2_r}
+\Big| \fint_{B(x,r)} g -\int_{B(x,r)} g(y)\Phi (x-y) \, dy \Big|\\
&\le C e^{-c\, r^2}\| g\|_{S^2_r}
+ \Big| \int_{B(x,r)} \left( g-\fint_{B(x,r)} g \right)
\big( \Phi (x-y) -E_r \big) \, dy \Big|\\
&\qquad\qquad
+ \Big| \fint_{B(x,r)} g \Big| \, \Big| \int_{B(0,r)} \Phi  -1 \Big|,
\endaligned
\end{equation}
where $E_r$ is the average of $\Phi$ over $B(0,r)$.
By H\"older's and Poincar\'e  inequalities the second term in the r.h.s. of (\ref{heat-3-3}) is 
bounded by 
$$
C r^{d+2} \|\nabla g\|_{S^2_r} \left(\fint_{B(0,r)} |\nabla \Phi|^2\right)^{1/2}
\le C r^{\frac{d}{2} +2} \|\nabla g\|_{S^2_r}.
$$
Finally,  since $\int_{\R^d} \Phi =1$,
the last term in the r.h.s. of (\ref{heat-3-3}) is bounded by
$$
C\,  \| g\|_{S^2_r} \int_{\R^d\setminus B(0,r)} \Phi
\le Ce^{-c\, r^2} \| g\|_{S^2_r}.
$$
This completes the proof of (\ref{heat-3-1}).
\end{proof}

To control $\| u(\cdot, t)\|_\infty$, we use a quantitative ergodic result
from \cite{AGK-2015}. We mention  that the explicit dependence of constants in $k$
is not used in this paper.

\begin{lem}\label{heat-lemma-0}
Let $u(x,t)= g* \Phi_t (x)$, where $g, \nabla g\in L^2_{\loc, \unif} (\R^d)$ and
$\langle g \rangle=0$.
Then, for any $t\ge  k R^2$ and $0<L<\infty$,
\begin{equation}\label{heat-estimate-1}
\aligned
\| u(\cdot, t)\|_\infty
& \le C^k \left\{ \omega_k (g; L, R) + \exp\left(-\frac{c\,  t}{k L^2}\right) \| g\|_{S^2_R}\right\},\\
\|\nabla_x u (\cdot , t)\|_\infty
& \le \frac{C^k}{\sqrt{t}}
 \left\{ \omega_k (g; L, R) + \exp\left({-\frac{c \, t}{kL^2}}\right) \| g\|_{S^2_R} \right\},
\endaligned
\end{equation}
where $C$ and $c$ depend only on $d$.
\end{lem}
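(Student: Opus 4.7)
The plan is to combine the algebra of finite-difference operators with the heat semigroup, reducing the estimate of $\|u(\cdot,t)\|_\infty$ to $k$-fold differences of $g$. First I would establish the algebraic identity, immediate from translation invariance of convolution:
\begin{equation*}
\Delta_{y,z}(u(\cdot,t))(x) = (\Delta_{y,z}(g)) * \Phi_t(x),
\end{equation*}
which iterates (using that difference operators commute with convolution) to
\begin{equation*}
\Delta_{y_1,z_1}\cdots\Delta_{y_k,z_k}(u(\cdot,t))(x) = \left(\Delta_{y_1,z_1}\cdots\Delta_{y_k,z_k}(g)\right) * \Phi_t(x).
\end{equation*}
A dyadic annular decomposition $\R^d = \bigcup_j \{2^j\sqrt t \le |y| \le 2^{j+1}\sqrt t\}$ combined with Cauchy--Schwarz and the Gaussian decay of $\Phi_t$ yields the convolution bound $\|f*\Phi_t\|_\infty \le C\|f\|_{S^2_{\sqrt t}}$. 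Since $t \ge kR^2 \ge R^2$, estimate (\ref{r-R}) upgrades this to $\|f*\Phi_t\|_\infty \le C\|f\|_{S^2_R}$; optimizing the translations via the definition of $\omega_k$ in (\ref{omega-k}) then yields
\begin{equation*}
\sup_{y_1,\ldots,y_k}\inf_{|z_i|\le L}\left\|\Delta_{y_1,z_1}\cdots\Delta_{y_k,z_k}(u(\cdot,t))\right\|_\infty \le C\,\omega_k(g; L, R).
\end{equation*}

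The heart of the proof is an inductive reduction of $\|u(\cdot,t)\|_\infty$ to the $k$-fold differences above. In the base case $k=1$, for each $y \in \R^d$, choose the approximately optimal $z(y) \in \overline{B(0,L)}$ in the definition of $\omega_1$; evaluating the identity at $x=0$ gives $|u(y,t)| \le |u(z(y),t)| + C\omega_1(g;L,R)$, so
\begin{equation*}
\|u(\cdot,t)\|_\infty \le \sup_{|z|\le L}|u(z,t)| + C\,\omega_1(g;L,R).
\end{equation*}
The residual $\sup_{|z|\le L}|u(z,t)|$ is controlled by writing $u(z,t) = \int g(y)\Phi_t(z-y)\,dy$ and splitting into the bulk $|y-z|\le M$ and Gaussian tail $|y-z|>M$: the bulk is bounded using $\langle g \rangle = 0$ and Lemma \ref{lem_Uniform_Ave} at scale $M$, while the tail contributes $Ce^{-cM^2/t}\|g\|_{S^2_M}$. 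A suitable choice of $M$ balances the two and produces the base-case exponential factor. The inductive step $k-1 \Rightarrow k$ proceeds via the semigroup decomposition $\Phi_t = \Phi_{t/k}^{*k}$: one applies the base-case reduction at each of $k$ stages with time $t/k$, peeling off one difference operator per stage; the constraint $t \ge kR^2$ guarantees $\sqrt{t/k} \ge R$ throughout, so the constants accumulate to $C^k$ and the per-stage decay $\exp(-c(t/k)/L^2)$ gives the claimed exponent $ct/(kL^2)$.

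The gradient estimate is proved by the same scheme with $\nabla\Phi_t$ in place of $\Phi_t$. Since $\|\nabla\Phi_t\|_{L^1(\R^d)} \le Ct^{-1/2}$ and $\nabla\Phi_t$ retains Gaussian decay at scale $\sqrt t$, the analogous convolution bound is $\|f*\nabla\Phi_t\|_\infty \le Ct^{-1/2}\|f\|_{S^2_R}$ for $R\le\sqrt t$, which accounts for the extra factor of $t^{-1/2}$ in the stated gradient bound.

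The main obstacle lies in the base case and the inductive step: precisely matching the exponent $ct/(kL^2)$ demands a careful balancing of the almost-periodic cancellation from Lemma \ref{lem_Uniform_Ave} against the Gaussian tail at each of the $k$ iterations, together with a consistent allocation of time $t/k$ per stage via the semigroup decomposition. The accumulation of constants through the $k$ steps is what forces the prefactor $C^k$, whose explicit $k$-dependence, as the paper notes, will not be needed in the applications.
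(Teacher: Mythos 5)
The paper's own proof is a two-line deferral: rescale to $R=1$, cite \cite{AGK-2015}, and add the single observation that since $\fint_{B(0,R)}u(x,t)\,dx\to 0$ (from $\langle g\rangle=0$ and dominated convergence), one has $\|u(\cdot,t)\|_\infty\le\sup_{x,y}|u(x,t)-u(y,t)|$, which reduces the sup-norm bound to an oscillation bound. Your proposal attempts to reconstruct the AGK argument from scratch, which is a reasonable ambition, but the reconstruction has a genuine gap exactly where the exponential factor $\exp(-c\,t/(kL^2))$ must be produced. In your base case you reduce $\|u(\cdot,t)\|_\infty$ to $\sup_{|z|\le L}|u(z,t)|$ plus $C\omega_1$, and then claim a bulk/tail split of $u(z,t)=\int g(y)\Phi_t(z-y)\,dy$ at radius $M$, with Lemma \ref{lem_Uniform_Ave} controlling the bulk, can be balanced to give the exponential. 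Run the numbers: the tail is $\lesssim e^{-cM^2/t}\|g\|_{S^2_R}$, while the bulk, after comparing $\Phi_t$ to its value at $0$ on $B(0,M)$ and invoking Lemma \ref{lem_Uniform_Ave}, contributes something of order $(M/\sqrt t)^d\big[\omega_1(g;L,R)+(L/M)^{1/2}\|g\|_{S^2_R}\big]$. Optimizing over $M$ (necessarily $M\gtrsim\sqrt t$ to tame the tail, $M\lesssim\sqrt t$ to tame the bulk prefactor) yields an error term that is a \emph{power} of $L/\sqrt t$, up to logarithms — not $\exp(-ct/L^2)$. The exponential cannot come out of a one-shot averaging at the scale $M$; it requires the iterated/semigroup structure (e.g.\ the spectral decay $\hat g(\xi)e^{-t|\xi|^2}$ with a spectral gap of order $1/L$, which AGK exploit) that your sketch gestures at in the induction step but does not actually implement.

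There is a second, smaller issue: your inductive step via $\Phi_t=\Phi_{t/k}^{*k}$ is asserted to "peel off one difference operator per stage," but after one application of the base case to $\Phi_{t/k}*g$, the intermediate function whose oscillation controls the next stage is $\Phi_{t/k}*g$, not $g$; turning the resulting cascade into the specific nested quantity $\omega_k(g;L,R)$ of (\ref{omega-k}) (which is \emph{not} $\big(\omega_1\big)^k$) needs a concrete mechanism that isn't supplied. You also never avail yourself of the paper's observation $\|u(\cdot,t)\|_\infty\le\sup_{x,y}|u(x,t)-u(y,t)|$, which is precisely what removes the need to estimate any single value $\sup_{|z|\le L}|u(z,t)|$ and is the only step of the proof the authors themselves felt needed to be made explicit.
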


\begin{proof}
By rescaling we may reduce the general case  to the case
where $R=1$ and $t\ge k$. In this case
the proposition was proved in \cite{AGK-2015}.
We point  out that the condition (\ref{mean}), together with the assumption that $g\in L^2_{\loc, \unif}(\R^d)$,
implies 
$
 \fint_{B(x,R)} g \to 0,
$
as $R\to \infty$,
for any $x\in \R^d$. It follows by the Lebesgue dominated convergence theorem that
$\fint_{B(0, R)} u(x, t)\, dx \to 0$, as $R\to \infty$, for any  $t>0$.
Hence, $\| u(\cdot, t)\|_\infty \le \sup_{x,y\in \R^d} |u(x,t)-u(y,t)|$.
\end{proof}

We are ready to state and prove the main result of this section.

\begin{thm}\label{general-theorem}
Let $g\in H^1_{\loc} (\R^d)$.
Suppose that $g, \nabla g\in L^2_{\loc, \unif} (\R^d)$ and
$\langle g \rangle=0$.
Then, for any $T\ge 2$ and $k\ge 1$,
\begin{equation}\label{general-estimate}
\aligned
\| g\|_{S^2_1}
 &\le C  \inf_{1\le L \le T}
 \left\{ \omega_k (g; L, T) +\exp \left(-\frac{c\, T^2}{L^2}\right) \| g\|_{S^2_T}  \right\}\\
 &\qquad
 + C \int_1^{T} \inf_{1\le L\le t}
 \left\{ \omega_k (\nabla g; L, t)
 +\exp\left(-\frac{c\, t^2}{L^2} \right)  \|\nabla g\|_{S^2_t}
 \right\}\, dt,
 \endaligned
\end{equation}
where $C>0$ and $c>0$ depend only on $d$ and $k$.
\end{thm}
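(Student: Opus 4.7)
The plan is to reduce bounding $\|g\|_{S^2_1}$ to bounding $\|u(\cdot,1)\|_\infty$ via Lemma \ref{heat-lemma-3} at scale $R=1$, which yields
$$\|g\|_{S^2_1}\le C\bigl\{\|u(\cdot,1)\|_\infty+\|\nabla g\|_{S^2_1}\bigr\},$$
and then to estimate $\|u(\cdot,1)\|_\infty$ by running the heat semigroup forward from time $1$ to time $T^2$. Specifically, using the fundamental theorem of calculus and $\partial_s u=\Delta u$,
$$u(x,1)=u(x,T^2)-\int_1^{T^2}\partial_s u(x,s)\,ds,$$
so $\|u(\cdot,1)\|_\infty\le \|u(\cdot,T^2)\|_\infty+\int_1^{T^2}\|\partial_s u(\cdot,s)\|_\infty\,ds$. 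The first term is handled directly by Lemma \ref{heat-lemma-0} at time $T^2$ and spatial scale $R\sim T$ (with the condition $T^2\ge kR^2$ accommodated by rescaling $R\to R/\sqrt{k}$ and absorbing the resulting $k$-dependent constants into $C$ and $c$), which produces the first summand $C\inf_{L}\{\omega_k(g;L,T)+\exp(-cT^2/L^2)\|g\|_{S^2_T}\}$ in the asserted estimate.

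For the integral piece, integration by parts inside the convolution gives
$$\partial_s u(x,s)=\Delta(g*\Phi_s)(x)=\sum_{j=1}^d\partial_{x_j}\bigl[(\partial_j g)*\Phi_s\bigr](x),$$
which displays $\partial_s u$ as a sum of \emph{spatial} gradients of heat convolutions of the components of $\nabla g$. Applying the second inequality of Lemma \ref{heat-lemma-0} to each $\partial_j g$ in place of $g$, we obtain
$$\|\partial_s u(\cdot,s)\|_\infty\le \frac{C^k}{\sqrt{s}}\Bigl\{\omega_k(\nabla g;L,R)+\exp\!\bigl(-c s/(kL^2)\bigr)\|\nabla g\|_{S^2_R}\Bigr\}$$
for any $L>0$ and any $R\le \sqrt{s/k}$. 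Choosing $R\sim t:=\sqrt{s}$ and changing variables $s=t^2,\ ds=2t\,dt$ exactly cancels the $1/\sqrt{s}$ factor and yields
$$\int_1^{T^2}\|\partial_s u(\cdot,s)\|_\infty\,ds\le C\int_1^T\inf_{1\le L\le t}\Bigl\{\omega_k(\nabla g;L,t)+\exp\!\bigl(-ct^2/L^2\bigr)\|\nabla g\|_{S^2_t}\Bigr\}\,dt,$$
which is the second summand on the right-hand side of \eqref{general-estimate}.

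It remains to absorb the residual term $\|\nabla g\|_{S^2_1}$ from Step 1 into this integral. For $t\in[1,2]$ we take $L=t$, so that the exponential becomes $e^{-c}$, and the elementary comparison $\fint_{B(x,1)}|\nabla g|^2\le 2^d \fint_{B(x,2)}|\nabla g|^2$ gives $\|\nabla g\|_{S^2_1}\le 2^{d/2}\|\nabla g\|_{S^2_t}$; hence the integrand over $[1,2]$ already dominates a constant multiple of $\|\nabla g\|_{S^2_1}$. Combining these pieces produces \eqref{general-estimate}.

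The main obstacle is the invocation of the gradient form of Lemma \ref{heat-lemma-0} on $\partial_j g$, which need not have vanishing mean. This is resolved by noting that the left-hand side $\|\nabla[(\partial_j g)*\Phi_s]\|_\infty$ is invariant under adding a constant to $\partial_j g$, and that $\omega_k$ for $k\ge 1$ is also insensitive to constants (since $\Delta_{yz}$ annihilates them); so we may apply the gradient estimate to $\partial_j g-\langle\partial_j g\rangle$ and pay only an extra constant in the norm $\|\nabla g\|_{S^2_R}$, which is harmless. A secondary bookkeeping point is the matching of the hypothesis $s\ge kR^2$ with our choice $R=\sqrt{s}$; this is fixed by taking $R=\sqrt{s/k}$ and using the equivalence (up to $k$-dependent constants) of $\omega_k(\nabla g;L,\cdot)$ and $\|\nabla g\|_{S^2_\cdot}$ on comparable scales, with all $k$-dependence absorbed into $C$ and $c$.
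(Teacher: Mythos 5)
Your proof is correct and follows essentially the same route as the paper: reduce to $\|u(\cdot,1)\|_\infty$ via Lemma~\ref{heat-lemma-3}, write $u(\cdot,1)=u(\cdot,T^2)-\int_1^{T^2}\partial_s u\,ds$, bound the two pieces by the two inequalities in Lemma~\ref{heat-lemma-0} at scales $R\sim T$ and $R\sim\sqrt{s}$ respectively, substitute $s=t^2$, and absorb $\|\nabla g\|_{S^2_1}$ into the integral over $[1,2]$. The one place you go slightly beyond the paper is in flagging and resolving the mean-value hypothesis when applying Lemma~\ref{heat-lemma-0} to $\nabla g$; that is a reasonable point to articulate (the gradient bound and $\omega_k$ are constant-insensitive), and the paper's own statement of Lemma~\ref{heat-lemma-0} only needs the $\langle g\rangle=0$ hypothesis for the first, not the second, inequality.
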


\begin{proof}
We first note that  $\|\nabla g\|_{S^2_1}$ is bounded by
the second integral in the r.h.s. of (\ref{general-estimate})
over the interval $[1,2]$.
Thus, in view of Lemma \ref{heat-lemma-3},
it suffices to show that $\| u(\cdot, 1)\|_\infty$ is dominated by the r.h.s. of (\ref{general-estimate}),
where $u(x,t)=g * \Phi_t (x)$.
To this end we use the heat equation $\partial_t u=\Delta_x u$ to obtain 
\begin{equation}\label{g-1}
\aligned
\| u(\cdot, 1)\|_\infty
&\le \| u(\cdot, T^2)\|_\infty +\int_1^{T^2} \|\partial_s u (\cdot, s)\|_\infty\, ds\\
&\le \| u(\cdot, T^2)\|_\infty +\int_1^{T^2} \| \nabla^2_x  u(\cdot, s)\|_\infty\, ds.
\endaligned
\end{equation}
By the first inequality in (\ref{heat-estimate-1}) with $R=c\, T$,
\begin{equation}\label{g-2}
 \| u(\cdot, T^2)\|_\infty \le 
C  \inf_{1\le L \le T }
 \left\{ \omega_k (g; L, T) +\exp \left(-\frac{c\, T^2}{L^2}\right) \| g\|_{S^2_T}  \right\}.
 \end{equation}
To handle $\|\nabla_x^2 u(\cdot, s)\|_\infty$, we use
$$
\nabla_x^2 u=\nabla_x \big(\nabla g * \Phi_t\big)
$$
and the second inequality in (\ref{heat-estimate-1})  with $R=c\sqrt{s}$
to obtain 
\begin{equation}\label{g-3}
\|\nabla^2 u(\cdot, s)\|_\infty
\le  
C \inf_{1\le L \le \sqrt{s}}
\left\{ \omega_k (\nabla g; L, \sqrt{s})
 +\exp\left(-\frac{c\, s}{L^2} \right)  \|\nabla g\|_{S^2_{\sqrt{s}}}
 \right\}\frac{1}{\sqrt{s}}.
\end{equation}
The estimate (\ref{general-estimate})
follows by combining (\ref{g-1}), (\ref{g-2}) and (\ref{g-3}) and using 
a change of variable $t=\sqrt{s}$ in the integral.
\end{proof}

\begin{rmk}\label{g-remark}
{\rm
Suppose that $g\in APW^2(\R^d)$ and $\langle g \rangle =0$.
Then $\omega_k (g, L, L)\to 0$ as $L\to \infty$.
It follows that the first term in (\ref{general-estimate})
goes to zero as $ T\to \infty$. This gives 
\begin{equation}\label{g-estimate-2}
\| g\|_{S^2_1}
 \le C \int_1^\infty \inf_{1\le L\le t}
 \left\{ \omega_k (\nabla g; L, t)
 +\exp\left(-\frac{c\, t^2}{L^2} \right)  \|\nabla g\|_{S^2_t}
 \right\}\, dt ,
\end{equation}
where $C>0$ and $c>0$ depend only on $d$ and $k$.
}
\end{rmk}



\section{Estimates of approximate correctors, part II}

In this section we give the proof of Theorems \ref{main-theorem-1} and \ref{main-theorem-2}.
Let $$
 P=P_k=\big\{ (y_1, z_1),  (y_2, z_2), \dots, (y_k, z_k)\big\},
 $$
  where $(y_i, z_i)\in \R^d\times \R^d$. Recall that
\begin{equation}\label{P}
\Delta_P  (f) =\Delta_{y_1z_1} \Delta_{y_2z_2} \cdots \Delta_{y_k z_k} (f)
\end{equation}
(if $k=0$, then $P=\emptyset$ and $\Delta_P (f)=f$).
Using the observation that
\begin{equation}\label{product-rule}
\Delta_{yz} (fg) (x)
=\Delta_{yz} (f) (x) \cdot g (x+y)
+ f(x+z) \cdot \Delta_{yz} (g) (x),
\end{equation}
an induction argument yields
\begin{equation}\label{product-rule-1}
\Delta_P (fg) (x)
=\sum_{Q\subset P}
\Delta_Q (f) (x +z_{j_1} +\cdots +z_{j_t}) \cdot
\Delta_{P\setminus Q} (g) (x +y_{i_1} +\cdots +y_{j_{i_\ell}}),
\end{equation}
where the sum is taken over all $2^k$ subsets 
$Q=\big\{ (y_{i_1}, z_{i_1}), \dots, (y_{i_\ell}, z_{i_\ell})\big\}$ of $P$,
with $P\setminus Q=\big\{(y_{j_1,}, z_{j_1}), \dots, (y_{j_t}, z_{j_t}) \big\}$. Here,
$i_1<i_2<\cdots i_\ell$, $j_1<j_2<\dots <j_t$, and $\ell +t=k$.
It follows from (\ref{product-rule-1}) by H\"older's inequality that
\begin{equation}\label{product-rule-2}
\| \Delta_P (fg)\|_{S^{q_1}_R}
\le \sum _{Q\subset P} \|\Delta_Q (f)\|_{S^p_R} \|\Delta_{P\setminus Q} (g)\|_{S^q_R},
\end{equation}
where 
$\frac{1}{q_1} \ge \frac{1}{p} +\frac{1}{q}$.

\begin{lem}\label{Higher-order-theorem}
Suppose that $A\in APW^2(\R^d)$ and satisfies the condition (\ref{cond_ellipticity}).
Let $F\in L^2_{\loc, \unif}(\R^d; \R^m)$, $f\in L^2_{\loc, \unif}(\R^{m\times d})$, and $u$ be the solution of
(\ref{global-equation}), 
given by Lemma \ref{lem_appr_u}.  Let $k\ge 0$ and $P=P_k$.
Then
there exists $\bar{q}>2$, depending only on $d$, $m$ and $\mu$, 
such that  any $1\le r\le T$ and $\sigma \in (0,1)$,
\begin{equation}\label{G-estimate-r}
\aligned
&\|\Delta_P (\nabla u)\|_{S^q_r} + T^{-1} \| \Delta_P (u) \|_{S^2_r}\\
 &\le C_\sigma \left(\frac{T}{r} \right)^{\sigma}
 \left\{  \sup_{r\le t\le T} t \| \Delta_{P} (F)\|_{S^2_t} + \| \Delta_{P} (f) \|_{S^{{q}_0}_r} \right\}\\
& \qquad  +C_\sigma \left(\frac{T}{r} \right)^{\sigma}
\sum_{P=Q_0\cup Q_1\cup\cdots \cup Q_\ell}
\|\Delta_{Q_1} A\|_{S^p_r}
\cdots \|\Delta_{Q_\ell} A \|_{S^p_r}\\
&\qquad\qquad\qquad\qquad
\cdot
\left\{ \sup_{r\le t\le T}
t \| \Delta_{Q_0} (F)\|_{S^2_t} + \| \Delta_{Q_0} (f) \|_{S^{{q}_0}_r} \right\},
\endaligned
\end{equation}
where $2\le q< q_0\le \bar{q}$, $\frac{1}{q}-\frac{1}{q_0} \ge \frac{k}{p}$,
and $C_\sigma$ depends only on $d$, $m$, $k$, $\sigma$ and $A$.
The sum in (\ref{G-estimate-r}) is taken over all partitions of $P=Q_0\cup Q_1\cup \cdots \cup Q_\ell$
with $1\le \ell \le k-1$ and $Q_j\neq \emptyset$.
\end{lem}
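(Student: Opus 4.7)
The plan is to prove the lemma by induction on $k = |P|$. The base case $k = 0$ is immediate from Theorem \ref{H-theorem-global}: then $\Delta_P$ is the identity and the partition sum is empty. For the inductive step from $k - 1$ to $k$, I would first derive a PDE for $w := \Delta_P(u)$ by applying $\Delta_P$ to equation (\ref{global-equation}) and expanding $\Delta_P(A\nabla u)$ via the discrete product rule (\ref{product-rule-1}). Isolating the $Q = \emptyset$ term, which has the form $\widetilde{A}(x)\Delta_P(\nabla u)(x)$ with $\widetilde{A}(x) = A(x + z_1 + \cdots + z_k)$ still in $APW^2(\R^d)$ and satisfying (\ref{cond_ellipticity}), and moving the remaining cross terms to the right yields
\begin{equation*}
-\text{div}\bigl(\widetilde{A}\,\nabla w\bigr) + T^{-2} w = \Delta_P(F) + \text{div}\Bigl(\Delta_P(f) + \sum_{\emptyset \neq Q \subseteq P} g_Q\Bigr),
\end{equation*}
where $g_Q(x) = \Delta_Q(A)(\cdot)\,\Delta_{P\setminus Q}(\nabla u)(\cdot)$. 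The internal shifts in the arguments of each factor are harmless because every $S^p_r$-norm is translation invariant.

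Applying Theorem \ref{H-theorem-global} to $w$ (with coefficient $\widetilde{A}$) at exponent $q$, and using the elementary bound $\|\Delta_P(f)\|_{S^q_r} \le \|\Delta_P(f)\|_{S^{q_0}_r}$, I obtain the first term on the right-hand side of (\ref{G-estimate-r}) together with a control term $\sum_Q \|g_Q\|_{S^q_r}$. H\"older's inequality with exponents $(p, b)$, where $\tfrac{1}{b} = \tfrac{1}{q} - \tfrac{1}{p}$, gives
\begin{equation*}
\|g_Q\|_{S^q_r} \le \|\Delta_Q(A)\|_{S^p_r}\,\|\Delta_{P\setminus Q}(\nabla u)\|_{S^b_r}.
\end{equation*}
The hypothesis $\tfrac{1}{q} - \tfrac{1}{q_0} \ge \tfrac{k}{p}$ combined with $|Q| \ge 1$ yields $\tfrac{1}{b} - \tfrac{1}{q_0} \ge \tfrac{k - |Q|}{p}$, and in particular $b \le \bar{q}$. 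Consequently the inductive hypothesis, at level $k - |Q|$ with the same $q_0$, applies to $\|\Delta_{P\setminus Q}(\nabla u)\|_{S^b_r}$.

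Substituting the inductive estimate and multiplying by $\|\Delta_Q(A)\|_{S^p_r}$, each inner partition $P\setminus Q = Q_0' \cup Q_1' \cup \cdots \cup Q_{\ell'}'$ combines with the outer block $Q$ to produce a partition $P = Q_0' \cup Q_1' \cup \cdots \cup Q_{\ell'}' \cup Q$. Reindexing the resulting double sum over $Q$ and the inner partitions reproduces precisely the partition sum on the right-hand side of (\ref{G-estimate-r}), with $S^p_r$-norms attached to the $A$-blocks and $S^2_t$/$S^{q_0}_r$-norms attached to the $F$/$f$ block $Q_0$. The accumulated multiplicative factor is at most $(T/r)^{(k+1)\sigma}$; since $\sigma \in (0,1)$ is arbitrary one runs the entire argument with $\sigma$ replaced by $\sigma/(k+1)$ at the outset to restore the stated exponent.

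The main obstacle is combinatorial bookkeeping: verifying that the recursion produces the partition sum exactly (no duplication, no omitted cases) and that the exponent chain $\tfrac{1}{q} \mapsto \tfrac{1}{q} - \tfrac{|Q|}{p}$ remains in the admissible range $[\tfrac{1}{\bar{q}}, \tfrac{1}{2}]$ at every level. The single condition $\tfrac{1}{q} - \tfrac{1}{q_0} \ge \tfrac{k}{p}$ is chosen so that after stripping $A$-factors of total size $j$, it reduces to the analogous condition $\tfrac{1}{q^{(j)}} - \tfrac{1}{q_0} \ge \tfrac{k - j}{p}$ required at the next level, which is precisely what makes the induction self-propagating.
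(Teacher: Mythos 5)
Your proposal is correct and follows essentially the same route as the paper: induction on $|P|$, the product rule (\ref{product-rule-1}) to produce the PDE for $\Delta_P(u)$, Theorem \ref{H-theorem-global} for the base case and for the outer estimate, H\"older with an intermediate exponent (your $b$ is the paper's $q_1$, both satisfying $\tfrac{1}{q}-\tfrac{1}{q_1}\ge\tfrac{1}{p}$ and $\tfrac{1}{q_1}-\tfrac{1}{q_0}\ge\tfrac{k-1}{p}$), and recombination of inner partitions of $P\setminus Q$ with $Q$. The paper manages the $\sigma$-budget by using $\sigma/2$ on each of the outer estimate and the inductive call rather than your rescale-at-the-outset device, but this is only cosmetic.
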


\begin{proof}
Let $\bar{q}>2$ be the same as in Theorem \ref{H-theorem-global}.
We prove the estimate (\ref{G-estimate-r})
by an induction argument on $k$.
Note that the case $k=0$ with $P=\emptyset$  is given by Corollary \ref{H-theorem-global}.
Let $k\ge 1$ and
suppose the estimate (\ref{G-estimate-r}) holds for  $P=P_\ell$ with $0\le \ell \le k-1$.
Let $2\le q<q_0\le \bar{q}$ and $\frac{1}{q}-\frac{1}{q_0} \ge \frac{k}{p}$.
By applying $\Delta_P $ to the system (\ref{global-equation}) and using (\ref{product-rule-1}),
we obtain 
$$
\aligned
& -\text{\rm div} \big( A(\cdot + z_1+\cdots z_k) \nabla \Delta_P (u)\big)
+T^{-2} \Delta_P (u)\\
&=\Delta_P(F)
+\text{\rm div} \big( \Delta_P(f))
+\text{\rm div} \Big( 
\sum_{Q\subset P, Q\neq \emptyset}
\Delta_Q (A) \cdot \Delta_{P\setminus Q} (\nabla u) \Big).
\endaligned
$$
It follows from  Theorem \ref{H-theorem-global} and H\"older's inequality  that
\begin{equation}\label{G-r-100}
\aligned
&\| \Delta_P (\nabla u) \|_{S^q_r} +T^{-1} \|\Delta_P (u)\|_{S^2_r}\\
&\le C_\sigma \left(\frac{T}{r} \right)^{\frac{\sigma}{2}}
\Bigg\{ \sup_{r\le t\le T}
t \|\Delta_P (F)\|_{S^2_t}
+ \|\Delta_P (f)\|_{S^q_r}\\
&\qquad\qquad\qquad\qquad
+\sum_{Q\subset P, Q\neq \emptyset}
\|\Delta_Q (A)\|_{S^p_r} \|\Delta_{P\setminus Q} (\nabla u)\|_{S^{q_1}_r} \Bigg\},
\endaligned
\end{equation}
where $q_1$ is chosen so that $2\le q<q_1<q_0\le  \bar{q}$, $\frac{1}{q}-\frac{1}{q_1} \ge \frac{1}{p}$ and 
$\frac{1}{q_1}-\frac{1}{q_0} \ge \frac{k-1}{p}$.
By  the induction assumption,
\begin{equation}\label{G-r-200}
\aligned
&\|\Delta_{P\setminus Q} (\nabla u)\|_{S^{q_1}_r} + T^{-1} \| \Delta_{P\setminus Q} (u) \|_{S^2_r}\\
 &\le C_\sigma \left(\frac{T}{r} \right)^{\frac{\sigma}{2}}
 \left\{  \sup_{r\le t\le T} t \| \Delta_{P\setminus Q} (F)\|_{S^2_t} 
 + \| \Delta_{P\setminus Q} (f) \|_{S^{{q}_0}_r} \right\}\\
&\quad
+ C_\sigma \left(\frac{T}{r} \right)^{\frac{\sigma}{2}}
\sum_{P\setminus Q=Q_0\cup Q_1\cup\cdots \cup Q_\ell}
\|\Delta_{Q_1} A\|_{S^p_r}
\cdots \|\Delta_{Q_\ell} A \|_{S^p_r}\\
&\qquad\qquad\qquad\qquad\qquad
\cdot
\left\{  \sup_{r\le t\le T}
t \| \Delta_{Q_0} (F)\|_{S^2_t} + \| \Delta_{Q_0} (f) \|_{S^{{q}_0}_r} \right\}.
\endaligned
\end{equation}
The desired estimate now follows by combining (\ref{G-r-100}) and
(\ref{G-r-200}).
\end{proof}

\begin{rmk}\label{higher-order-remark-1}
{\rm
If $r\ge T$, the argument in the proof of Lemma \ref{Higher-order-theorem},
together with the estimate in Lemma \ref{W-B-lemma}, gives
\begin{equation}\label{higher-order-large-estimate}
\aligned
&\|\Delta_P (\nabla u)\|_{S^q_r} + T^{-1} \| \Delta_P (u) \|_{S^2_r}\\
 &\le C
 \Big\{ T \| \Delta_{P}( F) \|_{S^2_r} + \| \Delta_{P} (f) \|_{S^{{q}_0}_r} \Big\}\\
& \qquad  +C
\sum_{P=Q_0\cup Q_1\cup\cdots \cup Q_\ell}
\|\Delta_{Q_1} A\|_{S^p_r}
\cdots \|\Delta_{Q_\ell} A \|_{S^p_r}\\
&\qquad\qquad\qquad\qquad
\cdot
\Big\{ T \| \Delta_{Q_0}( F) \|_{S^2_r} +  \| \Delta_{Q_0} (f) \|_{S^{{q}_0}_r} \Big\},
\endaligned
\end{equation}
}
where $2\le q<q_0\le \bar{q}$, $\frac{1}{q}-\frac{1}{q_0} \ge \frac{k}{p}$, and
$C$ depends only on $d$, $m$ and $\mu$.
\end{rmk}

Let $\rho_k (L,R)$ be the function defined by (\ref{rho-k}).

\begin{lem}\label{rho-k-lemma}
Suppose that $A\in APW^2(\R^d)$ and satisfies  the  condition (\ref{cond_ellipticity}).
Let $T\ge 1$.
Then, for any $\sigma \in (0,1)$ and $k\ge 1$,
\begin{equation}\label{omega-k-estimate}
 \omega_k (\nabla \chi_T; L, R)
 +\omega_k (T^{-1}\chi_T; L, R)
   \le C_\sigma \left(\frac{T}{R} \right)^\sigma
 \rho_k  (L, R),
 \end{equation}
 where $1\le R\le T$, $0<L<\infty$, and
 $C_\sigma$ depends only on $\sigma$, $k$, and $A$.
 If $R\ge T$, we have
 \begin{equation}\label{omega-k-estimate-large}
 \omega_k (\nabla \chi_T; L, R)
 +\omega_k (T^{-1}\chi_T; L, R)
   \le C\,
 \rho_k (L, R),
 \end{equation}
 where $C$ depends only on $d$, $m$ and $\mu$.
\end{lem}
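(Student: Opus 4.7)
The strategy is to apply Lemma~\ref{Higher-order-theorem} directly to $u = \chi_{T,j}^\beta$, which solves the corrector equation (\ref{cond_eq_corrector}) with $F = 0$ and $f = A \nabla P_j^\beta$. A key observation is that $\nabla P_j^\beta$ is a constant vector, so the product rule (\ref{product-rule-1}) degenerates: every term with a nontrivial difference applied to $\nabla P_j^\beta$ vanishes. Hence for every nonempty $Q \subseteq P_k$,
\begin{equation*}
\Delta_Q f = \Delta_Q(A) \cdot \nabla P_j^\beta,
\qquad\text{so}\qquad
\|\Delta_Q f\|_{S^s_R} \le \|\Delta_Q A\|_{S^s_R}
\end{equation*}
for every exponent $s$.

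Next, I would take $q = 2$ and $q_0 = \bar q$ in Lemma~\ref{Higher-order-theorem}; the constraint $\tfrac{1}{q} - \tfrac{1}{q_0} \ge \tfrac{k}{p}$ together with (\ref{p}) forces exactly this choice. Combining the leading term and the partition sum produces, for every fixed tuple $(y_1, z_1), \dots, (y_k, z_k)$ and every $1 \le R \le T$,
\begin{equation*}
\|\Delta_P \nabla \chi_T\|_{S^2_R} + T^{-1}\|\Delta_P \chi_T\|_{S^2_R}
\le C_\sigma \left(\frac{T}{R}\right)^\sigma \sum_{P = Q_0 \cup Q_1 \cup \cdots \cup Q_\ell}
\|\Delta_{Q_0} A\|_{S^{\bar q}_R} \prod_{j=1}^\ell \|\Delta_{Q_j} A\|_{S^p_R},
\end{equation*}
where the sum runs over partitions of $P$ (with $\ell = 0$, i.e.\ $Q_0 = P$, included as the main term).

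To convert the right-hand side into the form of $\rho_k(L, R)$, which uses only $S^p_R$ norms, I would exploit the freedom in the reverse H\"older exponent: since (\ref{RH}) holds for every exponent in $(2, \bar q]$, we may shrink $\bar q$ to ensure $\bar q \le 2(k+1)$, which by (\ref{p}) is equivalent to $\bar q \le p$. H\"older's inequality then yields $\|\Delta_{Q_0} A\|_{S^{\bar q}_R} \le \|\Delta_{Q_0} A\|_{S^p_R}$, so each summand is bounded by $\prod_{i=0}^\ell \|\Delta_{Q_i} A\|_{S^p_R}$. A given partition of $P$ into $\ell+1 \le k$ parts contributes at most $\ell+1$ such identical products (one for each choice of the distinguished $Q_0$), so the combinatorial count is absorbed into $C_\sigma$ and the sum matches the integrand defining $\rho_k(L, R)$.

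Finally, applying $\sup_{y_1}\inf_{|z_1|\le L}\cdots \sup_{y_k}\inf_{|z_k|\le L}$ to both sides (the inequality is monotone under these operations) yields (\ref{omega-k-estimate}). For the large-scale case $R \ge T$ in (\ref{omega-k-estimate-large}), the same argument applies verbatim using Remark~\ref{higher-order-remark-1} in place of Lemma~\ref{Higher-order-theorem}; the factor $(T/R)^\sigma$ does not appear because the Caccioppoli-type bound of Lemma~\ref{W-B-lemma} suffices on large balls. The main obstacle is purely bookkeeping: aligning the exponent $q_0 = \bar q$ that comes out of Lemma~\ref{Higher-order-theorem} with the single exponent $p$ appearing in the definition of $\rho_k$, which is handled by the flexibility in the reverse H\"older exponent. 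All the PDE input has been absorbed into Lemma~\ref{Higher-order-theorem}, so the present lemma is an algebraic/interpolation corollary.
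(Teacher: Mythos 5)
Your proposal is correct and follows the same route as the paper's one-line proof, but it makes explicit two steps that the paper leaves unspoken, and these are worth flagging. First, you correctly observe that with $F=0$ and $f=A\nabla P_j^\beta$, the constancy of $\nabla P_j^\beta$ collapses every higher difference of $f$ to $\Delta_Q(A)\nabla P_j^\beta$, so that $\|\Delta_Q f\|_{S^s_R} \le \|\Delta_Q A\|_{S^s_R}$ for any $s$; this is exactly what makes Lemma~\ref{Higher-order-theorem} usable here. Second, and more substantively, you identify that the forced choice $q_0=\bar q$ (the constraint $\tfrac{1}{q}-\tfrac{1}{q_0}\ge\tfrac{k}{p}$ with $q=2$ and $\tfrac{k}{p}=\tfrac12-\tfrac1{\bar q}$ leaves no slack) produces $\|\Delta_{Q_0}A\|_{S^{\bar q}_R}$ factors, whereas $\rho_k$ uses $S^p_R$ norms throughout, so one must have $\bar q\le p$. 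This does \emph{not} follow from (\ref{p}) alone: as you compute, $\bar q\le p$ is equivalent to $\bar q\le 2(k+1)$, which fails if the reverse H\"older exponent happens to be large. Your fix --- shrink $\bar q$ (legitimate, since (\ref{RH}) persists under decreasing the exponent) --- is correct, with the understanding that this choice is made once and for all and simultaneously fixes the $p$ that appears in (\ref{rho-k}) and (\ref{p}); the lemma's constant $C_\sigma$ is then allowed to depend on $k$, which absorbs both the modified $\bar q$ and the partition-counting combinatorics you describe. Your handling of the large-scale case via Remark~\ref{higher-order-remark-1} and the final $\sup\inf$ monotonicity step are both sound.
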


\begin{proof}
In view of the definition of $\chi_T$, estimates (\ref{omega-k-estimate})
and (\ref{omega-k-estimate-large}) follow
 directly from Lemma \ref{Higher-order-theorem} and Remark \ref{higher-order-remark-1},
 respectively, with 
$q=2$, $q_0=\bar{q}$, and $\frac{k}{p}=\frac{1}{2}-\frac{1}{\bar{q}}$.
\end{proof}

We are now in a position to give the proof of Theorems \ref{main-theorem-1}
and \ref{main-theorem-2}.

\begin{proof}[\bf Proof of Theorem \ref{main-theorem-1}]
We use Theorem \ref{general-theorem} with $g=\chi_T$ and $t=T^2$.
Note that by (\ref{cor-L-2}) and (\ref{ineq_dchi_r}), 
$$
\| g\|_{S^2_T}  \le C\, T,
$$
and for any $\sigma \in (0,1)$,
$$
\|\nabla g\|_{S^2_r}  \le C_\sigma \left(\frac{T}{r}\right)^\sigma, 
$$
for $1\le r\le T$. In particular, we have
$\| \nabla \chi_T\|_{S^2_1} \le C_\sigma T^\sigma$.
Also, by Lemma \ref{rho-k-lemma},
$$
T^{-1}\omega_k (g; L, T) \le C\, \rho_k (L, T),
$$
and for any $\sigma \in (0,1)$ and $1\le t\le T$,
$$
\omega_k (\nabla g; L, t)  \le C_\sigma \left(\frac{T}{t}\right)^\sigma \rho_k (L, t).
$$
It follows by Theorem \ref{general-theorem} that
\begin{equation}\label{main-1-100}
\aligned
\|\chi_T\|_{S^2_1}
&\le C\, T \inf_{1\le L\le T} \left\{ \rho_k (L, T)
+\exp\left(-\frac{c\, T^2}{L^2}\right) \right\}\\
&\qquad + C\int_1^{T}\inf_{1\le L\le {t}}
\left\{ \rho_k (L, {t}) +\exp\left(-\frac{c\, t^2}{L^2} \right)\right\}
 \left(\frac{T}{{t}}\right)^\sigma dt.
\endaligned
\end{equation}
It is not hard to see that   the first term in the r.h.s. of (\ref{main-1-100})
is bounded by the integral in (\ref{main-1-100}) from $(T/2)$ to $T$.
As a result, the estimate (\ref{main-estimate-1}) follows.
\end{proof}

\begin{rmk}\label{power-decay-remark}
{\rm 
Suppose that there exist some $k\ge 1$, $0<\alpha\le 1$ and $C>0$ such that
\begin{equation}\label{power-decay-condition}
\rho_k  (L, L)\le {C}{L^{-\alpha}}\quad \text{ for any } L\ge 1.
\end{equation}
By choosing $L=t^\delta$ for some $\delta\in (0,1)$, we see that
$$
\inf_{1\le L\le t}
\left\{ \rho_k (L, L)
+\exp\left(-\frac{c\, t^2}{L^2}\right) \right\}
\le \frac{C}{t^{\delta \alpha}}.
$$
It  follows from (\ref{main-estimate-1}) that
$\|\chi_T\|_{S^2_1}\le C \, T^{1-\delta \alpha}$.
Since $\delta\in (0,1)$ is arbitrary, we obtain 
\begin{equation}\label{power-decay-2}
\| \chi_T\|_{S^2_1} \le C_\beta \,T^{1-\beta},
\end{equation}
for any $\beta\in (0, \alpha)$ and $T\ge 1$.
}
\end{rmk}

\begin{proof}[\bf Proof of Theorem \ref{main-theorem-2}]
Suppose that there exist $k\ge 1$, $\delta>0$ and $C>0$ such that
\begin{equation}\label{decay-fast}
\rho_k (L, L) \le {C}{L^{-1-\delta}}\quad \text{ for any } L\ge 1.
\end{equation}
It follows by Remark \ref{power-decay-remark} that
$\|\chi_T\|_{S^2_1}\le C_\sigma T^\sigma$ for any $T\ge 1$ and $\sigma \in (0,1)$.
Let $g=\chi_T -\chi_{2T}$. Note that by Corollary \ref{thm_chiT_BL},
\begin{equation}\label{m-0}
\|\nabla g\|_{S^2_1}\le C_\sigma \, T^{\sigma -1}\quad \text{ for any } T\ge 1 \text{ and } \sigma \in (0,1).
\end{equation}
We will show that there exists some $\beta>0$ such that
\begin{equation}\label{m-1}
\|g\|_{S^2_1} \le C\, T^{-\beta} \quad \text{ for any } T\ge 1.
\end{equation}
This would imply that $\{ \chi_{2^j}, j=1,2, \dots, \}$ is a Cauchy sequence in 
the Banach space $S^2_1 =\{ F\in L^2_{\loc} (\R^d):\, \| F\|_{S^2_1}<\infty\}$.
Let $\chi$ be the limit of $\chi_{2^j}$ in $S^2_1$.
It is easy to see that $\|\chi\|_{S^2_1} +\|\nabla \chi\|_{S^2_1}\le C$.
Since $\chi_T\in APW^2(\R^d)$ and
$\|g\|_{W^2}\le \| g\|_{S^2_1}$, we also obtain $\chi \in APW^2(\R^d)$.
Note that (\ref{m-1}) also gives $\|\chi_T\|_{S^2_1}\le C$.

To see (\ref{m-1}), we let $u(x,t)=g* \Phi_t (x)$.
In view of Lemma \ref{heat-lemma-3} and (\ref{m-0}),
it suffices to show that
\begin{equation}\label{m-40}
\| u(\cdot, 1)\|_\infty \le C T^{-\beta}
\end{equation}
for some $\beta>0$.
To this end we note that since $g\in APW^2(\R^d)$,
$\omega_1 (g; L, L) \to 0$ as $L\to \infty$.
It follows by Lemma \ref{heat-lemma-0} that $\| u(\cdot, t)\|_\infty \to 0$ as $t\to \infty$.
Thus, as in the proof of Theorem \ref{general-theorem},
\begin{equation}\label{m-4}
\aligned \| u(\cdot, 1)\|_\infty
&\le \int_1^\infty \|\partial_t u(\cdot, t)\|_\infty\, dt\\
 &\le  \int_1^\infty \|\nabla_x \big(\nabla g *\Phi_t\big)\|_\infty\, dt\\
&\le C t_0 \|\nabla g\|_{S^2_1} 
+\int_{t_0^2} ^\infty
\|\nabla_x (\nabla g *\Phi_t)\|_\infty\, dt\\
&\le C t_0 T^{\sigma-1}
+\int_{t_0^2} ^\infty
\Big\{ \|\nabla_x (\nabla \chi_T *\Phi_t)\|_\infty
+
\|\nabla_x (\nabla \chi_{2T} *\Phi_t)\|_\infty\Big\} \, dt,
\endaligned
\end{equation}
where $t_0>1$ is to be chosen and we have used the estimate 
$$
 \|\nabla_x (\nabla g *\Phi_t)\|_\infty
\le C t^{-1/2} \|\nabla g\|_{S^2_1}
$$
 for the third inequality and (\ref{m-0}) for the fourth.
 
 As in the proof of Theorem \ref{main-theorem-1},
 the  integral in the r.h.s. of (\ref{m-4}) is bounded by
 $$
 \aligned
 & C T^\sigma \int_{t_0^2}^\infty
 \inf_{1\le L\le \sqrt{t}}
 \left\{ \rho_k (L, \sqrt{t}) +\exp \left( -\frac{c\, t}{L^2}\right) \right\}
 \frac{dt}{\sqrt{t}}\\
& \qquad \le C T^\sigma \int_{t_0}^\infty
 \inf_{1\le L\le t}
 \left\{ \rho_k (L, {t}) +\exp \left( -\frac{c\, t^2}{L^2}\right) \right\}\, dt\\
 & \qquad \le C T^\sigma \int_{t_0 }^\infty
 \inf_{1\le L\le t}
 \left\{ \frac{1}{L^{1+\delta}} +\exp \left( -\frac{c\, t^2}{L^2}\right) \right\}\, dt,
\endaligned
 $$
 where we have used the condition (\ref{decay-fast}) for the last step.
 By choosing $L=t^\alpha$ for $\alpha\in (0,1)$, it follows that
  the integral in the r.h.s. of (\ref{m-4}) is bounded by
  $CT^\sigma t_0^{1-(1+\delta)\alpha}$.
  As a result, we have proved that
  $$
  \| u(\cdot, 1)\|_\infty
  \le C\,  t_0 T^{\sigma -1}
  +C \, T^\sigma t_0^{1-(1+\delta)\alpha}
  =C\, t_0 T^\sigma \Big\{ T^{-1} + t_0^{-(1+\delta)\alpha} \Big\}.
  $$
  Finally, we choose $t_0>1$ such that $t_0^{(1+\delta)\alpha} =T$.
  This gives
  $$
  \| u(\cdot, 1)\|_\infty
  \le C \, t_0 T^{\sigma -1}
  =C\, T^{\sigma -1 +\frac{1}{(1+\delta)\alpha}} =C\, T^{-\beta},
  $$
  where
  $$
  \beta=1-\sigma -\frac{1}{(1+\delta) \alpha} >0,
  $$
  if $\sigma >0$ is small and $\alpha $ is close to $1$.
  This completes the proof.
\end{proof}




\section{Estimates of dual approximate correctors}

Let $\chi_T=(\chi_{T, j}^{\alpha\beta})$ be the approximate correctors defined by (\ref{cond_eq_corrector}). 
For $1\le i,j\le d$ and $1\le \alpha,\beta \le m$, let
$b_T=A+A\nabla \chi_T -\widehat{A}=\big(b_{T,ij}^{\alpha\beta}\big)$ with
\begin{equation}\label{cond_bT}
b^{\alpha\beta}_{T,ij}(y) = a^{\alpha\beta}_{ij}(y) + a^{\alpha\gamma}_{ik} (y)\frac{\partial}{\partial y_k} \big( \chi^{\gamma\beta}_{T,j}(y)\big) - \widehat{a}^{\alpha\beta}_{ij}.
\end{equation}
To establish the convergence rates in Theorem \ref{main-theorem-3},  
as in \cite{Shen-2015}, we introduce 
the matrix-valued function $\phi_T =(\phi_{T, ij}^{\alpha\beta})$, called the dual approximate correctors
 and defined by the following auxiliary equations:
\begin{equation}\label{cond_phiT}
-\Delta \phi^{\alpha\beta}_{T,ij} + T^{-2} \phi^{\alpha\beta}_{T,ij} = b^{\alpha\beta}_{T,ij} - \ag{b^{\alpha\beta}_{T,ij}},
\end{equation}
where $\phi^{\alpha\beta}_{T,ij} \in H^1_{\text{loc}}(\R^d)$ are the weak solutions given by Lemma \ref{lem_appr_u}.
 In this section we establish  the uniform local $L^2$ estimates for $\phi_T$ and its derivatives.
 
 Throughout the section we assume that
$A\in APW^2(\R^d)$ and satisfies the ellipticity condition (\ref{cond_ellipticity}).
It follows that $\nabla\chi_T\in APW^2(\R^d)$ and thus $b_T \in APW^2(\R^d)$. Moreover, by (\ref{ineq_dchi_r}),
  for any $\sigma\in (0,1)$ and $1\le R\le T$,
\begin{equation}\label{ineq_bT_BT}
\| b_T\|_{S^2_R} \le C_\sigma \left(\frac{T}{R}\right)^{\sigma},
\end{equation}
where $C_\sigma$ depends only on $\sigma$ and $A$.

\begin{lem}\label{B-lemma-0}
Let $k\ge 1$ and $\sigma \in  (0,1)$.
Then, for $0<L<\infty$ and $1\le R\le T$,
\begin{equation}\label{B-estimate-0}
\omega_k (b_T; L, R) \le C_\sigma
\left(\frac{T}{R}\right)^\sigma \rho_k (L, R),
\end{equation}
where $C_\sigma$ depends only on $\sigma$, $k$ and $A$.
\end{lem}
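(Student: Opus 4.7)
The plan is to expand $\Delta_P(b_T)$ via the product rule and then invoke the estimates for higher-order differences of $\nabla\chi_T$ already established in Section 8. Since $b_T = A + A\nabla\chi_T - \widehat{A}$ and $\Delta_P(\widehat{A}) = 0$ for any nonempty $P$, formula (\ref{product-rule-1}) applied to the product $A \cdot \nabla\chi_T$ gives
$$
\Delta_P(b_T)(x) = \Delta_P(A)(x) + \sum_{Q \subset P} \Delta_Q(A)(x+\tau_Q)\,\Delta_{P\setminus Q}(\nabla\chi_T)(x+\tau_Q'),
$$
where $\tau_Q, \tau_Q'$ are the appropriate shifts. The terms $Q = \emptyset$ and $Q = P$ are to be handled separately: for $Q = \emptyset$ we use $\|A\|_\infty \le \mu^{-1}$ and bound $\|\Delta_P(\nabla\chi_T)\|_{S^2_R}$ directly; for $Q = P$ we get $\|\Delta_P(A)\|_{S^2_R}\,\|\nabla\chi_T\|_{S^\infty_R}$-type factor, but since we work with $S^q$ norms we pair exponents via Hölder.

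Next I would apply Hölder's inequality to each mixed term, writing
$$
\|\Delta_Q(A)\,\Delta_{P\setminus Q}(\nabla\chi_T)\|_{S^2_R}
\le \|\Delta_Q(A)\|_{S^{p_Q}_R}\,\|\Delta_{P\setminus Q}(\nabla\chi_T)\|_{S^{q_Q}_R},
$$
with exponents satisfying $\tfrac{1}{p_Q}+\tfrac{1}{q_Q}=\tfrac12$, chosen so that $q_Q$ lies in the admissible range $[2,\bar q]$ of Lemma \ref{Higher-order-theorem} and so that $\tfrac{1}{q_Q}-\tfrac{1}{\bar q}\ge \tfrac{|P\setminus Q|}{p}$ with $p$ given by (\ref{p}). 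With this choice, Lemma \ref{Higher-order-theorem} (applied to the defining equation of $\chi_T$, in which $F=0$ and $f=A\nabla P_j^\beta$ so that $\Delta_{Q_0}(f)$ collapses to $\Delta_{Q_0}(A)\,\nabla P_j^\beta$) yields
$$
\|\Delta_{P\setminus Q}(\nabla\chi_T)\|_{S^{q_Q}_R}
\le C_\sigma\!\left(\frac{T}{R}\right)^{\!\sigma}
\sum_{P\setminus Q = Q_0 \cup Q_1 \cup \cdots \cup Q_\ell}
\|\Delta_{Q_0}(A)\|_{S^{\bar q}_R}\,\|\Delta_{Q_1}(A)\|_{S^p_R}\cdots\|\Delta_{Q_\ell}(A)\|_{S^p_R}.
$$

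Substituting back and collecting terms, $\|\Delta_P(b_T)\|_{S^2_R}$ is dominated by $C_\sigma(T/R)^\sigma$ times a finite sum (indexed by partitions of $P$) of products $\|\Delta_{Q_1}(A)\|_{S^p_R}\cdots\|\Delta_{Q_\ell}(A)\|_{S^p_R}$, since any $S^{\bar q}$ or $S^{p_Q}$ norm of $\Delta_{Q}(A)$ can be majorised by $\|A\|_\infty^{1-\theta}\|\Delta_Q(A)\|_{S^p_R}^\theta$ for a suitable $\theta$ (using the boundedness of $A$ and interpolation), absorbing the resulting constants into $C_\sigma$. Taking successively $\sup_{y_i\in\R^d}\inf_{|z_i|\le L}$ on both sides and matching against the definition (\ref{rho-k}) of $\rho_k(L,R)$ yields (\ref{B-estimate-0}).

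The main obstacle is bookkeeping the Hölder exponents: each factor of $\nabla\chi_T$ in the product expansion must be matched to an $S^{q_Q}_R$-norm compatible with Lemma \ref{Higher-order-theorem}, and the resulting $\|\Delta_Q(A)\|_{S^{p_Q}_R}$ factors (with $p_Q \ne p$ in general) must be controlled by $\|\Delta_Q(A)\|_{S^p_R}$ via the uniform bound $\|A\|_\infty \le \mu^{-1}$ and the inclusion $S^p_R \hookrightarrow S^{p_Q}_R$ for $p_Q \le p$. Once the exponent $p$ in (\ref{p}) is verified to accommodate all partitions arising in the $k$-fold expansion, the rest is purely algebraic.
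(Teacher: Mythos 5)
Your proposal follows the paper's proof essentially step for step: expand $\Delta_P(b_T)$ via the Leibniz formula (\ref{product-rule-1}), apply H\"older's inequality to each mixed term, invoke Lemma \ref{Higher-order-theorem} (with $F=0$, $f=A\nabla P_j^\beta$) to control $\|\Delta_{P\setminus Q}(\nabla\chi_T)\|_{S^q_R}$, then take $\sup_{y_i}\inf_{|z_i|\le L}$ to match (\ref{rho-k}). One small caution on the last step: an interpolation of the form $\|\Delta_Q(A)\|_{S^{\bar q}_R}\le\|A\|_\infty^{1-\theta}\|\Delta_Q(A)\|_{S^p_R}^{\theta}$ with $\theta<1$ would leave a fractional power and fail to match the \emph{linear} factors $\|\Delta_{Q_i}(A)\|_{S^p_R}$ appearing in $\rho_k$; instead note that the $p$ of (\ref{p}) satisfies $p\ge\bar q$ (always achievable by lowering $\bar q$ in (\ref{RH})), so the plain embeddings $S^p_R\hookrightarrow S^{\bar q}_R$ and $S^p_R\hookrightarrow S^{p_Q}_R$ for $p_Q\le p$ already give the linear bounds needed, which is what the paper does implicitly.
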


\begin{proof}
Let $\frac{k}{p}=\frac{1}{2} -\frac{1}{\bar{q}}$, where $\bar{q}>2$ is given by (\ref{RH}).
Note that
$$
\Delta_P (b_T)
=\Delta_P (A) +\sum_{Q\subset P}
\Delta_{ Q} (A) \cdot \Delta_{ P\setminus Q}  (\nabla \chi_T).
$$
It follows by H\"older's inequality that if $\frac{k-1}{p}+\frac{1}{q}=\frac{1}{2}$,
$$
\aligned
\|\Delta_P (b_T)\|_{S^2_R}
 &\le \| \Delta_P (A) \|_{S^2_R}+ \| A\|_\infty \|\Delta_P (\nabla \chi_T)\|_{S^2_R}\\
 & \qquad 
 + \sum_{Q\subset P, Q\neq \emptyset}
\| \Delta_Q (A)\|_{S^p_R}  \| \Delta_{P\setminus Q}  (\nabla \chi_T)\|_{S^q_R}\\
& \le C_\sigma \left(\frac{T}{R}\right)^\sigma 
\sum_{Q_1\cup Q_2 \cup \cdots \cup Q_\ell=P} 
\|\Delta_{Q_1} A\|_{S^p_R} \|\Delta_{Q_2} (A)\|_{S^p_R}\cdots \| \Delta_{Q_\ell} (A)\|_{S^p_R},
\endaligned
$$
where we have used Lemma \ref{Higher-order-theorem}  with $q_0=2$ for the last step.
By applying
$$
\sup_{y_1\in \R^d} \inf_{|z_1|\le L} \cdots \sup_{y_k\in \R^d} \inf_{|z_k|\le L}
$$
to the both sides of the inequality above,
we obtain (\ref{B-estimate-0}).
\end{proof}

\begin{lem}\label{lem_regu_Mor}
	Assume $F\in L^2_{\text{\loc, \unif}}(\R^d)$. Let $u\in H_{\text{\loc}}^2(\R^d)$ be the weak solution of
	\begin{equation}\label{cond_u_f}
	-\Delta u + T^{-2} u = F \quad \text{ in } \R^d,
	\end{equation}
	given by Lemma \ref{lem_appr_u}. Then for any $0<R<\infty$,
	\begin{equation}\label{ineq_ddu_Br}
	T^{-1} \| \nabla u\|_{S^2_R} +T^{-2}\| u\|_{S^2_R}
	\le C\,  \| F \|_{S^2_R},
		\end{equation}
	where $C$ depends only on $d$. Furthermore, 
	\begin{equation}\label{second-d-estimate}
	\| \nabla^2 u\|_{S^2_R} \le C\, \log \left(2+\frac{T}{R} \right) \| F\|_{S^2_R}.
	\end{equation}
\end{lem}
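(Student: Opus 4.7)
My approach is to use the explicit Green's function for the constant-coefficient operator $-\Delta + T^{-2}$. By the uniqueness provided by Lemma \ref{lem_appr_u}, the solution $u$ coincides with $G_T * F$, where $G_T$ is the Bessel (Yukawa) kernel with Fourier transform $(|\xi|^2 + T^{-2})^{-1}$. The scaling identity $G_T(x) = T^{2-d}G_1(x/T)$ reduces all bounds to pointwise estimates on the unit kernel $G_1$: a singularity of order $|x|^{2-d}$ (or $\log|x|$ in dimension two) at the origin, and exponential decay at infinity. The absolute convergence of $G_T*F$ for $F\in L^2_{\loc,\unif}(\R^d)$ is clear from this decay.

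For (\ref{ineq_ddu_Br}), I would observe that $T^{-2}G_T$ and $T^{-1}\nabla G_T$ are honest $L^1$ kernels with norms bounded independently of $T$. The identity $\|T^{-2}G_T\|_{L^1}=1$ follows by integrating $(-\Delta+T^{-2})G_T=\delta_0$ and using exponential decay to discard the Laplacian term, while $\|T^{-1}\nabla G_T\|_{L^1}=\|\nabla G_1\|_{L^1}<\infty$ by the singularity/decay bounds. Minkowski's integral inequality applied to $u=G_T*F$ then yields
\[
T^{-2}\|u\|_{S^2_R}+T^{-1}\|\nabla u\|_{S^2_R}\le \big(\|T^{-2}G_T\|_{L^1}+\|T^{-1}\nabla G_T\|_{L^1}\big)\|F\|_{S^2_R}\le C\|F\|_{S^2_R}
\]
for \emph{every} $R>0$, requiring no comparison between $R$ and $T$.

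For (\ref{second-d-estimate}), the kernel $\nabla^2 G_T$ is Calder\'on--Zygmund-type: singularity $|z|^{-d}$ near $0$, exponential decay at scale $T$. I would split $F=F_1+F_2$ with $F_1=F\chi_{B(x_0,2R)}$. The local piece is controlled by the global $L^2\to L^2$ boundedness of the Fourier multiplier $\xi_i\xi_j/(|\xi|^2+T^{-2})$, giving $\|\nabla^2(G_T*F_1)\|_{L^2(\R^d)}\le CR^{d/2}\|F\|_{S^2_R}$ and hence an $S^2_R$-contribution of $C\|F\|_{S^2_R}$ on $B(x_0,R)$. For the tail, I would apply Cauchy--Schwarz on dyadic annuli $A_j=\{2^jR\le|x-y|<2^{j+1}R\}$ inside $\{|x-y|\le T\}$; the pointwise bound $|\nabla^2 G_T(z)|\le C|z|^{-d}$ gives
\[
\int_{A_j}|\nabla^2 G_T(x-y)||F(y)|\,dy\le C\|F\|_{S^2_{2^{j+1}R}}\le C\|F\|_{S^2_R},
\]
a bound uniform in $j$. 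Summing over the $O(\log(T/R))$ relevant annuli, and using exponential decay to absorb $\{|x-y|\ge T\}$ into a convergent geometric series bounded by $C\|F\|_{S^2_R}$, produces $\|\nabla^2(G_T*F_2)\|_{L^\infty(B(x_0,R))}\le C\log(2+T/R)\|F\|_{S^2_R}$.

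The main obstacle is the logarithmic factor in (\ref{second-d-estimate}), which is not an artifact of the method but is intrinsic: the operator $\nabla^2(-\Delta+T^{-2})^{-1}$ fails to be bounded on $L^\infty$, and the $\log(2+T/R)$ is exactly the cost of summing a Calder\'on--Zygmund kernel across the $\log(T/R)$ dyadic scales separating the averaging scale $R$ from the natural truncation scale $T$ of the kernel. When $R\ge T$ the logarithm degenerates to a constant, and one recovers the standard CZ bound directly from the $L^2$ estimate alone.
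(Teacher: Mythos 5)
Your proposal follows essentially the same route as the paper: write $u$ as convolution with the fundamental solution of $-\Delta + T^{-2}$, deduce (\ref{ineq_ddu_Br}) from $L^1$ bounds on the (rescaled) kernel and its gradient via Minkowski's inequality, and prove (\ref{second-d-estimate}) by splitting $F$ near/far from $B(x_0,R)$, handling the near piece by the $L^2$ Calder\'on--Zygmund estimate and the far piece by dyadic annulus summation against the $|z|^{-d}e^{-c|z|/T}$ kernel, which is exactly where the $\log(2+T/R)$ arises. The only cosmetic differences are that you use a sharp cutoff $\chi_{B(x_0,2R)}$ in place of the paper's smooth cutoff, invoke the Fourier multiplier bound directly rather than generic ``singular integral estimates,'' and handle $d=2$ on the spot (noting the kernels remain $L^1$) instead of the paper's method of descent; none of these changes the substance.
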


\begin{proof}
By rescaling we may assume that $T=1$. We may also assume that $d\ge 3$.
The case $d=2$ may be handled by the method of descending 
(introducing a dummy variable and considering the equation in 
$\R^3$).

To show (\ref{ineq_ddu_Br}), we write 
$$
u(x)=\int_{\R^d} \Gamma (y) F(x-y)\, dy,
$$
where $\Gamma (x)$ denotes the fundamental solution for the operator $-\Delta +1$ in 
$\R^d$, with pole at the origin. Using Minkowski's inequality, we see that
$$
\| u\|_{S^2_R} \le \int_{\R^d} |\Gamma(y)| \| F\|_{S^2_R}\, dy
\le C \, \| F\|_{S^2_R},
$$
where the last inequality follows from the estimate $|\Gamma (x)|\le C\, |x|^{2-d} e^{-c|x|}$.
Similarly, by using the estimate $|\nabla \Gamma (x)|\le C |x|^{1-d} e^{-c|x|}$, we obtain 
$$
\| \nabla u\|_{S^2_R} \le \int_{\R^d} |\nabla \Gamma(y)| \| F\|_{S^2_R}\, dy
\le C \, \| F\|_{S^2_R}.
$$

Finally, to see (\ref{second-d-estimate}), we fix $B=B(x_0, R)$ and choose $\varphi \in C_0^1(3B)$
such that $\varphi=1$ in $2B$.
Write
$$
\aligned
u(x) &=\int_{\R^d} \Gamma (x-y) \varphi (y) F(y)\, dy
+\int_{\R^d} \Gamma (x-y) (1-\varphi (y)) F(y)\, dy\\
&=u_1 (x) + u_2 (x).
\endaligned
$$
By the well known singular integral estimates,
\begin{equation}\label{8-1}
\left(\fint_{B} |\nabla^2 u_1|^2\right)^{1/2} 
\le C \left(\fint_{3B} |F|^2\right)^{1/2} \le C \| F\|_{S^2_R}.
\end{equation}
Using the estimate $|\nabla^2 \Gamma (x)|\le C |x|^{-d} e^{-c|x|}$, we obtain that, for any $x\in B$,
$$
\aligned
|\nabla^2 u_2 (x)|
 &\le  C\int_{(2B)^c} |y-x_0|^{-d} e^{-c |y-x_0|} |F(y)|\, dy\\
 & \le C \sum_{j=1}^\infty e^{-c 2^j R} \fint_{|y-x_0|\le 2^j R} |F(y)|\, dy\\
 &\le C \| F\|_{S^2_R} \sum_{j=1}^\infty e^{-c 2^j R}\\
 &\le C \| F\|_{S^2_R} \log (2+R^{-1}).
 \endaligned
 $$
This, together with (\ref{8-1}), gives
$$
\left(\fint_B |\nabla^2 u|^2\right)^{1/2} \le C \|F\|_{S^2_R} \log (2+R^{-1})
$$
for any $B=B(x_0, R)$.
The estimate (\ref{second-d-estimate}) now follows.
\end{proof}

\begin{rmk}\label{B-remark-1}
{\rm
It follows from Lemma \ref{lem_regu_Mor} and estimate (\ref{ineq_bT_BT}) that
for $1\le R\le T$ and $\sigma\in (0,1)$, 
\begin{equation}\label{B-estimate-1}
T^{-2} \|\phi_T\|_{S^2_R}
+T^{-1} \|\nabla \phi_T\|_{S^2_R}
+\|\nabla^2 \phi_T\|_{S^2_R}
\le C_\sigma \left(\frac{T}{R}\right)^\sigma,
\end{equation}
where $C_\sigma$ depends only on $\sigma$ and $A$.
}
\end{rmk}

\begin{lem}\label{B-lemma-2}
Let $k\ge  1$ and $\sigma \in (0,1)$.
Then, for $ 1\le R\le T$ and $0<L<\infty$,
\begin{equation}\label{B-estimate-2}
\aligned
& T^{-2}\omega_k (\phi_T; L, R)
+ T^{-1}\omega_k (\nabla \phi_T; L, R)
+\omega_k (\nabla^2 \phi_T; L, R)\\
&\qquad\qquad\qquad
\le C_\sigma \left(\frac{T}{R}\right)^\sigma 
\rho_k (L, R),
\endaligned
\end{equation}
where $C_\sigma$ depends only on $\sigma$, $k$ and $A$.
\end{lem}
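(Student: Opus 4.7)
The plan is to exploit the fact that the operator $-\Delta + T^{-2}$ defining $\phi_T$ has \emph{constant} coefficients, so the difference operator $\Delta_P$ commutes with it. Fix a tuple $P = P_k = \{(y_1,z_1),\dots,(y_k,z_k)\}$ with $|z_i|\le L$, and apply $\Delta_P$ to (\ref{cond_phiT}). Since $\Delta_{yz}$ annihilates constants (for $k\ge 1$), the term $\langle b_T\rangle$ disappears, and we obtain
\begin{equation*}
-\Delta (\Delta_P \phi_T) + T^{-2}\Delta_P \phi_T = \Delta_P b_T \quad\text{in } \R^d.
\end{equation*}
Moreover $\Delta_P \phi_T$ is the solution of this equation provided by Lemma \ref{lem_appr_u}, because $\Delta_P$ preserves membership in $L^2_{\loc,\unif}(\R^d)$ and the solution is unique in that class.

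Next, I would apply Lemma \ref{lem_regu_Mor} directly to $u := \Delta_P \phi_T$ with source $F := \Delta_P b_T$. Inequality (\ref{ineq_ddu_Br}) gives
\begin{equation*}
T^{-2}\|\Delta_P \phi_T\|_{S^2_R} + T^{-1}\|\nabla \Delta_P \phi_T\|_{S^2_R} \le C\,\|\Delta_P b_T\|_{S^2_R},
\end{equation*}
and (\ref{second-d-estimate}) yields
\begin{equation*}
\|\nabla^2 \Delta_P \phi_T\|_{S^2_R} \le C\,\log\!\left(2+\tfrac{T}{R}\right)\|\Delta_P b_T\|_{S^2_R}.
\end{equation*}
Since $\nabla$ and $\nabla^2$ (being constant-coefficient) also commute with $\Delta_P$, we can replace $\nabla \Delta_P \phi_T$ and $\nabla^2\Delta_P\phi_T$ on the left-hand sides by $\Delta_P(\nabla \phi_T)$ and $\Delta_P(\nabla^2\phi_T)$ respectively.

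Now I take the supremum over $y_1,\dots,y_k\in \R^d$ and infimum over $|z_1|,\dots,|z_k|\le L$. The left-hand sides become the corresponding $\omega_k$-quantities for $\phi_T$, $\nabla \phi_T$, and $\nabla^2 \phi_T$, while the right-hand side is bounded by a constant multiple of $\log(2+T/R)\,\omega_k(b_T; L, R)$. Finally, invoking Lemma \ref{B-lemma-0} with exponent $\sigma/2$ (say) in place of $\sigma$ gives
\begin{equation*}
\omega_k(b_T; L, R) \le C_{\sigma}\left(\tfrac{T}{R}\right)^{\sigma/2}\rho_k(L,R),
\end{equation*}
and the logarithmic factor is absorbed by the remaining $(T/R)^{\sigma/2}$ (noting $\log(2+T/R)\le C_\sigma (T/R)^{\sigma/2}$), which produces the desired estimate (\ref{B-estimate-2}). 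No step presents a real obstacle: the essential point is the commutation of $\Delta_P$ with the constant-coefficient operator, which reduces everything to the already-established bounds on $\omega_k(b_T;L,R)$ from Lemma \ref{B-lemma-0}.
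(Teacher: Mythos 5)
Your proof is correct and follows essentially the same line as the paper's: commute $\Delta_P$ with the constant-coefficient operator $-\Delta + T^{-2}$, apply Lemma~\ref{lem_regu_Mor} to $\Delta_P\phi_T$ with source $\Delta_P b_T$, pass to $\omega_k$, and close with Lemma~\ref{B-lemma-0}. You also make explicit the absorption of the $\log(2+T/R)$ factor into $(T/R)^{\sigma/2}$ for $1\le R\le T$, a step the paper leaves implicit.
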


\begin{proof}
Let $u=\phi_T$. Since the difference operator $\Delta_P$ commutes with $\Delta$,
in view of Lemma \ref{lem_regu_Mor}, we have
$$
T^{-2}\|\Delta_P (u)\|_{S^2_R} +T^{-1} \|\Delta_P (\nabla u)\|_{S^2_R}
\le C\, \| \Delta_P (b_T)\|_{S^2_R}.
$$
It follows that
$$
T^{-2}\omega_k(u; L, R)
+T^{-1} \omega_k (u; L, R) \le C\, \omega_k (b_T, L, R).
$$
Similarly,
$$
\omega_k (\nabla^2 u; L, R) \le C \log (2 +TR^{-1})\,  \omega_k (b_T; L, R).
$$
The desired estimates now follows from (\ref{B-estimate-0}).
\end{proof}

We are now ready to state and prove our main estimates for the dual approximate correctors.

\begin{thm}\label{appx-theorem-1}
	Let $\phi_T = (\phi^{\alpha\beta}_{T,ij})$ be defined in (\ref{cond_phiT}).
	Let $k\ge 1$ and $\sigma\in (0,1)$.
	Then there exists $c>0$, depending only on $d$ and $k$, such that
	  for any $T\ge 2$ and $\sigma \in (0,1)$,
	\begin{equation}\label{Phi-estimate-1}
	\aligned
	& T^{-1}\| \phi_T\|_{S^2_1} + \|\nabla \phi_T \|_{S^2_1} \\
	&\le C_\sigma\int_1^T
	\inf_{1\le L\le t} 
	\left\{ \rho_k (L, t) +\exp{ \left(\frac{-c\,t^2}{L^2}\right)} \right\}
	\left(\frac{T}{t} \right)^\sigma dt,
	\endaligned
	\end{equation}
		where 
		$C_\sigma$ depends only on  $k$, $\sigma$ and $A$.
\end{thm}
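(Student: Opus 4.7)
The approach mirrors the proof of Theorem \ref{main-theorem-1} in Section 8. The plan is to apply the quantitative ergodic estimate of Theorem \ref{general-theorem} twice: first to each scalar component of $g=\phi_T$ to bound $T^{-1}\|\phi_T\|_{S^2_1}$, and then to each scalar component of $g=\nabla \phi_T$ to bound $\|\nabla \phi_T\|_{S^2_1}$. The quantitative almost-periodic input is supplied by Lemma \ref{B-lemma-2}, and the matching uniform $S^2_R$-bounds by Remark \ref{B-remark-1}.

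First I would verify the hypotheses of Theorem \ref{general-theorem}, namely that $\phi_T$ and each component of $\nabla \phi_T$ lie in $L^2_{\loc,\unif}(\R^d)\cap APW^2(\R^d)$ with vanishing mean. Membership in $L^2_{\loc,\unif}$ is immediate from Remark \ref{B-remark-1} with $R=1$. Since $b_T\in APW^2(\R^d)$ (because $\nabla\chi_T\in APW^2$ by Theorem \ref{lem_corrector_B2W2}) and $\phi_T$ is the unique $L^2_{\loc,\unif}$ solution of a linear constant-coefficient equation, the argument of Theorem \ref{lem_corrector_B2W2}---now with Lemma \ref{B-lemma-2} at $k=1$ in place of Lemma \ref{lem_chiT_A}, together with Proposition \ref{C-W-theorem}---yields $\phi_T,\nabla\phi_T\in APW^2(\R^d)$. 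Taking the mean of the defining equation then gives $\ag{\phi_T}=0$, while $\ag{\nabla\phi_T}=0$ because the mean of the gradient of any $APW^2$ function vanishes.

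Next, applying Theorem \ref{general-theorem} componentwise to $g=\phi_T$ and inserting the bounds $\|\phi_T\|_{S^2_T}\le CT^2$ and $\|\nabla\phi_T\|_{S^2_t}\le C_\sigma T(T/t)^\sigma$ from Remark \ref{B-remark-1}, together with $\omega_k(\phi_T;L,T)\le CT^2\rho_k(L,T)$ and $\omega_k(\nabla\phi_T;L,t)\le C_\sigma T(T/t)^\sigma\rho_k(L,t)$ from Lemma \ref{B-lemma-2}, and then dividing through by $T$, I obtain
\begin{equation*}
T^{-1}\|\phi_T\|_{S^2_1}\le CT\inf_{1\le L\le T}\bigl\{\rho_k(L,T)+e^{-cT^2/L^2}\bigr\}+C_\sigma\int_1^T\inf_{1\le L\le t}\bigl\{\rho_k(L,t)+e^{-ct^2/L^2}\bigr\}\bigl(T/t\bigr)^\sigma dt.
\end{equation*}
As in the final step of the proof of Theorem \ref{main-theorem-1}, the first term on the right is absorbed into the integral restricted to $[T/2,T]$. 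The companion bound on $\|\nabla\phi_T\|_{S^2_1}$ is obtained identically after applying Theorem \ref{general-theorem} to $g=\nabla\phi_T$ and substituting the corresponding inputs $\|\nabla\phi_T\|_{S^2_T}\le CT$, $\|\nabla^2\phi_T\|_{S^2_t}\le C_\sigma(T/t)^\sigma$, $\omega_k(\nabla\phi_T;L,T)\le CT\rho_k(L,T)$ and $\omega_k(\nabla^2\phi_T;L,t)\le C_\sigma(T/t)^\sigma\rho_k(L,t)$ from Remark \ref{B-remark-1} and Lemma \ref{B-lemma-2}.

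The main technical point is the verification that $\phi_T$ and $\nabla\phi_T$ belong to $APW^2(\R^d)$ with zero mean, since this is the gateway to Theorem \ref{general-theorem}. Because the equation for $\phi_T$ has constant coefficients, this is strictly simpler than the analogous statement for $\chi_T$ recorded in Theorem \ref{lem_corrector_B2W2}, and no new ideas are required. Once it is in hand, the remainder of the proof is a routine specialization of the $\chi_T$ computation carried out in Section 8.
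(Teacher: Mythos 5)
Your proposal follows the paper's own (very compressed) proof exactly: the authors invoke Remark \ref{B-remark-1} for the $S^2_R$ bounds, Lemma \ref{B-lemma-2} for the $\omega_k$ bounds, and then apply Theorem \ref{general-theorem} ``as in the case of (\ref{main-estimate-1})''. Your proposal fills in the same steps, with the sensible addition of explicitly verifying the $APW^2$ and zero-mean hypotheses of Theorem \ref{general-theorem} for $\phi_T$ and $\nabla\phi_T$.
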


\begin{proof}
With estimates (\ref{B-estimate-1}) and (\ref{B-estimate-2}) at our disposal,
as in the case of (\ref{main-estimate-1}),
this theorem follows readily from Theorem \ref{general-theorem}.
\end{proof}

Let
	\begin{equation}\label{h-1}
	h^{\alpha\beta}_{T,j} = \frac{\partial}{\partial x_i} \left( \phi^{\alpha\beta}_{T,ij} \right),
	\end{equation}
	where $\phi_T = (\phi^{\alpha\beta}_{T,ij})$ is defined in (\ref{cond_phiT}).
	Note that the index $i$ is summed.

\begin{thm}\label{appx-theorem-2}
Let $h_T =(h_{T, j}^{\alpha\beta})$ be defined by (\ref{h-1}).
Then 
\begin{equation}\label{h-2}
T\|\nabla h_T\|_{S^2_1} \le C\,  \|\chi_T\|_{S^2_1},
\end{equation}
where $C$ depends only on $d$.
\end{thm}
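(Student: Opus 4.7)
The plan is to derive a simple PDE satisfied by $h_T$ by combining the defining equation of $\phi_T$ with the corrector equation, and then apply Lemma \ref{lem_regu_Mor}.

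First I would take $\partial/\partial x_i$ of the dual corrector equation (\ref{cond_phiT}). Since the operator $-\Delta + T^{-2}$ commutes with $\partial_i$ and the mean $\langle b_{T,ij}^{\alpha\beta}\rangle$ is a constant,
\begin{equation*}
-\Delta h_{T,j}^{\alpha\beta} + T^{-2} h_{T,j}^{\alpha\beta} \;=\; \frac{\partial}{\partial x_i}\, b_{T,ij}^{\alpha\beta}
\;=\; \frac{\partial a_{ij}^{\alpha\beta}}{\partial x_i} + \frac{\partial}{\partial x_i}\!\left(a_{ik}^{\alpha\gamma} \frac{\partial \chi_{T,j}^{\gamma\beta}}{\partial x_k}\right),
\end{equation*}
where I used that $\partial_i \widehat{a}_{ij}^{\alpha\beta}=0$ since $\widehat{A}$ is constant.

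Next I would invoke the defining equation (\ref{cond_eq_corrector}) for the approximate corrector $\chi_T$, namely
\begin{equation*}
-\frac{\partial}{\partial x_i}\!\left(a_{ik}^{\alpha\gamma} \frac{\partial \chi_{T,j}^{\gamma\beta}}{\partial x_k}\right) + T^{-2}\chi_{T,j}^{\alpha\beta} \;=\; \frac{\partial a_{ij}^{\alpha\beta}}{\partial x_i}.
\end{equation*}
Substituting this into the previous display yields the remarkable cancellation
\begin{equation*}
-\Delta h_{T,j}^{\alpha\beta} + T^{-2} h_{T,j}^{\alpha\beta} \;=\; T^{-2}\chi_{T,j}^{\alpha\beta} \qquad \text{in } \R^d.
\end{equation*}

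Finally, with the right-hand side identified as $F = T^{-2}\chi_T \in L^2_{\loc,\unif}(\R^d)$, I would apply the gradient bound (\ref{ineq_ddu_Br}) of Lemma \ref{lem_regu_Mor} with $R=1$ to obtain
\begin{equation*}
T^{-1}\|\nabla h_T\|_{S^2_1} \;\le\; C\,\|T^{-2}\chi_T\|_{S^2_1} \;=\; C\,T^{-2}\|\chi_T\|_{S^2_1},
\end{equation*}
which after multiplication by $T^2$ gives the claimed inequality (\ref{h-2}). The only substantive step is the algebraic cancellation above; once the correct equation for $h_T$ is identified, the estimate is an immediate consequence of the resolvent estimate for $-\Delta + T^{-2}$ already established. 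One small point to verify is that the weak solution $h_T$ obtained from differentiating $\phi_T$ indeed coincides with the solution given by Lemma \ref{lem_appr_u}, which follows from uniqueness in the class $L^2_{\loc,\unif}(\R^d)$ since both solutions lie in this class by Remark \ref{B-remark-1}.
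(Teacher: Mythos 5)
Your proof is correct and takes essentially the same approach as the paper: both identify the key identity $\partial_i b_{T,ij}^{\alpha\beta}=T^{-2}\chi_{T,j}^{\alpha\beta}$ (which you derive explicitly from the corrector equation, while the paper simply asserts it "by the definition of $\chi_T$"), conclude $-\Delta h_T+T^{-2}h_T=T^{-2}\chi_T$, and then apply Lemma~\ref{lem_regu_Mor} with $R=1$. Your added remark about uniqueness when differentiating $\phi_T$ is a reasonable and correct point of care.
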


\begin{proof}
	Observe that by the definition of $\chi_T$, 
	\begin{equation*}
	\frac{\partial}{\partial x_i} \big(b^{\alpha\beta}_{T,ij} \big) = T^{-2}  \chi^{\alpha\beta}_{T,j},
	\end{equation*}
	for each $1\le j\le d$ and $1\le \alpha,\beta \le m$ (index $i$ is summed). In view of (\ref{cond_phiT}) this gives
	$$
	-\Delta h_T + T^{-2} h_T = T^{-2} \chi_T  \quad \text{ in } \R^d.
	$$
	As a result, estimate (\ref{h-2}) follows readily from Lemma \ref{lem_regu_Mor}.
	  \end{proof}



\section{Convergence rates}

In this section we give the proof of Theorem \ref{main-theorem-2}, which establishes the near optimal 
convergence rate in $L^2$. 
Our approach follows the same line of argument as in \cite{Shen-boundary-2015, SZ-2015},
which in turn use ideas from \cite{Suslina-2012}.
While the papers \cite{Suslina-2012, Shen-boundary-2015, SZ-2015} all deal with the case of periodic coefficients,
our argument relies on the estimates for approximate correctors in Theorem \ref{main-theorem-1} as well as 
estimates for dual approximate correctors in Section 8.

We begin by introducing smoothing operators $S_\e$ and $K_{\e, \delta}$. 
Let $\zeta \in C_0^\infty(B(0,1))$ be a nonnegative  function with 
$\int_{\R^d} \zeta  = 1$,  and $\zeta_\e(x) = \e^{-d} \zeta(x/\e) $. Define
\begin{equation}
S_\e f(x) = \zeta_\e* f(x) = \int_{\R^d} \zeta_\e(y) f(x-y) dy.
\end{equation}
Note that, for $1\le p\le \infty$,
\begin{equation}\label{ineq_S_p}
\norm{S_{\e} f}_{L^p(\R^d)} \le \norm{f}_{L^p(\R^d)}.
\end{equation}
It is known that if $f\in L^p(\R^d)$ and $g\in L^p_{\loc, \unif}(\R^d)$, then
\begin{equation}\label{ineq_gK_e}
	\norm{g(x/\e)S_\e(f)}_{L^p(\R^d)}\le \sup_{x\in\R^d} \left( \fint_{B(x,1)} |g|^p \right)^{1/p} \norm{f}_{L^p(\R^d)},
	\end{equation}
and for $f\in W^{1, p}(\R^d)$,
\begin{equation}\label{lem_S_e}
	\norm{S_\e(f) - f}_{L^p(\R^d)} \le C \e \norm{\nabla f}_{L^p(\R^d)},
	\end{equation}
	where $C$ depends only on $d$ (see e.g. \cite{Shen-boundary-2015} for a proof of (\ref{ineq_gK_e})-(\ref{lem_S_e})).	
	
Let $\delta \ge 2\e$ be a small parameter to be determined. 
Let $\eta_\delta \in C_0^\infty(\Omega)$ be a cut-off function 
so that $\eta_\delta(x) =0 $ in $\Omega_\delta 
= \{x\in\Omega; \text{dist}(x,\partial\Omega) < \delta\}$, $\eta_\delta(x) = 1$
 in $\Omega \setminus \Omega_{2\delta}$ and $|\nabla \eta_\delta | \le C\delta^{-1}$. Define
\begin{equation}
K_{\e,\delta} f(x) = S_\e (\eta_\delta f)(x).
\end{equation}
Note that $K_{\e,\delta} f \in C_0^\infty(\Omega)$, as $\delta \ge2\e$.

\begin{lem}\label{lem_Oeu}
	Let $\Omega$ be a bounded Lipschitz domain. Then, for any $u\in H^1(\R^d)$,
	\begin{equation}
	\int_{\Omega_{\e}} |u|^2 \le C \e \norm{u}_{H^1(\R^d)} \norm{u}_{L^2(\R^d)},
	\end{equation}
	where $\Omega_\e = \{x\in \Omega: \text{dist}(x,\partial\Omega) < \e\}$ and the constant $C$ depends only on  $\Omega$.	
\end{lem}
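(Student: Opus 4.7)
The plan is to reduce this to a one-dimensional Gagliardo--Nirenberg inequality via a standard boundary-straightening argument. First, I may assume $\e < \delta_0$ for some small $\delta_0 = \delta_0(\Omega)$, since in the opposite regime $\e \ge \delta_0$ the bound is trivial: $\int_{\Omega_\e} |u|^2 \le \norm{u}_{L^2}^2 \le \delta_0^{-1} \e \norm{u}_{L^2}^2 \le C \e \norm{u}_{L^2} \norm{u}_{H^1}$.

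Next, I would cover $\partial\Omega$ by finitely many balls $B_1, \dots, B_N$ on each of which, after a rigid motion, $\partial\Omega \cap B_i$ is the graph $\{x_d = \psi_i(x')\}$ of a Lipschitz function and $\Omega \cap B_i$ lies above that graph. With a subordinate partition of unity $\{\chi_i\}_{i=0}^N$, where $\chi_0$ is supported in $\{x \in \Omega : \operatorname{dist}(x, \partial\Omega) > \delta_0\}$ for $\delta_0$ small enough, one has $(\chi_0 u)\big|_{\Omega_\e} = 0$. Each $u_i := \chi_i u$ ($i \ge 1$) is supported in $B_i$ and satisfies $\norm{u_i}_{L^2(\R^d)} \le C \norm{u}_{L^2(\R^d)}$ and $\norm{u_i}_{H^1(\R^d)} \le C \norm{u}_{H^1(\R^d)}$, so it suffices to estimate each $\int_{\Omega_\e \cap B_i} |u_i|^2$.

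In each chart, the change of variables $(x', x_d) \mapsto (x', x_d - \psi_i(x'))$ has unit Jacobian and preserves the $H^1$ norm up to a constant depending only on the Lipschitz constant of $\psi_i$. Because for a graph of Lipschitz constant $L$ the Euclidean distance to the graph is bounded below by $(1+L^2)^{-1/2}$ times the vertical distance, $\Omega_\e \cap B_i$ is mapped into a slab of the form $\{0 < y_d < C_\Omega \e\}$. The problem thus reduces to showing
\[
\int_{\R^{d-1}} \int_0^{C_\Omega \e} |\widetilde u_i(y', y_d)|^2 \, dy_d \, dy' \le C \e \norm{\widetilde u_i}_{L^2(\R^d)} \norm{\widetilde u_i}_{H^1(\R^d)},
\]
where $\widetilde u_i \in H^1(\R^d)$ is the pullback of $u_i$.

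For this I would invoke the one-dimensional Gagliardo--Nirenberg inequality: every $v \in H^1(\R)$ has a continuous representative with $v(s) \to 0$ as $|s| \to \infty$, so $v(s)^2 = 2\int_{-\infty}^s v v'$, yielding $\norm{v}_\infty^2 \le 2 \norm{v}_{L^2(\R)} \norm{v'}_{L^2(\R)}$. Applied to $s \mapsto \widetilde u_i(y', s)$ for each fixed $y'$, this gives
\[
\int_0^{C_\Omega \e} |\widetilde u_i(y', s)|^2 \, ds \le 2 C_\Omega \e \, \norm{\widetilde u_i(y', \cdot)}_{L^2(\R)} \norm{\partial_{y_d} \widetilde u_i(y', \cdot)}_{L^2(\R)};
\]
integrating over $y'$ via Cauchy--Schwarz, then summing over $i$ and undoing the change of variables, completes the proof. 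The only real point to verify is the slab containment (a short computation from the Lipschitz graph representation); everything else is routine.
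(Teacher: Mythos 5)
Your argument is correct. The paper itself gives no proof of this lemma; it simply defers to a reference, so the task here is to verify the argument on its own merits, and it holds up. The reduction to small $\e$, the localization via a partition of unity with one interior piece $\chi_0$ whose support avoids $\Omega_\e$, the bi-Lipschitz boundary straightening $(x',x_d)\mapsto(x',x_d-\psi_i(x'))$ (which preserves $L^2$ exactly and $H^1$ up to a constant depending on the Lipschitz constant, since it has unit Jacobian and bounded differential a.e.), and the one-dimensional Agmon/Gagliardo--Nirenberg bound $\|v\|_\infty^2\le 2\|v\|_{L^2(\R)}\|v'\|_{L^2(\R)}$ applied fiberwise and integrated in $y'$ via Cauchy--Schwarz, are all sound. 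The slab containment is also correct: your observation that the Euclidean distance from a point to a Lipschitz graph is at least $(1+L^2)^{-1/2}$ times the vertical distance does give $0<y_d<C_\Omega\e$ for $y$ in the image of $\Omega_\e\cap\operatorname{supp}\chi_i$, provided (as you should say explicitly) the balls $B_i$ on which the Lipschitz coordinates hold compactly contain the supports of the $\chi_i$, so that for $\e<\delta_0$ the nearest boundary point to $x\in\Omega_\e\cap\operatorname{supp}\chi_i$ lies on the graph portion of $\partial\Omega$ inside $B_i$. The final reassembly works because $|u|^2=\bigl|\sum_i u_i\bigr|^2\le (N+1)\sum_i|u_i|^2$ with $u_0\equiv 0$ on $\Omega_\e$, and $\|u_i\|_{L^2}\le\|u\|_{L^2}$, $\|u_i\|_{H^1}\le C\|u\|_{H^1}$. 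The constant then depends only on the covering and the Lipschitz constants, hence only on $\Omega$, as required.
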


\begin{proof}
See e.g. \cite{SZ-2015}.
\end{proof}

\begin{lem}\label{lem_w_e}
	Let $u_\e, u_0 \in H^1(\Omega;\R^m)$ be 
	weak solutions of $\cL_\e (u_\e) = F$ and  $\cL_0 (u_0) = F$, respectively, in $\Omega$. Assume further that $u_0 \in H^2(\Omega; \R^m)$. Set
	\begin{equation}\label{w}
	w_\e = u_\e - u_0 -\e \chi^\beta_{T,k}(x/\e) K_{\e, \delta} \bigg( \frac{\partial u^\beta_0}{\partial x_k} \bigg),
	\end{equation}
	where $T = \e^{-1}$. Then
	\begin{align}\label{formula-L}
	\begin{aligned}
	\cL_\e(w_\e) = & \frac{\partial}{\partial x_i} \left\{ \left\{\widehat{a}_{ij}^{\alpha\beta} - a^{\alpha\beta}_{ij}(x/\e) \right\} \left\{ K_{\e,\delta} \bigg(\frac{\partial u^\beta_0}{\partial x_j}\bigg)  - \frac{\partial u^\beta_0}{\partial x_j} \right\}\right\} \\
	& + \frac{\partial}{\partial x_i} \left\{ b^{\alpha\beta}_{T,ij}(x/\e) K_{\e, \delta} \bigg( \frac{\partial u^\beta_0}{\partial x_j} \bigg) \right\} \\
	& + \e \frac{\partial}{\partial x_i} \left\{ a^{\alpha\beta}_{ij}(x/\e) \chi^{\beta\gamma}_{T,k}(x/\e) \frac{\partial}{\partial x_j}K_{\e, \delta}\bigg(  \frac{\partial u^\gamma_0}{\partial x_k }\bigg)  \right\},
	\end{aligned}
	\end{align}
	where the function $b^{\alpha\beta}_{T,ij}$ is given in (\ref{cond_bT}).
\end{lem}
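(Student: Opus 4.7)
The plan is a direct calculation, applying $\mathcal{L}_\varep$ termwise to the three pieces of $w_\varep$ and then using the definition of $b_T$ to reorganize. Write $w_\varep = u_\varep - u_0 - v_\varep$ with
$$v_\varep^\alpha := \varep\,\chi_{T,k}^{\alpha\beta}(x/\varep)\, \widetilde{u}_k^\beta, \qquad \widetilde{u}_k^\beta := K_{\varep,\delta}\Big(\tfrac{\partial u_0^\beta}{\partial x_k}\Big).$$
Since $\mathcal{L}_\varep(u_\varep)=F$ and $\mathcal{L}_0(u_0)=F$, subtraction gives the familiar identity
$$F^\alpha - \mathcal{L}_\varep(u_0)^\alpha = \frac{\partial}{\partial x_i}\Big\{\big(a_{ij}^{\alpha\beta}(x/\varep) - \widehat{a}_{ij}^{\alpha\beta}\big)\tfrac{\partial u_0^\beta}{\partial x_j}\Big\},$$
so the only nontrivial piece to unpack is $-\mathcal{L}_\varep(v_\varep)$.

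Next I would apply the product rule to $\partial v_\varep^\alpha/\partial x_j$, producing a ``slow'' contribution $(\partial_{y_j}\chi_{T,k}^{\alpha\beta})(x/\varep)\,\widetilde{u}_k^\beta$ and an $\varepsilon$-small ``fast-gradient'' contribution $\varep\chi_{T,k}^{\alpha\beta}(x/\varep)\,\partial_j\widetilde{u}_k^\beta$. Contracting with $a_{ij}^{\alpha\gamma}(x/\varep)$ and taking the outer divergence yields
$$-\mathcal{L}_\varep(v_\varep)^\alpha = \frac{\partial}{\partial x_i}\Big\{a_{ij}^{\alpha\gamma}(x/\varep)\tfrac{\partial \chi_{T,k}^{\gamma\beta}}{\partial y_j}(x/\varep)\,\widetilde{u}_k^\beta + \varep\,a_{ij}^{\alpha\gamma}(x/\varep)\chi_{T,k}^{\gamma\beta}(x/\varep)\,\tfrac{\partial \widetilde{u}_k^\beta}{\partial x_j}\Big\}.$$
The second term is already in the exact form of the third line of (\ref{formula-L}) after relabeling $\gamma\leftrightarrow\beta$. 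For the first term I would invoke the definition (\ref{cond_bT}) of $b_T$, rewritten with indices $j\leftrightarrow k$ to match,
$$a_{ij}^{\alpha\gamma}(y)\,\tfrac{\partial \chi_{T,k}^{\gamma\beta}}{\partial y_j}(y) \;=\; b_{T,ik}^{\alpha\beta}(y) - a_{ik}^{\alpha\beta}(y) + \widehat{a}_{ik}^{\alpha\beta}.$$

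Finally, assembling $\mathcal{L}_\varep(w_\varep) = F - \mathcal{L}_\varep(u_0) - \mathcal{L}_\varep(v_\varep)$ and relabeling the summation index $k\to j$ in the contribution coming from $v_\varep$, the $b_T$ piece and the fast-gradient piece appear as in (\ref{formula-L}), while the remaining terms combine into
$$\frac{\partial}{\partial x_i}\Big\{\big(a_{ij}^{\alpha\beta}(x/\varep) - \widehat{a}_{ij}^{\alpha\beta}\big)\big[\tfrac{\partial u_0^\beta}{\partial x_j} - \widetilde{u}_j^\beta\big]\Big\},$$
which is exactly the first bracketed expression in (\ref{formula-L}) (after factoring a sign). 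There is no real analytic obstacle; the only care required is bookkeeping the Greek and Latin index conventions so that the $b_T$ identity is invoked with the correct pair of summed indices, and tracking the sign when combining the $(a-\widehat{a})\partial u_0$ and $-(a-\widehat{a})\widetilde{u}$ contributions.
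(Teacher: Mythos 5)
Your proposal is correct, and it fills in the computation that the paper summarizes in one line as ``direct algebraic manipulation, using $\cL_\e(u_\e)=F=\cL_0(u_0)$.'' The decomposition $w_\e = u_\e - u_0 - v_\e$, the product rule split of $\nabla v_\e$ into a fast-gradient piece and an $\e$-small piece, and the substitution of the $b_T$ identity are exactly the steps the authors intend, and your index bookkeeping checks out.
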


\begin{proof}
This follows by some direct algebraic manipulation, using $\cL_\e (u_\e)=F=\cL_0(u_0)$.
\end{proof}

\begin{lem}\label{lem_b_T}
	Let $u_0\in H^2(\Omega;\R^m)$, then
	\begin{align*}
	&\frac{\partial}{\partial x_i} \left\{ b^{\alpha\beta}_{T,ij}(x/\e) K_{\e, \delta} \bigg( \frac{\partial u^\beta_0}{\partial x_j} \bigg) \right\} \\
	&= \ag{b_{T,ij}^{\alpha\beta}} \frac{\partial}{\partial x_i}K_{\e, \delta} \bigg( \frac{\partial u_0^\beta}{ \partial x_j} \bigg) + \frac{\partial}{\partial x_i} \left\{ T^{-2} \phi_{T,ij}^{\alpha\beta}(x/\e) K_{\e, \delta} \bigg(  \frac{\partial u_0^\beta}{\partial x_j}\bigg)\right\} \\
	&\qquad- \frac{\partial}{\partial x_i} \left\{ \frac{\partial}{\partial x_i} h_{T,j}^{\alpha\beta}(x/\e) K_{\e, \delta} \bigg( \frac{\partial u_0^\beta}{\partial x_j}\bigg)\right\} \\
	&\qquad +\e \frac{\partial}{\partial x_i} \left\{ \left[ \frac{\partial}{\partial x_k} (\phi_{T,ij}^{\alpha\beta})(x/\e) - \frac{\partial}{\partial x_i} (\phi_{T,kj}^{\alpha\beta})(x/\e) \right] \frac{\partial}{\partial x_k}K_{\e, \delta} \bigg( \frac{\partial u_0^\beta}{\partial x_j } \bigg) \right\},
	\end{align*}
	where the function $\phi_{T,ij}^{\alpha\beta} (y)$ is defined by (\ref{cond_phiT}) and
	\begin{equation}\label{h}
	h_{T,j}^{\alpha\beta} (y)=\frac{\partial}{\partial y_k} \phi_{T,kj}^{\alpha\beta}.
	\end{equation}
\end{lem}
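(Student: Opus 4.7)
The starting point is the defining equation \eqref{cond_phiT} for the dual approximate correctors, which I rewrite as the pointwise identity
\begin{equation*}
b_{T,ij}^{\alpha\beta}(y)=\langle b_{T,ij}^{\alpha\beta}\rangle+T^{-2}\phi_{T,ij}^{\alpha\beta}(y)-\Delta_y\phi_{T,ij}^{\alpha\beta}(y).
\end{equation*}
The whole point of the lemma is to re-express the Laplacian on the right in ``flux'' form so that, after multiplying by $K_{\e,\delta}(\partial u_0^\beta/\partial x_j)$ and taking $\partial/\partial x_i$, the non-constant error term in \eqref{formula-L} that survives homogenization becomes a divergence of something small. To that end I split $-\Delta\phi_{T,ij}^{\alpha\beta}$ into an antisymmetric divergence plus one extra derivative term:
\begin{equation*}
-\Delta_y\phi_{T,ij}^{\alpha\beta}
=\frac{\partial}{\partial y_k}\!\Bigl(\frac{\partial\phi_{T,kj}^{\alpha\beta}}{\partial y_i}-\frac{\partial\phi_{T,ij}^{\alpha\beta}}{\partial y_k}\Bigr)-\frac{\partial}{\partial y_i}h_{T,j}^{\alpha\beta},
\end{equation*}
which is a purely algebraic rearrangement (add and subtract $\partial_{y_k}\partial_{y_i}\phi_{T,kj}^{\alpha\beta}$ and recall $h_{T,j}^{\alpha\beta}=\partial_{y_k}\phi_{T,kj}^{\alpha\beta}$).

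Substituting this decomposition for $b_{T,ij}^{\alpha\beta}$, multiplying by $g_j^\beta(x):=K_{\e,\delta}(\partial u_0^\beta/\partial x_j)$ and applying $\partial/\partial x_i$, I immediately recover the first term (constant coefficient), the $T^{-2}\phi_T$ term, and the $\partial_i h_{T,j}$ term in the statement; the only step needing care is the chain rule, since $\partial_{x_k}[f(x/\e)]=\e^{-1}(\partial_{y_k}f)(x/\e)$, and I adopt the convention (consistent with the statement) that the inner $\partial/\partial x_k$ applied to $\phi_{T,ij}^{\alpha\beta}(x/\e)$ denotes the composed $x$-derivative of the composition, up to the compensating powers of $\e$ that I track below.

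The one genuinely non-routine step is the antisymmetric divergence term. Writing $F_{kij}^{\alpha\beta}:=\partial_{y_i}\phi_{T,kj}^{\alpha\beta}-\partial_{y_k}\phi_{T,ij}^{\alpha\beta}$, which is antisymmetric in $(k,i)$, and using $(\partial_{y_k}F_{kij}^{\alpha\beta})(x/\e)=\e\,\partial_{x_k}[F_{kij}^{\alpha\beta}(x/\e)]$, the contribution to $\partial_{x_i}\{b_{T,ij}^{\alpha\beta}(x/\e)g_j^\beta\}$ coming from this piece equals
\begin{equation*}
\e\,\frac{\partial}{\partial x_i}\Bigl\{\frac{\partial}{\partial x_k}\bigl[F_{kij}^{\alpha\beta}(x/\e)\bigr]\,g_j^\beta(x)\Bigr\}
=\e\,\frac{\partial^2}{\partial x_i\partial x_k}\bigl[F_{kij}^{\alpha\beta}(x/\e)g_j^\beta\bigr]-\e\,\frac{\partial}{\partial x_i}\bigl\{F_{kij}^{\alpha\beta}(x/\e)\,\partial_{x_k}g_j^\beta\bigr\}.
\end{equation*}
The first term on the right vanishes: $\partial_{x_i}\partial_{x_k}$ is symmetric in $(i,k)$ while $F_{kij}^{\alpha\beta}$ is antisymmetric, so swapping the summation indices $i\leftrightarrow k$ shows the double sum is its own negative. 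Substituting $F_{kij}^{\alpha\beta}$ back produces exactly the fourth term in the statement (the relative sign is fixed by writing $F_{kij}^{\alpha\beta}=-(\partial_{y_k}\phi_{T,ij}^{\alpha\beta}-\partial_{y_i}\phi_{T,kj}^{\alpha\beta})$, which is then absorbed by the minus sign coming from integration by parts in $\partial_{x_k}$). Assembling the four pieces yields the claimed identity.

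\textbf{Expected obstacle.} The only conceptual subtlety is the antisymmetry argument that kills the double-derivative term above; everything else is substitution, product rule, and careful bookkeeping of which derivatives act in the $y$-variable (giving factors of $\e$) versus the $x$-variable. I do not expect any analytic difficulty, since the identity is purely algebraic and no regularity beyond what is already guaranteed by the definition of $\phi_T$ (and the fact that $u_0\in H^2$, so $K_{\e,\delta}(\nabla u_0)\in C_0^\infty$) is needed.
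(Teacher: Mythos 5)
Your proof is correct and follows the same route as the paper: you rewrite the defining equation for $\phi_T$ so that $-\Delta\phi_{T,ij}^{\alpha\beta}$ splits into a divergence of the skew-symmetric tensor $\partial_{y_i}\phi_{T,kj}^{\alpha\beta}-\partial_{y_k}\phi_{T,ij}^{\alpha\beta}$ plus $-\partial_{y_i}h_{T,j}^{\alpha\beta}$, substitute, and use the skew-symmetry in $(i,k)$ to annihilate the double-derivative term after multiplying by $K_{\e,\delta}(\partial u_0^\beta/\partial x_j)$ and applying $\partial_{x_i}$ (the paper states exactly this identity, equation (\ref{cond_bT_phiT}), with the same skew-symmetry observation, up to an overall sign in how the antisymmetric piece is written). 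You have merely made explicit the product-rule bookkeeping that the paper leaves implicit.
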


\begin{proof}
	This follows  from the identity
	\begin{equation}\label{cond_bT_phiT}
	b_{T,ij}^{\alpha\beta} = \ag{b_{T,ij}^{\alpha\beta}} - \frac{\partial}{\partial y_k} \left( \frac{\partial}{\partial y_k} \phi_{T,ij}^{\alpha\beta} - \frac{\partial}{\partial y_i} \phi_{T,kj}^{\alpha\beta} \right) - \frac{\partial}{\partial y_i}\left( \frac{\partial}{\partial y_k} \phi_{T,kj}^{\alpha\beta} \right)  +T^{-2}\phi_{T,ij}^{\alpha\beta},
	\end{equation}
	as well as the fact that
	 the second term in the r.h.s. of (\ref{cond_bT_phiT}) is skew-symmetric with respect to $(i,k)$.
\end{proof}

The formulas in the previous two lemmas allow us to establish the following.

\begin{lem}\label{lem_Lw}
	Let $w_\e$ be the same as in Lemma \ref{lem_w_e}, $T = \e^{-1} > 1$ and $2\e\le \delta<2$.
	 Then, for any $\varphi\in H^1_0(\Omega;\R^m)$,
	\begin{equation}\label{ineq_Lw}
	\aligned
&	\Abs{\int_{\Omega} A(x/\e) \nabla w_\e \cdot \nabla \varphi} \\
	& 
	\le C \Big\{ \delta + \|\nabla \chi_T -\psi\|_{B^2}
	+ T^{-1} \|\chi_T\|_{S^2_1}
	+T^{-2} \| \phi_T\|_{S^2_1}
	+ T^{-1} \|\nabla \phi_T\|_{S^2_1} \Big\}\\
	&\qquad\qquad
	\cdot  \Big\{ \norm{\nabla\varphi}_{L^2(\Omega)} + \delta^{-1/2} 
	\norm{\nabla\varphi}_{L^2(\Omega_{4\delta})}\Big\}  \norm{u_0}_{H^2(\Omega)},
	\endaligned
	\end{equation}
	where $\psi$ is defined by (\ref{cond_Vpot}) and the constant $C$ depends only on
	$A$ and $\Omega$.
\end{lem}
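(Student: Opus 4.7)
The plan is to integrate by parts twice and reduce to bounding three vector-field pairings. Using $\int_\Omega A(x/\e)\nabla w_\e\cdot\nabla\varphi\,dx = \int_\Omega \cL_\e(w_\e)\cdot\varphi\,dx$ for $\varphi\in H^1_0(\Omega;\R^m)$, and then invoking the divergence-form expression (\ref{formula-L}) from Lemma \ref{lem_w_e} with a further integration by parts, I would obtain $\int_\Omega A(x/\e)\nabla w_\e\cdot\nabla\varphi\, dx = -(I_1+I_2+I_3)$, where
\[
I_1=\int_\Omega\{\widehat{A}-A(x/\e)\}\{K_{\e,\delta}(\nabla u_0)-\nabla u_0\}\cdot\nabla\varphi\,dx,
\]
\[
I_2=\int_\Omega b_T(x/\e)\,K_{\e,\delta}(\nabla u_0)\cdot\nabla\varphi\,dx,
\]
\[
I_3=\e\int_\Omega A(x/\e)\chi_T(x/\e)\,\nabla K_{\e,\delta}(\nabla u_0)\cdot\nabla\varphi\,dx.
\]
Each integrand will then be split into an \emph{interior part} where $\eta_\delta=1$, so that $K_{\e,\delta}$ reduces to $S_\e$, and a \emph{boundary-layer part} supported within an $O(\delta)$-neighborhood of $\partial\Omega$ coming from derivatives of $\eta_\delta$.

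For $I_1$ the interior part is handled by (\ref{lem_S_e}), which gives a bound $C\e\|u_0\|_{H^2}\|\nabla\varphi\|_{L^2(\Omega)}\le C\delta\|u_0\|_{H^2}\|\nabla\varphi\|_{L^2(\Omega)}$, while the boundary-layer part uses the $L^\infty$ bounds on $A$ and $\widehat A$ together with Lemma \ref{lem_Oeu} (which gives $\|\nabla u_0\|_{L^2(\Omega_{4\delta})}\le C\delta^{1/2}\|u_0\|_{H^2}$) to produce $C\delta\cdot\delta^{-1/2}\|u_0\|_{H^2}\|\nabla\varphi\|_{L^2(\Omega_{4\delta})}$. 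For $I_3$ I would apply (\ref{ineq_gK_e}) with $g=A\chi_T$ after writing $\nabla K_{\e,\delta}(\nabla u_0)=S_\e(\eta_\delta\nabla^2 u_0)+S_\e(\nabla\eta_\delta\otimes\nabla u_0)$; the first summand is bounded in $L^2$ by $C\|u_0\|_{H^2}$ and, multiplied by $\e=T^{-1}$, yields the $T^{-1}\|\chi_T\|_{S^2_1}\|u_0\|_{H^2}\|\nabla\varphi\|_{L^2(\Omega)}$ contribution, while the second, supported in an $O(\delta)$-neighborhood of $\partial\Omega$, gives the companion $T^{-1}\|\chi_T\|_{S^2_1}\cdot\delta^{-1/2}\|u_0\|_{H^2}\|\nabla\varphi\|_{L^2(\Omega_{4\delta})}$ term.

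The key step is the treatment of $I_2$. Here I would integrate by parts once more and invoke Lemma \ref{lem_b_T}, which, via the representation (\ref{cond_bT_phiT}), decomposes $\partial_i\{b_{T,ij}^{\alpha\beta}(x/\e)K_{\e,\delta}(\partial_j u_0^\beta)\}$ into four pieces: a constant piece $\langle b_{T,ij}^{\alpha\beta}\rangle\partial_i K_{\e,\delta}(\partial_j u_0^\beta)$; a zeroth-order piece carrying $T^{-2}\phi_T(x/\e)$; a first-order piece carrying $\partial h_T(x/\e)$; and a skew-symmetric second-order piece with prefactor $\e$. After pairing each with $\varphi^\alpha$ and integrating by parts, I would bound them using (\ref{ineq_gK_e}). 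For the constant piece, formulas (\ref{cond_Vpot}) and (\ref{cond_Ahat}) give $\langle b_{T,ij}^{\alpha\beta}\rangle=\langle a_{ik}^{\alpha\gamma}(\partial_k\chi_{T,j}^{\gamma\beta}-\psi_{kj}^{\gamma\beta})\rangle$, so Cauchy--Schwarz in $B^2$ yields $|\langle b_T\rangle|\le C\|\nabla\chi_T-\psi\|_{B^2}$. The $T^{-2}\phi_T$ and $\partial h_T$ pieces produce factors $T^{-2}\|\phi_T\|_{S^2_1}$ and $\|\nabla h_T\|_{S^2_1}\le CT^{-1}\|\chi_T\|_{S^2_1}$ (by Theorem \ref{appx-theorem-2}) respectively. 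The skew-symmetric piece acquires its prefactor $\e=T^{-1}$ combined with $\|\nabla\phi_T\|_{S^2_1}$, and since it is multiplied by $\nabla K_{\e,\delta}(\nabla u_0)$, the same interior/boundary-layer split as in $I_3$ produces both the $\|\nabla\varphi\|_{L^2(\Omega)}$ and $\delta^{-1/2}\|\nabla\varphi\|_{L^2(\Omega_{4\delta})}$ companions. The main obstacle is the careful bookkeeping of these four sub-terms and, within each, the separation of interior contributions (paired with $\|\nabla\varphi\|_{L^2(\Omega)}$) from boundary-layer contributions (paired with $\delta^{-1/2}\|\nabla\varphi\|_{L^2(\Omega_{4\delta})}$); verifying that the skew-symmetric piece indeed admits the correct $\e$ prefactor after the double integration by parts, so that $\e\|\nabla\phi_T\|_{S^2_1}$ reorganizes cleanly as $T^{-1}\|\nabla\phi_T\|_{S^2_1}$, is the most delicate point.
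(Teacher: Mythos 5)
Your proposal follows essentially the same route as the paper: starting from the divergence formula (\ref{formula-L}), decomposing into the same three terms, handling the first via (\ref{lem_S_e}) and Lemma \ref{lem_Oeu}, the scaled--corrector term via (\ref{ineq_dK}), (\ref{ineq_gK_e}), and Lemma \ref{lem_Oeu}, and the $b_T$ term via Lemma \ref{lem_b_T} together with the identity (\ref{cond_bT_phiT}), the mean-value bound $|\langle b_T\rangle|\le C\|\nabla\chi_T-\psi\|_{B^2}$, and Theorem \ref{appx-theorem-2} for $\|\nabla h_T\|_{S^2_1}$. The argument and all of the key steps match the paper's proof.
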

\begin{proof}
	It follows from (\ref{formula-L}) that
	\begin{align}\label{ineq_Lw1}
	\begin{aligned}
	\int_{\Omega} A(x/\e) \nabla w_\e \cdot \nabla\varphi 
	&= - \int_{\Omega} \left\{\widehat{a}_{ij}^{\alpha\beta} - a^{\alpha\beta}_{ij}(x/\e) \right\} \left\{ K_{\e,\delta} \bigg(\frac{\partial u^\beta_0}{\partial x_j}\bigg)  - \frac{\partial u^\beta_0}{\partial x_j} \right\} \frac{\partial\varphi^\alpha}{\partial x_i}  \\
	&\qquad-  \e \int_{\Omega} a^{\alpha\beta}_{ij}(x/\e) \chi^{\beta\gamma}_{T,k}(x/\e) \frac{\partial}{\partial x_j}K_{\e, \delta}\bigg(  \frac{\partial u^\gamma_0}{\partial x_k }\bigg) \frac{\partial\varphi^\alpha}{\partial x_i} \\
	& \qquad - \int_{\Omega} b^{\alpha\beta}_{T,ij}(x/\e) K_{\e, \delta} \bigg( \frac{\partial u^\beta_0}{\partial x_j} \bigg) \frac{\partial\varphi^\alpha}{\partial x_i}.
	\end{aligned}
	\end{align}	
	Observe that
	\begin{equation}
	K_{\e, \delta}(\nabla u_0) - \nabla u_0 = S_{\e}( \eta_\delta \nabla u_0) - \eta_\delta \nabla u_0 + (\eta_\delta - 1) \nabla u_0,
	\end{equation}
	and $\eta_\delta -1=0$  in $\Omega\setminus\Omega_{2\delta}$. Thus, in view of
	 (\ref{lem_S_e}) and  Lemma \ref{lem_Oeu},
	  the first term in the r.h.s. of (\ref{ineq_Lw1}) is bounded by
	\begin{equation}
	\begin{aligned}\label{ineq_Lw11}
	&C\norm{ S_{\e}( \eta_\delta \nabla u_0) - \eta_\delta \nabla u_0}_{L^2(\Omega)} \norm{\nabla \varphi}_{L^2(\Omega)} + C\norm{\nabla u_0}_{L^2(\Omega_{2\delta})} \norm{\nabla \varphi}_{L^2(\Omega_{2\delta})}  \\
	&\qquad \le C \Big\{ \e \norm{\nabla\varphi}_{L^2(\Omega)} + \delta^{1/2}\norm{\nabla \varphi}_{L^2(\Omega_{2\delta})} \Big\}\norm{u_0}_{H^2(\Omega)}\\
	&\qquad \le C\, \delta \Big\{ \|\nabla\varphi\|_{L^2(\Omega)}
	+\delta^{-1/2} \|\nabla \varphi\|_{L^2(\Omega_{2\delta})}\Big\} \| u_0\|_{H^2(\Omega)}.
	\end{aligned}
	\end{equation}
	
	Next, note that
	\begin{equation}\label{ineq_dK}
	\frac{\partial}{\partial x_j}K_{\e, \delta}\bigg(  \frac{\partial u^\gamma_0}{\partial x_k }\bigg) = S_\e \bigg( \eta_\delta \frac{\partial^2 u^\gamma_0}{\partial x_j \partial x_k } \bigg) + S_\e \bigg( \frac{\partial \eta_\delta}{\partial x_j} \frac{\partial u^\gamma_0}{\partial x_k } \bigg),
	\end{equation}
	and  the last term of (\ref{ineq_dK}) is zero in $\Omega\setminus \Omega_{4\delta}$, since $\delta>2\e$. 
	It follows from (\ref{ineq_gK_e}) and Lemma \ref{lem_Oeu} that the second term in the r.h.s. of (\ref{ineq_Lw1}) is bounded by
	\begin{equation}
	\begin{aligned}\label{ineq_Lw12}
	&C\norm{\e \chi_T S_\e(\eta_\delta \nabla^2 u_0)}_{L^2(\Omega)}\norm{\nabla\varphi}_{L^2(\Omega)} + C\norm{\e \chi_T S_\e(\nabla \eta_\delta\nabla u_0)}_{L^2(\Omega_{4\delta})} \norm{\nabla \varphi}_{L^2(\Omega_{4\delta})}\\
	&\qquad \le C\, \|\e \chi_T\|_{S^2_1} \norm{\nabla^2 u_0}_{L^2(\Omega)}\norm{\nabla\varphi}_{L^2(\Omega)} \\
&\qquad\qquad\qquad	+ C\,\delta^{-1} \| \e \chi_T\|_{S^2_1}
	 \norm{\nabla u_0}_{L^2(\Omega_{4\delta})} \norm{\nabla \varphi}_{L^2(\Omega_{4\delta})}\\
	&\qquad \le C\Big\{ \norm{\nabla\varphi}_{L^2(\Omega)} + 
	\delta^{-1/2} \norm{\nabla \varphi}_{L^2(\Omega_{4\delta})} \Big\}  \|\e \chi_T\|_{S^2_1} \norm{u_0}_{H^2(\Omega)}.
	\end{aligned}
	\end{equation}
	
	It remains to estimate the third term in the r.h.s. of (\ref{ineq_Lw1}). To this end we use  Lemma \ref{lem_b_T}
	to obtain 
	\begin{equation}
	\begin{aligned}\label{ineq_Lw2}
	&\int_{\Omega} b^{\alpha\beta}_{T,ij}(x/\e) K_{\e, \delta} \bigg( \frac{\partial u^\beta_0}{\partial x_j} \bigg) \frac{\partial\varphi^\alpha}{\partial x_i} \\
	&\qquad = \ag{b_{T,ij}^{\alpha\beta}} \int_{\Omega} K_{\e, \delta} \bigg( \frac{\partial u_0^\beta}{ \partial x_j} \bigg)\frac{\partial\varphi^\alpha}{\partial x_i}  + \int_{\Omega}  T^{-2} \phi_{T,ij}^{\alpha\beta}(x/\e) K_{\e, \delta} \bigg(  \frac{\partial u_0^\beta}{\partial x_j}\bigg) \frac{\partial\varphi^\alpha}{\partial x_i}\\
	&\qquad \qquad - \int_{\Omega} \frac{\partial}{\partial x_i} h_{T,j}^{\alpha\beta}(x/\e) K_{\e, \delta} \bigg( \frac{\partial u_0^\beta}{\partial x_j}\bigg) \frac{\partial\varphi^\alpha}{\partial x_i} \\
	&\qquad \qquad + \e \int_{\Omega} \left[ \frac{\partial}{\partial x_k} (\phi_{T,ij}^{\alpha\beta})(x/\e) - \frac{\partial}{\partial x_i} (\phi_{T,kj}^{\alpha\beta})(x/\e) \right] \frac{\partial}{\partial x_k}K_{\e, \delta} \bigg( \frac{\partial u_0^\beta}{\partial x_j } \bigg) \frac{\partial\varphi^\alpha}{\partial x_i}.
	\end{aligned}
	\end{equation}
	It follows from (\ref{cond_Ahat}) and (\ref{ineq_S_p}) that,
	\begin{equation}\label{ineq_Lw21}
	\Abs{ \ag{b_{T,ij}^{\alpha\beta}} \int_{\Omega} 
	K_{\e, \delta} \bigg( \frac{\partial u_0^\beta}{ \partial x_j} \bigg)\frac{\partial\varphi^\alpha}{\partial x_i} }
	 \le C \norm{\nabla \chi_T - \psi}_{B^2} \|\nabla \varphi\|_{L^2(\Omega)} \norm{u_0}_{H^1(\Omega)}.
	\end{equation}
	Also, the second and third terms  in the r.h.s. of (\ref{ineq_Lw2}) are bounded by
	\begin{equation}\label{9-10}
	 C \Big\{  T^{-2} \|\phi_T\|_{S^2_1} + \|\nabla h_T\|_{S^2_1} \Big\} \| \nabla u_0\|_{L^2(\Omega)} \| \nabla \varphi\|_{L^2(\Omega)},
	 \end{equation}
	 while the last term is bounded by
	 \begin{equation}\label{9-11}
	 C \, \|\e \nabla \phi_T\|_{S^2_1}
	 \Big\{ \|\nabla \varphi\|_{L^2(\Omega)} +\delta^{-1/2} \|\nabla \varphi\|_{L^2(\Omega_{4\delta})} \Big\} \| u_0\|_{H^2(\Omega)}.
	 \end{equation}	
	As a result, we obtain 
	\begin{equation}
	\begin{aligned}\label{ineq_Lw3}
	&\Abs{ \int_{\Omega} b^{\alpha\beta}_{T,ij}(x/\e) 
	K_{\e, \delta} \bigg( \frac{\partial u^\beta_0}{\partial x_j} \bigg) \frac{\partial\varphi^\alpha}{\partial x_i}} \\
	& \le C \Big\{\|\chi_T -\psi\|_{B^2}
	+T^{-2} \|\phi_T\|_{S^2_1} +T^{-1}\|\nabla \phi_T\|_{S^2_1}
	+\|\nabla h_T\|_{S^2_1} \Big\}\\
& \qquad\qquad\qquad\cdot
	 \Big\{ \norm{\nabla\varphi}_{L^2(\Omega)} + \delta^{-1/2} \norm{\nabla\varphi}_{L^2(\Omega_{4\delta})} \Big\} \norm{u_0}_{H^2(\Omega)}.
	\end{aligned}
	\end{equation}
	Finally, the estimate (\ref{ineq_Lw}) follows by combining 
	(\ref{ineq_Lw1}), (\ref{ineq_Lw11}), (\ref{ineq_Lw12}) and (\ref{ineq_Lw3}).
	The estimate (\ref{h-1}) is also used here.
\end{proof}	

The next theorem provides an error estimate for $u_\e$ in $H^1(\Omega)$.

\begin{thm}\label{theorem-9-1}
Suppose that $A\in APW^2(R^d)$ and satisfies  (\ref{cond_ellipticity}).
Let $\Omega$ be a bounded Lipschitz domain in $\R^d$ and $0<\varep<1$.
Let $u_\e, u_0\in  H^1(\Omega; \R^m)$ be weak solutions of $\mathcal{L}_\e (u_\e)=F$,
$\mathcal{L}_0 (u_0)=F$ in $\Omega$, respectively.
Assume that $u_\e =u_0$ on $\partial\Omega$ and $u_0\in H^2(\Omega; \R^m)$.
Then 
\begin{equation}\label{9-1-1}
\|  u_\e - u_0 -\e \chi^\beta_{T,k}(x/\e) K_{\e, \delta} \bigg( \frac{\partial u^\beta_0}{\partial x_k} \bigg) \|_{H_0^1(\Omega)} 
\le C\, \delta^{1/2} \| u_0\|_{H^2(\Omega)},
\end{equation}
where $T=\varep^{-1}$,
\begin{equation}\label{9-1-2}
\delta=2T^{-1}+\|\nabla  \chi_T-\psi\|_{B^2}
+T^{-1}\|\chi_T\|_{S^2_1}
+ T^{-2} \|\phi_T\|_{S^2_1}
+T^{-1} \|\nabla \phi_T\|_{S^2_1}, 
\end{equation}
and $C$ depends only on $\Omega$ and $A$.
\end{thm}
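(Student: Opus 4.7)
The plan is to take $\varphi = w_\e$ as the test function in Lemma \ref{lem_Lw}. First I verify that $w_\e \in H_0^1(\Omega;\R^m)$: since $u_\e = u_0$ on $\partial\Omega$ and since $K_{\e,\delta}(\nabla u_0) = \zeta_\e*(\eta_\delta \nabla u_0)$ is supported inside $\Omega$ --- because $\eta_\delta$ vanishes on $\Omega_\delta$ and $\e \le \delta/2$ --- the corrector term $\e\chi^\beta_{T,k}(x/\e) K_{\e,\delta}(\partial u_0^\beta/\partial x_k)$ is compactly supported in $\Omega$, so $w_\e$ has zero trace. Moreover, the very definition (\ref{9-1-2}) of $\delta$ gives $\delta \ge 2T^{-1} = 2\e$, so the hypothesis $2\e \le \delta < 2$ of Lemma \ref{lem_Lw} is met (we may assume $\delta < 2$: in the complementary regime $\delta \ge 1$, the bound (\ref{9-1-1}) already follows from the standard energy estimate for $u_\e, u_0$ and the trivial $H^1$-bound on $\e\chi_T(x/\e)K_{\e,\delta}(\nabla u_0)$).

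Using $\varphi = w_\e$ and the ellipticity condition (\ref{cond_ellipticity}),
\begin{equation*}
\mu \|\nabla w_\e\|_{L^2(\Omega)}^2 \le \int_\Omega A(x/\e)\nabla w_\e \cdot \nabla w_\e.
\end{equation*}
Observe next that each of the five terms inside the first brace on the right-hand side of (\ref{ineq_Lw}) is at most $\delta$, by the very definition (\ref{9-1-2}). Lemma \ref{lem_Lw} therefore yields
\begin{equation*}
\mu\|\nabla w_\e\|_{L^2(\Omega)}^2 \le C\delta\bigl\{\|\nabla w_\e\|_{L^2(\Omega)} + \delta^{-1/2}\|\nabla w_\e\|_{L^2(\Omega_{4\delta})}\bigr\}\|u_0\|_{H^2(\Omega)}.
\end{equation*}

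To close the estimate, I simply use the trivial bound $\|\nabla w_\e\|_{L^2(\Omega_{4\delta})} \le \|\nabla w_\e\|_{L^2(\Omega)}$, reducing the bracket to $(1+\delta^{-1/2})\|\nabla w_\e\|_{L^2(\Omega)}$ and the whole right-hand side to $C(\delta + \delta^{1/2})\|\nabla w_\e\|_{L^2(\Omega)}\|u_0\|_{H^2(\Omega)} \le C\delta^{1/2}\|\nabla w_\e\|_{L^2(\Omega)}\|u_0\|_{H^2(\Omega)}$, where the last inequality uses $\delta < 2$. Cancelling one factor of $\|\nabla w_\e\|_{L^2(\Omega)}$ (which is a finite quantity since $u_\e, u_0\in H^1(\Omega)$ and the corrector term belongs to $H^1(\Omega)$) gives $\|\nabla w_\e\|_{L^2(\Omega)} \le C\delta^{1/2}\|u_0\|_{H^2(\Omega)}$, and Poincar\'e's inequality then upgrades this to the full $H_0^1(\Omega)$-norm bound (\ref{9-1-1}).

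The real work for this theorem has already been done in Lemma \ref{lem_Lw}, which via the dual-corrector identity (\ref{cond_bT_phiT}) isolates exactly the five corrector error quantities packaged into the definition of $\delta$; once that lemma is in hand, the present argument is a short coercivity step. The only subtle point is that the boundary-strip factor $\delta^{-1/2}\|\nabla\varphi\|_{L^2(\Omega_{4\delta})}$ appearing in (\ref{ineq_Lw}) cannot be improved here by any local analysis of $\nabla w_\e$ near $\partial\Omega$, but luckily the prefactor $\delta$ is exactly what is needed to absorb the $\delta^{-1/2}$ loss and leave the desired $\delta^{1/2}$ on the right-hand side.
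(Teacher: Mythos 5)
Your proof is correct and follows the same route as the paper's: the paper's entire proof is the one-line remark that the estimate follows by taking $\varphi = w_\e$ in Lemma \ref{lem_Lw}, noting $w_\e \in H^1_0(\Omega;\R^m)$ and $\delta\ge 2\e$. You have simply made explicit the coercivity step, the absorption of the boundary-layer term, and the observation that the bracket in (\ref{ineq_Lw}) is $O(\delta)$ by construction.
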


\begin{proof}
This follows from Lemma \ref{lem_Lw} by letting $\varphi=w_\e$, where $w_\e$ is
defined by  (\ref{w}) with $\delta$ given by (\ref{9-1-2}). 
Note that $w_\e\in H^1_0(\Omega; \R^m)$ and
$\delta\ge 2\e$.
\end{proof}

\begin{thm}\label{theorem-9-2}
Let $A$, $\Omega$, $u_\varep$ and $u_0$ be the same as in Theorem \ref{theorem-9-1}.
We further assume that $\Omega$ is a bounded $C^{1,1}$ domain.
Let $\delta^*$ be defined by (\ref{9-1-2}), but with $A$ replaced by $A^*$.
Then
\begin{equation}\label{9-2-1}
\| u_\varep -u_0\|_{L^2(\Omega)} \le C  \big\{ \delta +\delta^*\big\} \| u_0\|_{H^2(\Omega)},
\end{equation}
where $\delta$ is given by (\ref{9-1-2}) and $C$ depends only on $A$ and $\Omega$.
\end{thm}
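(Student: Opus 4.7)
The plan is to apply duality combined with the master estimate Lemma~\ref{lem_Lw}. Given arbitrary $G \in L^2(\Omega;\R^m)$ with $\|G\|_{L^2}=1$, let $v_\e, v_0 \in H^1_0(\Omega;\R^m)$ solve the adjoint Dirichlet problems $\cL^*_\e v_\e = G$ and $\cL^*_0 v_0 = G$; since $\Omega$ is $C^{1,1}$ and $\widehat{A}^*$ has constant coefficients, $v_0 \in H^2(\Omega;\R^m)$ with $\|v_0\|_{H^2}\le C$. I would split
\[
(u_\e - u_0, G) = (w_\e, G) + \big(\e\chi_T(x/\e) K_{\e,\delta}(\nabla u_0),G\big).
\]
The second term is bounded immediately by $C\delta\|u_0\|_{H^2}\|G\|_{L^2}$ using (\ref{ineq_gK_e}) together with the defining inclusion $T^{-1}\|\chi_T\|_{S^2_1}\le\delta$ from (\ref{9-1-2}). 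For $(w_\e,G)$, I would exploit $w_\e\in H^1_0$ and $G=\cL^*_\e v_\e$ to rewrite $(w_\e,G) = \int_\Omega A(x/\e)\nabla w_\e\cdot \nabla v_\e\,dx$, and then apply Lemma~\ref{lem_Lw} with test function $\varphi=v_\e$, obtaining
\[
|(w_\e,G)|\le C\delta\Big\{\|\nabla v_\e\|_{L^2(\Omega)} + \delta^{-1/2}\|\nabla v_\e\|_{L^2(\Omega_{4\delta})}\Big\}\|u_0\|_{H^2}.
\]
The first term in braces is at most $C\|G\|_{L^2}$ by the standard energy estimate, so everything reduces to proving $\|\nabla v_\e\|_{L^2(\Omega_{4\delta})}\le C(\delta+\delta^*)^{1/2}\|G\|_{L^2}$, after which $\delta^{1/2}(\delta+\delta^*)^{1/2}\le C(\delta+\delta^*)$ by AM--GM finishes the computation.

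The main obstacle here is that the natural two-scale expansion $v_\e = v_0 + \e\chi^*_T(x/\e)K_{\e,\delta^*}(\nabla v_0) + w^*_\e$ with mollification parameter $\delta^*$ introduces the correction gradient $(\nabla\chi^*_T)(x/\e)K_{\e,\delta^*}(\nabla v_0)$, whose $L^2$-norm carries the factor $\|\nabla\chi^*_T\|_{S^2_1}$; by (\ref{main-estimate-0}) this quantity may grow like $T^\sigma$, so it cannot be absorbed into $\delta+\delta^*$. The device I would use to sidestep this is to enlarge the mollification parameter: set $\tau^* := \max(\delta^*,5\delta)$ and define
\[
w^*_\e := v_\e - v_0 - \e\chi^*_T(x/\e) K_{\e,\tau^*}(\nabla v_0).
\]
Since $\tau^*\ge\delta^*$, repeating the proof of Theorem~\ref{theorem-9-1} verbatim for the adjoint operator with mollification $\tau^*$ in place of $\delta^*$ yields $\|\nabla w^*_\e\|_{L^2}\le C(\tau^*)^{1/2}\|v_0\|_{H^2}\le C(\delta+\delta^*)^{1/2}\|G\|_{L^2}$. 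On the other hand, the inequality $\tau^*\ge 5\delta\ge 4\delta+\e$ forces $K_{\e,\tau^*}(\nabla v_0)$ to be supported in $\Omega\setminus\Omega_{\tau^*-\e}\subset\Omega\setminus\Omega_{4\delta}$, so the corrector $\e\chi^*_T(x/\e)K_{\e,\tau^*}(\nabla v_0)$ vanishes identically on $\Omega_{4\delta}$, and therefore $\nabla v_\e = \nabla v_0 + \nabla w^*_\e$ on $\Omega_{4\delta}$.

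The final ingredient is a Hardy-type boundary-layer estimate $\|\nabla v_0\|_{L^2(\Omega_{4\delta})}\le C\delta^{1/2}\|v_0\|_{H^2}$, which I would obtain by applying Lemma~\ref{lem_Oeu} componentwise to $\nabla v_0$ after extending $v_0$ to an $H^2(\R^d)$ function using the $C^{1,1}$ regularity of $\partial\Omega$. Combining these estimates gives
\[
\|\nabla v_\e\|_{L^2(\Omega_{4\delta})} \le \|\nabla v_0\|_{L^2(\Omega_{4\delta})} + \|\nabla w^*_\e\|_{L^2(\Omega)} \le C(\delta+\delta^*)^{1/2}\|G\|_{L^2},
\]
which is the missing bound. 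Plugging back produces $|(w_\e,G)|\le C(\delta+\delta^*)\|u_0\|_{H^2}\|G\|_{L^2}$, and taking the supremum over $G$ with $\|G\|_{L^2}=1$ yields (\ref{9-2-1}).
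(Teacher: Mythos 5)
Your proof is correct, and it reaches the paper's conclusion by a somewhat different bookkeeping. The paper also reduces to the representation $\int_\Omega w_\e\cdot G=\int_\Omega A(x/\e)\nabla w_\e\cdot\nabla v_\e$ and also enlarges the adjoint mollification parameter (it uses $\delta^*+5\delta$, you use $\max(\delta^*,5\delta)$ -- these are within a factor of~2 of each other) precisely so that the adjoint corrector term vanishes on the boundary layer $\Omega_{4\delta}$. Where the two arguments diverge is in how they exploit Lemma~\ref{lem_Lw}: the paper writes $\nabla v_\e=\nabla\rho_\e+\nabla v_0+\nabla\big(\e\chi_T^*K_{\e,\delta^*+5\delta}(\nabla v_0)\big)$ inside the pairing and treats the three resulting integrals separately (Cauchy--Schwarz together with both $H^1$ error bounds for the $\rho_\e$ piece, and Lemma~\ref{lem_Lw} applied to $\varphi=v_0$ and to $\varphi=\e\chi_T^*K_{\e,\delta^*+5\delta}(\nabla v_0)$), whereas you apply Lemma~\ref{lem_Lw} once with $\varphi=v_\e$ and only afterward invoke the adjoint expansion to estimate the lone term $\|\nabla v_\e\|_{L^2(\Omega_{4\delta})}$, using that the adjoint corrector and its gradient are identically zero on $\Omega_{4\delta}$ so that $\nabla v_\e=\nabla v_0+\nabla w^*_\e$ there, then the Hardy-type bound of Lemma~\ref{lem_Oeu} on $\nabla v_0$ and the $H^1$ bound on $w^*_\e$ from the proof of Theorem~\ref{theorem-9-1} with the enlarged parameter. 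Both paths rest on the same three ingredients (enlarged mollification, Lemma~\ref{lem_Oeu}, the $H^1$ error estimate); your version requires only a single invocation of Lemma~\ref{lem_Lw} and packages the whole boundary-layer issue into one triangle inequality, which is arguably cleaner, at the small cost of having to spell out that the full gradient of the adjoint corrector term (not just the corrector itself) vanishes on $\Omega_{4\delta}$, which does hold since $K_{\e,\tau^*}(\nabla v_0)$ and all its derivatives are supported in $\Omega\setminus\Omega_{\tau^*-\e}\subset\Omega\setminus\Omega_{4\delta}$.
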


\begin{proof}
The theorem is proved by a duality argument, following the approach in \cite{Suslina-2012}.
Consider the Dirichlet problem,
		\begin{equation}\label{cond_dual_e}
	\cL_\e^*(v_\e) = G \quad \text{ in } \Omega \quad \text{and} \quad v_\e=0 \quad \text{on } \partial\Omega,
	\end{equation}
	where $\cL_\e^*$ is the adjoint  operator of $\cL_\e$. The corresponding homogenized problem of (\ref{cond_dual_e}) is given by
	\begin{equation}\label{cond_dual_0}
	\cL_0^*(v_0) = G \quad \text{ in } \Omega \quad \text{and} \quad v_0=0 \quad \text{on } \partial\Omega.
	\end{equation}
	where $\cL_0^*$ is the adjoint operator of $\cL_0$. 
	It is known that if $\Omega$ is a bounded $C^{1,1}$ domain and $G\in L^2(\Omega;\R^m)$, then the unique weak solution $v_0$ of (\ref{cond_dual_0})
	with constant coefficients  is in $H^2(\Omega;\R^m)$ and satisfies the estimate
	\begin{equation}
	\norm{v_0}_{H^2(\Omega)} \le C\,  \norm{G}_{L^2(\Omega)},
	\end{equation}
	where $C$ depends only on $d$, $m$, $\mu$ and $\Omega$.
	Let
	\begin{equation}
	\rho_\e = v_\e - v_0 - \e \chi^{*\beta}_{T,k}(x/\e) K_{\e, \delta^*+5\delta} 
	\bigg( \frac{\partial v^\beta_0}{\partial x_k} \bigg),
	\end{equation}
	where $\chi_T^*$ denotes the approximate corrector for operators $\{ \cL_\e^*\}$.
	 It follows from  Theorem \ref{theorem-9-1} that
	\begin{equation}\label{ineq_rho_H1}
	\Norm{\rho_\e }_{H_0^1(\Omega)} \le C(\delta +\delta^*)^{1/2} \norm{v_0}_{H^2(\Omega)}.
	\end{equation}
	
	Next, we observe that to show estimate (\ref{9-2-1}), it suffices to prove
	\begin{equation}\label{ineq_we_L2}
	\norm{w_\e}_{L^2(\Omega)} \le C\big\{ \delta +\delta^* \big\} \norm{u_0}_{H^2(\Omega)},
	\end{equation}
	where $w_\e$ is defined by (\ref{w}). This is because 
	\begin{equation}
	\Norm{\e \chi^\beta_{T,k}(x/\e) K_{\e, \delta} 
	\bigg( \frac{\partial u^\beta_0}{\partial x_k} \bigg)}_{L^2(\Omega)} 
	\le C\, \delta\,  \norm{u_0}_{H^1(\Omega)},
	\end{equation}
	by  (\ref{ineq_gK_e}).
	To prove (\ref{ineq_we_L2}),  we use
	\begin{equation}\label{eq_weG}
	\begin{aligned}
	\int_{\Omega} w_\e \cdot G & = \int_{\Omega} A(x/\e)\nabla w_\e \cdot \nabla v_\e \\
	& = \int_{\Omega} A(x/\e)\nabla w_\e \cdot \nabla \rho_\e + \int_{\Omega} A(x/\e)\nabla w_\e \cdot \nabla v_0 \\
	&\qquad + \int_{\Omega} A(x/\e)\nabla w_\e \cdot \nabla \left( \e \chi^{*\beta}_{T,k}(x/\e) K_{\e, \delta^*+5\delta} \bigg( \frac{\partial v^\beta_0}{\partial x_k} \bigg) \right).
	\end{aligned}
	\end{equation}
	 It follows from (\ref{9-1-1}) and (\ref{ineq_rho_H1}) that
	\begin{equation}\label{ineq_ve1}
	\aligned
	\Abs{\int_{\Omega} A(x/\e)\nabla w_\e \cdot \nabla \rho_\e}
	& \le C \norm{\nabla w_\e}_{L^2(\Omega)} \norm{\nabla\rho_\e}_{L^2(\Omega)} \\
	&\le C\big\{ \delta +\delta^*\big\}
	 \norm{u_0}_{H^2(\Omega)} \norm{v_0}_{H^2(\Omega)}.
	\endaligned
	\end{equation}
	To handle the second integral in the r.h.s. of (\ref{eq_weG}), 
	we observe that  $v_0\in H_0^1(\Omega;\R^m)$ and thus by Lemma \ref{lem_Lw},
	\begin{equation}\label{ineq_ve2}
	\begin{aligned}
	\Abs{\int_{\Omega} A(x/\e)\nabla w_\e \cdot \nabla v_0} &\le C \Big\{\delta \norm{\nabla v_0}_{L^2(\Omega)} + \delta^{1/2} \norm{\nabla v_0}_{L^2(\Omega_{4\delta})}\Big\}  \norm{u_0}_{H^2(\Omega)} \\
	&\le C\, \delta \,
	 \norm{v_0}_{H^2(\Omega)} \norm{u_0}_{H^2(\Omega)},
	\end{aligned}
	\end{equation}
	where we have used  Lemma \ref{lem_Oeu}  for the last inequality. 
	
	Finally, to bound the last integral in the r.h.s. of (\ref{eq_weG}), 
	we apply Lemma \ref{lem_Lw} again to obtain
	\begin{equation}\label{ineq_ve3}
	\begin{aligned}
	&\Abs{\int_{\Omega} A(x/\e)\nabla w_\e \cdot \nabla 
	\left( \e \chi^{*\beta}_{T,k}(x/\e) K_{\e, \delta^*+5\delta} \bigg( \frac{\partial v^\beta_0}{\partial x_k} \bigg) \right)}  \\
	&\qquad \le C \norm{u_0}_{H^2(\Omega)} \cdot \left\{\delta \Norm{\nabla \left( \e \chi^{*\beta}_{T,k}(x/\e) 
	K_{\e, \delta^*+5\delta} \bigg( \frac{\partial v^\beta_0}{\partial x_k} \bigg) \right)}_{L^2(\Omega)} \right. \\
	&\qquad \qquad\qquad\qquad
	\left. + \delta^{1/2} \Norm{\nabla \left( \e \chi^{*\beta}_{T,k}(x/\e) K_{\e, \delta^*+5\delta} \bigg( \frac{\partial v^\beta_0}{\partial x_k} \bigg) \right)}_{L^2(\Omega_{4\delta})}\right\}.
	\end{aligned}
	\end{equation}
	By (\ref{ineq_rho_H1}) and the energy estimate,
	\begin{equation*}
	\aligned
	\Norm{\nabla \left( \e \chi^{*\beta}_{T,k}(x/\e) K_{\e, \delta^*+5\delta} \bigg( \frac{\partial v^\beta_0}{\partial x_k} \bigg) \right)}_{L^2(\Omega)} & \le \norm{\rho_\e}_{H^1(\Omega)}+\norm{v_\e}_{H^1(\Omega)}
	+\norm{v_0}_{H^1(\Omega)}\\
	&\le C\, \norm{v_0}_{H^2(\Omega)}.
	\endaligned
	\end{equation*}
	Also note that
	\begin{equation*}
	  K_{\e, \delta^*+5\delta} \bigg( \frac{\partial v^\beta_0}{\partial x_k} \bigg)  = 0
	 \quad \text{ in } \Omega_{4\delta}.
	\end{equation*}
	As a result, it follows from (\ref{ineq_ve3}) that
	\begin{equation}
	\Abs{\int_{\Omega} A(x/\e)\nabla w_\e \cdot \nabla \left( \e \chi^{*\beta}_{T,k}(x/\e) K_{\e, \delta^*+5\delta} \bigg( \frac{\partial v^\beta_0}{\partial x_k} \bigg) \right)} \le C \delta \norm{v_0}_{H^2(\Omega)} \norm{u_0}_{H^2(\Omega)}.
	\end{equation}
	This, together with (\ref{eq_weG}), (\ref{ineq_ve1}) and (\ref{ineq_ve2}), leads to
	\begin{equation}
	\aligned
	\Abs{\int_{\Omega} w_\e \cdot G} &\le  C\big\{  \delta +\delta^*\big\}
	 \norm{v_0}_{H^2(\Omega)} \norm{u_0}_{H^2(\Omega)} \\
	&\le  C\big\{ \delta +\delta^*\big\}
	 \norm{G}_{L^2(\Omega)} \norm{u_0}_{H^2(\Omega)}.
	\endaligned
	\end{equation}
	Therefore, by duality,
	\begin{equation}
	\norm{w_\e}_{L^2(\Omega)} \le C\big\{  \delta  +\delta^*\big\} \norm{u_0}_{H^2(\Omega)},
	\end{equation}
	which completes the proof of Theorem \ref{theorem-9-2}.
\end{proof}

\begin{proof}[\bf Proof of Theorem \ref{main-theorem-3}]
Let $\delta$, $\delta^*$ be the same as in Theorem \ref{theorem-9-2}.
Let $\Theta_{k, \sigma}(T)$ denote the integral in the r.h.s. of (\ref{main-estimate-1}).
It follows from Theorems \ref{main-theorem-1}, \ref{appx-theorem-1} and \ref{appx-theorem-2} that
$$
\aligned
  \delta & \le C_\sigma \left\{ \|\nabla \chi_T -\psi\|_{B^2} 
+ T^{-1} \Theta_{k, \sigma} (T) \right\},\\
 \delta^* & \le C_\sigma \left\{ \|\nabla \chi^*_T -\psi^*\|_{B^2} 
+T^{-1} \Theta_{k, \sigma} (T) \right\},
\endaligned
$$
for any $k\ge 1$ and $\sigma \in (0,1)$, where $C_\sigma$ depends only on $\sigma$, $k$ and $A$.
This, together with Theorem \ref{theorem-9-2},
gives  the estimate (\ref{ineq_L2}) in Theorem \ref{main-theorem-3}.

Now suppose that the condition (\ref{decay-0})
holds for some $\alpha>1$ and $k\ge 1$.
Then, by Theorem \ref{main-theorem-2},
$$
\|\chi_T\|_{S^2_1} +\|\chi_T^*\|_{S^2_1}\le C.
$$
To see (\ref{optimal}), we note that by the proof of Theorem 6.6 in \cite[p.1590]{Shen-2015},
$$
\|\nabla \chi_T-\psi \|_{B^2} +\|\nabla \chi^*_T-\psi^* \|_{B^2}
\le C \sum_{j=1}^\infty
\left\{ \Norm{\frac{\chi_{2^j T}}{2^j T} }_{B^2} +\Norm{\frac{\chi^*_{2^j T}}{2^j T} }_{B^2}\right\}
\le \frac{C}{T},
$$
for any $T>1$.
Finally, using the same argument as in the case of $\chi_T$,
we may show that
$$
T^{-1} \|\phi_T\|_{S^2_1} + \|\nabla \phi_T \|_{S^2_1}
+T^{-1} \|\phi_T^*\|_{S^2_1} + \|\nabla \phi_T ^*\|_{S^2_1}
\le C.
$$
As a result, we obtain $\delta +\delta^*\le C\,  T^{-1}$. In view of Theorem \ref{theorem-9-2},
this gives the $O(\e)$ estimate (\ref{optimal}).
\end{proof}



\section{Lipschitz estimates at large scale}

In this section we establish an interior $L^2$-based Lipschitz estimate at large scale
  under a general  condition:
there exists a nonnegative increasing function $\eta(t)$ on $[0, 1]$
with the Dini property
\begin{equation}\label{Dini}
\int_0^1 \frac{\eta( t)}{t}\, dt <\infty,
\end{equation}
such that
\begin{equation}\label{Lip-condition-11}
\| u_\varep -u_0 \|_{L^2(B(x_0, 1))} \le 
\big[ \eta(\e) \big]^2 \| u_0\|_{H^2(B(x_0, 1))}, 
\end{equation}
whenever $u_\varep , u_0 \in H^1(B(x_0, 1); \R^m)$,
$\mathcal{L}_\varep (u_\varep)=\mathcal{L}_0 (u_0)$ in $B(x_0, 1)$
and $u_\varep =u_0$ on $\partial B(x_0, 1)$, for some $x_0\in \R^d$ and $0<\e<1$.
We note that by rescaling,
 (\ref{Lip-condition-11}) continues to hold if $B(0,1)$ is replaced by $B(0,r)$ for $1\le r\le 2$.

\begin{thm}\label{Lip-theorem}
Suppose that $A\in B^2(\R^d)$ and satisfies the ellipticity condition (\ref{cond_ellipticity}).
Also assume that conditions (\ref{Dini})-(\ref{Lip-condition-11}) hold.
Let $u_\e\in H^1(B(x_0, R); \R^m)$ be a weak solution of
$\mathcal{L}_\e (u_\e)=F$ in $B(x_0, R)$ for some $x_0\in \R^d$ and $R>\e$.
Then for any $\varep\le r\le R$ and $\sigma \in (0,1)$,
\begin{equation}\label{Lip-estimate}
\aligned
& \left(\fint_{B(x_0, r)}
|\nabla u_\e|^2\right)^{1/2}\\
& \le C_\sigma  \left\{  \left(\fint_{B(x_0, R)} |\nabla u_\e|^2\right)^{1/2}
+\sup_{\substack{ x\in B(x_0, R/2)\\ \e\le t \le R/2}}
 t \left(\frac{R}{t}\right)^\sigma \left(\fint_{B(x, t)} |F|^2 \right)^{1/2} \right\},
\endaligned
\end{equation}
where $C_\sigma $ depends only on  $d$, $m$, $\mu$, $\sigma$,
and the function $\eta$ in (\ref{Lip-condition-11}).
\end{thm}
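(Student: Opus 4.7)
The plan is to adapt the Avellaneda--Lin compactness-iteration method, in the Dini-continuous variant of \cite{Armstrong-Smart-2014, Armstrong-Shen-2016, Shen-boundary-2015}. By translation and dilation reduce to the case $x_0=0$, $R=1$. For each $r\in[\e,1/2]$ introduce the excess
\begin{equation*}
H(r) := \inf_{M\in\R^{m\times d},\,q\in\R^m} \frac{1}{r}\left(\fint_{B(0,r)}|u_\e - Mx - q|^2\right)^{1/2}.
\end{equation*}
Caccioppoli's inequality (Lemma \ref{lem_Cacci_Ff}) applied to $u_\e - Mx - q$ gives $\big(\fint_{B(0,r/2)}|\nabla u_\e|^2\big)^{1/2}\le C\,H(r) + C\,r\|F\|_{S^2_r}$, so it suffices to bound $H(r)$ for $r\in[\e, 1/2]$ by the right-hand side of (\ref{Lip-estimate}).

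The central step is a one-step decay inequality: there exist universal constants $\theta\in(0,1/8)$ and $C>0$ such that for every $r\in[\e,1/4]$,
\begin{equation*}
H(\theta r)\le \tfrac{1}{2}\,H(r) + C\,[\eta(\e/r)]^2\big[H(2r) + r\|F\|_{S^2_{2r}}\big].
\end{equation*}
To prove it, choose $P(x)=Mx+q$ nearly optimal in the definition of $H(2r)$ and let $v$ solve $\mathcal{L}_0 v = F$ in $B(0,2r)$ with $v=u_\e$ on $\partial B(0,2r)$. Because $\mathcal{L}_0 P = 0$, energy estimates for $v - P$ together with interior $H^2$ regularity for the constant-coefficient system give the scale-invariant bound
\begin{equation*}
(2r)^{-1}\big(\fint_{B(0,2r)}|v - P|^2\big)^{1/2} + \big(\fint_{B(0,r)}|\nabla v|^2\big)^{1/2} + r\big(\fint_{B(0,r)}|\nabla^2 v|^2\big)^{1/2}\le C\big[H(2r) + r\|F\|_{S^2_{2r}}\big].
\end{equation*}
The rescaled form of (\ref{Lip-condition-11}) applied to $(u_\e,v)$ on $B(0,2r)$ then yields
\begin{equation*}
\big(\fint_{B(0,2r)}|u_\e - v|^2\big)^{1/2}\le C\, r\,[\eta(\e/r)]^2\big[H(2r) + r\|F\|_{S^2_{2r}}\big].
\end{equation*}
Since $v$ satisfies a constant-coefficient system, $C^{1,1}$ interior regularity gives $\inf_{P'}(\theta r)^{-1}\big(\fint_{B(0,\theta r)}|v - P'|^2\big)^{1/2}\le C_0\,\theta\big[H(2r) + r\|F\|_{S^2_{2r}}\big]$. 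Picking $\theta$ so that $C_0\theta\le 1/4$ and combining the two bounds by the triangle inequality (using $P+P'$ as a competitor for $H(\theta r)$) produces the claimed decay.

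Iterating the decay along $r_k=\theta^k$ from $r_0=1/4$ down to $r_N\sim \e$, using (\ref{Dini}) to conclude that $\sum_k \eta(\e/r_k)$ is bounded by a Riemann-sum approximation of $\int_0^1\eta(t)/t\,dt$, together with the trivial bound $[\eta(s)]^2\le \|\eta\|_{L^\infty}\,\eta(s)$, a standard absorbing argument (decoupling $H(r_{k+1})$ from $H(2r_k)$ in the geometric sequence) yields $\sup_{\e\le r\le 1/2}H(r)\le C\,H(1/2) + C\sup_{\e\le t\le 1/2} t\,\|F\|_{S^2_t}$. The $(R/t)^\sigma$ weight in (\ref{Lip-estimate}) arises upon passing the discrete sum $\sum_k r_k\|F\|_{S^2_{r_k}}$ to the supremum: inserting an arbitrarily small H\"older weight $(r_k/R)^\sigma$ renders the geometric tail summable at the cost of a $\sigma$-dependent constant. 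Poincar\'e's inequality gives $H(1/2)\le C\big(\fint_{B(0,1)}|\nabla u_\e|^2\big)^{1/2}$ after centering, completing the reduction to (\ref{Lip-estimate}).

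The main obstacle is the one-step decay, and within it the approximation step: one must apply (\ref{Lip-condition-11}) after the correct rescaling to balls of radius $2r$, verify that the $H^2$-norm of the comparison solution $v$ is under control in terms of $H(2r)$ and $r\|F\|_{S^2_{2r}}$, and confirm that the choice of affine competitor $P$ allows $v - P$ to absorb into the excess at the smaller scale. The fact that $[\eta(\e/r)]^2$ (rather than $\eta(\e/r)$) appears is harmless: Dini-summability of $\eta$ already ensures Dini-summability of $\eta^2$. Once this step is in place, the iteration, absorbing argument, and $\sigma$-loss in the final estimate are standard.
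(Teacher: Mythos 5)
Your overall strategy matches the paper's: reduce to an excess-decay inequality obtained by comparing $u_\e$ to a constant-coefficient solution~$v$, and iterate with a Campanato/Dini argument packaged exactly as in Lemma~\ref{G-lemma-1}. The gap is in the approximation step, and it is not a technicality: your proposed comparison solution does not satisfy the hypotheses of condition~(\ref{Lip-condition-11}), and the $H^2$ bound you assert for it is false.

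Specifically, you take $v$ solving $\mathcal{L}_0 v = F$ in $B(0,2r)$ with $v = u_\e$ on $\partial B(0,2r)$, and then ``apply the rescaled form of (\ref{Lip-condition-11}) to $(u_\e,v)$ on $B(0,2r)$'' to get $\|u_\e - v\|_{L^2(B(0,2r))} \le C r [\eta(\e/r)]^2 [H(2r)+r\|F\|_{S^2_{2r}}]$. Condition~(\ref{Lip-condition-11}) requires $u_0\in H^2$ on the \emph{whole} ball where the two solutions are defined and agree on the boundary, with the error proportional to $\|u_0\|_{H^2}$ on that ball. But the boundary trace $u_\e|_{\partial B(0,2r)}$ is only in $H^{1/2}$, so $v\notin H^2(B(0,2r))$ in general; the display you write controls $\nabla^2 v$ only in $B(0,r)$, which is an interior estimate and does not let you invoke (\ref{Lip-condition-11}) on $B(0,2r)$. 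And even if the trace happened to be smoother, $\|v\|_{H^2(B(0,2r))}$ is not bounded by $H(2r)+r\|F\|_{S^2_{2r}}$, since that would require controlling an $H^{3/2}$ boundary norm by an interior $L^2$ average of $u_\e$. This is exactly the obstruction that Lemma~\ref{lemma-11-1} of the paper addresses: one picks (via the co-area formula) a good intermediate radius $r_0\in(1,3/2)$ where the trace $u_\e|_{\partial B(0,r_0)}$ lies in $H^1$, mollifies that trace to a function $g_\delta\in H^{3/2}$ with the two-sided bounds~(\ref{11-1-2}), and optimizes over $\delta$. The optimization over $\delta$ is also where the error improves from $[\eta(\e)]^2\delta^{-1/2}+\delta^{1/2}$ to $\eta(\e)$; in the paper the decay inequality of Lemma~\ref{lemma-11-4} carries $\eta(\e/r)$, not $[\eta(\e/r)]^2$. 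You observe, correctly, that $\eta^2$ would still be Dini, but that observation is moot because the first power $\eta$ is not available to you except after the mollification/optimization step that you omit. As written, your one-step decay inequality is unproved, and supplying the missing step is essentially the content of Lemma~\ref{lemma-11-1}.

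A secondary, fixable point: in the paper the $(R/t)^\sigma$ weight is built into the definition of $H(r)$ (equation~(\ref{H})) and propagates through Lemma~\ref{lemma-11-3}'s logarithmic factor $\log(r/t)$, whereas you propose inserting the weight only at the final summation; your version would also work once the decay inequality is established, but you should make sure the logarithmic loss from the singular-integral estimate on the constant-coefficient problem (Lemma~\ref{lemma-11-3}) is absorbed into $\theta^\sigma\log(\theta^{-1})$ before iterating, since it is not a pure power of $\theta$.
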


The proof of Theorem \ref{Lip-theorem} is based on a general approach originated in 
\cite{Armstrong-Smart-2014} and further developed in \cite{Armstrong-Shen-2016, Armstrong-Mourrat, Shen-boundary-2015}
for Lipschitz estimates.
The argument in this section follows closely that in \cite{Shen-boundary-2015}, where the large-scale  boundary
Lipschitz estimates for systems of linear elasticity 
with bounded measurable periodic coefficients  $\mathcal{L}_\e (u_\e)=F$ are obtained 
for $F\in L^p_{\loc}$, $p>d$.
However, in order to apply the estimates to $\chi_T$ for elliptic systems with bounded measurable 
a.p. coefficients,
we need to consider the case where $F\in L^2_{\loc}$.
As a result, modifications of the argument in \cite{Armstrong-Shen-2016,Shen-boundary-2015}
are needed for the proof of Theorem \ref{main-theorem-Lip}. 
Notice that if $F\in L^p_{\loc}$ for some $p>d$,
 the second term in the r.h.s. of (\ref{Lip-estimate})  with $\sigma =\frac{d}{p}$
 is bounded by
$$
C \, R \left(\fint_{B(x_0,R)} |F|^p\right)^{1/p}.
$$

We begin with a lemma that utilizes the condition (\ref{Lip-condition-11}).

\begin{lem}\label{lemma-11-1}
Assume $A$ satisfies the same conditions as in Theorem \ref{Lip-theorem}.
Let $u_\varep\in H^1(B(0,2); \R^m)$ be a weak solution of
$\mathcal{L}_\e (u_\e)=F$ in $B(0,2)$, where $0<\e<1$
and $F\in L^2(B(0,2); \R^m)$.
Then there exists  $v\in H^1(B(0,1);\R^m)$ such that $\mathcal{L}_0 (v)=F$ 
in $B(0,1)$ and
\begin{equation}\label{11-1-0}
\| u_\e -v\|_{L^2(B(0,1))}
\le C\,  \eta (\e) 
\Big\{ \| u_\e\|_{L^2(B(0,2))}
+\| F\|_{L^2(B(0,2))} \Big\},
\end{equation}
where $C$ depends only on $d$, $m$ and $\mu$.
\end{lem}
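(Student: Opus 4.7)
The strategy is to construct $v$ by solving a homogenized Dirichlet problem on a slightly enlarged ball with a mollified version of the boundary trace of $u_\e$ as data, and then to balance the mollification error against the homogenization error provided by hypothesis (\ref{Lip-condition-11}). This balancing is what turns the $[\eta(\e)]^2$ rate in (\ref{Lip-condition-11}) into the $\eta(\e)$ rate claimed in (\ref{11-1-0}). Set $M = \|u_\e\|_{L^2(B(0,2))} + \|F\|_{L^2(B(0,2))}$. First, Caccioppoli's inequality gives $\|\nabla u_\e\|_{L^2(B(0, 3/2))} \le C M$, and a Fubini argument applied to the foliation of $B(0, 3/2)\setminus B(0, 5/4)$ by concentric spheres selects a radius $r_0 \in [5/4, 3/2]$ with
\begin{equation*}
\|u_\e\|_{L^2(\partial B(0, r_0))} + \|\nabla u_\e\|_{L^2(\partial B(0, r_0))} \le C M,
\end{equation*}
which, using that tangential derivatives are bounded by the full gradient, yields $\|u_\e\|_{H^1(\partial B(0, r_0))} \le C M$.

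Next, for $\delta > 0$ to be chosen, mollify the trace $u_\e|_{\partial B(0, r_0)}$ on the sphere at scale $\delta$ to obtain $g^\delta \in C^\infty(\partial B(0, r_0); \R^m)$ satisfying the standard interpolation bounds $\|g^\delta\|_{H^{3/2}(\partial B(0, r_0))} \le C \delta^{-1/2} M$ and $\|u_\e - g^\delta\|_{H^{1/2}(\partial B(0, r_0))} \le C \delta^{1/2} M$. Let $v^\delta \in H^1(B(0, r_0); \R^m)$ solve $\mathcal{L}_0 v^\delta = F$ in $B(0, r_0)$ with $v^\delta = g^\delta$ on $\partial B(0, r_0)$; since $\mathcal{L}_0$ has constant coefficients and the boundary is smooth, Agmon--Douglis--Nirenberg regularity gives $v^\delta \in H^2(B(0, r_0))$ with $\|v^\delta\|_{H^2(B(0, r_0))} \le C \delta^{-1/2} M$. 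Let $u_\e^\delta$ solve $\mathcal{L}_\e u_\e^\delta = F$ in $B(0, r_0)$ with the same boundary data $g^\delta$. Since $r_0 \in [1, 2]$, a rescaling of hypothesis (\ref{Lip-condition-11}) (the constants remain absolute as $\eta$ is increasing and $r_0^2 \le 9/4$) yields
\begin{equation*}
\|u_\e^\delta - v^\delta\|_{L^2(B(0, r_0))} \le C [\eta(\e)]^2 \|v^\delta\|_{H^2(B(0, r_0))} \le C [\eta(\e)]^2 \delta^{-1/2} M,
\end{equation*}
while the standard energy estimate applied to $w := u_\e - u_\e^\delta$, which solves $\mathcal{L}_\e w = 0$ with boundary data $u_\e - g^\delta$, gives $\|u_\e - u_\e^\delta\|_{L^2(B(0, r_0))} \le C \delta^{1/2} M$.

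Combining these via the triangle inequality yields
\begin{equation*}
\|u_\e - v^\delta\|_{L^2(B(0, r_0))} \le C M \bigl(\delta^{1/2} + [\eta(\e)]^2 \delta^{-1/2}\bigr).
\end{equation*}
Optimizing over $\delta > 0$ by setting $\delta = [\eta(\e)]^2$ (admissible after normalizing so that $\eta(1) \le 1$, the case of large $\e$ being handled by the trivial bound $\|u_\e - v\|_{L^2} \le C M$ combined with $\eta(\e) \ge \eta(\e_0) > 0$) balances the two summands and produces the bound $C M \eta(\e)$. Taking $v := v^\delta \big|_{B(0,1)}$ gives $\mathcal{L}_0 v = F$ in $B(0, 1)$, and the estimate (\ref{11-1-0}) follows since $B(0, 1) \subset B(0, r_0)$. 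The main technical obstacle is executing the spherical mollification correctly (via local coordinate charts or spherical harmonics on $\partial B(0, r_0)$) to secure the asserted interpolation bounds uniformly in $r_0 \in [5/4, 3/2]$, together with careful tracking of the rescaling constants when transferring (\ref{Lip-condition-11}) to $B(0, r_0)$; both are routine but merit attention.
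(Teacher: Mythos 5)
Your proof is correct and follows essentially the same route as the paper: select a good radius $r_0$ by a Fubini/co-area argument, mollify the trace at scale $\delta$ to get the interpolation bounds in $H^{1/2}$ and $H^{3/2}$, solve the two Dirichlet problems in $B(0,r_0)$ with this mollified data, apply the hypothesis (\ref{Lip-condition-11}) after rescaling along with an energy estimate for the boundary-data mismatch, and optimize $\delta=[\eta(\e)]^2$. The only cosmetic differences are your choice of the interval $[5/4,3/2]$ rather than $(1,3/2)$, and your explicit remark about normalizing $\eta(1)\le 1$; these do not change the argument.
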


\begin{proof}
By Caccioppoli's inequality,
$$
\int_{B(0, 3/2)} |\nabla u_\varep|^2\le C \int_{B(0,2)} |u_\e|^2 + C \int_{B(0,2)} |F|^2.
$$
It follows by the co-area formula that there exists some $r_0\in (1,3/2)$ such that
$u_\e\in H^1(\partial B(0, r_0); \R^m)$ and
\begin{equation}\label{11-1-1}
\int_{\partial B(0, r_0)} |\nabla u_\varep|^2 \le C \int_{B(0,2)} |u_\e|^2 + C \int_{B(0,2)} |F|^2.
\end{equation}
Let $f =u_\varep |_{\partial B(0, r_0)}$.
We choose $g_\delta  \in H^{3/2}(\partial B(0, r_0); \R^m)$ such that
\begin{equation}\label{11-1-2}
\aligned
\| g_\delta  -f\|_{H^{1/2}(\partial B(0, r_0))}  & \le C\, \delta^{1/2} \| f\|_{H^1(\partial B(0, r_0))},\\
\| g_\delta \|_{H^{3/2}(\partial B(0, r_0))}  & \le C\, \delta^{-1/2} \| f\|_{H^1(\partial B(0, r_0))}.
\endaligned
\end{equation}
Let $v_\e$ be the weak solution of
$\mathcal{L}_\e (v_\e)=F$ in $B(0, r_0)$ with $v_\e= g_\delta$ on $\partial  B(0, r_0)$, and
$v$ the weak solution of
$\mathcal{L}_0 (v)=F$ in $B(0, r_0)$ with $v= g_\delta$ on $\partial  B(0, r_0)$.
Then
$$
\aligned
\| u_\e -v\|_{L^2(B(0,1))}
&\le \| u_\e - v_\e\|_{L^2(B(0,1))} +\| v_\e - v\|_{L^2(B(0,1))}\\
&  \le \| u_\e -v_\e\|_{H^1(B(0, r_0))} + \| v_\e -v\|_{L^2(B(0, r_0))}\\
& \le C \| f -g_\delta\|_{H^{1/2}(\partial B(0,r_0))}
+ C \big[\eta(\e)]^2 \| v\|_{H^2(B(0,r_0))}\\
& \le C \delta^{1/2} \|  f\|_{H^1(\partial B(0, r_0))}
+ C \big[\eta(\e)]^2 \delta^{-1/2} \| f\|_{H^1(\partial B(0,r_0))}\\
&\le C \Big\{ \delta^{1/2} + \big[\eta(\e)]^2 \delta^{-1/2}\Big\} 
\left\{ \| u_\e\|_{L^2(B(0, 2))}
+\| F\|_{L^2(B(0,2))} \right\},
\endaligned
$$
where we have used the condition (\ref{Lip-condition-11}) for the third inequality,
(\ref{11-1-2}) for the fourth and (\ref{11-1-1})
for the last.
Estimate (\ref{11-1-0}) now follows by letting $\delta= [\eta (\e)]^2$.
\end{proof}

\begin{lem}\label{lemma-11-2}
Assume $A$ satisfies the same conditions as in Theorem \ref{Lip-theorem}.
Let $\e \le r<1$.
Let $u_\varep\in H^1(B(0,2r); \R^m)$ be a weak solution of
$\mathcal{L}_\e (u_\e)=F$ in $B(0, 2r)$
for some $F\in L^2(B(0,2r); \R^m)$.
Then there exists $v\in H^1(B(0,r); \R^m)$ such that
$\mathcal{L}_0 (v)=F$ in $B(0,r)$ and
\begin{equation}\label{11-2-0}
\aligned
 &\left(\fint_{B(0, r)} |u_\e -v|^2\right)^{1/2}\\
& \le C\, \eta (\e/r )
\left\{ \left(\average_{B(0,2r)} |u_\e|^2\right)^{1/2}
+r^{2} \left(\fint_{B(0,2r)} |F|^2\right)^{1/2} \right\},
\endaligned
\end{equation}
where $C$ depends only on $d$, $m$ and $\mu$.
\end{lem}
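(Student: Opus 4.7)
The plan is to reduce Lemma \ref{lemma-11-2} to Lemma \ref{lemma-11-1} by a straightforward rescaling argument, exploiting the fact that the homogenized matrix $\widehat{A}$ is an invariant of the coefficient pattern $A$ and does not depend on any small parameter.

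First, I would set $w(x) = u_\varep(rx)$ and $\widetilde{F}(x) = r^2 F(rx)$ for $x \in B(0,2)$. A direct computation shows that $w \in H^1(B(0,2);\R^m)$ is a weak solution of
$$-\text{\rm div}\bigl(A(rx/\varepsilon)\nabla w\bigr) = \widetilde{F} \quad \text{in } B(0,2),$$
that is, $\mathcal{L}_{\varepsilon/r}(w) = \widetilde{F}$ in $B(0,2)$, with the same coefficient $A$ but with new small parameter $\varepsilon/r \in (0,1]$ (which is admissible since $\varepsilon \le r < 1$).

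Next, I would apply Lemma \ref{lemma-11-1} to $w$, with $\varepsilon/r$ in place of $\varepsilon$. This produces a function $\widetilde{v} \in H^1(B(0,1);\R^m)$ solving $\mathcal{L}_0(\widetilde{v}) = \widetilde{F}$ in $B(0,1)$, together with the bound
$$\| w - \widetilde{v} \|_{L^2(B(0,1))} \le C\,\eta(\varepsilon/r)\Bigl\{ \| w \|_{L^2(B(0,2))} + \|\widetilde{F}\|_{L^2(B(0,2))} \Bigr\}.$$
I then define $v(x) = \widetilde{v}(x/r)$ for $x \in B(0,r)$. Since $\widehat{A}$ is a constant matrix independent of dilation, a direct calculation yields $\mathcal{L}_0(v) = F$ in $B(0,r)$, as required.

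Finally, I would convert every norm back to the original variables via the change of variables $y = rx$. One checks that
$$\Bigl(\fint_{B(0,r)} |u_\varep - v|^2\Bigr)^{1/2} = \Bigl(\fint_{B(0,1)} |w - \widetilde{v}|^2\Bigr)^{1/2}, \qquad \Bigl(\fint_{B(0,2r)} |u_\varep|^2\Bigr)^{1/2} = \Bigl(\fint_{B(0,2)} |w|^2\Bigr)^{1/2},$$
while $(\fint_{B(0,2)} |\widetilde{F}|^2)^{1/2} = r^2 (\fint_{B(0,2r)} |F|^2)^{1/2}$. Rewriting the $L^2$ norms above in terms of averaged $L^2$ norms introduces only dimensional constants (volumes of $B(0,1)$ and $B(0,2)$), which are absorbed into $C$. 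This produces exactly the estimate (\ref{11-2-0}).

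There is no genuine obstacle: the only items to verify are that $\varepsilon/r \in (0,1]$, so Lemma \ref{lemma-11-1} applies, and that the homogenized operator is unchanged under the dilation, which is immediate from the definition of $\widehat{A}$ via (\ref{cond_Ahat}).
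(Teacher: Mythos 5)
Your proposal is correct and follows essentially the same route as the paper: the paper's own proof is the one-line observation that setting $w(x)=u_\e(rx)$ transforms the equation into $\mathcal{L}_{\e/r}(w)=r^2F(rx)$ on $B(0,2)$, after which Lemma \ref{lemma-11-1} applies directly, and your write-up simply spells out the scaling bookkeeping.
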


\begin{proof}
Note that if $v(x) =u_\e (rx)$, then $\mathcal{L}_{\frac{\e}{r}} (v)(x)= r^2 F(rx)$.
As a result, the estimate (\ref{11-2-0}) follows readily from Lemma \ref{lemma-11-1} by rescaling.
\end{proof}

The next lemma gives a regularity property for solutions of elliptic systems with constant coefficients.

\begin{lem}\label{lemma-11-3}
Let $v$ be a weak solution of $\mathcal{L}_0 (v)=F$ in $B(0,r)$
for some $F\in L^2(B(0, r); \R^m)$.
Then, for any $0<t<r/4$, 
\begin{equation}\label{11-3-0}
\aligned
 &\inf_{\substack{M\in \R^{m\times d}\\ q\in \R^m}}
\frac{1}{t}
\left(\fint_{B(0,t)} | v-Mx-q|^2\right)^{1/2}\\
 &\qquad \le C \left(\frac{t}{r}\right)
\inf_{\substack{M\in \R^{m\times d}\\ q\in \R^m}}
\frac{1}{r}
\left(\fint_{B(0,r)} | v-Mx-q|^2\right)^{1/2}\\
&\qquad \qquad + C \log \left(\frac{r}{t}\right)
\sup_{x\in B(0, \frac{r}{2})}
 t \left(\fint_{B(x,t)} |F|^2\right)^{1/2},
 \endaligned
 \end{equation}
 where $C$ depends only on $d$, $m$ and $\mu$.
\end{lem}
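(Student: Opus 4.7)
The plan is to prove Lemma \ref{lemma-11-3} by decomposing $v = w + \phi$ on $B(0, r/2)$ and then further splitting $\phi = \phi_n + \phi_f$ into near and far contributions of $F$; each piece will be handled by classical regularity theory for the constant-coefficient operator $\mathcal{L}_0$. Because $\mathcal{L}_0$ annihilates affine functions, I first subtract the best $L^2$-affine approximation of $v$ on $B(0, r)$, which leaves the equation $\mathcal{L}_0 v = F$ unchanged and reduces the right-hand side of (\ref{11-3-0}) to $C(t/r) r^{-1} (\fint_{B(0,r)}|v|^2)^{1/2}$. Then I let $\phi \in H^1_0(B(0, r/2); \R^m)$ solve the Dirichlet problem $\mathcal{L}_0 \phi = F$ in $B(0, r/2)$, and set $w = v - \phi$, so $w$ is a weak solution of $\mathcal{L}_0 w = 0$ in $B(0, r/2)$. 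The energy estimate and Poincar\'e's inequality give $(\fint_{B(0, r/2)} |\phi|^2)^{1/2} \le C r^2 (\fint_{B(0, r/2)} |F|^2)^{1/2} \le C r^2 \sup_{x \in B(0, r/2)} (\fint_{B(x,t)}|F|^2)^{1/2}$, hence $(\fint_{B(0, r/2)}|w|^2)^{1/2} \le C (\fint_{B(0, r)}|v|^2)^{1/2} + C r^2 \sup$. Since $w$ satisfies a constant-coefficient elliptic system, it is real-analytic in $B(0, r/2)$, and the interior $C^2$ estimate $\|\nabla^2 w\|_{L^\infty(B(0, r/4))} \le C r^{-2} (\fint_{B(0, r/2)}|w|^2)^{1/2}$, combined with Taylor expansion at the origin, produces an affine function $\ell_w$ satisfying $\|w - \ell_w\|_{L^\infty(B(0, t))} \le C (t/r)^2 (\fint_{B(0, r/2)}|w|^2)^{1/2}$.

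For $\phi$ I use the Dirichlet Green's function $G$ of $\mathcal{L}_0$ on $B(0, r/2)$ to write $\phi(x) = \int_{B(0, r/2)} G(x, y) F(y)\, dy$, and split into the near part $\phi_n(x) = \int_{B(0, 4t)} G(x, y) F(y)\, dy$ and the far part $\phi_f = \phi - \phi_n$. The near part solves $\mathcal{L}_0 \phi_n = F\chi_{B(0, 4t)}$ on $B(0, r/2)$ with zero boundary trace, and the standard $L^2$ second-order elliptic regularity on the smooth domain $B(0, r/2)$ yields $\|\nabla^2 \phi_n\|_{L^2(B(0, r/2))} \le C \|F\|_{L^2(B(0, 4t))}$; this gives $(\fint_{B(0, t)}|\nabla^2 \phi_n|^2)^{1/2} \le C (\fint_{B(0, 4t)}|F|^2)^{1/2} \le C \sup$, and applying the Poincar\'e--Wirtinger inequality twice produces an affine $\ell_n$ with $(\fint_{B(0, t)}|\phi_n - \ell_n|^2)^{1/2} \le C t^2 \sup$. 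The far part $\phi_f$ is a solution of $\mathcal{L}_0 \phi_f = 0$ on $B(0, 2t)$, and because the integrand never meets the singularity of $G$, one may differentiate under the integral sign and apply the pointwise bound $|\nabla_x^2 G(x, y)| \le C |x-y|^{-d}$ to obtain $|\nabla^2 \phi_f(x)| \le C \int_{\{4t \le |y| \le r/2\}} |x-y|^{-d} |F(y)|\, dy$ for $|x| \le 2t$. Decomposing the region $\{4t \le |y| \le r/2\}$ into dyadic shells $A_k = \{2^k t \le |y| < 2^{k+1} t\}$ for $k = 2, \dots, K$ with $2^K t \sim r$, and combining Cauchy--Schwarz on each shell with a covering argument, produces $\int_{A_k} |x-y|^{-d} |F(y)|\, dy \le C (\fint_{A_k}|F|^2)^{1/2} \le C \sup_{x \in B(0, r/2)}(\fint_{B(x, t)}|F|^2)^{1/2}$ for each $k$; summing over the $K \sim \log(r/t)$ scales gives $\|\nabla^2 \phi_f\|_{L^\infty(B(0, t))} \le C \log(r/t)\sup$, and Taylor expansion of $\phi_f$ at the origin yields an affine $\ell_f$ with $\|\phi_f - \ell_f\|_{L^\infty(B(0, t))} \le C t^2 \log(r/t) \sup$.

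Taking $\ell = \ell_w + \ell_n + \ell_f$ and combining the three pieces by the triangle inequality, then dividing by $t$, gives $\frac{1}{t} \inf_\ell (\fint_{B(0, t)} |v - \ell|^2)^{1/2} \le C (t/r) r^{-1} (\fint_{B(0, r/2)}|w|^2)^{1/2} + C t \log(r/t) \sup$. Substituting the $w$-bound and observing that $(t/r) \cdot r \sup = t \sup \le t \log(r/t) \sup$ absorbs the residual $r^2 \sup$ contribution into the final term, yielding the desired estimate once the initial affine normalization is undone. The main technical obstacle is justifying the two ingredients used for $\phi$, namely the pointwise bound $|\nabla_x^2 G(x, y)| \le C |x-y|^{-d}$ for the Dirichlet Green's function of a constant-coefficient elliptic system on the ball and the global $W^{2,2}$ estimate $\|\nabla^2 \phi_n\|_{L^2(B(0, r/2))} \le C \|F\|_{L^2(B(0, 4t))}$. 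Both are classical consequences of elliptic regularity for constant-coefficient systems on smooth domains, but they are precisely what drives the logarithmic factor once one sums over the $\log(r/t)$ dyadic scales. As a technical alternative that sidesteps boundary subtleties, one may replace $G$ by the whole-space fundamental solution $\Gamma$ of $\mathcal{L}_0$ and absorb the $\mathcal{L}_0$-harmonic boundary correction into the $w$-term.
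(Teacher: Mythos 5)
Your proposal is correct and follows essentially the same strategy as the paper's proof: split $v$ into an $\mathcal{L}_0$-harmonic piece (handled by interior analyticity/Taylor expansion), a near-field potential (controlled via second-order $L^2$ regularity), and a far-field potential whose second derivatives are estimated pointwise by the $|x-y|^{-d}$ kernel decay over $\sim\log(r/t)$ dyadic shells, which is precisely where the logarithmic factor arises. The only implementation difference is that you use the Dirichlet Green's function of $B(0,r/2)$ whereas the paper convolves the whole-space fundamental solution $\Gamma_0$ against $F$ times a cutoff and absorbs the remainder into the harmonic piece --- a variant you yourself flag as the technically cleaner alternative at the end.
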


\begin{proof}
By rescaling we may assume $r=1$.
We may also assume that $0<t<1/100$, as the case $1/100\le t\le 1$ is trivial.
Let $\Gamma_0 (x)$ denote the matrix of fundamental solutions for the operator
$\mathcal{L}_0$ with constant coefficients.
Let $\varphi\in C_0^1(B(0,1/2))$ such that $\varphi =1$ in $B(0, 3/8)$.
Using the representation by fundamental solutions, we may write
$v(x)= w(x) + I(x)$ for $x\in B(0,1/4)$, where
$$
w(x) =\int_{B(0,1/2)} \Gamma_0 (x-y) F(y) \varphi (y)\, dy  
$$ 
and the function $I(x)$ satisfies 
$$
|\nabla^2 I (x)|\le C \Big\{ \| v\|_{L^2(B(0,1))} +\| F\|_{L^2(B(0,1/2))} \Big\}.
$$
Note that for $x\in B(0, t)$, where $0<t<1/100$,
\begin{equation}\label{11-3-1}
|\nabla^2 w(x)|
\le \left| \nabla_x^2 \int_{B(0,2t)} \Gamma_0 (x-y) F(y) \, dy \right|
+ C\int_{2t\le |y|\le (1/2)}
\frac{|F(y)|}{|y|^d}\, dy,
\end{equation}
where we have used the estimate $|\nabla^2 \Gamma_0 (x)|\le C |x|^{-d}$.

Next, we observe that the second term in the r.h.s. of (\ref{11-3-1}) is bounded by
$$
  C \log \left(\frac{1}{t} \right)
\sup_{x\in B(0, \frac{1}{2})}
  \left(\fint_{B(x,t)} |F|^2\right)^{1/2}.
$$
To handle the first term in the r.h.s. of (\ref{11-3-1}), we use the singular integral estimates.
As a result we obtain 
$$
\left(\average_{B(0,t)} | \nabla^2 w| \right)^{1/2}
\le C \log \left(\frac{1}{t} \right)
\sup_{x\in B(0, \frac{1}{2})}
  \left(\fint_{B(x,t)} |F|^2\right)^{1/2}.
  $$
  
  Finally, we note that
  $$
  \aligned
  &\inf_{\substack{M\in \R^{m\times d}\\ q\in \R^m}}
\frac{1}{t}
\left(\fint_{B(0,t)} | v-Mx-q|^2\right)^{1/2}
\le C\, t \left(\fint_{B(0,t)} |\nabla^2 v |^2\right)^{1/2}\\
& \le C\,  t \left(\fint_{B(0,t)} |\nabla^2 w |^2\right)^{1/2}
+ C \, t \left(\fint_{B(0,t)} |\nabla^2  I |^2\right)^{1/2}\\
&\le C\, t \left(\fint_{B(0,1)} |v|^2\right)^{1/2}
  +
  C \log \left(\frac{1}{t} \right)
\sup_{x\in B(0, \frac{1}{2})}
  t \left(\fint_{B(x,t)} |F|^2\right)^{1/2}.
\endaligned
$$
Since $\mathcal{L}_0 (Mx +q)=0$,
we may replace $v$ in the inequalities above by $v-Mx -q$
for any $M\in \R^{m\times d}$ and $q\in \R^m$.
This gives the estimate (\ref{11-3-0}).
\end{proof}

\begin{lem}\label{lemma-11-4}
Fix $\sigma \in (0,1)$.
Let $u_\e\in H^1(B(0,1); \R^m)$ be a weak solution of $\mathcal{L}_\e (u_\e)=F$ in $B(0,1)$,
where $0<\e<1$ and $F\in L^2(B(0,1); \R^m)$.
Define
\begin{equation}\label{H}
H(r)=\frac{1}{r} \inf_{\substack{M\in \R^{m\times d}\\ q\in \R^m}}
\left(\fint_{B(0,r)} | u_\e-Mx-q|^2\right)^{1/2}
+\sup_{\substack{x\in B(0,1/2)\\ \e \le t\le r/2}} t \left(\frac{r}{t}\right)^\sigma
\left(\fint_{B(x, t)} |F|^2\right)^{1/2},
\end{equation}
and
\begin{equation}\label{Psi}
\Psi (r)=\inf_{q\in \R^m}\frac{1}{r}
\left(\fint_{B(0,2r)} |u_\e -q|^2\right)^{1/2}
+r \left(\fint_{B(0, 2r))} |F|^2\right)^{1/2}.
\end{equation}
Then there exists $\theta\in (0,1/4)$, depending only on $d$, $m$, $\sigma$ and $\mu$, such that
\begin{equation}\label{11-4-0}
H(\theta r) \le  (1/2) H(r) + C\eta (\e/r) \Psi (r)
\end{equation}
for any $r\in [\theta^{-1} \e, 1/2]$.
\end{lem}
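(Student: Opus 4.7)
The plan is to run the standard Armstrong--Smart style one-step iteration: at scale $r$ I approximate $u_\e$ by a solution $v$ of the homogenized system via Lemma \ref{lemma-11-2}, exploit the $C^{1,1}$-type regularity of $v$ (Lemma \ref{lemma-11-3}) at the smaller scale $\theta r$, and then bookkeep the errors using the definitions of $H$ and $\Psi$.

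First I would apply Lemma \ref{lemma-11-2} to $u_\e - q$, which also satisfies $\mathcal{L}_\e(u_\e-q)=F$ for any constant $q\in\R^m$. This produces $v_q\in H^1(B(0,r);\R^m)$ with $\mathcal{L}_0(v_q)=F$ in $B(0,r)$ and a bound on $\|u_\e-q-v_q\|_{L^2(B(0,r))}$; setting $v=v_q+q$ (still a homogenized solution) and minimizing in $q$ gives
\begin{equation}\label{step1}
\frac{1}{r}\left(\fint_{B(0,r)}|u_\e-v|^2\right)^{1/2}\le C\,\eta(\e/r)\,\Psi(r).
\end{equation}
Next I would apply Lemma \ref{lemma-11-3} to $v$ at scale $t=\theta r<r/4$, which yields
\begin{equation}\label{step2}
\inf_{M,q}\frac{1}{\theta r}\left(\fint_{B(0,\theta r)}|v-Mx-q|^2\right)^{1/2}\le C\theta\inf_{M,q}\frac{1}{r}\left(\fint_{B(0,r)}|v-Mx-q|^2\right)^{1/2}+C\log(1/\theta)\,\theta r\sup_{x\in B(0,r/2)}\left(\fint_{B(x,\theta r)}|F|^2\right)^{1/2}.
\end{equation}

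Two triangle inequalities tie these together with $H$. At scale $r$,
$\inf_{M,q}r^{-1}\|v-Mx-q\|_{\fint,B(0,r)}\le \inf_{M,q}r^{-1}\|u_\e-Mx-q\|_{\fint,B(0,r)}+r^{-1}\|u_\e-v\|_{\fint,B(0,r)}$; at scale $\theta r$, using the volume ratio $\theta^{-d/2}$ and (\ref{step1}),
\[
\frac{1}{\theta r}\left(\fint_{B(0,\theta r)}|u_\e-v|^2\right)^{1/2}\le \frac{\theta^{-d/2-1}}{r}\left(\fint_{B(0,r)}|u_\e-v|^2\right)^{1/2}\le C\theta^{-d/2-1}\eta(\e/r)\Psi(r).
\]
Combining with (\ref{step2}) produces
\begin{align*}
\frac{1}{\theta r}\inf_{M,q}\left(\fint_{B(0,\theta r)}|u_\e-Mx-q|^2\right)^{1/2}&\le C\theta\cdot\frac{1}{r}\inf_{M,q}\left(\fint_{B(0,r)}|u_\e-Mx-q|^2\right)^{1/2}\\
&\quad+C_\theta\,\eta(\e/r)\Psi(r)+C\log(1/\theta)\,\theta r\sup_{x\in B(0,r/2)}\left(\fint_{B(x,\theta r)}|F|^2\right)^{1/2}.
\end{align*}

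Finally I handle the $F$-dependent pieces through the $\sigma$-exponent built into $H$. Since $r\ge\theta^{-1}\e$ one has $\theta r\ge \e$, so the scale $t=\theta r$ lies in the admissible range $[\e,r/2]$ of the supremum in $H(r)$; writing $\theta r=\theta^\sigma\cdot\theta^{1-\sigma}r=\theta^\sigma\,t(r/t)^\sigma|_{t=\theta r}$ bounds the last display above by $C\log(1/\theta)\theta^\sigma$ times the supremum part of $H(r)$. The same substitution shows that the supremum part of $H(\theta r)$ itself is at most $\theta^\sigma$ times the supremum part of $H(r)$. Adding these estimates and choosing $\theta\in(0,1/4)$ small enough (depending only on $d,m,\mu,\sigma$) so that $C\theta+(C\log(1/\theta)+1)\theta^\sigma\le 1/2$ yields $H(\theta r)\le (1/2)H(r)+C\eta(\e/r)\Psi(r)$, as desired.

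The main obstacle is not conceptual but bookkeeping: one must verify that the logarithmic loss $\log(1/\theta)$ in Lemma \ref{lemma-11-3} is absorbed by $\theta^\sigma$ (which is true for any $\sigma>0$), and that the range $r\in[\theta^{-1}\e,1/2]$ is precisely what makes $t=\theta r$ admissible for the sup in $H(r)$. Everything else is a standard triangle-inequality chase.
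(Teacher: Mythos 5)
Your proposal is correct and follows essentially the same approach as the paper's own proof: a one-step Armstrong--Smart iteration combining Lemma \ref{lemma-11-2} (approximation of $u_\e$ by a homogenized solution $v$) with Lemma \ref{lemma-11-3} (interior $C^{1,1}$-type regularity of $v$), and then bookkeeping via triangle inequalities and the $\sigma$-exponent factorization $t=\theta^\sigma\,t(r/t)^\sigma|_{t=\theta r}$. The only cosmetic difference is that you minimize over the constant $q$ at the outset (by applying Lemma \ref{lemma-11-2} to $u_\e-q$ and choosing $q$ near-optimal), whereas the paper applies it to $u_\e$ and notes at the end that the same inequalities hold for $u_\e-q$; the two orderings are mathematically equivalent.
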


\begin{proof}
Let $r\in [\theta^{-1}\e, 1/2]$, where $\theta\in (0,1/4)$ is to be determined.
Let $v$ be the solution of $\mathcal{L}_0 (v)=F$ in $B(0,r)$, given by Lemma \ref{lemma-11-2}.
Observe that by Lemmas \ref{lemma-11-3} and \ref{lemma-11-2},
$$
\aligned
H(\theta r)
 &\le \inf_{\substack{M\in \R^{m\times d}\\ q\in \R^m}}
\frac{1}{\theta r} \left(\fint_{B(0,\theta r)} | v-Mx-q|^2\right)^{1/2}
+\sup_{\substack{x\in B(0,1/2)\\ \e\le t\le \theta r} } 
t \left(\frac{\theta r}{t} \right)^\sigma  \left(\fint_{B(x, t)} |F|^2\right)^{1/2}\\
 &\qquad \qquad
 + \frac{1}{\theta r} \left(\fint_{B(0, \theta r)} |u_\e -v|^2\right)^{1/2}\\
& \le C \theta  \inf_{\substack{M\in \R^{m\times d}\\ q\in \R^m}}
\left(\fint_{B(0,r)} | v -Mx-q|^2\right)^{1/2}
 +\theta^\sigma \sup_{\substack{x\in B(0,1/2)\\ \e\le t\le r/2}} t \left(\frac{r}{t}\right)^\sigma
  \left(\fint_{B(x, t)} |F|^2\right)^{1/2}\\
& \qquad
+C\theta  \log (\theta^{-1})
\sup_{x\in B(0,r/2)} r\left(\fint_{B(x,\theta r)} |F|^2\right)^{1/2}
+\frac{C_\theta}{r} \left(\fint_{B(0,r)} |u_\e -v|^2\right)^{1/2}\\
&\le C \left\{ \theta +\theta^\sigma +\theta^\sigma  \log (\theta^{-1})\right\}
H(r)
+\frac{C_\theta}{r} \left(\fint_{B(0,r)} |u_\e -v|^2\right)^{1/2}\\
&\le C \left\{ \theta + \theta^\sigma +\theta^\sigma  \log (\theta^{-1})\right\}
H(r)\\
&\qquad
+\frac{C_\theta}{r} \eta (\e/r)
\left\{  \left(\fint_{B(0,2r)} |u_\e |^2\right)^{1/2}
+ r^2 \left(\fint_{B(0,2r)} |F|^2\right)^{1/2} \right\}.
\endaligned
$$
We now choose $\theta\in (0,1/4)$ so small that $ C\left\{ \theta +\theta^\sigma +\theta^\sigma \log (\theta^{-1})\right\}
\le (1/2)$.
Since the inequalities above also hold for $u_\e-q$ with any $q\in \R^m$,
we obtain  the estimate (\ref{11-4-0}).
\end{proof}

The next lemma was proved in \cite{Shen-boundary-2015}.

\begin{lem}\label{G-lemma-1}
Let $H(r)$ and $h(r)$ be two nonnegative  continuous functions on the interval $(0, 1]$. Let
$0<\varep<(1/4)$.
Suppose that there exists a constant $C_0$  such that
\begin{equation}\label{G-1}
\left\{
\aligned
& \max_{r\le t \le 2r} H(t) \le C_0\,  H(2r),\\
& \max_{r\le t,s\le 2r} |h(t) -h(s)|   \le C_0\,  H(2r),
\endaligned
\right.
\end{equation}
for any $r\in [ \varep, 1/2]$. We further assume that
\begin{equation}\label{G-3}
H(\theta r) \le (1/2) H(r) + C_0\,  \omega (\varep/r) \Big\{ H(2r) + h(2r) \Big\},
\end{equation}
for any $r\in [ \theta^{-1} \varep, 1/2]$,
where $\theta\in (0,1/4)$ and $\omega$ is a nonnegative increasing function $[0,1]$ such that
$\omega(0)=0$ and
\begin{equation}\label{G-4}
\int_0^1 \frac{\omega(t)}{t}\, dt  <\infty.
\end{equation}
Then
\begin{equation}\label{G-5}
\max_{\varep\le r\le 1}
\Big\{ H(r) +h(r) \Big\}
\le C \Big\{ H(1) +h (1) \Big\},
\end{equation}
where $C$ depends only on $C_0$, $\theta$,  and $\omega$.
\end{lem}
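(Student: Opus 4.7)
The plan is to iterate \eqref{G-3} at the dyadic scales $r_k=\theta^k$ (for $0\le k\le K$, where $K$ is chosen so that $\theta^K$ is comparable to $\varepsilon$) and to use \eqref{G-1} both to convert the ``doubled'' argument $2r$ appearing on the right-hand side of \eqref{G-3} into a manageable value at a nearby dyadic scale, and to control $h$ through telescoping. Write $a_k=H(\theta^k)$ and $b_k=h(\theta^k)$. Because $\theta^{-1}$ is a fixed constant, a bounded number of applications of $\max_{r\le t\le 2r}H(t)\le C_0H(2r)$ yields $H(2\theta^k)\le C_1 a_{k-1}$, while the same chain applied to the oscillation estimate for $h$ gives $|b_k-b_{k-1}|\le C_1 a_{k-1}$, and hence the telescoping bound $|b_k|\le |b_0|+C_1\sum_{j=0}^{k-1}a_j$.

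Substituting these comparisons into \eqref{G-3} with $r=\theta^k$ produces the recursion
\begin{equation*}
a_{k+1}\le \tfrac12 a_k + C_2\,\omega(\varepsilon\theta^{-k})\Big\{a_{k-1}+|h(1)|+\sum_{j=0}^{k-1}a_j\Big\}.
\end{equation*}
Unwinding this recursion $k$ times, the contractive factor $\tfrac12$ produces a weighted sum with geometric weights $2^{j-k}$ attached to the error at scale $\theta^j$. Introducing the nondecreasing majorant $N_k=\max_{0\le j\le k}a_j+|h(1)|$, a discrete Gronwall-type argument then gives
\begin{equation*}
N_{k+1}\le N_0\Bigl(1+C_3\sum_{j=0}^{k}\omega(\varepsilon\theta^{-j})\Bigr)\exp\Bigl(C_3\sum_{j=0}^{k}\omega(\varepsilon\theta^{-j})\Bigr).
\end{equation*}

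The key input is that, by monotonicity of $\omega$ and a change of variable, the Dini condition \eqref{G-4} yields
\begin{equation*}
\sum_{j=0}^{K}\omega(\varepsilon\theta^{-j})\le \frac{1}{\log(1/\theta)}\int_0^{1}\frac{\omega(t)}{t}\,dt,
\end{equation*}
which is bounded uniformly in $\varepsilon$ and $K$. Combined with the previous display, this yields $N_K\le C\{H(1)+h(1)\}$ with $C$ depending only on $C_0$, $\theta$, and the Dini integral of $\omega$. The bound on $b_k$ for $k\le K$ then follows from the telescoping inequality, and to extend the uniform bound from the dyadic points $\{\theta^k\}$ to all $r\in[\varepsilon,1]$, one final application of \eqref{G-1} between $r$ and the nearest $r_k$ suffices.

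The main obstacle is the ``upward shift'' built into \eqref{G-3}: the correction at scale $\theta r$ depends on $H$ and $h$ at the \emph{larger} scale $2r$, so a naive fixed-point iteration is unavailable. This self-referential structure forces the bootstrap through \eqref{G-1} that relates $h$ at distant scales back through the partial sums of the $a_j$, and it is the Dini summability of $\omega(\varepsilon\theta^{-j})$---rather than any pointwise smallness of $\omega$---that ultimately absorbs the cumulative growth and closes the estimate uniformly in $\varepsilon$.
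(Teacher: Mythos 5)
The overall strategy — iterating \eqref{G-3} at dyadic scales $r_k=\theta^k$, using \eqref{G-1} to compare scales within a bounded factor and to telescope $h$, then closing with a discrete Gronwall argument fed by the Dini summability of $\omega(\varepsilon\theta^{-j})$ — is the right one, and matches the standard approach to such Campanato-type iteration lemmas. However, the Gronwall step as you have written it contains a genuine gap: your recursion
\begin{equation*}
a_{k+1}\le \tfrac12 a_k + C_2\,\omega(\varepsilon\theta^{-k})\Bigl\{a_{k-1}+|h(1)|+\sum_{j=0}^{k-1}a_j\Bigr\}
\end{equation*}
cannot be closed with the majorant $N_k=\max_{0\le j\le k}a_j+|h(1)|$. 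Bounding $\sum_{j<k}a_j$ by $N_k$ loses a factor of $k$, and the Dini hypothesis \eqref{G-4} only guarantees $\sum_j\omega(\varepsilon\theta^{-j})<\infty$, not $\sum_j j\,\omega(\varepsilon\theta^{-j})<\infty$; since $\omega(\varepsilon\theta^{-k})$ need not decay as $k$ increases (in fact it is monotone increasing in $k$), the product $\prod_k\bigl(1+Ck\,\omega(\varepsilon\theta^{-k})\bigr)$ is not bounded uniformly in $\varepsilon$. The inequality you claim for $N_{k+1}$ therefore does not follow.

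The remedy is to track the \emph{partial sum} rather than the running maximum: set $M_k=\sum_{j=0}^{k}a_j+|h(1)|$, which is the natural quantity because the telescoping of $h$ already produced $\sum_{j<k}a_j$. Unrolling the contraction $a_{k}\le 2^{-k}a_0+\sum_{j<k}2^{j-k+1}e_j$ with $e_j=C\omega(\varepsilon\theta^{-j})M_{j-1}$, summing over $k$, and exchanging the order of summation (the geometric weights absorb the inner sum) gives
\begin{equation*}
M_K\le C\bigl(M_0+a_0\bigr)+C\sum_{j<K}\omega(\varepsilon\theta^{-j})\,M_{j},
\end{equation*}
and now the discrete Gronwall inequality together with the Dini bound on $\sum_j\omega(\varepsilon\theta^{-j})$ yields $M_K\le C\{H(1)+h(1)\}$. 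This dominates $a_k$, $|b_k|$, and the partial sums simultaneously, and the extension from dyadic $\theta^k$ to general $r\in[\varepsilon,1]$ by one more application of \eqref{G-1} then goes through exactly as you describe. You should also note that the cases $k=0,1$ (where $a_{k-1}$ or the range restriction $r\ge\theta^{-1}\varepsilon$ in \eqref{G-3} is not yet available) need a one-line separate treatment, but this is routine.
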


We are now in a position to give the proof of Theorem \ref{Lip-theorem}.

\begin{proof}[\bf Proof of Theorem \ref{Lip-theorem}]
By translation and dilation we may assume that $x_0=0$ and $R=1$.
Thus $u_\e$ is a weak solution of $\mathcal{L}_\e (u_\e)=F$ in $B(0, 1)$
and $0<\e<(1/2)$.
Let $H(r)$ be defined by (\ref{H}).
Let $h(r)=|M_r|$, where $M_r\in \R^{m\times d}$ is a matrix such that 
$$
\inf_{q\in \R^m}\frac{1}{r}
\left(\fint_{B(0, r)} |u_\e -M_r x -q|^2\right)^{1/2}
=\inf_{\substack{M\in \R^{m\times d}\\ q\in \R^m}}
\frac{1}{r}
\left(\fint_{B(0, r)} |u_\e -M x -q|^2\right)^{1/2}.
$$
As in \cite{Shen-boundary-2015}, it follows that if $t,s\in [r,2r]$,
$$
|h (t)- h(s)|\le |M_t -M_s|\le C \Big\{ H(t) +H(s)\Big\} \le C H(2r).
$$
Also, if $\Psi (r)$ is defined by (\ref{Psi}),
then
$$
\Psi(r)\le H(2r) +h(2r).
$$
By Lemma \ref{lemma-11-4} we see that
$$
H(\theta r)\le (1/2) H(r) +C \eta (\e/r) \Big\{ H(2r) +h (2r)\Big\},
$$
for any $r\in [\theta^{-1} \e, 1/2]$, where $\eta(t)$ satisfies the Dini condition (\ref{Dini}).
This allows us to apply Lemma \ref{G-lemma-1}  and obtain 
$$
\aligned
 &\inf_{q\in \R^m} \frac{1}{r} \left(\fint_{B(0,r)} |u_\e -q|^2\right)^{1/2}
\le C \Big\{ H(r) + h(r) \Big\}
\le C \Big\{ H(1) + h(1) \Big\}\\
&\qquad \le C \left\{ \left(\fint_{B(0,1)}| \nabla u_\e|^2\right)^{1/2}
+\sup_{\substack{x\in B(0, 1/2)\\ \e\le t\le 1/2} }  t^{1-\sigma}
\left(\fint_{B(x, t)} |F|^2\right)^{1/2} \right\}.
\endaligned
$$
Hence, by Caccioppoli's inequality, 
$$
\left(\fint_{B(0,r)} |\nabla u_\e|^2\right)^{1/2}
\le C \left\{ \left(\fint_{B(0,1)}| \nabla  u_\e|^2\right)^{1/2}
+\sup_{\substack{x\in B(0, 1/2)\\ \e\le t\le 1/2}} t^{1-\sigma}
\left(\fint_{B(x, t)} |F|^2\right)^{1/2} \right\},
$$
for any $r\in [\e, 1/2]$.
The proof is complete.
\end{proof}

Finally, we give the proof of Theorem \ref{main-theorem-Lip}

\begin{proof}[\bf Proof of Theorem \ref{main-theorem-Lip}]

Note that by (\ref{decay-Lip}),
$$
\inf_{1\le L\le t}
\left\{ \rho_k (L, L) + \exp \left(-\frac{c\, t^2}{L^2}\right) \right\}
\le C \left\{ \big\{ \log (t+1) \big\}^{-\alpha}
+ \exp (-ct) \right\}.
$$
Using
$$
\int_2^T \frac{dt}{(\log t)^\alpha t^\sigma }
\le C_{\sigma, \alpha} \, T^{1-\sigma} (\log T )^{-\alpha}
$$
for  any $T\ge 2$, where $\sigma \in (0,1)$ and $\alpha>0$,
we see that
$$
\frac{1}{T}
\int_1^T \inf_{1\le L\le t}
\left\{ \rho_k (L, L) + \exp \left(-\frac{c\, t^2}{L^2}\right) \right\} \left(\frac{T}{t}\right)^\sigma\, dt
\le C \big\{ \log (T+1) \big\}^{-\alpha}.
$$
In view of Theorem \ref{main-theorem-1}, this yields
$$
T^{-1} \|\chi_T\|_{S^2_1} \le C \big\{ \log (T+1) \big\}^{-\alpha},
$$
and
$$
\| \nabla \chi_T -\psi\|_{B^2} \le   C\sum_{j=1}^\infty
(2^j T)^{-1} \|\chi_{2^{j}T} \|_{S^2_1}
\le  C\big\{ \log (T+1) \big\}^{1-\alpha}.
$$
The same argument also gives the estimate for the adjoint operator,
$$
\| \nabla \chi^*_T -\psi^*\|_{B^2} \le  C \big\{ \log (T+1) \big\}^{1-\alpha}.
$$
It then follows from Theorem \ref{main-theorem-3} that the condition (\ref{Lip-condition-11})
holds for
$$
\eta (t)=  C \big\{ \log (2/t)\big\}^{\frac{1-\alpha}{2}}.
$$
Since $\alpha >3$, the function $\eta (t)$ satisfies the Dini condition (\ref{Dini}).
As a result, Theorem \ref{Lip-theorem} holds for the operator $\mathcal{L}_\e$.

Finally, let
$$
u(x)=\chi_{T, j}^\beta (x) + P_j^\beta (x).
$$
Then $\mathcal{L}_1 (u)=-T^{-2} \chi_{T, j}^\beta$ in $\R^d$.
It follows from Theorem \ref{Lip-theorem} with $r=\e=1$ and $R=T$ that
$$
\left(\fint_{B(x_0,1)} |\nabla u|^2\right)^{1/2}
\le C \left(\fint_{B(x_0, T)} |\nabla u|^2\right)^{1/2}
+ C T^{-1} \|\chi_T\|_{S^2_1}
\le C,
$$
for any $x_0\in \R^d$. 
Since $|\nabla \chi_{T,j}^\beta |\le |\nabla u| +C$,
this gives the estimate (\ref{Lip}).
\end{proof}



\bibliography{Shen-Zhuge.bbl}

\medskip

\begin{flushleft}
Zhongwei Shen

Department of Mathematics
 
University of Kentucky

Lexington, Kentucky 40506, USA. 


E-mail: zshen2@uky.edu

\medskip

Jinping Zhuge

Department of Mathematics
 
University of Kentucky

Lexington, Kentucky 40506, USA. 

E-mail: jinping.zhuge@uky.edu

\end{flushleft}

\medskip

\noindent \today

\end{document}